\title{Khovanov Homology and Rational Unknotting}
\author{Damian Iltgen}
\address{Department of Mathematics, University of Regensburg, Germany}
\email{\myemail{damian@iltgen.ch}}
\author{Lukas Lewark}
\address{Department of Mathematics, University of Regensburg, Germany}
\email{\myemail{lukas@lewark.de}}
\urladdr{\url{http://www.lewark.de/lukas/}}
\author{Laura Marino}
\address{IMJ-PRG, Universit\'{e} de Paris, France}
\email{\myemail{laura.marino@imj-prg.fr}}
\keywords{Khovanov homology, Bar-Natan homology, Frobenius algebra, Rasmussen invariant, unknotting number, rational tangles}
\let\cref\Cref
\crefname{subsection}{section}{sections}
\Crefname{subsection}{Section}{Sections}
\Crefname{enumi}{}{}
\Crefname{equation}{}{}
\definecolor{darkblue}{RGB}{0,0,96}
\definecolor{gray}{RGB}{127,127,127}
\definecolor{darkred}{RGB}{160,0,0}
\definecolor{lightyellow}{RGB}{255,255,128}
\definecolor{myred}{rgb}{1,0.85,0.85}
\definecolor{myblue}{rgb}{0.75,0.75,1}
  \def\unskip{}%
  \def\lambda{lambda}
  \def\Z{Z}
\newcommand{\myemail}[1]{\href{mailto:#1}{#1}}
\newcommand{\qua}{\hskip 0.4em \ignorespaces}
\def\arxiv#1{\relax\ifhmode\unskip\qua\fi
\href{http://arxiv.org/abs/#1}%
{\tt arXiv:\penalty -100\unskip#1}}
\def\ZB#1{\relax\ifhmode\unskip\qua\fi
\href{https://zbmath.org/?q=an:#1}{\tt zb#1}}
\def\xox#1{\csname xx#1\endcsname}
\numberwithin{equation}{section}
\declaretheorem[numberwithin=section,name={Lemma}]{lem}
\newtheorem{theorem}[lem]{Theorem}
\newtheorem{corollary}[lem]{Corollary}
\newtheorem{prop}[lem]{Proposition}
\theoremstyle{definition}
\newtheorem{dfn}[lem]{Definition}
\newtheorem{remark}[lem]{Remark}
\newtheorem{example}[lem]{Example}
\newtheorem{question}[lem]{Question}
\DeclareMathAlphabet{\mathpzc}{OT1}{pzc}{m}{it}
\DeclareMathOperator{\Kom}{Kom}
\DeclareMathOperator{\Cob}{Cob}
\DeclareMathOperator{\Mat}{Mat}
\DeclareMathOperator{\Id}{Id}
\DeclareMathOperator{\univ}{univ}
\DeclareMathOperator{\Hom}{hom}
\DeclareMathOperator{\ob}{ob}
\DeclareMathOperator{\id}{id}
\newcommand{\Z}{\mathbb{Z}}
\newcommand{\Q}{\mathbb{Q}}
\newcommand{\R}{\mathbb{R}}
\newcommand*{\img}[1]{%
\raisebox{-.3\baselineskip}{%
\includegraphics[ height=\baselineskip, width=\baselineskip, keepaspectratio, ]{figures/#1}%
}}%
\newcommand{\myqed}{\pushQED{\qed}\qedhere}
\definecolor{annotation}{rgb}{.2,.7,.2}
\definecolor{firstmap}{rgb}{0,0,1}
\definecolor{secondmap}{rgb}{.7,.7,1}
\definecolor{homotopy}{rgb}{1,.1,.1}
\renewcommand*{\backrefalt}[4]{%
\tiny
\ifcase #1 %
No citations.%
\or
Cited on page~#2.%
\else
Cited on pages~#2.%
\fi
}
\newcommand{\refinsec}[2]{\texorpdfstring{\cref{#2}}{#1 \ref*{#2}}}
\newcommand{\modl}{\smash{\raisebox{.3ex}{$\scriptscriptstyle /$}l}}
\let\stdthebibliography\thebibliography
\let\stdendthebibliography\endthebibliography
\begin{document}
\thispagestyle{empty}
\subjclass{57K10, 57K18}
\begin{abstract}
Building on work by Alishahi--Dowlin, 
we extract a new knot invariant $\lambda \geq 0$ from universal Khovanov homology.
While $\lambda$ is a lower bound for the unknotting number, in fact more is true:
$\lambda$ is a lower bound for the proper rational unknotting number (the minimal
number of rational tangle replacements preserving connectivity necessary to relate a knot to the unknot).
Moreover, we show that for all~$n\geq 0$, there exists a knot $K$ with~$\lambda(K) = n$.
Along the way, following Thompson, we compute the Bar-Natan complexes of rational tangles.

\end{abstract}
\maketitle
\section{Introduction}\label{sec:intro}
The most famous geometric application of Khovanov's categorification of the Jones polynomial \cite{KH1} comes from the Rasmussen invariant, a knot concordance homomorphism giving a strong lower bound for the slice genus \cite{ras3}.
The Rasmussen invariant may be read off the grading of the limit of Lee's spectral sequence \cite{Lee}, which starts at Khovanov homology.
Recently, Alishahi and Dowlin \cite{zbMATH07005602} discovered that there is further geometric information contained in Lee's spectral sequence.
Namely, the number of the page on which that sequence collapses is a lower bound for the unknotting number~$u$.
This new lower bound behaves rather differently from the Rasmussen invariant, e.g.~it is not invariant under concordance, and not additive under the connected sum of knots.

In this paper, we use Alishahi and Dowlin's methods on a different variation of Khovanov homology to define a new knot invariant $\lambda$ taking non-negative integer values. This invariant is greater than or equal to all the previously known variations of Alishahi--Dowlin's bound appearing in \cite{zbMATH07005602,zbMATH07178864,CGLLSZ,gujral}. Before giving the definition of~$\lambda$, let us state our two main results.

\begin{theorem}\label{thm:rationalreplacement}
For all knots~$K$, one has $\lambda(K) \leq u_q(K)$.
\end{theorem}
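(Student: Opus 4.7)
The plan is to proceed by induction on $u_q(K)$. The base case $K = U$ (the unknot) is immediate, since $\lambda(U) = 0$ by definition. For the inductive step, it suffices to show that if $K_0$ and $K_1$ are knots related by a single proper rational tangle replacement, then $\lambda(K_1) \leq \lambda(K_0) + 1$. Iterating along a sequence realizing $u_q(K)$, beginning at the unknot and ending at $K$, then delivers the stated inequality.

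To carry out the inductive step, fix a three-ball $B$ inside which the replacement occurs, and write $K_i = T_e \cup T_i$ for $i = 0,1$, where $T_e$ is the common external tangle in $S^3 \setminus B$ and $T_0, T_1$ are rational tangles inside $B$. Properness guarantees that $T_0$ and $T_1$ realize the same connectivity of the four endpoints. Via the planar algebra structure of Bar-Natan homology, each knot complex factors as $\llbracket K_i \rrbracket \simeq \llbracket T_e \rrbracket \otimes \llbracket T_i \rrbracket$ in the appropriate Bar-Natan cobordism category, so any chain map $f \colon \llbracket T_0 \rrbracket \to \llbracket T_1 \rrbracket$ induces a chain map $\id \otimes f \colon \llbracket K_0 \rrbracket \to \llbracket K_1 \rrbracket$ whose mapping cone is $\llbracket T_e \rrbracket \otimes \operatorname{Cone}(f)$. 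The goal is then to exhibit an $f$ whose cone is ``small enough'' to raise $\lambda$ by at most one.

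The essential input is the explicit computation of Bar-Natan complexes of rational tangles promised in the abstract (following Thompson). Because $T_0$ and $T_1$ share the same connectivity, $\llbracket T_0 \rrbracket$ and $\llbracket T_1 \rrbracket$ are both chain-homotopy equivalent to standard small models for the two trivial tangles of that connectivity, the two models differing only by a ``twist'' expressed as a polynomial in the formal variable of the Bar-Natan deformation. From this explicit form one reads off a canonical comparison map $f$ whose cone is a single basic piece controlled by one power of the deformation variable. Feeding this through the tensor product with $\llbracket T_e \rrbracket$ and invoking the definition of $\lambda$ as a torsion-order-type invariant of the resulting universal Khovanov chain complex, one obtains the inequality $\lambda(K_1) \leq \lambda(K_0) + 1$.

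The principal obstacle I expect is the last step: reliably translating the explicit form of the mapping cone of $f$ into the bound on $\lambda$ at the knot level. This requires formulating $\lambda$ so that it interacts cleanly both with chain-level mapping cones and with the planar-algebra tensor product by an arbitrary external tangle $\llbracket T_e \rrbracket$. The properness hypothesis should enter precisely here, as the matching connectivity of $T_0$ and $T_1$ is what ensures that the two knots are indeed obtained by gluing with the same external tangle and that the cone of $\id \otimes f$ accumulates only a single unit of deformation torsion; non-proper replacements would change the component count and produce a substantially larger cone, breaking the bound.
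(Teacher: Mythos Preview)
Your high-level reduction is right and matches the paper: by induction and the triangle inequality it suffices to show that a single proper rational replacement changes $\lambda$ by at most~1, and the planar-algebra/gluing structure lets you pass from a bound at the tangle level to the knot level (this is the paper's \cref{prop:gluing}). The trouble is in the middle, where the actual work happens.

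First, the structural claim about rational tangle complexes is false. It is not true that two rational tangles with the same connectivity have Bar-Natan complexes that are ``standard small models for the trivial tangle of that connectivity, differing only by a twist.'' The paper's \cref{thm:thompson} computes $[R(p/q)]^{\bullet}$ explicitly: it is a zigzag complex with $|p|+|q|$ objects, so distinct rational numbers give genuinely different, non-homotopy-equivalent complexes. There is no canonical comparison map whose cone is automatically ``a single basic piece.''

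Second, the mapping-cone framing does not match the definition of $\lambda$. By \cref{dfn:lambda}, $\lambda$ is not a torsion order of homology; it demands chain maps in \emph{both} directions, $f\colon C\to C'$ and $g\colon C'\to C$, with $g\circ f\simeq G^k\cdot\id$ and $f\circ g\simeq G^k\cdot\id$. Exhibiting one map with small cone does not supply the reverse map or the two homotopies. (The paper stresses that $\lambda\neq\mathfrak{u}_G$ in general precisely because $\Z[G]$ is not a PID.)

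What the paper actually does for the key step (\cref{prop:discrete}) is: use the mapping class group of the 4-punctured sphere to normalize one of the two rational tangles to $R(-1)$ (\cref{lem:changerationalcoeff}), so the other becomes $R(p/q)$ with $p,q$ both odd by the connectivity constraint (\cref{lem:parity}). Then, working with the explicit zigzag model for $[R(p/q)]^{\bullet}$, it writes down maps $f,g$ by hand and constructs the required homotopies $g\circ f\simeq G\cdot\id$ and $f\circ g\simeq G\cdot\id$ (\cref{lem:fg}). The homotopy on the large complex is built from local pieces via a delicate combinatorial lemma about the zigzag graph (\cref{lem:bigcasedistinction,lem:homotopy2}); this is where the real effort lies, and your proposal does not yet contain any mechanism that would produce it.
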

\begin{theorem}\label{thm:lambdahigh}
For every $n\geq 0$ there exists a knot $K$ such that~$\lambda(K) = n$.
\end{theorem}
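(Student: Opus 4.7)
The plan is to exhibit, for each $n \geq 0$, a knot $K_n$ with $\lambda(K_n) = n$. Take $K_0$ to be the unknot. For $n \geq 1$, the natural candidates are the torus knots $T(2,2n+1)$ or the $n$-fold connected sums $\#^n T(2,3)$ of trefoils; both have classical unknotting number $n$ and are the standard test cases for Alishahi--Dowlin-type invariants.

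For the upper bound $\lambda(K_n) \leq n$, I would apply \cref{thm:rationalreplacement} and verify that $u_q(K_n) \leq n$. Taking $K_n = T(2,2n+1)$, one has the usual unknotting sequence consisting of $n$ crossing changes; each crossing change is a rational tangle replacement (between the integer tangles $+1$ and $-1$), and each intermediate result $T(2,2k+1)$ is itself a knot, so every replacement is connectivity-preserving, hence proper. This yields $u_q(T(2,2n+1)) \leq n$, and therefore $\lambda(T(2,2n+1)) \leq n$. A parallel argument handles $\#^n T(2,3)$.

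For the lower bound $\lambda(K_n) \geq n$, my first attempt would exploit the fact, stated in the introduction, that $\lambda$ dominates every previously known variant of the Alishahi--Dowlin bound. If one of those variants is already known to equal $n$ on $K_n$, we are done; this is expected to be the case on families such as $\#^n T(2,3)$. Otherwise, one can compute $\lambda$ directly: the universal Khovanov and Bar-Natan chain complexes of $T(2,2n+1)$ admit explicit descriptions into which the definition of $\lambda$ can be fed. Given the machinery the paper develops for rational tangles (as mentioned in the abstract), a direct computation is a plausible intended route.

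The hard part will be the lower bound. Since $\lambda$ is defined via a filtration or spectral sequence on a chain complex built from universal Khovanov homology, establishing $\lambda(K_n) \geq n$ requires pinpointing a homology class that survives exactly $n$ stages of that filtration. Even for a family as explicit as $T(2,2n+1)$, this homological accounting is where the substantive effort lies; piggybacking on earlier Alishahi--Dowlin-type computations shortens the argument but still demands that such a computation exist for the chosen family, and otherwise the calculation must be carried out from scratch using the explicit complex.
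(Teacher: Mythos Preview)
Your proposal has a fundamental gap: both of your candidate families have $\lambda = 1$ for all $n \geq 1$, not $\lambda = n$. The torus knots $T(2,2n+1)$ are alternating, hence thin, and the paper proves (\cref{prop:lambdaofthin}) that every non-trivial thin knot has $\lambda = 1$. The same applies to $\#^n T(2,3)$; indeed the introduction states explicitly that $\lambda$ of the connected sum of $n$ trefoils equals $1$. So your upper bound $\lambda \leq n$ is correct but far from sharp, and the hoped-for lower bound is simply false for these knots. Your fallback, that some Alishahi--Dowlin variant already equals $n$ on these families, fails for the same reason: those variants are bounded above by $\lambda$, and all of them equal $1$ here. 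More generally, the paper notes that $\mathfrak{u}_X \leq 3$ for every knot computed to date, so no off-the-shelf torsion-order computation will give arbitrarily large values.

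The paper's actual construction is much more delicate. It takes $K = 14n19265$, a knot distinguished by having $s_{\mathbb{Q}} \neq s_{\mathbb{F}_2}$, and shows by computer calculation that $\llbracket K \rrbracket$ contains a \emph{staircase} piece $S_1$ (a rank-3 complex with differential $(2\ \ G)$) alongside only $\symknight(G)$ and $\symknight(G)\otimes\symknight(2)$ pieces. Two algebraic lemmas then do the work: $\lambda(S_n) = \mathfrak{u}_G(S_n) = n$, and $S_1 \otimes S_n \cong S_{n+1} \oplus (\symknight(G)\otimes\symknight(2))^{\oplus n}$. Together these give $\lambda(K^{\#n}) = n$. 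The point is that unboundedness of $\lambda$ genuinely requires $\Z[G]$ (not $\mathbb{F}[G]$) phenomena---the interaction of the prime $2$ with $G$ in the staircase---and cannot be seen on thin knots at all.
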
%
Here, the \emph{proper rational unknotting number} $u_q(K)$ is defined as follows.
\begin{dfn}\label{dfn:ratioreplacement}
Two knots $K,K'$ are related by a \emph{rational replacement} if $K'$ may be obtained from $K$
by replacing a rational tangle $T$ in $K$ with another rational tangle~$T'$.
If the arcs of $T$ and $T'$ connect the same tangle end points, we say that
the rational replacement is \emph{proper}. Now, $u_q(K)$ is defined
as the minimal number of proper rational replacements relating $K$ to the unknot.
\end{dfn}
\Cref{fig:introratioreplacement} shows examples of rational replacements,
and an application of \cref{thm:rationalreplacement}.
\begin{figure}[t]
		\centering
		\def\svgwidth{0.7\textwidth}
   {
\begingroup%
  \makeatletter%
  \providecommand\color[2][]{%
    \errmessage{(Inkscape) Color is used for the text in Inkscape, but the package 'color.sty' is not loaded}%
    \renewcommand\color[2][]{}%
  }%
  \providecommand\transparent[1]{%
    \errmessage{(Inkscape) Transparency is used (non-zero) for the text in Inkscape, but the package 'transparent.sty' is not loaded}%
    \renewcommand\transparent[1]{}%
  }%
  \providecommand\rotatebox[2]{#2}%
  \newcommand*\fsize{\dimexpr\f@size pt\relax}%
  \newcommand*\lineheight[1]{\fontsize{\fsize}{#1\fsize}\selectfont}%
  \ifx\svgwidth\undefined%
    \setlength{\unitlength}{1054.78352224bp}%
    \ifx\svgscale\undefined%
      \relax%
    \else%
      \setlength{\unitlength}{\unitlength * \real{\svgscale}}%
    \fi%
  \else%
    \setlength{\unitlength}{\svgwidth}%
  \fi%
  \global\let\svgwidth\undefined%
  \global\let\svgscale\undefined%
  \makeatother%
  \begin{picture}(1,0.68356642)%
    \lineheight{1}%
    \setlength\tabcolsep{0pt}%
    \put(0,0){\includegraphics[width=\unitlength,page=1]{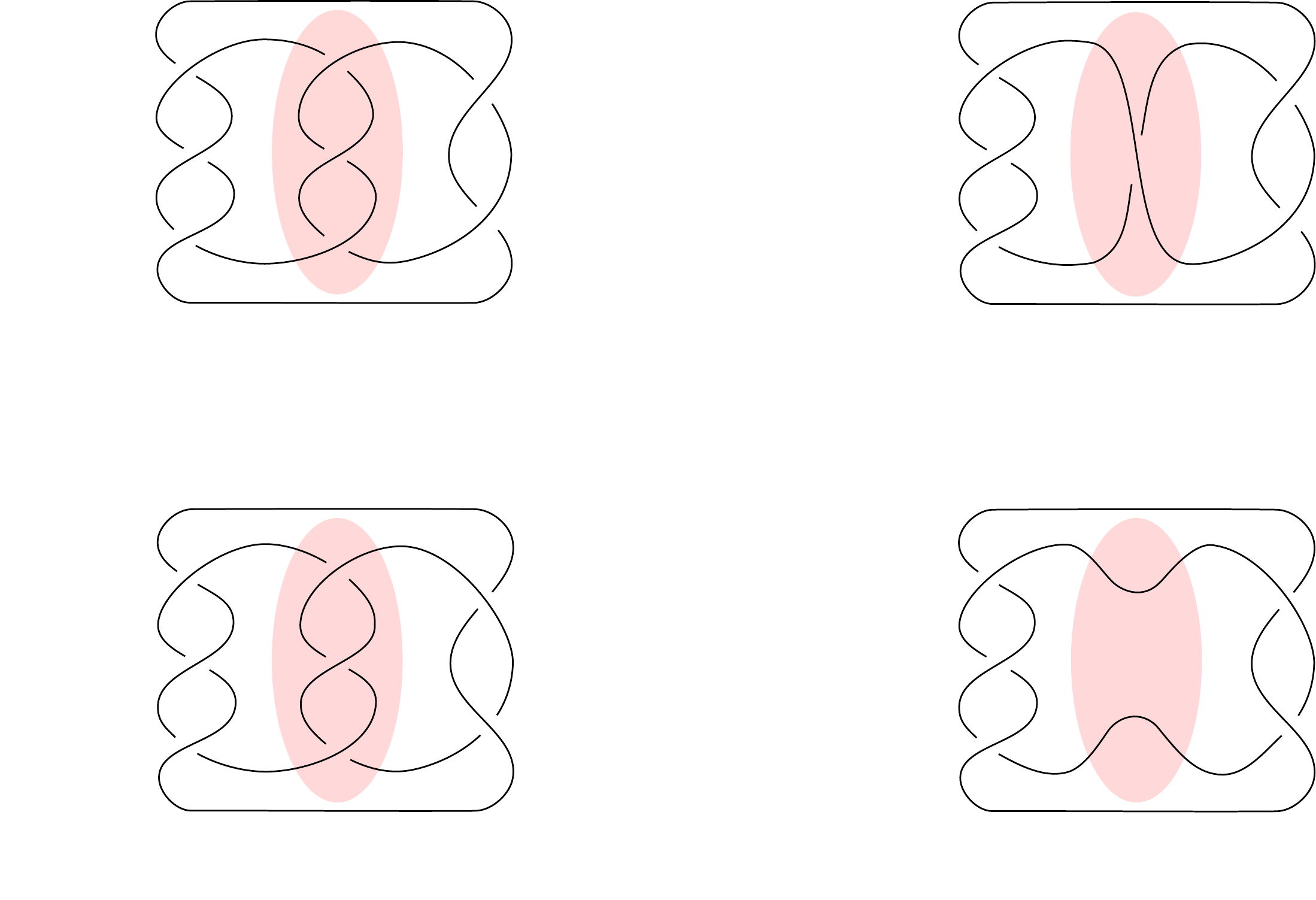}}%
    \put(0.1326647,0.3946474){\makebox(0,0)[lt]{\lineheight{1.25}\smash{\begin{tabular}[t]{l}$P(3,3,2)=8_5$\end{tabular}}}}%
    \put(0.79809713,0.39405239){\makebox(0,0)[lt]{\lineheight{1.25}\smash{\begin{tabular}[t]{l}unknot\end{tabular}}}}%
    \put(0.046378,0.00884351){\makebox(0,0)[lt]{\lineheight{1.25}\smash{\begin{tabular}[t]{l}$P(3,3,-2)=T_{3,4}=8_{19}$\end{tabular}}}}%
    \put(0.79745398,0.00875863){\makebox(0,0)[lt]{\lineheight{1.25}\smash{\begin{tabular}[t]{l}unknot\end{tabular}}}}%
    \put(0,0){\includegraphics[width=\unitlength,page=2]{rationalreplacement.pdf}}%
    \put(-0.00304517,0.55122038){\makebox(0,0)[lt]{\lineheight{1.25}\smash{\begin{tabular}[t]{l}(i)\end{tabular}}}}%
    \put(-0.00144314,0.16736927){\makebox(0,0)[lt]{\lineheight{1.25}\smash{\begin{tabular}[t]{l}(ii)\end{tabular}}}}%
    \put(0,0){\includegraphics[width=\unitlength,page=3]{rationalreplacement.pdf}}%
  \end{picture}%
\endgroup%
}
\caption{In (i), an example of a proper rational replacement ($1/3$ by $-1$ in the language of \cref{dfn:ratioreplacement2}), showing that the $P(3,3,2)$ pretzel knot has proper rational unknotting number~1. In (ii), an example of a non-proper rational replacement ($1/3$ by~$0$), showing that the $P(3,3,-2)$ pretzel knot, which is also the $T_{3,4}$ torus knot, has rational unknotting number 1. Since $\lambda(T_{3,4}) = 2$, it follows from \cref{thm:rationalreplacement} that there is no \emph{proper} rational replacement transforming the $T_{3,4}$ pretzel knot into the unknot, i.e.~$T_{3,4}$ has proper rational unknotting number at least 2 (and in fact equal to 2).}
\label{fig:introratioreplacement}
\end{figure}
Since a crossing change is merely a special case of a proper rational replacement, we find that
$
u_q(K) \leq u(K)
$
holds for all knots~$K$.
So \cref{thm:rationalreplacement} can be seen as a strengthening of the inequality $\mathfrak{u}_X(K) \leq u(K)$,
where $\mathfrak{u}_X(K)$ denotes one of the bounds constructed by Alishahi--Dowlin \cite{zbMATH07005602}. Indeed, we have
\[
\mathfrak{u}_X(K) \leq \lambda(K) \leq u_q(K) \leq u(K).
\]
We shall see that none of these inequalities are equalities,
and in fact, the gaps between the four involved invariants can be arbitrarily large.

In contrast to \cref{thm:lambdahigh}, let us note that currently $\mathfrak{u}_X(K) \leq 3$ holds for all knots $K$ for which $\mathfrak{u}_X$ has been computed.
The essentially only known knot with $\mathfrak{u}_X(K) = 3$ was recently found by Manolescu and Marengon \cite{zbMATH07144520}.
Still, it seems a reasonable conjecture that $\mathfrak{u}_X$ is unbounded,
though the proof of that conjecture might require more complicated knots and methods of computation
than our proof of the unboundedness of $\lambda$ in \cref{thm:lambdahigh}.

In the remainder of the introduction, let us provide details and background about Khovanov homologies, the invariant $\lambda$ and rational unknotting.
\subsection{Rational replacements and rational unknotting}
Rational unknotting has previously been
considered by Lines~\cite{zbMATH00894845} and McCoy~\cite{zbMATH06497273},
and proper rational unknotting in a recent paper by McCoy and Zentner~\cite{newmccoy}.
In those papers, rational unknotting is obstructed via the double branched cover,
relying on the so-called Montesinos trick: if two knots $K$ and $J$ are related by a rational replacement,
then their double branched covers $M_K, M_J$ are related by a surgery.
So one may obstruct the existence of certain rational replacements by obstructing the existence of certain surgeries.

The arguably easiest upshot of that method is the following:
the minimal number of generators of $H_1(M_K; \Z)$ is a lower
bound for the rational unknotting number of~$K$. For example, this implies that the connected sum of $n$ trefoil knots
has (proper and non-proper) rational unknotting number equal to~$n$. On the other hand, one may easily compute that
$\lambda$ of the connected sum of $n\geq 1$ trefoil knots equals~$1$. This may be taken as a first sign
that our lower bound $\lambda$ is quite different from the lower bounds for $u_q$ obtainable from the double branched cover.
In the instance of connected sums of trefoils, the $\lambda$ bound is weaker. However, there are also knots for which $\lambda$ is the strongest lower bound for $u_q$ that we know of. Let us give an explicit example.
\begin{question}
In \cref{ex:t56}, we compute $\lambda(T_{5,6}) = 3$, from which it follows that $u_q(T_{5,6}) \geq 3$.
Is there any other way of showing $u_q(T_{5,6}) \geq 3$?
\end{question}

Note that the gap between the proper rational unknotting number $u_q$ and the (classical) unknotting number $u$ may be arbitrarily high.
For example, $u_q(K) = 1$ clearly holds for all two-bridge knots~$K$;
but $u(K)$ of two-bridge knots can take any value, which can e.g.~be shown using
the signature bound $|\sigma(K)/2| \leq u(K)$.
This also demonstrates that
$|\sigma|/2$ is not a lower bound for the proper rational unknotting number.

In \cref{subsec:tangles}, we will fix our conventions regarding tangles and tangle diagrams.
We will then revisit \cref{dfn:ratioreplacement}, and give a more refined definition of rational replacement in \cref{dfn:ratioreplacement2}. As an aside, let us also remark that in the definition of the proper rational unknotting number~$u_q(K)$,
the proper rational replacements relating $K$ and the unknot are sequential: happening one after the other.
However, by a standard transversality argument (see e.g.~\cite{scharlemann}) one can show that for every knot~$K$,
there exist $u_q(K)$ many simultaneous rational replacements, i.e.~rational replacements taking place in pairwise disjoint balls.
\subsection{A simple universal Khovanov homology}
An algebra $A$ over a ring $R$ equipped with an $(A,A)$-bilinear map $\Delta\colon A \to A\otimes A$ and an $R$-linear map $\epsilon\colon A\to R$, such that $(\epsilon\otimes \text{Id})\circ\Delta= \Id$, is called a \emph{Frobenius algebra},
and the tuple $\mathcal{F} = (R,A,\Delta,\epsilon)$ is called a \emph{Frobenius system}.
We will only consider so-called \emph{rank two Frobenius systems}.
These are Frobenius systems $\mathcal{F}$ with an $X\in A$ such that $A$ is freely generated by $1$ and $X$ as an $R$-module. Moreover, all our Frobenius algebras will be equipped with a filtration or a grading, such that $1$ and $X$ are homogeneous elements of degree $0$ and~$-2$, respectively. We call this the \emph{quantum grading}.
Every such rank two Frobenius system $\mathcal{F}$ yields
a variation of Khovanov homology theory, i.e.~a way to associate with all
diagrams $D$ of a link $L$ a chain complex $C_{\mathcal{F}}(D)$ of $R$-modules, such that $C_{\mathcal{F}}(L)$
is well-defined up to homotopy equivalence~\cite{KH2}.
For links with a marked component, or for knots, there is an action of $A$ on $C_{\mathcal{F}}(D)$,
which is well-defined up to homotopy, so we may consider $C_{\mathcal{F}}(D)$ as a chain complex of $A$-modules, which are free.

Khovanov's original homology theory corresponds to the Frobenius algebra $\Z[X]/(X^2)$ over~$\Z$.
On the other hand, the theory coming from the Frobenius algebra $A_{\univ} = R_{\univ}[X] / (X^2 - hX - t)$ over $R_{\univ} = \Z[h,t]$
is called \emph{universal}, since for all rank two Frobenius algebras~$\mathcal{F}$, the chain complex $C_{\mathcal{F}}(D)$
is determined by $C_{\univ}(D)$~\cite{KH2}.
Recently, Khovanov and Robert defined another theory called $\alpha$-homology,
which is also universal in the sense above \cite{frob2}.
To define~$\lambda$, we will use a third universal theory, which we call $\Z[G]$-homology.%
\footnote{Here $G$ is simply a symbolic variable, and so $\Z[G]$ is the one-variable polynomial ring over the integers (not some group ring).}
The universality of this theory is due to Naot \cite{zbMATH05118580,naotphd}.
This $\Z[G]$-theory associates with a diagram $D$ of a knot $K$ the reduced Khovanov chain complex coming from the Frobenius algebra $R[X] / (X^2 + GX)$ with~$R = \Z[G]$. We denote this chain complex by $\llbracket D\rrbracket$ (well-defined up to isomorphism) or $\llbracket K\rrbracket$ (well-defined up to homotopy equivalence).
Our reason to use $\Z[G]$-homology is that it is the simplest of the three mentioned universal theories,
in so far as the ground ring is a polynomial ring in only one, instead of two variables.
Let us explicitly state how $\Z[G]$-homology determines $\mathcal{F}_{\univ}$-homology
(this is implicit in the work of Naot \cite{zbMATH05118580,naotphd}).
\begin{restatable}{theorem}{thmequiv}\label{thm:equiv}
Endow $A_{\univ} = \Z[h,t][X] / (X^2 - hX - t)$
with the structure of a $\Z[G]$-module by letting $G$ act as $2X - h$.
Then for every knot~$K$,
\[
C_{\univ}(K) \simeq \llbracket K\rrbracket \otimes_{\Z[G]} A_{\univ}\{1\}.
\]
\end{restatable}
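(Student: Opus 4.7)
The plan is to realize both $C_{\univ}(K)$ and $\llbracket K \rrbracket$ as images of Bar-Natan's formal Khovanov complex under two different rank-two Frobenius TQFT functors: the unreduced universal one with algebra $A_{\univ}$ over $R_{\univ} = \Z[h,t]$, and the reduced $\Z[G]$-theory with algebra $A_G = \Z[G][X]/(X^2 + GX)$ together with a marked circle. The claim then becomes that these two functors agree after base-changing the reduced one along $\Z[G] \to A_{\univ}$, $G \mapsto 2X - h$, up to the quantum shift $\{1\}$.

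The core algebraic step is the \emph{atomic isomorphism} corresponding to a single unmarked circle adjacent to a single marked one:
\[
A_G \otimes_{\Z[G]} A_{\univ} \xrightarrow{\sim} A_{\univ} \otimes_{R_{\univ}} A_{\univ},\qquad X_G \otimes 1 \mapsto X_1 - X_2,
\]
with $X_2$ the marked factor and $G$ acting on $A_{\univ}$ via $2X_2 - h$. Well-definedness follows from the Frobenius relation $X_i^2 = h X_i + t$, which implies $(X_1 - X_2)^2 + (2X_2 - h)(X_1 - X_2) = 0$; bijectivity is a short change-of-basis computation. Iterating this atomic isomorphism once per unmarked circle at each vertex $v$ of the cube of resolutions with $k_v$ circles yields
\[
A_G^{\otimes (k_v - 1)} \otimes_{\Z[G]} A_{\univ} \xrightarrow{\sim} A_{\univ}^{\otimes k_v},
\]
identifying the vertex-wise generators of $\llbracket K \rrbracket \otimes_{\Z[G]} A_{\univ}\{1\}$ with those of $C_{\univ}(K)$.

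It then remains to verify that this family of vertex-wise isomorphisms intertwines the two cubes' differentials. Since every differential is induced by an elementary cobordism (saddle, birth, or death), this compatibility reduces to a small finite list of local identities on the explicit bases above, each a direct computation. The overall shift $\{1\}$ reflects the grading normalization under which each unmarked circle contributes $A_{\univ}\{1\}$ in the unreduced theory, whereas the marked circle in the reduced theory contributes $\Z[G]$ in quantum degree $0$. The main obstacle will be the bookkeeping around the marked circle: a saddle that interacts with it can swap marked and unmarked status, and one must check naturality of the atomic isomorphism under these transfers of basepoint. Once this is in place, the isomorphism is functorial in Bar-Natan's cobordism category and descends to the claimed homotopy equivalence over $A_{\univ}$.
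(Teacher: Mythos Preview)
Your approach is correct, and it is a genuinely different route from the paper's. The atomic isomorphism $A_G \otimes_{\Z[G]} A_{\univ}\to A_{\univ}\otimes_{R_{\univ}}A_{\univ}$, $X_G\otimes 1\mapsto X_1-X_2$, is well-defined and invertible exactly as you say, and the compatibility with merges, splits, births and deaths (including those touching the marked circle) does go through; for instance, the reduced split sends $1\mapsto X_G+G$, and under your map this becomes $(X_1-X_2)+(2X_2-h)=X_1+X_2-h$, which is precisely $\Delta_{\univ}(1)$. So the naturality checks you defer are indeed routine.

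The paper, by contrast, never writes down a vertex-wise tensor isomorphism. It first establishes that $\Mat(\Cob^3_{\modl}(2))$ is equivalent, via delooping, to a category with a single object and endomorphism ring $\Z[G]$ (the genus being the only invariant of a connected cobordism between two arcs). Hence \emph{any} rank-two Frobenius TQFT on $2$-ended tangles is determined up to natural isomorphism by where the genus-one curtain goes; for $\mathcal{F}_{\univ}$ this is $m\circ\Delta=2X-h$. This yields $C_{\mathcal{F}}(D_T)\cong \Omega(D_T)\otimes_{\Z[G]}A\{1\}$ for all $\mathcal{F}$ at once, and a separate short argument identifies $\Omega$ with the reduced $\mathcal{F}_{\Z[G]}$-complex $\llbracket\,\cdot\,\rrbracket$.

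What each approach buys: your argument is self-contained and constructive---you get an explicit chain isomorphism on the cube without any categorical machinery---but you pay in casework for every elementary cobordism and in bookkeeping for the basepoint. The paper's argument requires setting up the delooping equivalence once, after which the result drops out for every rank-two Frobenius system simultaneously with no further local checks; the price is the appendix-level proof that $\Cob^3_{\modl}(2)$ really has no hidden relations beyond those generated by the genus-one curtain.
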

Here, $\{\,\cdot\,\}$ denotes a shift in quantum degree.
\begin{corollary}\label{cor:equiv}
For every knot~$K$, $C_{\univ}(K)$ is homotopy equivalent to a chain complex of free shifted $A_{\univ}$-modules, with differentials consisting only of integer multiples of powers of~$2X - h$.\qed
\end{corollary}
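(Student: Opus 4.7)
The corollary is essentially a direct unpacking of \cref{thm:equiv}, so the plan is quite short. First I would recall that, by construction, $\llbracket K\rrbracket$ is (well-defined up to homotopy equivalence as) a chain complex of free $\Z[G]$-modules with homogeneous generators: its underlying modules are direct sums of shifted copies of $\Z[G]$, and its differentials are $\Z[G]$-linear maps between such free modules. Since $\Z[G]$ is the polynomial ring on one generator, every such differential is represented, with respect to a basis of homogeneous generators, by a matrix whose entries are polynomials in $G$ with integer coefficients, i.e.\ integer linear combinations of powers of $G$.

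Next I would apply \cref{thm:equiv} and tensor this model of $\llbracket K\rrbracket$ with $A_{\univ}\{1\}$ over $\Z[G]$, where $G$ acts on $A_{\univ}$ as $2X-h$. Two observations finish the argument: tensoring a free shifted $\Z[G]$-module with $A_{\univ}\{1\}$ produces a free shifted $A_{\univ}$-module, so the underlying modules of the resulting complex have the desired form; and under the substitution $G \mapsto 2X - h$ the matrix entries of the differentials become integer linear combinations of powers of $2X - h$. Since homotopy equivalences are preserved under the tensor product with $A_{\univ}\{1\}$, the resulting complex is homotopy equivalent to $C_{\univ}(K)$, as required. There is no real obstacle here beyond making sure the model of $\llbracket K\rrbracket$ one starts with genuinely consists of free $\Z[G]$-modules with homogeneous bases, which is immediate from the cube-of-resolutions construction underlying $\Z[G]$-homology.
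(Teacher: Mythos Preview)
Your approach is exactly the one the paper intends (the corollary is marked with \qed as immediate from \cref{thm:equiv}), and it is correct in spirit. One small sharpening is needed to match the statement precisely: you write that the matrix entries of the differentials of $\llbracket K\rrbracket$ are ``polynomials in $G$ with integer coefficients, i.e.\ integer linear combinations of powers of $G$,'' and conclude that after substituting $G\mapsto 2X-h$ they become integer linear combinations of powers of $2X-h$. But the corollary asserts that each entry is an integer multiple of a \emph{single} power of $2X-h$. The missing observation is that the differentials of $\llbracket K\rrbracket$ are homogeneous with respect to the quantum grading, so with respect to a homogeneous basis each matrix entry is a homogeneous element of $\Z[G]$; since $\deg G=-2$, the homogeneous elements are exactly the monomials $nG^k$ with $n\in\Z$, $k\ge 0$. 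You already mention homogeneous generators, so you have the right ingredients---just make the homogeneity of the individual entries explicit to get from ``integer linear combinations'' down to ``integer multiples of powers.''
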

\Cref{thm:equiv} and \Cref{cor:equiv} can be understood to say that $\Z[G]$-homology encodes the same amount of information present in $\mathcal{F}_{\univ}$- and $\alpha$-homology in a more compact way.
In particular, the original reduced Khovanov homology over~$\Z$ of $K$ as defined in \cite{KH3} may be obtained from $\llbracket K\rrbracket$ simply by setting~$G = 0$, i.e.~by tensoring with $\Z[G]/(G) \cong \Z$.
The original unreduced Khovanov homology over~$\Z$ is also determined by $\Z[G]$-homology, see \cref{cor:zg2kh}.

Let us give three examples of $\Z[G]$-complexes $\llbracket\,\cdot\,\rrbracket$ of knots.
For the unknot~$U$, $\llbracket U\rrbracket$ is simply homotopy equivalent to one copy of $\Z[G]$ supported in homological degree 0. For the trefoil~$T_{2,3}$, we have a homotopy equivalence
\[
\llbracket T_{2,3}\rrbracket\ \simeq
\begin{tikzcd}
_0\Z[G]\{2\} \ar[r] & 0 \ar[r] & \Z[G]\{6\} \ar[r,"G"] & \Z[G]\{8\},
\end{tikzcd}
\]
where the subscript to the left denotes homological degree.
Finally, for the $T_{3,4}$ torus knot, we have
\[
\llbracket T_{3,4}\rrbracket\ \simeq
\begin{tikzcd}[column sep=small]
_0\Z[G]\{6\} \ar[r] & 0 \ar[r] & \Z[G]\{10\}\ar[r,"G"] & \Z[G]\{12\}
\ar[r,"0"] & \Z[G]\{12\} \ar[r,"G^2"] & \Z[G]\{16\}.
\end{tikzcd}
\]
 Many more examples will be given in \cref{sec:lambdaintro,sec:lambdacalc}.

We will give an introduction to Khovanov homologies and the proof of \Cref{thm:equiv} in \cref{sec:zg}.
The idea of the proof is to show that $\Z[G]$-homology of a knot $K$ determines the \emph{Bar-Natan complex} \cite{BN1} of the 2-ended tangle obtained by cutting $K$ open at some point. The Bar-Natan complex in turn is known to be universal and determine $\mathcal{F}_{\univ}$-homology. The Bar-Natan complex is discussed in \Cref{sec:zg}, and also in \Cref{subsec:bn} below.

\subsection{The definition of $\lambda$}
Let us now introduce this paper's protagonist:
\begin{restatable}{dfn}{dfnlambda}\label{dfn:lambda}
For a knot~$K$, let $\lambda(K)$ be the minimal integer $k\geq 0$ such that there exist
ungraded chain maps (i.e.~chain maps that do not need to respect the homological or the quantum degree,
cf.~\cref{dfn:ungraded})
\[
\begin{tikzcd}[ampersand replacement=\&]
\llbracket K\rrbracket \ar[r,"f",bend left=10] \&
\llbracket U\rrbracket \ar[l,"g",bend left=10]
\end{tikzcd}
\]
and homotopies
$
g\circ f \simeq G^k\cdot \text{id}_{\llbracket K\rrbracket},
f\circ g \simeq G^k\cdot \text{id}_{\llbracket U\rrbracket}$.
\end{restatable}
It is not obvious that for a given knot, $f,g,k$ as in \cref{dfn:lambda} exist at all.
So, for the time being, we scrupulously set $\lambda(K) = \infty$ if they do not. It will then be a consequence
of \cref{thm:rationalreplacement} that this case does not occur, since $\lambda(K) \leq u_q(K) \leq u(K) < \infty$.
To get acquainted with calculating~$\lambda$, the reader is invited
to convince themself that $\lambda(U) = 0, \lambda(T_{2,3}) = 1, \lambda(T_{3,4}) = 2$.

Our \cref{dfn:lambda} is based on the work of Alishahi and Dowlin, who use analogous maps $f$, $g$ in the proof that their invariant $\mathfrak{u}_X$ is a lower bound for the unknotting number~$u$. The invariant $\mathfrak{u}_X(K)$ is defined as the maximal $X$-torsion order of the homology of $K$ coming from the Frobenius algebra $\mathcal{F}_{\text{Lee}} = \Q[X,t] / (X^2 - t)$ over~$\Q[t]$,
i.e.~the minimal $n$ such that $X^n H_{\text{Lee}}(K)$ is torsion-free.
At first glance, the definition of $\mathfrak{u}_X$ and $\lambda$ appear to be rather different; but on a closer look, one finds that $\mathfrak{u}_X(K) = \lambda_X(K)$, where $\lambda_X(K)$ is the minimal $k\geq 0$ such that there exist ungraded chain maps
\[
\begin{tikzcd}[ampersand replacement=\&]
C_{\text{Lee}}(K) \ar[r,"f",bend left=10] \&
C_{\text{Lee}}(U) \ar[l,"g",bend left=10]
\end{tikzcd}
\]
and homotopies
$
g\circ f \simeq (2X)^k\cdot \text{id}_{C_{\text{Lee}(K)}},
f\circ g \simeq (2X)^k\cdot \text{id}_{C_{\text{Lee}(U)}}$.

In this sense, $\lambda$ is a direct generalization of~$\mathfrak{u}_X$, obtained from the reduced homology coming from the Frobenius algebra $\mathcal{F}_{\Z[G]}$ instead of from the unreduced homology coming from~$\mathcal{F}_{\text{Lee}}$. But why don't we instead of $\lambda$ consider~$\mathfrak{u}_G(K)$, defined (see \cref{def:maxGtorsion}) as the maximal $G$-torsion order of $\Z[G]$-homology of~$K$? There are two reasons. Firstly, $\lambda$ is not equal to $\mathfrak{u}_G$ for all knots; the proof of the equality $\lambda_X = \mathfrak{u}_X$ does not carry over from $\Q[X,t] / (X^2 - t)$ to~$\Z[G]$, because it relies on $\Q[X,t] / (X^2 - t) \cong \Q[X]$ being a PID, which $\Z[G]$ is not. In fact, $\lambda(K) \geq \mathfrak{u}_G(K)$ holds for all knots~$K$.
Secondly, $\mathfrak{u}_G$ displays some unorthodox behavior; for example, the value of $\mathfrak{u}_G(-K)$ is not determined by the value of~$\mathfrak{u}_G(K)$, where $-K$ denotes the mirror image of~$K$. Again, the ring $\Z[G]$ not being a PID is to blame for this.

More details about the invariant $\lambda$ and related invariants can be found in \cref{subsec:furtherlambda} below, and in \cref{sec:lambdaintro}.
\subsection{The Bar-Natan complex of rational tangles}\label{subsec:bn}
The proof of \cref{thm:rationalreplacement}, given at the end of \cref{sec:rational},
proceeds by relating the Bar-Natan complexes of different rational tangles.
So, first we need to compute the Bar-Natan complexes of rational tangles.
For this, we heavily rely on Thompson's computation \cite{thompson} of \emph{dotted} Bar-Natan complexes of rational tangles.
Thompson's proof adapts mutatis mutandis to the
(more general) version of Bar-Natan complexes that we are using.
We also use Kotelskiy--Watson--Zibrowius' theorem \cite[Theorem~1.1]{KWZ}
that Bar-Natan's category of 4-ended tangles and cobordisms
is equivalent to a category coming from a quiver with two vertices and four edges,
which yields a quite simple calculus for chain complexes of 4-ended tangles.

As a result, we obtain a recursive algorithm that takes as input a rational tangle $T$ corresponding to a rational number $p/q\in \Q\cup\{\infty\}$, and returns as output a chain complex in the homotopy class of the Bar-Natan complex of~$T$. That chain complex is relatively small, consisting of $|p| + |q|$ many objects and $|p| + |q| - 1$ non-trivial morphisms. We refer the reader to \cref{sec:rational} and \cref{thm:thompson} for details.
\subsection{Further properties and generalizations of $\lambda$}\label{subsec:furtherlambda}
From Kronheimer--Mrowka's result that Khovanov homology detects the unknot \cite{km3}, it follows that $\lambda$ does, too.
\begin{restatable}{prop}{propdetects}\label{prop:lambdadetectsunknot}
The $\lambda$-invariant detects the unknot, i.e.~$\lambda(K) = 0$ holds if and only if $K$ is the unknot.
\end{restatable}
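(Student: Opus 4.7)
The ``if'' direction is tautological: for $K = U$, taking $f = g = \id_{\llbracket U\rrbracket}$ satisfies the conditions in \cref{dfn:lambda} with $k = 0$, so $\lambda(U) = 0$.

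For the ``only if'' direction, suppose $\lambda(K) = 0$. Then \cref{dfn:lambda} provides ungraded chain maps $f\colon \llbracket K\rrbracket \to \llbracket U\rrbracket$ and $g\colon \llbracket U\rrbracket \to \llbracket K\rrbracket$ together with ungraded chain homotopies witnessing $g\circ f \simeq \id$ and $f\circ g \simeq \id$ (since $G^0 = 1$); in other words, $(f,g)$ constitutes an ungraded homotopy equivalence of $\Z[G]$-complexes. The next step is to specialize $G = 0$ by tensoring everything with $\Z[G]/(G) \cong \Z$ over $\Z[G]$, which, as noted after \cref{cor:equiv}, recovers the reduced Khovanov complex over~$\Z$. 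Tensor product preserves both chain maps and chain homotopies, so this yields an ungraded homotopy equivalence between the reduced Khovanov complexes of $K$ and of~$U$; passing to (ungraded) homology gives an isomorphism of abelian groups $\widetilde{Kh}(K;\Z) \cong \widetilde{Kh}(U;\Z) \cong \Z$.

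Finally, since $\widetilde{Kh}(K;\Z)$ is torsion-free of total rank~$1$, the universal coefficient theorem yields $\widetilde{Kh}(K;\Q) \cong \Q$, and Kronheimer--Mrowka's unknot detection theorem \cite{km3} then forces $K$ to be the unknot. The only non-formal input is the deep result of \cite{km3}; every other step is a routine chain-complex manipulation. The loss of grading control on $f$ and $g$ is harmless here, because \cite{km3} only needs the total rank of reduced Khovanov homology, and this is preserved by any ungraded homotopy equivalence.
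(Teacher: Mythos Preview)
Your proof is correct and follows essentially the same route as the paper's direct proof: $\lambda(K)=0$ means $\llbracket K\rrbracket$ is (ungradedly) homotopy equivalent to $\llbracket U\rrbracket$, whence reduced Khovanov homology of $K$ has total rank~$1$, and Kronheimer--Mrowka finishes. The paper also notes an alternative route via the inequality $\mathfrak{u}_G\leq\lambda$ together with the fact that $\mathfrak{u}_G$ detects the unknot, but your argument matches the direct one.
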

We will see that the value of $\lambda(K\# J)$ for the connected sum $K \# J$ is not determined by the values of $\lambda(K)$ and~$\lambda(J)$. However, we can say the following.
\begin{prop}\label{prop:lambdabasicproperties}
\begin{enumerate}[label=(\roman*)]
\item $\lambda(K\# J) \leq \lambda(K) + \lambda(J)$ for all knots~$K, J$.
\item $\lambda$ does not change under taking mirror images, or orientation reversal.
\end{enumerate}
\end{prop}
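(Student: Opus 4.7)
The plan is to prove parts (i) and (ii) independently, each by an explicit chain-level construction.

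For (i), the key ingredient is the connected-sum formula $\llbracket K \# J \rrbracket \simeq \llbracket K\rrbracket \otimes_{\Z[G]} \llbracket J\rrbracket$, which holds in the reduced $\Z[G]$-theory with the basepoint placed on the connecting arc; in particular $\llbracket U\rrbracket \otimes_{\Z[G]} \llbracket U\rrbracket \simeq \llbracket U\rrbracket$. Given ungraded chain maps $f_K, g_K$ witnessing $\lambda(K)$ and $f_J, g_J$ witnessing $\lambda(J)$, I would set $f := f_K \otimes f_J$ and $g := g_K \otimes g_J$. Since chain homotopy is compatible with tensor products,
\[
g \circ f \;=\; (g_K \circ f_K) \otimes (g_J \circ f_J) \;\simeq\; G^{\lambda(K)+\lambda(J)} \cdot \text{id}_{\llbracket K\# J\rrbracket},
\]
and symmetrically $f \circ g \simeq G^{\lambda(K)+\lambda(J)} \cdot \text{id}_{\llbracket U\rrbracket}$, yielding the desired subadditivity.

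For (ii), orientation reversal is essentially formal: reversing the orientation of a knot negates the orientation of both strands at every crossing, preserving the sign of each crossing, so the chain complex $\llbracket K\rrbracket$ is literally unchanged. For the mirror image $\bar K$, the plan is to invoke the standard mirror-image duality, which in the $\Z[G]$-setting takes the form $\llbracket \bar K\rrbracket \simeq \text{Hom}_{\Z[G]}(\llbracket K\rrbracket, \Z[G])$ up to homological and quantum shifts (well-behaved because the terms of $\llbracket K\rrbracket$ are free $\Z[G]$-modules of finite rank, by construction). Applying $\text{Hom}_{\Z[G]}(-, \Z[G])$ to maps and homotopies witnessing $\lambda(K)$ produces the analogous data for $\bar K$; in particular, the $\Z[G]$-dual of $\llbracket U\rrbracket = \Z[G]$ is again $\Z[G]$, and the grading shifts are irrelevant for ungraded chain maps. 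This gives $\lambda(\bar K) \leq \lambda(K)$, and symmetry yields equality.

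The main obstacle, such as it is, is not deep: it consists in ensuring that the two structural facts used—the connected-sum formula for $\Z[G]$-complexes and the mirror-image duality over $\Z[G]$—hold in precisely the form required. Both are standard for Khovanov-type theories and should follow cleanly from the Bar-Natan/Khovanov framework developed in \cref{sec:zg}; the freeness of the terms of $\llbracket K\rrbracket$ over $\Z[G]$ ensures that dualization behaves correctly, even though $\Z[G]$ is neither a field nor a PID.
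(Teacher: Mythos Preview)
Your proposal is correct and follows essentially the same approach as the paper: the paper first proves the abstract chain-complex statements $\lambda(A_1\otimes A_2)\le\lambda(A_1)+\lambda(A_2)$ and $\lambda(\overline{A})=\lambda(A)$ via exactly the tensor-of-maps and dualize-the-maps constructions you describe, and then deduces the proposition from $\llbracket K\# J\rrbracket\cong\llbracket K\rrbracket\otimes\llbracket J\rrbracket$ and $\llbracket -K\rrbracket\cong\overline{\llbracket K\rrbracket}$. The only difference is organizational---the paper factors through a lemma about arbitrary $\Z[G]$-complexes rather than arguing directly with knots.
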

Let us call a knot $K$ \emph{thin} if 
its reduced integral Khovanov homology consists of free modules supported in a single $\delta$-degree
(see \cref{subsec:thin} for the definition of $\delta$ and further details).
\begin{restatable}{prop}{propthin}\label{prop:lambdaofthin}
For all non-trivial thin knots~$K$, we have~$\lambda(K) = 1$.
\end{restatable}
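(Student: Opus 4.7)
The lower bound $\lambda(K)\geq 1$ is immediate from \cref{prop:lambdadetectsunknot}, since $K$ is nontrivial. For the upper bound $\lambda(K)\leq 1$, the plan is to produce a particularly simple homotopy representative of $\llbracket K\rrbracket$ and then write down the maps $f$ and $g$ of \cref{dfn:lambda} by hand with $k=1$.

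The strategy is to decompose $\llbracket K\rrbracket$ up to homotopy as
\[
\llbracket K\rrbracket\ \simeq\ \Z[G]\{q_0\}\ \oplus\ \bigoplus_{i=1}^{m}\Bigl(\Z[G]\{q_i\}\xrightarrow{\,G^{n_i}\,}\Z[G]\{q_i+2n_i\}\Bigr),
\]
where the first summand sits in homological degree $0$ (the analog of the Rasmussen piece for $\Z[G]$-homology) and each remaining summand is a two-term ``knight move'' in consecutive homological degrees. Such a decomposition can be extracted from a Bar-Natan--style structure theorem for graded chain complexes of free $\Z[G]$-modules attached to a knot; the thinness hypothesis then forces every $n_i$ to equal $1$. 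Indeed, reducing modulo $G$ recovers the reduced Khovanov complex of $K$, whose homology is by assumption free and supported in a single $\delta$-grading (with $\delta=q-2h$), while the two generators of a knight move $\Z[G]\{q_i\}\xrightarrow{G^{n_i}}\Z[G]\{q_i+2n_i\}$ survive modulo $G$ in $\delta$-gradings differing by $2n_i-2$. Thinness therefore rules out $n_i\geq 2$, and it also rules out any longer staircase or torsion summand that might otherwise appear.

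With this decomposition, define $g\colon\llbracket U\rrbracket\to\llbracket K\rrbracket$ as the ungraded inclusion of $\Z[G]=\Z[G]\{q_0\}$ and $f\colon\llbracket K\rrbracket\to\llbracket U\rrbracket$ as $G$ times the projection onto that summand. Then $f\circ g=G\cdot\id_{\llbracket U\rrbracket}$ on the nose, while $g\circ f$ equals $G\cdot\id$ on the $\Z[G]\{q_0\}$ summand and is zero on each knight move. On a knight move $C_i\colon \Z[G]\xrightarrow{G}\Z[G]$, multiplication by $G$ is nullhomotopic via the homotopy $h\colon C_i^1\to C_i^0$ equal to the identity of $\Z[G]$, since $dh+hd=G\cdot\id_{C_i}$. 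Summing these homotopies across the knight moves yields $g\circ f\simeq G\cdot\id_{\llbracket K\rrbracket}$, completing the construction.

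The main obstacle is securing the decomposition at the outset: because $\Z[G]$ is not a principal ideal domain, one cannot directly invoke a PID structure theorem, and the list of indecomposable summands of a $\Z[G]$-chain complex is a priori richer (staircases of arbitrary length, torsion pieces, etc.). The key point to push through is that thinness forces the mod-$G$ reduction to be homotopy equivalent to its free homology in a single $\delta$-diagonal, and this rigidity lifts to $\Z[G]$ and restricts the indecomposable summands to the two types listed above; once this is in place, the verification that $f$ and $g$ have the required properties is immediate.
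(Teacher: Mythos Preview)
Your overall strategy matches the paper's: decompose $\llbracket K\rrbracket$ as one pawn plus finitely many $G$-knights, then observe (via the explicit $f,g,h$ you wrote, which is exactly the content of \cref{lem:lambdaofknights}) that $\lambda\leq 1$; the lower bound via \cref{prop:lambdadetectsunknot} is identical.

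The gap is precisely where you locate it, and you have not actually closed it. You invoke a ``Bar-Natan--style structure theorem'' to get the decomposition, but over $\Z[G]$ no such classification is available, and the sentence ``this rigidity lifts to $\Z[G]$'' is an assertion, not an argument. The paper's route is concrete and avoids any general structure theorem:
\begin{enumerate}
\item Since reduced integral Khovanov homology of $K$ is torsion free, a Smith-normal-form simplification of $\llbracket K\rrbracket\otimes_{\Z[G]}\Z$ has only $0/1$ entries; because differential entries in $\llbracket K\rrbracket$ are homogeneous, a $1$ mod $G$ is already a $1$ over $\Z[G]$, so Gaussian elimination lifts and produces a complex $C\simeq\llbracket K\rrbracket$ with the same Poincar\'e polynomial as reduced Khovanov homology (\cref{lem:minimalchaincomplex}).
\item Thinness now says $C$ is supported in a single $\delta$-degree, so every differential entry is an integer times $G$. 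Smith normal form on the integer matrices splits $C$ into pawns and pieces $\symknight(aG)$ with $a\in\Z_{>0}$.
\item Setting $G=1$ and using $H(\llbracket K\rrbracket_{G=1})\cong\Z$ (\cref{prop:redzghomdim}) forces exactly one pawn and every $a=1$: a piece $\symknight(aG)$ with $a>1$ would contribute $\Z/a$-torsion to $H(\llbracket K\rrbracket_{G=1})$.
\end{enumerate}

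A second, smaller issue: your $\delta$-argument only controls the exponent $n_i$, not the integer coefficient. A piece $\symknight(aG)$ with $a>1$ has both generators in the \emph{same} $\delta$-degree and contributes two free $\Z$-summands to reduced Khovanov homology, so thinness as you stated it does not exclude it. That is exactly what step~(3) above handles, and it is the ingredient your sketch is missing.
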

In particular $\lambda(K) = 1$ holds for all non-trivial quasi-alternating knots, since those knots are thin in the above sense \cite{manozs}. This leads to applications such as the following.
\begin{example}
In \cref{ex:t56}, we will compute $\lambda(T_{5,6}) = 3$.
It follows that there is no proper rational replacement relating $T_{5,6}$
to a quasi-alternating knot (compare this to~\cite[Example~10]{CGLLSZ}).
\end{example}
In the definition of~$\lambda$, replacing $U$ by an arbitrary second knot~$J$
yields the definition of a function $\lambda(K,J) \geq 0$ that is symmetric and obeys the triangle inequality: $\lambda(\,\cdot\,,\,\cdot\,)$ is a pseudometric on the set of isotopy classes of knots.
In fact, we can even further extend the definition of~$\lambda$, and define $\lambda$ for pairs of tangles.
This leads to a pseudometric on the set of equivalence classes of tangles in a fixed ball, with fixed base point, connectivity, and number of components, see \cref{prop:lambdametric}.
\Cref{sec:lambdaintro} provides details, and also the proofs of the above propositions.
\subsection{A comparison of $\lambda$ with previously known invariants}
Alishahi and Dowlin's article \cite{zbMATH07005602} appeared at the same time as an article by 
Alishahi \cite{zbMATH07178864}, in which, similarly to~$\mathfrak{u}_X$, a lower bound $\mathfrak{u}_h$
for the unknotting number was constructed using the Frobenius algebra $\mathbb{F}_2[X,h]/(X^2 + hX)$ over~$\mathbb{F}_2[h]$.
Then, further papers followed:
Caprau--González--Lee--Lowrance--Sazdanović--Zhang
generalized Alishahi and Dowlin's work for $\Q$ to the fields $\mathbb{F}_p$ for all odd primes~$p$ \cite{CGLLSZ}.
Gujral \cite{gujral}, using $\alpha$-homology, defined an invariant $\nu$ which can be seen to equal our invariant~$\mathfrak{u}_G$, and showed that it provides a lower bound for the ribbon distance between knots;
this was a generalization of earlier work by Sarkar \cite{zbMATH07195384}.
Here, the \emph{ribbon distance} between two smoothly concordant knots $K$ and $J$ is the minimal $k$ such that there is a sequence $K = K_1, \ldots, K_n = J$ of knots, such that each consecutive $K_i, K_{i+1}$ are related by a ribbon concordance in either direction with at most $k$ saddles. This leads to the following question (see \cref{subsec:furtherlambda} or \cref{dfn:lambdageneral} for the definition of~$\lambda(K,J)$).
\begin{question}
Is $\lambda(K, J)$ less than or equal to the ribbon distance of $K$ and $J$ for all pairs of knots~$K$,~$J$?
\end{question}
The previously defined invariants mentioned above will be discussed in more detail in \cref{subsec:torsionorders}.
By construction, $\lambda$ is greater than or equal to all of them (the price to pay is that $\lambda$ is generally slightly harder to compute).
Let us explicitly emphasize that this observation combined with \cref{thm:rationalreplacement} implies that
all of those previously defined invariants are also lower bounds for the proper rational unknotting number.

The construction principle that underlies $\lambda$ and the other above-mentioned invariants from Khovanov homology goes back to work by Alishahi and Eftekhary, who applied it to knot Floer homology \cite{arXiv1610.07122v1,zbMATH07305772}. They obtained a lower bound for the unknotting number, as well as lower bounds for other quantities, such as the minimal number of negative-to-positive crossing changes in any unknotting sequence of a knot. Further knot Floer torsion order invariants were defined by Juhász, Miller and Zemke \cite{zbMATH07311838}, who find lower bounds for even more topological quantities, such as the bridge index, the band-unlinking number, etc. Still, the following question remains open.
\begin{question}
Is one of the knot Floer torsion order invariants a lower bound for the proper rational unknotting number?%
\footnote{After the article at hand first had appeared as a preprint, Eftekhary answered this
question in the positive~\cite{arXiv2203.09319}.}
\end{question}

\subsection{Computations}
Computations of $\Z[G]$-homology are theoretically possible by hand using Bar-Natan's divide-and-conquer approach \cite{BN2}. Nevertheless, to proceed efficiently,
we use the program \texttt{khoca} \cite{khoca} (originally written for \cite{lewarklobb})
to compute $\Z[G]$-complexes of knots.\footnote{Note that \texttt{javakh} \cite{javakh}, while very fast, apparently only calculates Morrison's `universal homology', which corresponds to $\Q[G]$-homology. Currently, the program \texttt{kht++} \cite{khtpp} also only simplifies complexes over fields, not over the integers.}
As input, \texttt{khoca} accepts diagrams of a knot~$K$, e.g.~in PD notation. From \texttt{khoca}'s output, one may read off a chain complex of $\Z[G]$-modules in the homotopy class of~$\llbracket K\rrbracket$.
For further simplification, \texttt{khoca}'s output may be fed into the new
program \texttt{homca} \cite{homca}, which attempts to decompose $\llbracket K\rrbracket$ as a
direct sum of simpler chain complexes.
From these simpler pieces, one may typically calculate $\lambda$ by hand.
See \cref{ex:t56} for an application of this strategy to the $T_{5,6}$ torus knot.
For small knots, we find the following.
\begin{restatable}{prop}{lambdasmallknots} \label{prop:lambdasmallknots}
	For all knots up to $10$ crossings we have~$\lambda = 1$, except for the knots $8_{19}$, $10_{124}$, $10_{128}$, $10_{139}$, $10_{152}$, $10_{154}$, $10_{161}$, where~$\lambda = 2$.
\end{restatable}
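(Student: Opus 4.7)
The plan is to combine the structural results proven earlier in the paper with direct computer computation. First, by \cref{prop:lambdaofthin}, every non-trivial thin knot has $\lambda=1$. Since all quasi-alternating knots are thin, this single observation handles the vast majority of knots up to 10 crossings---all alternating knots, and many non-alternating ones---and by \cref{prop:lambdadetectsunknot} these knots have $\lambda$ exactly $1$.

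For the remaining knots---the non-thin ones with at most 10 crossings, a short list that contains the seven exceptions $8_{19},10_{124},10_{128},10_{139},10_{152},10_{154},10_{161}$ as well as a handful of non-thin knots (e.g.\ $10_{132}, 10_{145}, 10_{153}$) which nevertheless turn out to have $\lambda=1$---I would feed a PD-diagram into \texttt{khoca} to obtain $\llbracket K\rrbracket$ as an explicit chain complex of $\Z[G]$-modules, then run \texttt{homca} to reduce it to a direct sum of small indecomposable summands. From such a reduced model, $\lambda(K)$ can be read off directly: one builds $f\colon \llbracket K\rrbracket\to\llbracket U\rrbracket\simeq \Z[G]$ by projecting onto a cycle representing the distinguished surviving class, and $g$ by reinjecting the corresponding generator; the minimal $k$ for which $g\circ f\simeq G^k\cdot\id$ and $f\circ g\simeq G^k\cdot\id$ is $\lambda(K)$. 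The pattern to follow is exactly the one used for $T_{2,3}$ and $T_{3,4}=8_{19}$ in the introduction and developed more elaborately for $T_{5,6}$ in \cref{ex:t56}.

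To confirm that the seven exceptional knots really have $\lambda=2$ rather than $1$, I would appeal to the inequality $\mathfrak{u}_G(K)\leq \lambda(K)$ from \cref{subsec:torsionorders} and check from the reduced complex that each of them has $G$-torsion of order $2$ in its $\Z[G]$-homology. For the non-thin but non-exceptional knots, the reduced complex still exhibits $G$-torsion of order at most $1$, so the same explicit construction as in the thin case produces $f,g$ with $k=1$.

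The main obstacle is two-fold. First, since $\Z[G]$ is not a PID, the decomposition of $\llbracket K\rrbracket$ up to homotopy is genuinely more delicate than over a field, and both the reduction performed by \texttt{homca} and the chain homotopies realizing the bounds on $\lambda$ must be checked honestly at the integer level rather than after inverting or killing $G$. Second, for the non-thin non-exceptional knots one cannot invoke \cref{prop:lambdaofthin} and must argue case by case; fortunately, after the \texttt{homca} reduction each complex is small enough that constructing $f$, $g$ and the required homotopies is a mechanical verification.
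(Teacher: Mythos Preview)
Your approach is correct and essentially the same as the paper's. The paper's proof is slightly more streamlined in two respects: it explicitly identifies all twelve thick knots up to 10 crossings (you omit $9_{42}$ and $10_{136}$ from your partial list), and it observes that after running \texttt{khoca}/\texttt{homca} each of these complexes decomposes entirely into a $\sympawn$ and some $\symknight(G)$ pieces (for the five non-exceptional thick knots) or a $\sympawn$, some $\symknight(G)$ pieces, and a single $\symknight(G^2)$ (for the seven exceptions), so that \cref{lem:lambdaofknights}, \cref{lem:lambdadirectsum}, and the bound $\mathfrak{u}_G\leq\lambda$ dispatch every case without constructing $f$, $g$, or homotopies by hand.
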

This proposition and further calculations for small knots are discussed in \cref{subsec:lambdasmall}.
\subsection{Structure of the paper}
In \cref{sec:zg}, we introduce $\Z[G]$-homology and the other variations of Khovanov homology needed in this article, and prove \cref{thm:equiv} (parts of that proof have been moved to \cref{sec:appendix}).
\cref{sec:lambdaintro} contains properties and generalizations of the $\lambda$-invariant
and other closely related invariants (in particular, the proofs of the propositions mentioned in \cref{subsec:furtherlambda} above).
In \cref{sec:lambdacalc}, we calculate the $\lambda$ invariant for various families of knots, thereby proving \cref{thm:lambdahigh}.
\cref{sec:rational} includes a discussion of the Bar-Natan complexes of rational tangles, and the proof of \cref{thm:rationalreplacement}.
\subsection*{Acknowledgments}
The authors thank Claudius Zibrowius for pointing us towards the papers of Naot and Thompson,
which were essential for this text.
We thank all participants of our \emph{Khovanov homology reading course},
Felix Eberhart, Marco Moraschini, Lars Munser, Jos\'e Pedro Quintanilha, and Paula Tru\"ol
for the exciting times.
The second author thanks Raphael Zentner for stimulating discussions about rational unknotting,
and Paul Turner for inspiring conversations about integral Lee theory.
Thanks to Dirk Schütz for sharing his calculations of Rasmussen invariants.
The authors thank Eaman Eftekhary, Duncan McCoy, Dirk Schütz and Claudius Zibrowius for comments on drafts of this article.
Thanks to Daniel Schäppi for discussions that lead to \cref{rmk:nonuniq}.
The first and second author gratefully acknowledge the support by the Emmy Noether Programme of the DFG, project no.~412851057.
The authors thank the anonymous referee for their suggestions.

\section{$\Z[G]$-homology and other variations of Khovanov homology}\label{sec:zg}
The aim of this section is to lay the foundations of $\mathbb{Z}[G]$-homology and discuss how it fits in the general picture of Khovanov homology. We will in particular show that the $\mathbb{Z}[G]$-theory is equivalent to Khovanov's universal theory described in \cite{KH2}, which establishes $\mathbb{Z}[G]$-homology as a simpler alternative of equal strength. Note that the $\mathbb{Z}[G]$-theory is not new; it was previously described by Naot \cite{zbMATH05118580,naotphd}\footnote{Naot's notation is $\Z[H]$ instead of~$\Z[G]$. $H$ is for `handle', while $G$ is for `genus'...}. Other topics of discussion in this section include Frobenius systems and topological quantum field theories (TQFTs for short), as well as homology of reduced type. Throughout this section we assume familiarity with Bar-Natan's theory for tangles and cobordisms \cite{BN1}.

\subsection{Tangles and tangle diagrams}\label{subsec:tangles}
Let us start by giving precise definitions of tangles and tangle diagrams, and discuss their relationship.
\begin{dfn}
A \emph{tangle} $T$ is a proper smooth 1-submanifold of a closed oriented 3-ball~$B$.
The points in $T\cap \partial B$ are called \emph{end points} of~$T$. Every tangle is \emph{$2n$-ended},
i.e.~has $2n$ end points, for some~$n\geq 0$.
Throughout this text we will consider oriented tangles, unless explicitly mentioned otherwise.
Two tangles in the same 3-ball $B$
with the same set of $2n$ end points in $\partial B$
are \emph{equivalent} if there is an orientation-preserving homeomorphism of~$B$, fixing the boundary pointwise, mapping one tangle to the other, and preserving the orientation of the tangles if they are oriented.
\end{dfn}
Note that a $2n$-ended tangle consists of $n$ arcs and a finite number of circles.
\begin{dfn}
The \emph{connectivity} of a $2n$-ended tangle $T$ with arcs $\alpha_1,\ldots,\alpha_n\subset T$
is the set $\{\partial \alpha_1, \ldots,\partial\alpha_n\}$.
\end{dfn}
For example, for tangles in a fixed ball $B$ with a fixed set of $0,2,4,6,\ldots$ end points on~$\partial B$,
there are $1,1,3,15,\ldots$ possible connectivities\footnote{With $2n>0$ end points, there are $(2n-1)!!$ connectivities, where $!!$ denotes the double factorial.}.
Bleiler \cite{zbMATH03960559} called this notion `string attachments', but for its brevity we prefer the term
connectivity, which is also used in \cite{2004.10837,KWZ2}.
Note that proper rational replacements (see \cref{dfn:ratioreplacement}) are precisely those rational replacements that preserve connectivity.
\begin{dfn}
A \emph{tangle diagram} $D$ is an immersed proper smooth 1-submanifold of a closed 2-disk~$E$,
such that all self-intersections are transverse double points, endowed with over-under information at each such double point.
Similarly as for tangles, every tangle diagram has an even number of \emph{end points}~$D\cap \partial E$,
and we call two tangle diagrams in the same disk $E$ with the same set of end points \emph{equivalent} if there
is an orientation-preserving homeomorphism of~$E$, fixing the boundary pointwise, mapping one tangle diagram to the other, preserving over-under information, and preserving orientation if the tangle diagram is oriented.
\end{dfn}
\begin{remark}
All tangles in a ball that is embedded into the 3-sphere arise as intersections of that ball with a link that is transverse to the ball's boundary sphere. Similarly, all tangle diagrams in a disk embedded into the plane arise as intersections of that disk with link diagrams that are transverse to the disk's boundary circle.
\end{remark}
How are tangle diagrams with $2n$ end points in two \emph{different} disks $E_1, E_2$ related?
Let us consider homeomorphisms $\varphi\colon E_1\to E_2$ that are orientation-preserving and end point-preserving, i.e.~sending end points to end points. If two such homeomorphisms $\varphi, \varphi'$ are end point-preservingly isotopic (i.e.~isotopic along end point-preserving maps), then they send a tangle diagram $D\subset E_1$ to two equivalent tangle diagrams $\varphi(D), \varphi'(D) \subset E_2$.
By Alexander's trick, the isotopy class of a homeomorphism $E_1\to E_2$ is determined by the isotopy class of its restriction to the boundary. So, there are $2n$ end point-preserving isotopy classes of homeomorphisms~$E_1\to E_2$, each giving a way to identify equivalence classes of $2n$-ended tangle diagrams in two different disks.
If one considers tangle diagrams with \emph{base points}, i.e.~one distinguished end point,
then requiring that $\varphi$ sends base point to base point determines $\varphi$ uniquely up to isotopy.

The situation is more complicated, however, for tangles in different balls~$B_1, B_2$.
As before, the end point-preserving isotopy classes of homeomorphisms $\varphi\colon B_1\to B_2$ are determined by the end point-preserving isotopy classes of homeomorphisms $\partial B_1\to \partial B_2$.
Those are in (non-canonical) one-to-one correspondence with the elements of the mapping class group of the $2n$-punctured sphere (see e.g.~\cite{fm} for an introduction to mapping class groups). For~$2n \geq 4$, there are non-trivial mapping classes fixing some boundary point; so, in contrast to the situation for tangle diagrams, base pointed tangles with four or more end points in different balls cannot be identified in a canonical fashion.

This also has consequences for the tangle diagrams of a tangle, which one may obtain by projection.
\begin{dfn}\label{dfn:diagramoftangle}
Let $T\subset B$ be a $2n$-ended tangle. Let $\varphi$ be a homeomorphism
from~$B$ to the unit ball $B_0 \subset \mathbb{R}^3$,
mapping the end points of $T$ on $\partial B$ to $\{(\cos(k\pi/n), \sin(k\pi/n),0) \mid 0\leq k < 2n\}$.
If the projection $\mathbb{R}^3\to\mathbb{R}^2, (x,y,z)\mapsto (x,y)$ sends $\varphi(T)$ to a tangle diagram $D_T$ in the unit disk in the $xy$-plane, we call $D_T$ a \emph{tangle diagram of $T$}.
\end{dfn}
A fixed homeomorphism $\varphi$ sends equivalent tangles $T, T'$ to tangle diagrams $D_T, D_{T'}$ related by Reidemeister moves and tangle diagram equivalence. But if one does not specify~$\varphi$, this is no longer true, and the equivalence class of $T$ is no longer determined by~$D_T$. We will revisit this in \cref{subsec:lambdatangles}.

For now, let us focus on the special case $n = 1$ (note that for~$n = 0$, tangles without end points are just links in a ball, and of no interest beyond that). Two-ended tangles and tangle diagrams will be the objects we use to construct $\Z[G]$-homology in this section.
We have the following one-to-one correspondences:
\begin{equation}\label{eq:2endlinks}
	\parbox{.27\textwidth}{isotopy classes of base pointed oriented links $L\subset S^3$} \quad\overset{1:1}\longleftrightarrow \quad \parbox{.40\textwidth}{equivalence classes of $2$-ended tangles $T$ in a fixed ball with fixed end points~$x,y$, with the arc of $T$ oriented from $x$ to~$y$.}
\end{equation}
\begin{equation}\label{eq:2enddiagrams}
	\parbox{.27\textwidth}{isotopy classes of base pointed oriented link diagrams} \quad\overset{1:1}\longleftrightarrow \quad \parbox{.40\textwidth}{equivalence classes of $2$-ended tangle diagrams $D$ in a fixed disk with fixed end points~$x,y$, with the arc of $D$ oriented from $x$ to~$y$.}
\end{equation}\medskip

Let us describe how to get from $L$ to $T$ and vice versa in \cref{eq:2endlinks}.
The complement of an open ball neighborhood of the base point of $L$ is a closed ball $B$ containing a 2-ended tangle~$B\cap L$. There are two non-isotopic homeomorphisms sending end points to end points between $B$ and another fixed ball; these two correspond to the two elements of the mapping class group of the twice-punctured sphere. By specifying the orientation of the arc on the right-hand side of \cref{eq:2endlinks}, we eliminate this ambiguity.
In the other direction, a fixed ball containing a 2-ended tangle $T$ may be embedded into~$S^3$, and the two end points of $T$ may be joined by an arc outside of the embedded ball, producing a link~$L\subset S^3$.
The correspondence \cref{eq:2enddiagrams} can be shown in a similar way.

So, from now on, we will work with the notions of base pointed link (diagrams) and 2-ended tangle (diagrams) interchangeably. Moreover, we may assign tangle diagrams to given 2-ended tangles, without the ambiguities arising for tangles with more end points.

Note that this setup is also well-suited to describe the connected sum operation:
given two base pointed oriented links~$L$,~$L'$,
glue their corresponding 2-ended tangles together in a way that is compatible with the orientation of the arcs. This produces another oriented 2-ended tangle, which corresponds
to the \emph{connected sum}~$L \# L'$.
\subsection{Categorical Framework}
Let us now fix some notions from category theory, following Bar-Natan \cite{BN1}. Whenever we refer to ``category'' in this paper, we assume that the category is small, i.e.~its classes of objects and morphisms are actually sets. A category $\mathcal{C}$ is called \emph{pre-additive} if it has the following additional structure: for any two given objects $\mathcal{O}, \mathcal{O}^\prime \in \ob(\mathcal{C})$, the set $\hom_\mathcal{C}(\mathcal{O}, \mathcal{O}^\prime)$ is an Abelian group and the composition of morphisms is bilinear. An arbitrary category $\mathcal{C}$ can be made pre-additive by allowing formal $\Z$-linear combinations in every set of morphisms $\hom_\mathcal{C}(\mathcal{O}, \mathcal{O}^\prime)$ and by extending composition of morphisms in the natural bilinear way. Given a pre-additive category~$\mathcal{C}$, we denote by $\Mat(\mathcal{C})$ the additive closure and by $\Kom(\mathcal{C})$ the category of complexes over $\mathcal{C}$ (in the sense of Bar-Natan \cite{BN1}).
We call a pre-additive category $\mathcal{C}$ \emph{graded} if it carries the following additional structure:
\begin{enumerate}
	\item For any two objects $\mathcal{O}, \mathcal{O}^\prime \in \ob(\mathcal{C})$, $\hom_\mathcal{C}(\mathcal{O}, \mathcal{O}^\prime)$ forms a graded Abelian group such that composition of morphisms respects the grading and such that all identity maps are of degree zero.
	\item There is a $\Z$-action $(m, \mathcal{O}) \mapsto \mathcal{O}\{m\}$ on the objects of~$\mathcal{C}$, called \emph{grading shift by $m$}. Note that this action changes gradings of morphisms, but not the set of morphisms itself (i.e.\ $\hom_\mathcal{C}(\mathcal{O}, \mathcal{O}^\prime) = \hom_\mathcal{C}(\mathcal{O}\{m\}, \mathcal{O}^\prime\{n\})$, but if $f \in \hom_\mathcal{C}(\mathcal{O}, \mathcal{O}^\prime)$ has degree~$d$, then $f \in \hom_\mathcal{C}(\mathcal{O}\{m\}, \mathcal{O}^\prime\{n\})$ has degree~$d-m+n$).
\end{enumerate}
If a pre-additive category $\mathcal{C}$ only satisfies the first point above, we can extend the category to have the second point as well: simply add ``artificial'' objects $\mathcal{O}\{m\}$ for any $\mathcal{O} \in \ob(\mathcal{C})$ and $m \in \Z$ and define the $\Z$-action in the obvious way. Let us call this construction the \emph{graded closure}.
If $\mathcal{C}$ is a graded category, the additive closure $\Mat(\mathcal{C})$ and the category of complexes $\Kom(\mathcal{C})$ can be considered as graded categories as well.

Next, let us describe the categories we are going to work with to construct $\Z[G]$-homology.

\begin{itemize}
	\item $\Cob^3(2n)$: The objects of $\Cob^3(2n)$ are crossingless unoriented $2n$-ended tangle diagrams $D_T$ (possibly empty if~$n = 0$) in some disk that is fixed throughout, together with an enumeration of every circle appearing in $D_T$ (see \cref{fig:cobobject} below).
		\begin{figure}[b]
			\centering
			\def\svgwidth{0.5\textwidth}
\begingroup%
  \makeatletter%
  \providecommand\color[2][]{%
    \errmessage{(Inkscape) Color is used for the text in Inkscape, but the package 'color.sty' is not loaded}%
    \renewcommand\color[2][]{}%
  }%
  \providecommand\transparent[1]{%
    \errmessage{(Inkscape) Transparency is used (non-zero) for the text in Inkscape, but the package 'transparent.sty' is not loaded}%
    \renewcommand\transparent[1]{}%
  }%
  \providecommand\rotatebox[2]{#2}%
  \newcommand*\fsize{\dimexpr\f@size pt\relax}%
  \newcommand*\lineheight[1]{\fontsize{\fsize}{#1\fsize}\selectfont}%
  \ifx\svgwidth\undefined%
    \setlength{\unitlength}{225.73773193bp}%
    \ifx\svgscale\undefined%
      \relax%
    \else%
      \setlength{\unitlength}{\unitlength * \real{\svgscale}}%
    \fi%
  \else%
    \setlength{\unitlength}{\svgwidth}%
  \fi%
  \global\let\svgwidth\undefined%
  \global\let\svgscale\undefined%
  \makeatother%
  \begin{picture}(1,0.33241687)%
    \lineheight{1}%
    \setlength\tabcolsep{0pt}%
    \put(0,0){\includegraphics[width=\unitlength,page=1]{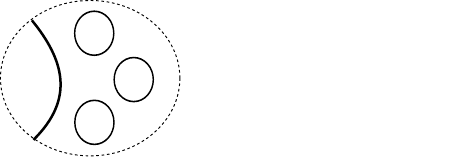}}%
    \put(0.25756274,0.25537718){\color[rgb]{0,0,0}\makebox(0,0)[lt]{\lineheight{1.25}\smash{\begin{tabular}[t]{l}$2$\end{tabular}}}}%
    \put(0.25552277,0.05728002){\color[rgb]{0,0,0}\makebox(0,0)[lt]{\lineheight{1.25}\smash{\begin{tabular}[t]{l}$1$\end{tabular}}}}%
    \put(0.34039116,0.15478367){\color[rgb]{0,0,0}\makebox(0,0)[lt]{\lineheight{1.25}\smash{\begin{tabular}[t]{l}$3$\end{tabular}}}}%
    \put(0,0){\includegraphics[width=\unitlength,page=2]{cobobject.pdf}}%
    \put(0.8223784,0.25537622){\color[rgb]{0,0,0}\makebox(0,0)[lt]{\lineheight{1.25}\smash{\begin{tabular}[t]{l}$1$\end{tabular}}}}%
    \put(0.82033842,0.05727903){\color[rgb]{0,0,0}\makebox(0,0)[lt]{\lineheight{1.25}\smash{\begin{tabular}[t]{l}$3$\end{tabular}}}}%
    \put(0.90520685,0.15478272){\color[rgb]{0,0,0}\makebox(0,0)[lt]{\lineheight{1.25}\smash{\begin{tabular}[t]{l}$2$\end{tabular}}}}%
  \end{picture}%
\endgroup%

			\caption{Two non-equal but isomorphic objects in~$\Cob^3(2n)$.}
			\label{fig:cobobject}
		\end{figure}
Morphisms are $2$-dimensional cobordisms (orientable, possibly disconnected surfaces) between two such diagrams~$D_T$,~$D_{T^\prime}$, considered up to boundary-fixing isotopy. The identity is given by the product cobordism, and composition is done by concatenating cobordisms. We turn $\Cob^3(2n)$ into a pre-additive category as described above.
		For better readability we will frequently keep the enumeration implicit and omit it in our diagrams.
		\begin{figure}[b]
			\centering
			\def\svgwidth{0.7\textwidth}
			{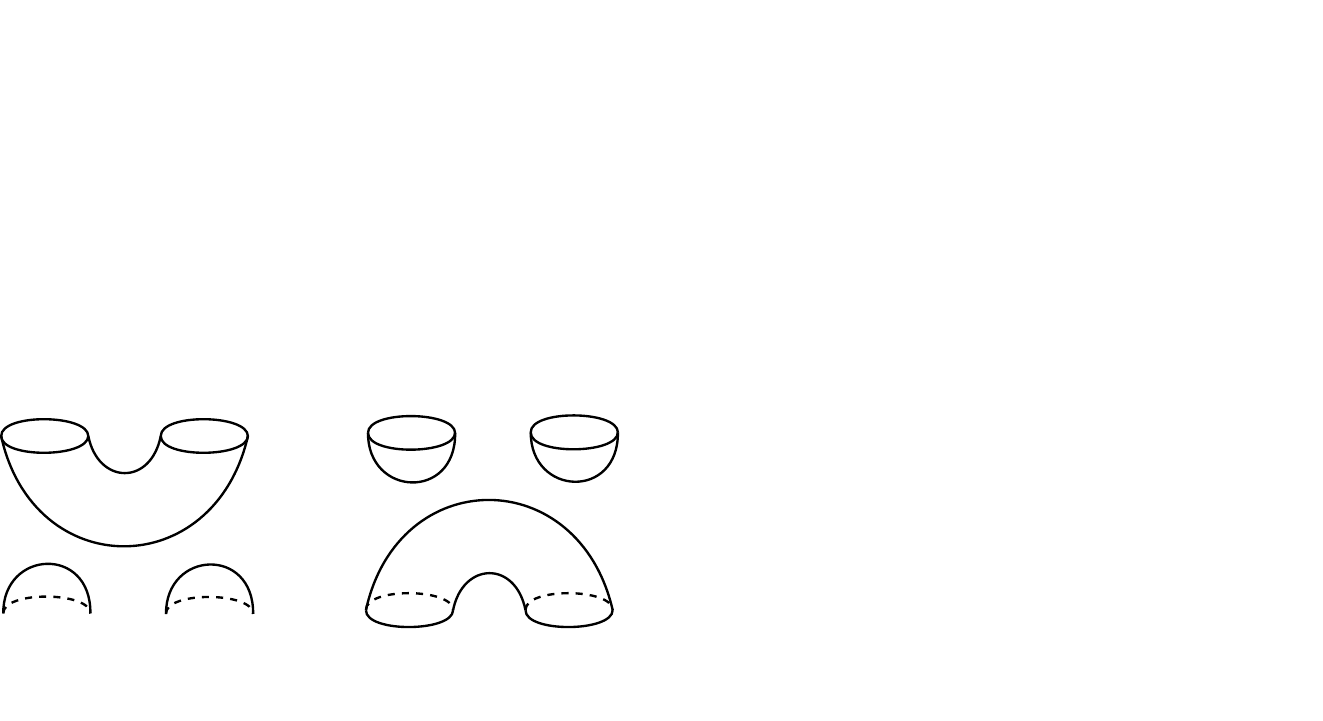}%
			\caption{The defining relations for~$\Cob^3_{\modl}(2n)$.}
			\label{fig:relations}
		\end{figure}
	\item $\Cob^3_{\modl}(2n)$: By modding out the \emph{local relations}~$S$,~$T$, and $4Tu$ on the morphisms of~$\Cob^3(2n)$, we obtain the category $\Cob^3_{\modl}(2n)$ (see \cref{fig:relations}).
	\item $\mathcal{E}$: There is only one object in~$\mathcal{E}$, namely the crossingless diagram $D_{T_0}$ of the trivial $2$-ended tangle~$T_0$. Morphisms are connected cobordisms up to boundary-fixing isotopy. The identity is given by the product cobordism (a ``curtain'', see \cref{def:curtain}), and composition is done by concatenating cobordisms. We turn $\mathcal{E}$ into a pre-additive category as described above.
	\item $\mathcal{M}_R$: Let $R$ be a graded ring. We write $R\{m\}$ for $R$ with grading shifted by~$m \in \Z$, i.e.\ $R\{m\}_n = R_{n-m}$. Let $\mathcal{M}_R$ be the category whose objects are graded $R$-modules isomorphic to a direct sum $\oplus_{i=1}^n R\{m_i\}$, and whose morphisms are graded homomorphisms between $R$-modules. We turn $\mathcal{M}_{R}$ into a graded category by introducing the shift operation
		\begin{equation*}
			\Big(\bigoplus_{i=1}^n R\{m_i\}\Big)\{n\} \coloneqq \bigoplus_{i=1}^n R\{m_i + n\},\quad n \in \Z.
		\end{equation*}
\end{itemize}

\begin{remark}
	Note that our definition of the category $\Cob^3(2n)$ (resp.~$\Cob^3_{\modl}(2n)$) differs from Bar-Natan's definition in \cite{BN1}: we require that the objects in~$\Cob^3(2n)$, i.e.\ crossingless tangle diagrams, come with an enumeration of the circles in the diagram. This enumeration will be needed in subsequent sections in order to obtain well-defined TQFTs.
    It is worthwhile to note that while the enumeration enlarges the set of objects in~$\Cob^3(2n)$, it does \emph{not} introduce any new isomorphism classes of objects compared to Bar-Natan's definition. Indeed, the morphisms of~$\Cob^3(2n)$ are unaffected by the enumeration, so any two differently enumerated tangle diagrams with same underlying un-enumerated tangle diagram are isomorphic via the product cobordism. In fact, the functor that forgets the enumeration of the circles is an equivalence of categories.
\end{remark}

Connected cobordisms between the trivial $2$-ended tangle diagram $D_{T_0}$ and itself will have a special role throughout this paper, so let's give them a proper name.

\begin{dfn}\label{def:curtain}
	A connected cobordism of genus $k$ between the trivial $2$-ended tangle diagram and itself will be called a \emph{curtain of genus $k$}.
\end{dfn}

\cref{fig:curtaingenusone} shows a curtain of genus one.

\begin{figure}[t]
	\centering
	\includegraphics[width=0.3\textwidth]{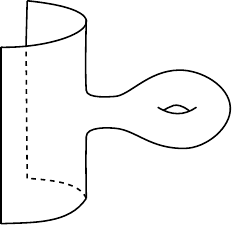}
	\caption{A curtain of genus one.}
	\label{fig:curtaingenusone}
\end{figure}

Let $C \in \Hom_{\Cob^3(2n)}(D_{T_1}, D_{T_2})$ be a morphism between two tangle diagrams $D_{T_1}$ and~$D_{T_2}$. We can turn $\Hom_{\Cob^3(2n)}(D_{T_1}, D_{T_2})$ into a graded Abelian group by setting
\begin{equation*}
	\deg C \coloneqq \chi(C) - n,
\end{equation*}
where $\chi(C)$ is the Euler characteristic of~$C$. Consequently, we can extend $\Cob^3(2n)$ to become a graded category (cf.~\cite{BN1}). Since the three local relations $S, T,$ and $4Tu$ are degree-homogeneous, $\Cob^3_{\modl}(2n)$ is graded, too. Last but not least, we use the same notion of degree to turn $\mathcal{E}$ into a graded category as well. For the sake of simplicity, we will use the same notation for the graded versions of~$\Cob^3(2n)$, $\Cob^3_{\modl}(2n)$ and~$\mathcal{E}$.

\subsection{Definition of the $\Z[G]$-complex}\label{subsec:defzgcomplex}

Given a $2n$-ended tangle $T$ with diagram~$D_T$, Bar-Natan \cite{BN1} showed how to obtain from the cube of resolutions of $D_T$ a chain complex $[D_T]$ living in $\Kom(\Mat(\Cob^3_{\modl}(2n)))$ and well-defined up to isomorphism.
For $2$-ended tangles, this complex is an invariant up to homotopy equivalence for equivalent tangles. If $T$ is obtained from a link $L$ with base point by the correspondence \cref{eq:2endlinks}, Bar-Natan showed that one can obtain the original Khovanov homology of $L$ from the complex $[T]$ (\cite{BN1}, Section 9).

There is an isomorphism in the category $\Mat(\Cob^3_{\modl}(2))$ known as \emph{delooping}\footnote{Delooping was first described by Bar-Natan \cite{BN2}, with a different version given later by Naot \cite{zbMATH05118580}. Our version is closest to Naot's, with the exception that we don't use dotted cobordisms.}, which can be used to reduce the complexity of the complex~$[T]$. It is described in \cref{fig:delooping} below.

\begin{figure}[th]
	\centering
	\captionsetup{width=.8\linewidth, font=small}
	\def\svgwidth{.5\textwidth}
\begingroup%
  \makeatletter%
  \providecommand\color[2][]{%
    \errmessage{(Inkscape) Color is used for the text in Inkscape, but the package 'color.sty' is not loaded}%
    \renewcommand\color[2][]{}%
  }%
  \providecommand\transparent[1]{%
    \errmessage{(Inkscape) Transparency is used (non-zero) for the text in Inkscape, but the package 'transparent.sty' is not loaded}%
    \renewcommand\transparent[1]{}%
  }%
  \providecommand\rotatebox[2]{#2}%
  \newcommand*\fsize{\dimexpr\f@size pt\relax}%
  \newcommand*\lineheight[1]{\fontsize{\fsize}{#1\fsize}\selectfont}%
  \ifx\svgwidth\undefined%
    \setlength{\unitlength}{387.80964048bp}%
    \ifx\svgscale\undefined%
      \relax%
    \else%
      \setlength{\unitlength}{\unitlength * \real{\svgscale}}%
    \fi%
  \else%
    \setlength{\unitlength}{\svgwidth}%
  \fi%
  \global\let\svgwidth\undefined%
  \global\let\svgscale\undefined%
  \makeatother%
  \begin{picture}(1,0.49904735)%
    \lineheight{1}%
    \setlength\tabcolsep{0pt}%
    \put(0,0){\includegraphics[width=\unitlength,page=1]{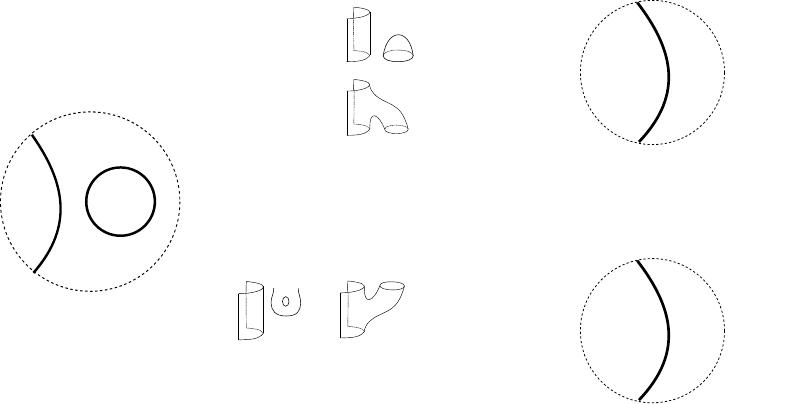}}%
    \put(0.38957827,0.10404666){\color[rgb]{0,0,0}\makebox(0,0)[lt]{\lineheight{1.25}\smash{\begin{tabular}[t]{l}-\end{tabular}}}}%
    \put(0,0){\includegraphics[width=\unitlength,page=2]{deloopingv2.pdf}}%
    \put(0.90775635,0.40523022){\color[rgb]{0,0,0}\makebox(0,0)[lt]{\lineheight{1.25}\smash{\begin{tabular}[t]{l}$\{-1\}$\end{tabular}}}}%
    \put(0.90775635,0.07549376){\color[rgb]{0,0,0}\makebox(0,0)[lt]{\lineheight{1.25}\smash{\begin{tabular}[t]{l}$\{+1\}$\end{tabular}}}}%
  \end{picture}%
\endgroup%

	\caption{Delooping.}
	\label{fig:delooping}
\end{figure}

It is a simple exercise to check that the morphisms depicted in \cref{fig:delooping} yield two mutually inverse isomorphisms in~$\Mat(\Cob^3_{\modl}(2))$. In other words, we have an isomorphism of objects 
\begin{equation*}
	\img{strandcircle.pdf} \cong \img{strand.pdf}\{-1\} \oplus \img{strand.pdf}\{+1\}.
\end{equation*}

Given a $2$-ended tangle $T$ with diagram~$D_T$, we can use delooping to successively resolve every circle appearing in the complex~$[D_T]$. This yields an isomorphic complex whose chain objects consist solely of sums of grading shifted copies of~$D_{T_0}$, giving us a connection between the categories $\Kom(\Mat(\Cob^3_{\modl}(2)))$ and $\Kom(\Mat(\mathcal{E}))$. In fact:

\begin{restatable}{prop}{rpropequivcat}\label{prop:equivcat}
	The functor $B\colon \Mat(\mathcal{E}) \rightarrow \Mat(\Cob^3_{\modl}(2))$ given by inclusion is an equivalence of categories.
\end{restatable}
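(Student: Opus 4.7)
The plan is to show that $B$ is essentially surjective and fully faithful. Essential surjectivity reduces to finitely many applications of delooping: any object of $\Mat(\Cob^3_{\modl}(2))$ is a formal direct sum of shifted crossingless $2$-ended tangle diagrams, and each such diagram consists of the single strand $D_{T_0}$ together with a finite collection of pairwise disjoint circles. Applying the isomorphism of \cref{fig:delooping} once per circle expresses any such diagram as a direct sum of shifted copies of $D_{T_0}$, which lies in the image of $B$. The enumeration of circles does not cause any issue since any two enumerations of the same underlying diagram give isomorphic objects.

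For full faithfulness, because morphisms in an additive closure are matrices of morphisms between the summands, it suffices to show that the inclusion $\hom_\mathcal{E}(D_{T_0}, D_{T_0}) \hookrightarrow \hom_{\Cob^3_{\modl}(2)}(D_{T_0}, D_{T_0})$ is an isomorphism of graded Abelian groups. For fullness, decompose any cobordism $C\colon D_{T_0} \to D_{T_0}$ as its boundary-containing component $C_0$ (necessarily a curtain of some genus) together with finitely many closed surface components. Sphere components are killed by $S$, torus components contribute a factor of $2$ via $T$, and closed components of higher genus are reduced via $4Tu$: applying the relation in a small ball meeting $C_0$ and the closed component each in a disk expresses $C$ as a combination of cobordisms in which the closed component has been tubed onto $C_0$ (converting its handles into additional genus on $C_0$) or in which a sphere appears. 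Iterating reduces $C$ modulo the local relations to a $\Z$-linear combination of curtains.

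Faithfulness requires showing that no nontrivial $\Z$-linear combination of curtains vanishes modulo $S$, $T$, and $4Tu$. The plan is to apply the universal TQFT corresponding to the Frobenius system with algebra $A_{\univ} = \Z[h,t][X]/(X^2 - hX - t)$, which sends $D_{T_0}$ to the free $A_{\univ}$-module of rank one and a curtain of genus $k$ to multiplication by $(2X - h)^k \in A_{\univ}$ (the image of the handle element under this TQFT is $\mu \circ \Delta(1) = 2X - h$). These elements are homogeneous of pairwise distinct quantum degrees $-2k$, so they are $\Z$-linearly independent in $A_{\univ}$, which yields faithfulness.

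The main obstacle is the fullness step: reducing closed components of genus at least $2$ relies on a careful application of $4Tu$, where the bookkeeping must simultaneously track the genus and the connectivity of each component, together with an induction on the total genus of closed components to ensure termination and to confirm that all non-canceling terms contribute only curtains.
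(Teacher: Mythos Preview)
Your argument is correct, and for essential surjectivity it matches the paper exactly. The fullness step is also close in spirit: the paper makes your reduction fully explicit by passing to the polynomial model $\Hom_{\Cob^3(2)}(D_{T_0},D_{T_0})\cong \Z[G,\Sigma_0,\Sigma_1,\dots]$ and using the $3S_2$ relation (equivalent to $4Tu$) with two of the three surfaces equal to the curtain to derive the recursion $\Sigma_{c+1}=2G^c-G\Sigma_c$, which together with $\Sigma_0=0$ reduces every closed component to a polynomial in $G$. This replaces your somewhat informal ``tube the closed component onto $C_0$'' induction with a clean one-line recursion, and avoids having to specify exactly which four disks enter into $4Tu$.

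Where you genuinely diverge is faithfulness. The paper proves it by brute force: after obtaining the recursion above it runs through seven exhaustive cases of the $3S_2$ relation (all possible coincidence patterns among the three surfaces, and whether or not one of them is the curtain) and checks in each case that no relation beyond $\Sigma_c\in\Z[G]$ is produced, so that the quotient is exactly $\Z[G]$. Your approach via the universal TQFT is shorter and more conceptual: since $\mathcal{F}_{\univ}$ is defined on $\Cob^3_{\modl}(2)$, it automatically respects $S$, $T$, $4Tu$, and sending the curtain of genus $k$ to multiplication by $(2X-h)^k\in A_{\univ}$ (nonzero, of quantum degree $-2k$) immediately gives $\Z$-linear independence. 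This buys you a one-paragraph proof of faithfulness in place of a page of case analysis; the paper's approach, on the other hand, has the advantage of being entirely self-contained within the cobordism category, without needing to invoke any particular Frobenius system.
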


The proof requires quite a bit of work and is postponed to \cref{sec:appendix}.

As $B$ is an equivalence of categories, there is a functor $I\colon \Mat(\Cob^3_{\modl}(2)) \rightarrow \Mat(\mathcal{E})$ such that $I \circ B$ and $B \circ I$ are naturally isomorphic to the identity functors $\Id_{\Mat(\mathcal{E})}$ and $\Id_{\Mat(\Cob^3_{\modl}(2))}$, respectively. This functor can be constructed by using delooping, though not in a natural way: there is an ambiguity in the order of which circles get delooped.\footnote{This problem can be resolved by introducing the convention of always delooping the \emph{last} circle with respect to the enumeration. However, since we don't need a natural inverse we aren't going to introduce such a convention.} Observe that if $I$ is constructed in this way, then $I \circ B = \Id_{\Mat(\mathcal{E})}$ while $B \circ I$ is still only naturally isomorphic to~$\Id_{\Mat(\Cob^3_{\modl}(2))}$. The functor $I$ induces an equivalence of categories

\begin{equation*}
	\widehat I\colon \Kom(\Mat(\Cob^3_{\modl}(2))) \rightarrow \Kom(\Mat(\mathcal{E})).
\end{equation*}

Now let $G$ be a formal variable and consider the ring~$\Z[G]$. We equip $\Z[G]$ with a grading by setting $\deg 1 = 0$ and~$\deg G = -2$, and we consider the category~$\mathcal{M}_{\Z[G]}$. There is a functor $F\colon \mathcal{E} \rightarrow \mathcal{M}_{\Z[G]}$ sending the object $D_{T_0}\{m\} \in \ob(\mathcal{E})$ to the $\Z[G]$-module $\Z[G]\{m\}$ and a cobordism of genus $k$ to the linear map given by multiplication with~$G^k$. This functor extends by linearity to a functor $F\colon \Mat(\mathcal{E}) \rightarrow \mathcal{M}_{\Z[G]}$, which is in fact an isomorphism. Moreover, it induces yet another functor
\begin{equation*}
	\widehat F\colon \Kom(\Mat(\mathcal{E})) \rightarrow \Kom(\mathcal{M}_{\Z[G]}).
\end{equation*}

Let us make the following definition.

\begin{dfn}\label{def:zgcomplex}
	The \emph{$\Z[G]$-complex} of a $2$-ended tangle~$T$, denoted by ~$\Omega(T)$, is defined as the chain complex
	\begin{equation*}
		\Omega(T) \coloneqq \widehat F(\widehat I([T])) \in \Kom(\mathcal{M}_{\Z[G]}),
	\end{equation*}
	where $[T]$ is the Bar-Natan complex of~$T$. If $L$ is a link with base point and $T_L$ its corresponding $2$-ended tangle, we define the \emph{$\Z[G]$-complex of $L$} as $\Omega(L) \coloneqq \widehat F(\widehat I([T_L]))$.
\end{dfn}

By construction, the homotopy class of $\Omega(T)$ is an invariant for $2$-ended tangles. The construction is summarized in the schematic below.

\begin{center}
\begin{tikzcd}
	\textrm{$2$-ended tangle $T$} \arrow{d} \\
	\textrm{cube of resolutions of $T$} \arrow{d}{\textrm{\cite{BN1}}} \\
	\phantom{}[T]\in \Kom(\Mat(\Cob^3_{\modl}(2))) \arrow{d}{\widehat I, \text{ \cref{prop:equivcat}}} \\
	\widehat I([T]) \in \Kom(\Mat(\mathcal{E})) \arrow{d}{\widehat F} \\
	\Omega(T) \coloneqq \widehat F(\widehat I([T])) \in \Kom(\mathcal{M}_{\Z[G]}).
\end{tikzcd}
\end{center}

\subsection{Frobenius Systems and TQFTs}\label{subsec:frobsystqft}

In \cite{KH2}, Khovanov describes a rank~$2$ Frobenius system $\mathcal{F}_{\textrm{univ}}$ (denoted by $\mathcal{F}_5$ in \cite{KH2}) which is universal in the sense that any other rank two Frobenius system can be obtained from it by a base change and a twist. 
Consequently, the chain complex obtained by applying the $\mathcal{F}_{\textrm{univ}}$-TQFT to the cube of resolutions is sometimes called \emph{universal Khovanov complex}. Before we go into more detail, let us first recall the definition of a Frobenius system (as in \cite{KH2}).

\begin{dfn}
    A Frobenius system $\mathcal{F} = (R, A, \Delta, \epsilon)$ is a $4$-tuple consisting of a graded commutative unitary ring $R$ and a graded or filtered free $R$-module $A$ equipped with a commutative algebra structure (multiplication~$m$, unit~$\iota$) and a cocommutative coalgebra structure (comultiplication~$\Delta$, counit~$\epsilon$) that are related by the so-called \emph{Frobenius identity}:
    \begin{equation*}
    	\Delta \circ m = (\Id \otimes m) \circ (\Delta \otimes \Id).
    \end{equation*}
    The maps defining the (co-)algebra structure are required to be homogeneous of a certain degree.
\end{dfn}

We will only be interested in rank $2$ Frobenius systems, i.e.\ Frobenius systems where $A \cong R1 \oplus RX$ as $R$-modules for some~$X \in A$, and for such systems we will always use the grading $\deg 1 = 0$ and~$\deg X = -2$.

Given a rank $2$ Frobenius system $\mathcal{F} = (R, A, \Delta, \epsilon)$, we can define a functor (a~TQFT) $\mathcal{F}\colon \Cob^3_{\modl}(2) \rightarrow \mathcal{M}_A$ as follows:\footnote{We abuse notation and use $\mathcal{F}$ to denote the Frobenius system, the corresponding TQFT, and sometimes even the Frobenius algebra of the system.}
\begin{enumerate}
	\item On objects, $\mathcal{F}$ acts in the following way:
		\begin{equation*}
			\mathcal{F}\Big(\img{special.pdf} \sqcup \underbrace{\img{circle.pdf} \cdots \img{circle.pdf}}_{n \textrm{ times}}\Big) = \underline{A}\{1\} \otimes_R \underbrace{A\{1\} \otimes_R \cdots \otimes_R A\{1\}}_{n \textrm{ times}}.
		\end{equation*}
		Here, the special strand corresponds to the first tensor factor while the other factors are ordered according to the enumeration of the circles. The underline indicates the action of $A$ on the tensor product $\underline{A}\{1\} \otimes_R A\{1\}^{\otimes n}$, turning it into an $A$-module.
	\item The morphisms of $\Cob^3_{\modl}(2)$ can be expressed as sums of compositions of disjoint unions of the following elementary cobordisms (details can be found in \cite{KH1}):

		\begin{equation*}
		\begin{split}
			\raisebox{-0.6\baselineskip}{\includegraphics[width=0.1\textwidth]{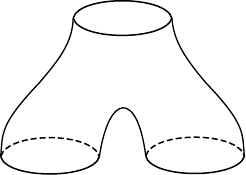}}\quad \raisebox{-0.6\baselineskip}{\includegraphics[width=0.1\textwidth]{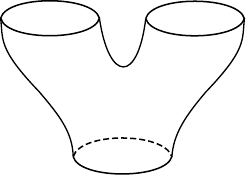}}\quad \raisebox{-0.6\baselineskip}{\includegraphics[width=0.04\textwidth]{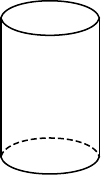}}\quad \raisebox{-0.6\baselineskip}{\includegraphics[width=0.05\textwidth]{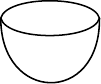}}\quad \raisebox{-0.6\baselineskip}{\includegraphics[width=0.05\textwidth]{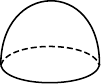}}\quad
			\raisebox{-0.6\baselineskip}{\includegraphics[width=0.07\textwidth]{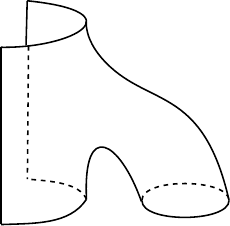}}\quad \raisebox{-0.6\baselineskip}{\includegraphics[width=0.07\textwidth]{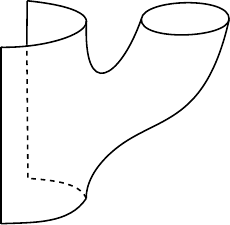}}\quad \raisebox{-0.6\baselineskip}{\includegraphics[width=0.03\textwidth]{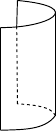}}
		\end{split}
		\end{equation*}\smallskip

		\noindent Hence it is enough to define $\mathcal{F}$ on these elementary cobordisms:\smallskip
		\begin{align*}
			& \mathcal{F}\bigg(\raisebox{-0.6\baselineskip}{\includegraphics[width=0.1\textwidth]{m.pdf}}\bigg) = m\colon A\{1\} \otimes A\{1\} \rightarrow A\{1\} && \mathcal{F}\big(\raisebox{-0.3\baselineskip}{\includegraphics[width=0.05\textwidth]{unit.pdf}}\big) = \iota\colon R\{1\} \rightarrow A\{1\} \\ &
			\mathcal{F}\bigg(\raisebox{-0.6\baselineskip}{\includegraphics[width=0.1\textwidth]{Delta.pdf}}\bigg) = \Delta\colon A\{1\} \rightarrow A\{1\} \otimes A\{1\} && \mathcal{F}\big(\raisebox{-0.3\baselineskip}{\includegraphics[width=0.05\textwidth]{counit.pdf}}\big) = \epsilon\colon A\{1\} \rightarrow R\{1\} \\ &
			\mathcal{F}\bigg(\raisebox{-0.6\baselineskip}{\includegraphics[width=0.04\textwidth]{id.pdf}}\bigg) = \Id\colon A\{1\} \rightarrow A\{1\} && \mathcal{F}\bigg(\raisebox{-0.6\baselineskip}{\includegraphics[width=0.03\textwidth]{idSpecial.pdf}}\bigg) = \Id\colon \underline{A}\{1\} \rightarrow \underline{A}\{1\} \\ &
			\mathcal{F}\bigg(\raisebox{-0.6\baselineskip}{\includegraphics[width=0.07\textwidth]{mSpecial.pdf}}\bigg) = m\colon \underline{A}\{1\} \otimes A\{1\} \rightarrow \underline{A}\{1\} && \mathcal{F}\bigg(\raisebox{-0.6\baselineskip}{\includegraphics[width=0.07\textwidth]{DeltaSpecial.pdf}}\bigg) = \Delta\colon \underline{A}\{1\} \rightarrow \underline{A}\{1\} \otimes A\{1\}
		\end{align*}
\end{enumerate}\smallskip

Given a $2$-ended tangle $T$ with diagram~$D_T$, we can apply $\mathcal{F}$ to its cube of resolutions and obtain a Khovanov-type chain complex in the usual way. The resulting complex is sometimes called \emph{unreduced}. We will denote it by either $C_\mathcal{F}(T)$ or~$C_\mathcal{F}(D_T)$, which is justified since different choices of the diagram $D_T$ yield homotopy equivalent complexes.

\begin{remark}\label{rem:tqft}
	\begin{enumerate}
		\item By our definition, a TQFT yields a complex over the category~$\mathcal{M}_A$. However, this category is not Abelian, so in order to take homology one has to move from $\Kom(\mathcal{M}_A)$ to $\Kom(A\textrm{-Mod})$ using inclusion, where $A\textrm{-Mod}$ denotes the category of graded $A$-modules. We accept this inconvenience because $\mathcal{M}_A$ is a simpler category and complexes are our main working tool (in contrast to homology).
		\item If we forget about the base point, then the TQFT above can be viewed as $\mathcal{F}\colon \Cob^3_{\modl}(0) \rightarrow \mathcal{M}_R$ which yields the usual Khovanov complex corresponding to the Frobenius system~$\mathcal{F}$.
	\end{enumerate}
\end{remark}

We will mainly be interested in the following two Frobenius systems:

\begin{dfn}\label{def:f5system}
	The Frobenius system $\mathcal{F}_{\textrm{univ}} = (R_{\textrm{univ}}, A_{\textrm{univ}}, \Delta, \epsilon)$ is defined as follows:
	\begin{equation*}
		R_{\textrm{univ}} = \mathbb{Z}[h, t],\quad A_{\textrm{univ}} = R_\textrm{univ}[X]/(X^2-hX-t).
	\end{equation*}
	\begin{alignat*}{2}
		& \epsilon(1) = 0,\quad && \Delta(1) = 1\otimes X + X\otimes 1 - h1\otimes 1 \\ &
		\epsilon(X) = 1,\quad && \Delta(X) = X\otimes X + t1\otimes 1.
	\end{alignat*}
	We equip $\mathcal{F}_{\textrm{univ}}$ with a grading by setting $\deg X = \deg h = -2$ and~$\deg t = -4$.
\end{dfn}

\begin{dfn}\label{def:ZGsystem}
	The Frobenius system $\mathcal{F}_{\mathbb{Z}[G]} = (R_{\mathbb{Z}[G]}, A_{\mathbb{Z}[G]}, \Delta, \epsilon)$ is defined as follows:
	\begin{equation*}
		R_{\mathbb{Z}[G]} = \mathbb{Z}[G],\quad A_{\mathbb{Z}[G]} = R_{\mathbb{Z}[G]}[X]/(X^2+GX).
	\end{equation*}
	\begin{alignat*}{2}
		& \epsilon(1) = 0,\quad && \Delta(1) = 1\otimes X + X\otimes 1 + G1\otimes 1 \\ &
		\epsilon(X) = 1,\quad && \Delta(X) = X\otimes X.
	\end{alignat*}
	We equip $\mathcal{F}_{\mathbb{Z}[G]}$ with a grading by setting $\deg X = \deg G = -2$.
\end{dfn}

It is easy to see that $\mathcal{F}_{\mathbb{Z}[G]}$ can be obtained from $\mathcal{F}_{\textrm{univ}}$ by a base change that sends $t \mapsto 0$ and~$h \mapsto -G$.
To simplify notation, we will write $C_\textrm{univ}$ for the unreduced complex $C_{\mathcal{F}_\textrm{univ}}$ and $C_{\mathbb{Z}[G]}$ for $C_{\mathcal{F}_{\mathbb{Z}[G]}}$.

Using the Frobenius system $\mathcal{F}_{\mathbb{Z}[G]}$, we can define a second type of chain complex as follows.

\begin{dfn}\label{def:redzgcomplex}
	Let $T$ be a $2$-ended tangle with diagram $D_T$ and $C_{\mathbb{Z}[G]}(D_T)$ the corresponding $\mathcal{F}_{\mathbb{Z}[G]}$-complex. The \emph{reduced} $\mathcal{F}_{\mathbb{Z}[G]}$-complex of $T$ is defined as
	\begin{equation*}
		\llbracket T \rrbracket \coloneqq C_{\mathbb{Z}[G]}(D_T) \otimes_{A_{\mathbb{Z}[G]}} A_{\mathbb{Z}[G]}/(X)\{-1\} \in \Kom(\mathcal{M}_{\mathbb{Z}[G]}).
	\end{equation*}
\end{dfn}

The notation $\llbracket T \rrbracket$ is justified since different choices of the diagram $D_T$ yield homotopy equivalent complexes (we will sometimes use $\llbracket D_T \rrbracket$ nevertheless). Observe that reducing has the following effect on summands in~$C_{\mathbb{Z}[G]}$:
\begin{equation*}
	\underline{A_{\mathbb{Z}[G]}}\{1\} \otimes_{R_{\mathbb{Z}[G]}} A_{\mathbb{Z}[G]}\{1\}^{\otimes n} \overset{\textrm{reduce}}\longrightarrow R_{\mathbb{Z}[G]} \otimes_{R_{\mathbb{Z}[G]}} A_{\mathbb{Z}[G]}\{1\}^{\otimes n}.
\end{equation*}
In particular, the reduced complex has no longer an $A_{\mathbb{Z}[G]}$-module structure. Also note that the first factor is no longer affected by a shift in grading.\footnote{This is needed in order for the dual of the reduced complex to correspond to the reduced complex of the mirror image. Here, we use the usual convention that the signs of the homological and quantum grading in the dual are switched.} We will see in the next section that the reduced $\mathcal{F}_{\mathbb{Z}[G]}$-complex $\llbracket T \rrbracket$ is in fact isomorphic to the $\mathbb{Z}[G]$-complex $\Omega(T)$ defined in \cref{subsec:defzgcomplex}. 

\subsection{Equivalence of the $\mathcal{F}_{\textrm{univ}}$- and $\mathbb{Z}[G]$-theory}\label{subsec:equivtheories}

Let $\mathcal{F} = (R, A, \Delta, \epsilon)$ be a rank $2$ Frobenius system. The corresponding TQFT gives us a functor
\begin{equation*}
	\alpha = \alpha_\mathcal{F}\colon \Kom(\Mat(\Cob^3_{\modl}(2))) \rightarrow \Kom(\mathcal{M}_A).
\end{equation*}
Let us now show that the $\mathcal{F}_{\textrm{univ}}$- and the $\mathbb{Z}[G]$-theory are equivalent.

\begin{lem}\label{lem:alpha}
	Let $D_T$ be a $2$-ended tangle diagram and let $\mathcal{F} = (R,A,\Delta,\epsilon)$ be a rank $2$ Frobenius system. We can see $A$ as a $\mathbb{Z}[G]$-module by letting $G$ act as $\mathcal{F}(\img{curtain.pdf})$. Then the functor $\gamma\colon \Kom(\mathcal{M}_{\mathbb{Z}[G]}) \rightarrow \Kom(\mathcal{M}_A)$ defined as
	\begin{equation*}
		\gamma (Y) \coloneqq Y \otimes_{\mathbb{Z}[G]} A\{1\},\quad Y \in \Kom(\mathcal{M}_{\mathbb{Z}[G]})
	\end{equation*}
	satisfies
	\begin{equation*}
		\alpha([D_T]) \cong \gamma(\Omega(D_T))
	\end{equation*}
	(cf.~\cref{fig:equivUnivAndZG} below).
\end{lem}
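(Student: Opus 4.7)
The plan is to reduce everything to a comparison of functors on the simple category $\mathcal{E}$. Consider the two composite functors
\[
\alpha \circ B,\ \gamma \circ F \circ (\,\cdot\,)\colon \mathcal{E} \longrightarrow \mathcal{M}_A,
\]
(where by $\gamma\circ F$ I mean first apply $F\colon\mathcal{E}\to\mathcal{M}_{\Z[G]}$ and then the base change $(\,\cdot\,)\otimes_{\Z[G]}A\{1\}$). First I would check these agree on objects: the unique object $D_{T_0}\{m\}$ is sent by $\alpha\circ B$ to $\underline{A}\{m+1\}$ by definition of the TQFT, while $\gamma\circ F$ sends it to $\Z[G]\{m\}\otimes_{\Z[G]} A\{1\}=A\{m+1\}$. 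Next, on morphisms it suffices to compare the two functors on the generating morphisms of $\mathcal{E}$, namely curtains of each genus $k\geq 0$. Here $F$ sends a genus-$k$ curtain to multiplication by $G^k$, so $\gamma\circ F$ sends it to multiplication by $G^k$ on $A\{m+1\}$, which by the definition of the $\Z[G]$-module structure on $A$ is exactly $\mathcal{F}(\img{curtain.pdf})^k$. On the other hand, $\alpha\circ B$ sends the genus-$k$ curtain to $\mathcal{F}$ applied to that cobordism, so one needs the identity
\[
\mathcal{F}(\text{genus-}k\text{ curtain})=\mathcal{F}(\text{genus-}1\text{ curtain})^{k}.
\]
This follows because, up to boundary-fixing isotopy, a genus-$k$ curtain is the $k$-fold composition of genus-$1$ curtains (genera add under vertical stacking), and $\mathcal{F}$ is a functor.

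With this agreement on the generating category established, both functors extend to $\Mat(\mathcal{E})$ additively and then to $\Kom(\Mat(\mathcal{E}))$ by term-wise application and compatibility with differentials; since the identifications on objects and generating morphisms are natural, one obtains a natural isomorphism of the induced functors
\[
\widehat{\alpha\circ B}\ \cong\ \widehat{\gamma\circ F}\colon \Kom(\Mat(\mathcal{E}))\longrightarrow \Kom(\mathcal{M}_A).
\]
Evaluating at $\widehat I([D_T])$ and unpacking definitions gives
\[
(\alpha\circ B)(\widehat I([D_T]))\ \cong\ \gamma\bigl(\widehat F(\widehat I([D_T]))\bigr)\ =\ \gamma(\Omega(D_T)).
\]

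To finish, I invoke \cref{prop:equivcat}: the natural isomorphism $B\circ \widehat I\simeq \Id_{\Mat(\Cob^3_{\modl}(2))}$ lifts to a natural isomorphism of endofunctors of $\Kom(\Mat(\Cob^3_{\modl}(2)))$, so that
\[
\alpha([D_T])\ \cong\ \alpha\bigl(B(\widehat I([D_T]))\bigr)\ =\ (\alpha\circ B)(\widehat I([D_T])),
\]
and composing with the isomorphism above yields $\alpha([D_T])\cong \gamma(\Omega(D_T))$ in $\Kom(\mathcal{M}_A)$, as desired.

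The main technical point — and the only nontrivial input beyond bookkeeping with gradings and additive/complex extensions — is the identification of the $\Z[G]$-action on $A$ with the TQFT applied to genus-$k$ curtains, together with the observation that curtain genera add under composition. Everything else is a naturality statement packaged by \cref{prop:equivcat}.
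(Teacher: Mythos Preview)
Your proof is correct and follows essentially the same route as the paper: the paper packages your functor $\alpha\circ B$ as an auxiliary functor $J\colon\mathcal{E}\to\mathcal{M}_A$ (sending $D_{T_0}\mapsto A$ and a genus-$k$ curtain to $\mathcal{F}(\img{curtain.pdf})^k$), notes $\widehat{J}=\alpha\circ\widehat{B}$, checks $\widehat{J}\cong\gamma\circ\widehat{F}$ from the definitions, and then invokes the equivalence $\widehat{B}\circ\widehat{I}\cong\Id$ from \cref{prop:equivcat} exactly as you do. The one place you are slightly more explicit than the paper is in justifying $\mathcal{F}(\text{genus-}k\text{ curtain})=\mathcal{F}(\text{genus-}1\text{ curtain})^k$ via composition of curtains, which the paper folds into the definition of $J$.
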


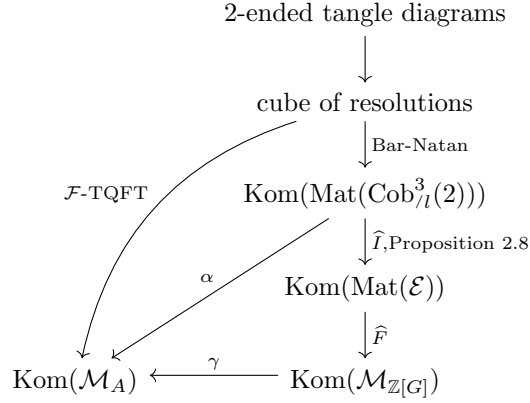
\begin{figure}[ht!]
\centering
\begin{tikzcd}
	& \textrm{$2$-ended tangle diagrams} \arrow{d} \\
	& \textrm{cubes of resolutions} \arrow[dddl, bend right, swap, "\mathcal{F}\textrm{-TQFT}"] \arrow{d}{\textrm{\cite{BN1}}} \\
	& \Kom(\Mat(\Cob^3_{\modl}(2))) \arrow[ddl, swap, "\alpha"] \arrow{d}{\widehat I, \textrm{ \cref{prop:equivcat}}} \\
	& \Kom(\Mat(\mathcal{E})) \arrow{d}{\widehat F} \\
	\Kom(\mathcal{M}_A) &  \Kom(\mathcal{M}_{\mathbb{Z}[G]}) \arrow[l, shift right, swap, "\gamma"]
\end{tikzcd}
\caption{The functors and constructions figuring in the statement of \cref{lem:alpha}.}
\label{fig:equivUnivAndZG}
\end{figure}

\begin{proof}
	We know that the functor $\widehat{I}$ is an equivalence of categories (with ``inverse''~$\widehat{B}$, cf.~\cref{prop:equivcat}) and that $\widehat{F}$ is an isomorphism of categories, thus if $\beta = \widehat{F} \circ \widehat{I}$ and $\zeta = \widehat{B} \circ \widehat{F}^{-1}$ we have that $\zeta(\beta(C)) \cong C$ for all $C \in \Kom(\Mat(\Cob^3_{\modl}(2)))$ (see \cref{fig:proofEquivUnivAndZG} below). In order to show that $\alpha([D_T]) \cong \gamma(\widehat{F}(\widehat{I}([D_T])))$, it is enough to prove that $\alpha(\zeta(Y)) \cong \gamma(Y)$ for all $Y \in \Kom(\mathcal{M}_{\mathbb{Z}[G]})$. This is done by introducing a new functor $J\colon \mathcal{E} \rightarrow \mathcal{M}_A$ sending the trivial $2$-ended tangle diagram $D_{T_0}$ to the $A$-module $A$ and the curtain with genus $k \geq 0$ to the linear map given by~$\mathcal{F}(\img{curtain.pdf})^k$. The functor $J$ induces a functor
	\begin{equation*}
		\widehat{J}\colon \Kom(\Mat(\mathcal{E})) \rightarrow \Kom(\mathcal{M}_A).
	\end{equation*}
	It is easy to see that $\widehat{J}=\alpha \circ \widehat{B}$, so $\alpha$ is naturally isomorphic to~$\widehat{J} \circ \widehat{I}$. Thus it only remains to check that $\widehat{J} \cong \gamma \circ \widehat{F}$. This follows immediately from the definitions.
\end{proof}

\begin{figure}[th]
    \centering
\begin{tikzcd}
	& \Kom(\Mat(\Cob^3_{\modl}(2))) \arrow[ddl, bend right, swap, "\alpha"] \arrow{d}{\widehat I} \arrow[dd, bend left=80, swap, "\beta"] \\
	& \Kom(\Mat(\mathcal{E})) \arrow[dl, swap, "\widehat{J}"] \arrow{d}{\widehat F} \\
	\Kom(\mathcal{M}_A) &  \Kom(\mathcal{M}_{\mathbb{Z}[G]}) \arrow[l, shift right, swap, "\gamma"] \arrow[uu, bend right=90, swap, "\zeta"]
\end{tikzcd}
\caption{Functors used in the proof of \cref{lem:alpha}.}
    \label{fig:proofEquivUnivAndZG}
\end{figure}
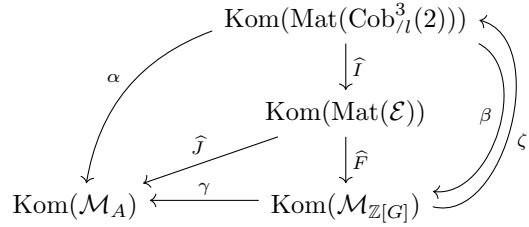

Before we move on, let us first show that the reduced $\mathcal{F}_{\mathbb{Z}[G]}$-complex $\llbracket\, \cdot\, \rrbracket$ is isomorphic to~$\Omega$.

\begin{prop}\label{prop:ZGcomplexesequiv}
	The reduced $\mathcal{F}_{\mathbb{Z}[G]}$-complex $\llbracket D_T \rrbracket$ is isomorphic to the $\mathbb{Z}[G]$-complex~$\Omega(D_T)$.
\end{prop}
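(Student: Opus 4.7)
The plan is to derive the isomorphism as a short formal consequence of \cref{lem:alpha} applied to the Frobenius system $\mathcal{F}_{\mathbb{Z}[G]}$. By construction of the TQFT, $C_{\mathbb{Z}[G]}(D_T) = \alpha_{\mathcal{F}_{\mathbb{Z}[G]}}([D_T])$, and since here $A = A_{\mathbb{Z}[G]}$ carries the $\mathbb{Z}[G]$-module structure in which $G$ acts as $\mathcal{F}_{\mathbb{Z}[G]}(\img{curtain.pdf})$ (this is built into $\mathcal{F}_{\mathbb{Z}[G]}$ via the relation $X^2 + GX = 0$, since the genus-one curtain acts as $m\circ \Delta$ which equals multiplication by $2X - h = 2X + G$; one checks that on $\Omega(D_T)$ this agrees, so the hypothesis of \cref{lem:alpha} is verified), the lemma supplies an isomorphism
\[
C_{\mathbb{Z}[G]}(D_T)\;\cong\; \Omega(D_T)\otimes_{\mathbb{Z}[G]} A_{\mathbb{Z}[G]}\{1\}
\]
in $\Kom(\mathcal{M}_{A_{\mathbb{Z}[G]}})$.

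Next, I would apply the functor $(-)\otimes_{A_{\mathbb{Z}[G]}} A_{\mathbb{Z}[G]}/(X)\{-1\}$ to both sides. The left-hand side becomes $\llbracket D_T\rrbracket$ by \cref{def:redzgcomplex}. Using associativity of tensor products, the right-hand side transforms into
\[
\Omega(D_T)\otimes_{\mathbb{Z}[G]}\bigl(A_{\mathbb{Z}[G]}\{1\}\otimes_{A_{\mathbb{Z}[G]}} A_{\mathbb{Z}[G]}/(X)\{-1\}\bigr).
\]
The inner expression collapses: the $A_{\mathbb{Z}[G]}$-tensor absorbs the free factor to yield $A_{\mathbb{Z}[G]}/(X)$, while the two grading shifts $\{+1\}$ and $\{-1\}$ cancel. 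Since $A_{\mathbb{Z}[G]}/(X)\cong \mathbb{Z}[G]$ as a graded $\mathbb{Z}[G]$-module, the right-hand side simplifies to $\Omega(D_T)\otimes_{\mathbb{Z}[G]} \mathbb{Z}[G] \cong \Omega(D_T)$, giving the desired isomorphism in $\Kom(\mathcal{M}_{\mathbb{Z}[G]})$.

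I do not expect any genuine obstacle: the argument is essentially a one-line manipulation of tensor products, with \cref{lem:alpha} doing all the heavy lifting. The only care required is the bookkeeping of the two grading shifts and of the $A_{\mathbb{Z}[G]}$-module structure (which resides on the distinguished \emph{underlined} factor of the chain objects of $C_{\mathbb{Z}[G]}$, exactly the factor that is mirrored on the right-hand side by $A_{\mathbb{Z}[G]}\{1\}$). If one preferred to avoid invoking \cref{lem:alpha}, one could alternatively argue directly: delooping each circle in $[D_T]$ produces two copies of the trivial tangle diagram distinguished by grading shifts $\{\pm 1\}$, which under the $\mathcal{F}_{\mathbb{Z}[G]}$-TQFT followed by reduction by $A_{\mathbb{Z}[G]}/(X)\{-1\}$ matches exactly the $\mathbb{Z}[G]\{\pm 1\}$ summands produced by $\widehat F$; but the route via \cref{lem:alpha} is shorter and more conceptual.
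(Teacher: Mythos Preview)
Your proof is correct and follows essentially the same approach as the paper: apply \cref{lem:alpha} with $\mathcal{F}=\mathcal{F}_{\mathbb{Z}[G]}$, then tensor with $A_{\mathbb{Z}[G]}/(X)\{-1\}$ and use $A_{\mathbb{Z}[G]}/(X)\cong\mathbb{Z}[G]$. One small remark: your parenthetical about ``verifying the hypothesis'' of \cref{lem:alpha} is unnecessary, since the lemma has no hypothesis to check---it simply \emph{defines} the $\mathbb{Z}[G]$-module structure on $A$ via the curtain action (and indeed after reducing by $(X)$ that action becomes the standard one, which is what makes the final identification work).
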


\begin{proof}
	Using \cref{lem:alpha} with $\mathcal{F} = \mathcal{F}_{\mathbb{Z}[G]}$ and that $A_{\mathbb{Z}[G]}/(X) \cong \mathbb{Z}[G]$, we obtain
	\begin{equation*}
	\begin{split}
		\llbracket D_T \rrbracket & = C_{\mathbb{Z}[G]}(D_T) \otimes_{A_{\mathbb{Z}[G]}} A_{\mathbb{Z}[G]}/(X)\{-1\} \\ &
		= \alpha([D_T]) \otimes_{A_{\mathbb{Z}[G]}} A_{\mathbb{Z}[G]}/(X)\{-1\} \\ &
		\cong \gamma(\Omega(D_T)) \otimes_{A_{\mathbb{Z}[G]}} A_{\mathbb{Z}[G]}/(X)\{-1\} \\ &
		\cong \Omega(D_T) \otimes_{\mathbb{Z}[G]} A_{\mathbb{Z}[G]}/(X)\{-1\} \\ &
		\cong \Omega(D_T).\myqed
	\end{split}
	\end{equation*}
\end{proof}

\cref{prop:ZGcomplexesequiv} tells us that the reduced $\mathcal{F}_{\mathbb{Z}[G]}$-complex and the $\mathbb{Z}[G]$-complex are isomorphic. We will from now on denote both complexes with $\llbracket\, \cdot\, \rrbracket$ and no longer distinguish between them.

\begin{theorem}\label{thm:F5fromZG}
	Let $D_T$ be a $2$-ended tangle diagram and let $\mathcal{F} = (R,A,\Delta,\epsilon)$ be a rank $2$ Frobenius system. The $\mathcal{F}$-complex $C_\mathcal{F}(D_T)$ is determined by the $\mathbb{Z}[G]$-complex $\llbracket D_T \rrbracket$ in the following way:
	\begin{equation*}
		C_\mathcal{F}(D_T) \cong \llbracket D_T \rrbracket \otimes_{\mathbb{Z}[G]} A\{1\} \in \Kom(\mathcal{M}_A),
	\end{equation*}
	where $A$ is a $\mathbb{Z}[G]$-module via $G$ acting as $\mathcal{F}\big( \img{curtain.pdf} \big)$.
\end{theorem}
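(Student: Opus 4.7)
The plan is to deduce this theorem as an essentially immediate consequence of \cref{lem:alpha} together with \cref{prop:ZGcomplexesequiv}. Indeed, both ingredients have already been set up for exactly this purpose: \cref{lem:alpha} compares the $\mathcal{F}$-TQFT applied to the Bar-Natan complex with the base-changed $\mathbb{Z}[G]$-complex, and \cref{prop:ZGcomplexesequiv} identifies $\Omega(D_T)$ with $\llbracket D_T\rrbracket$.

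First I would unwind definitions to observe that $C_\mathcal{F}(D_T)$ is, by construction, obtained by applying the $\mathcal{F}$-TQFT to the cube of resolutions of $D_T$. Since Bar-Natan's complex $[D_T]$ is itself built from that same cube of resolutions, applying $\mathcal{F}$ agrees with applying the functor $\alpha = \alpha_\mathcal{F}$ of \cref{subsec:equivtheories} to $[D_T]$. In other words, one has the tautological identification $C_\mathcal{F}(D_T) = \alpha([D_T])$ in $\Kom(\mathcal{M}_A)$.

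Next I would invoke \cref{lem:alpha} with the given $\mathcal{F}$, using the $\mathbb{Z}[G]$-module structure on $A$ in which $G$ acts as $\mathcal{F}(\img{curtain.pdf})$. This yields
\[
\alpha([D_T]) \;\cong\; \gamma(\Omega(D_T)) \;=\; \Omega(D_T) \otimes_{\mathbb{Z}[G]} A\{1\}.
\]
Finally I would apply \cref{prop:ZGcomplexesequiv} to replace $\Omega(D_T)$ by the isomorphic complex $\llbracket D_T\rrbracket$, which directly yields the claimed isomorphism
\[
C_\mathcal{F}(D_T) \;\cong\; \llbracket D_T\rrbracket \otimes_{\mathbb{Z}[G]} A\{1\}.
\]

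In this formulation there is no genuine obstacle: all of the conceptual work (the equivalence of categories $B\colon\Mat(\mathcal{E})\to\Mat(\Cob^3_{\modl}(2))$, the construction of the functors $\widehat I, \widehat F, J$, and the comparison $\widehat J \cong \gamma\circ \widehat F$) has been absorbed into \cref{prop:equivcat}, \cref{lem:alpha} and \cref{prop:ZGcomplexesequiv}. The only point requiring mild care is book-keeping of the quantum grading shift $\{1\}$ and verifying that the $\mathbb{Z}[G]$-action on $A$ coming from $G\mapsto \mathcal{F}(\img{curtain.pdf})$ is the same one used in \cref{lem:alpha}; both are immediate from the definitions.
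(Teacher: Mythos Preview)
Your proposal is correct and follows exactly the same approach as the paper: the paper's proof is the single line $C_\mathcal{F}(D_T) = \alpha([D_T]) \cong \gamma(\Omega(D_T)) \cong \llbracket D_T \rrbracket \otimes_{\mathbb{Z}[G]} A\{1\}$, invoking \cref{lem:alpha} and \cref{prop:ZGcomplexesequiv} just as you do. Your write-up simply unpacks the three steps of this chain more explicitly.
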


\begin{proof}
	The statement of the theorem follows immediately from \cref{lem:alpha} and \cref{prop:ZGcomplexesequiv}:
	\begin{equation*}
		C_\mathcal{F}(D_T) = \alpha([D_T]) \cong \gamma(\Omega(D_T)) \cong \llbracket D_T \rrbracket \otimes_{\mathbb{Z}[G]} A\{1\}.\myqed
	\end{equation*}
\end{proof}

Observe that \cref{thm:F5fromZG} specializes to \cref{thm:equiv} from the introduction:
\thmequiv*

\begin{proof}
	Apply \cref{thm:F5fromZG} with $\mathcal{F} = \mathcal{F}_{\univ}$.
\end{proof}

Let us make explicit how $\Z[G]$-homology determines the original Khovanov homology
(Naot mentions this statement in \cite[Section~6.6]{zbMATH05118580}).
\begin{corollary}\label{cor:zg2kh}
For all knots~$K$, the unreduced integral Khovanov chain complex may be obtained from $\llbracket K\rrbracket$
by tensoring with the $\Z[G]$-module $\Z\{-1\}\oplus \Z\{1\}$, where $G$ acts as
$\bigl(\begin{smallmatrix} 0 & 2 \\ 0 & 0 \end{smallmatrix}\bigr)$.
More sloppily said, replace every copy of $\Z[G]\{m\}$ by $\Z\{m-1\}\oplus \Z\{m+1\}$,
and every differential $n G^k$ with $n,k\in\Z, k\geq0$ by $\bigl(\begin{smallmatrix} n & 0 \\ 0 & n \end{smallmatrix}\bigr)$
for~$k = 0$, by $\bigl(\begin{smallmatrix} 0 & 2n \\ 0 & 0 \end{smallmatrix}\bigr)$ for~$k = 1$,
and by the zero matrix for~$k \geq 2$.
\end{corollary}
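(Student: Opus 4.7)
The plan is to specialize Theorem~\ref{thm:F5fromZG} to Khovanov's original Frobenius system $\mathcal{F}_{\mathrm{Kh}} = (\Z, A_{\mathrm{Kh}}, \Delta, \varepsilon)$ with $A_{\mathrm{Kh}} = \Z[X]/(X^2)$, $\Delta(1) = 1\otimes X + X\otimes 1$, $\Delta(X) = X\otimes X$, $\varepsilon(1) = 0$, $\varepsilon(X) = 1$. Since $\mathcal{F}_{\mathrm{Kh}}$ arises from $\mathcal{F}_{\univ}$ by the base change $h,t\mapsto 0$, its TQFT produces exactly the original unreduced integral Khovanov chain complex (applied to any diagram of the knot $K$). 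Theorem~\ref{thm:F5fromZG} thus yields $C_{\mathcal{F}_{\mathrm{Kh}}}(K) \cong \llbracket K\rrbracket \otimes_{\Z[G]} A_{\mathrm{Kh}}\{1\}$, where the $\Z[G]$-module structure on $A_{\mathrm{Kh}}$ is given by letting $G$ act as the TQFT image of the genus-one curtain.

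First I would identify $A_{\mathrm{Kh}}\{1\}$ as a bigraded $\Z$-module. The basis $\{1,X\}$ sits in quantum degrees $0$ and $-2$, so after shifting by $\{1\}$ the generators have degrees $+1$ and $-1$ respectively. Hence $A_{\mathrm{Kh}}\{1\} \cong \Z\{-1\}\oplus \Z\{1\}$, with the first summand spanned by $X$ and the second by $1$. This already accounts for the replacement of every $\Z[G]\{m\}$ by $\Z\{m-1\}\oplus\Z\{m+1\}$ after tensoring.

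Next I would compute the action of $G$ on $A_{\mathrm{Kh}}$. The genus-one curtain factors (up to boundary-fixing isotopy in $\Cob^3_{\modl}(2)$) as a comultiplication on the special strand followed by a multiplication recombining the two resulting strands, so its TQFT image is $m\circ\Delta\colon A_{\mathrm{Kh}}\to A_{\mathrm{Kh}}$. Direct computation gives $m(\Delta(1)) = m(1\otimes X + X\otimes 1) = 2X$ and $m(\Delta(X)) = m(X\otimes X) = X^2 = 0$. In the ordered basis $(X,1)$ consistent with the identification $A_{\mathrm{Kh}}\{1\} \cong \Z\{-1\}\oplus\Z\{1\}$, this is precisely the matrix $\bigl(\begin{smallmatrix} 0 & 2 \\ 0 & 0 \end{smallmatrix}\bigr)$; in particular $G^2$ acts as the zero endomorphism of $A_{\mathrm{Kh}}$.

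Finally I would translate the differentials. A differential entry $nG^k\colon \Z[G]\{m\}\to\Z[G]\{m'\}$ in $\llbracket K\rrbracket$ becomes the $\Z$-linear map $n\cdot G^k$ on $A_{\mathrm{Kh}}\{1\}$. For $k=0$ this is $n\cdot\Id$ with matrix $\bigl(\begin{smallmatrix} n & 0 \\ 0 & n \end{smallmatrix}\bigr)$; for $k=1$ it is $\bigl(\begin{smallmatrix} 0 & 2n \\ 0 & 0 \end{smallmatrix}\bigr)$; and for $k\geq 2$ it vanishes because $G^2=0$. No serious obstacle is expected: once Theorem~\ref{thm:F5fromZG} is in hand, the corollary amounts to a bookkeeping exercise, the only subtlety being the consistent ordering of the basis of $A_{\mathrm{Kh}}\{1\}$ matching the grading convention of the target $\Z\{-1\}\oplus\Z\{1\}$.
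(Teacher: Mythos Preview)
Your proof is correct and follows exactly the paper's approach: apply Theorem~\ref{thm:F5fromZG} to the Frobenius system $\Z[X]/(X^2)$ over $\Z$ and forget the $A$-action. The paper's own proof is a single sentence to this effect; you have simply spelled out the computation of the $G$-action $m\circ\Delta$ and the resulting matrices, which the paper leaves implicit.
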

\begin{proof}
Apply \cref{thm:F5fromZG} to the Frobenius system $\Z[X]/(X^2)$ over~$\Z$, and forget the action of the algebra.
\end{proof}

\cref{thm:F5fromZG} shows us how to obtain the $\mathcal{F}$-complex $C_\mathcal{F}(D_T)$ from the $\mathbb{Z}[G]$-complex $\llbracket D_T \rrbracket$ for any rank $2$ Frobenius system~$\mathcal{F}$, which is in particular true for the universal system~$\mathcal{F}_\textrm{univ}$. In order to show that the $\mathcal{F}_\textrm{univ}$- and the $\mathbb{Z}[G]$-theory are in fact equivalent, it remains to prove that $\llbracket D_T \rrbracket$ is also determined by~$C_\textrm{univ}(D_T)$.

\begin{theorem}\label{thm:ZGfromF5} 
	Let $D_T$ be a $2$-ended tangle diagram. The $\mathbb{Z}[G]$-complex $\llbracket D_T \rrbracket$ is determined by the $\mathcal{F}_\textrm{univ}$-complex $C_\textrm{univ}(D_T)$ in the following way:
	\begin{equation*}
		\llbracket D_T \rrbracket \cong C_\textnormal{univ}(D_T) \otimes_{A_\textnormal{univ}} \mathbb{Z}[G]\{-1\} \in \Kom(\mathcal{M}_{\mathbb{Z}[G]}),
	\end{equation*}
	where $\mathbb{Z}[G]$ is an $A_{\textnormal{univ}}$-module by $X$ and $t$ acting as $0$ and $h$ as~$-G$.
\end{theorem}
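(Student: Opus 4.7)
The plan is to derive \cref{thm:ZGfromF5} as a formal consequence of \cref{thm:F5fromZG} applied with $\mathcal{F}=\mathcal{F}_\textrm{univ}$. That earlier theorem already gives
\[
C_\textrm{univ}(D_T) \cong \llbracket D_T\rrbracket \otimes_{\mathbb{Z}[G]} A_\textrm{univ}\{1\},
\]
where $\mathbb{Z}[G]$ acts on $A_\textrm{univ}$ by letting $G$ act as $\mathcal{F}_\textrm{univ}(\img{curtain.pdf})$, which one easily computes (via $m\circ\Delta$) to be multiplication by $2X-h$, in agreement with \cref{thm:equiv}. Tensoring both sides on the right with $\mathbb{Z}[G]\{-1\}$ over $A_\textrm{univ}$ (with the $A_\textrm{univ}$-module structure described in the statement: $X,t\mapsto 0$, $h\mapsto -G$) and using associativity of the tensor product of bimodules, the right-hand side becomes
\[
\llbracket D_T\rrbracket \otimes_{\mathbb{Z}[G]} \bigl(A_\textrm{univ}\{1\}\otimes_{A_\textrm{univ}} \mathbb{Z}[G]\{-1\}\bigr).
\]
So it suffices to prove that the parenthesised $(\mathbb{Z}[G],\mathbb{Z}[G])$-bimodule is isomorphic to $\mathbb{Z}[G]$ in unshifted degree.

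For this identification, I would write down the explicit map $\varphi\colon A_\textrm{univ}\otimes_{A_\textrm{univ}} \mathbb{Z}[G]\to \mathbb{Z}[G]$ sending $a\otimes g\mapsto a\cdot g$ (using the right $A_\textrm{univ}$-action on $\mathbb{Z}[G]$), with inverse $g\mapsto 1\otimes g$. Surjectivity is immediate; for injectivity one notes that in the tensor product $X\otimes g = 1\otimes(X\cdot g)=0$ and $h\otimes g = 1\otimes(h\cdot g)=1\otimes(-Gg)$, so every element is of the form $1\otimes g$. The essential compatibility to verify is that $\varphi$ intertwines the two $\mathbb{Z}[G]$-actions: the left action sends $1\otimes g\mapsto G\cdot 1\otimes g = (2X-h)\otimes g = -h\otimes g = 1\otimes Gg$, which matches multiplication by $G$ on $\mathbb{Z}[G]$. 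The quantum grading shifts $\{1\}$ and $\{-1\}$ cancel, giving the desired isomorphism in unshifted degree.

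Chaining these isomorphisms yields
\[
C_\textrm{univ}(D_T)\otimes_{A_\textrm{univ}} \mathbb{Z}[G]\{-1\} \cong \llbracket D_T\rrbracket \otimes_{\mathbb{Z}[G]} \mathbb{Z}[G] \cong \llbracket D_T\rrbracket,
\]
which is precisely the statement. The whole proof is essentially a bookkeeping exercise; the only genuinely non-formal input is the identification of the curtain operator with $2X-h$ in $A_\textrm{univ}$ (already used in \cref{thm:equiv}) combined with the compatibility check $(2X-h)\otimes 1 = 1\otimes G$ in the tensor product. The main obstacle, if any, is making sure that tensoring is sensible on the level of chain complexes of modules over non-Abelian categories $\mathcal{M}_{\,\cdot\,}$ — but since $A_\textrm{univ}$ is free over $\mathbb{Z}[G]$ and $\mathbb{Z}[G]$ is free of rank one over itself as an $A_\textrm{univ}$-module via the quotient map $A_\textrm{univ}\to A_\textrm{univ}/(X,t)\cong\mathbb{Z}[G]$ (after renaming $h=-G$), each object-level tensor lands back in $\mathcal{M}_{\mathbb{Z}[G]}$, so the functor $\,\cdot\,\otimes_{A_\textrm{univ}}\mathbb{Z}[G]\{-1\}$ is well-defined on $\Kom(\mathcal{M}_{A_\textrm{univ}})$, and the above module isomorphisms upgrade to isomorphisms of chain complexes.
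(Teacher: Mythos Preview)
Your proof is correct and follows essentially the same strategy as the paper: tensor the isomorphism of \cref{thm:F5fromZG} (with $\mathcal{F}=\mathcal{F}_\textrm{univ}$) by the appropriate $A_\textrm{univ}$-module and simplify. The only difference is in bookkeeping: the paper factors the tensor product into two steps, first tensoring with $A_{\mathbb{Z}[G]}$ over $A_\textrm{univ}$ (which it identifies, via a second application of \cref{thm:F5fromZG}, with the unreduced complex $C_{\mathbb{Z}[G]}(D_T)$), and then tensoring with $A_{\mathbb{Z}[G]}/(X)\{-1\}$ to obtain the reduced complex by definition. You instead tensor in one step with $\mathbb{Z}[G]\{-1\}\cong A_\textrm{univ}/(X,t)\{-1\}$ and verify directly that the resulting $(\mathbb{Z}[G],\mathbb{Z}[G])$-bimodule $A_\textrm{univ}\otimes_{A_\textrm{univ}}\mathbb{Z}[G]$ has matching left and right $G$-actions via the computation $(2X-h)\otimes g = 1\otimes Gg$. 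Your route is slightly more direct; the paper's has the advantage of naming the intermediate object as $C_{\mathbb{Z}[G]}(D_T)$, which makes the relationship to \cref{def:redzgcomplex} transparent.
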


\begin{proof}
	By \cref{thm:F5fromZG},
	\begin{equation*}
		C_{\mathcal{F}_\textrm{univ}}(D_T) \cong \llbracket D_T \rrbracket \otimes_{\mathbb{Z}[G]} A_\textrm{univ}\{1\}.\tag{$*$}
	\end{equation*}
	Consider $A_{\mathbb{Z}[G]}$ as an $A_{\textrm{univ}}$-module by letting $t$ act as $0$ and $h$ as~$-G$. Tensoring $(*)$ with $A_{\mathbb{Z}[G]}$ over $A_\textrm{univ}$ yields
	\begin{equation*}
	\begin{split}
		C_\textrm{univ}(D_T) \otimes_{A_\textrm{univ}} A_{\mathbb{Z}[G]} & \cong \big(\llbracket D_T \rrbracket \otimes_{\mathbb{Z}[G]} A_\textrm{univ}\{1\}\big) \otimes_{A_\textrm{univ}} A_{\mathbb{Z}[G]} \\ &
		\cong \llbracket D_T \rrbracket \otimes_{\mathbb{Z}[G]} A_{\mathbb{Z}[G]}\{1\} \\ &
		\cong C_{\mathbb{Z}[G]}(D_T).
	\end{split}
	\end{equation*}
	Therefore
	\begin{equation*}
	\begin{split}
		\llbracket D_T \rrbracket & = C_{\mathbb{Z}[G]}(D_T) \otimes_{A_{\mathbb{Z}[G]}} A_{\mathbb{Z}[G]}/(X)\{-1\} \\ &
		\cong (C_\textrm{univ}(D_T) \otimes_{A_\textrm{univ}} A_{\mathbb{Z}[G]}) \otimes_{A_{\mathbb{Z}[G]}} A_{\mathbb{Z}[G]}/(X)\{-1\} \\ &
		\cong C_\textrm{univ}(D_T) \otimes_{A_\textrm{univ}} A_{\mathbb{Z}[G]}/(X)\{-1\} \\ &
		\cong C_\textrm{univ}(D_T) \otimes_{A_\textrm{univ}} \mathbb{Z}[G]\{-1\}.\myqed
	\end{split}
	\end{equation*}
\end{proof}

The discussion in this section can be summarized by the commutative diagram in \cref{fig:univAndZG}, where $\xi \colon \Kom(\mathcal{M}_{A_{\textrm{univ}}}) \to \Kom(\mathcal{M}_{\mathbb{Z}[G]})$ is the functor given by
\begin{equation*}
	\xi(C) \coloneqq C \otimes_{A_{\textrm{univ}}} \mathbb{Z}[G]\{-1\}
\end{equation*}
for $C \in \Kom(\mathcal{M}_{A_\textrm{univ}})$.

\begin{figure}[ht!]
    \centering
\begin{tikzcd}
	& \textrm{$2$-ended tangle diagrams} \arrow{d} & \\
	& \textrm{cubes of resolutions} \arrow[dddl, bend right, swap, "\mathcal{F}_{\textrm{univ}}\textrm{-TQFT}"] \arrow[dddr, bend left, "\textrm{red. } \mathcal{F}_{\mathbb{Z}[G]}\textrm{-TQFT}"] \arrow{d}{\textrm{\cite{BN1}}} & \\
	& \Kom(\Mat(\Cob^3_{\modl}(2))) \arrow{d}{\widehat I, \textrm{ \cref{prop:equivcat}}} & \\
	& \Kom(\Mat(\mathcal{E})) \arrow[dl, swap, "\widehat J"] \arrow{dr}{\widehat F} & \\
	\Kom(\mathcal{M}_{A_\textrm{univ}}) \arrow[rr, shift right, swap, "\xi"] & & \Kom(\mathcal{M}_{\mathbb{Z}[G]}) \arrow[ll, shift right, swap, "\gamma"]
\end{tikzcd}
\caption{A summary of the relationships discussed in \cref{subsec:equivtheories}.}
    \label{fig:univAndZG}
\end{figure}
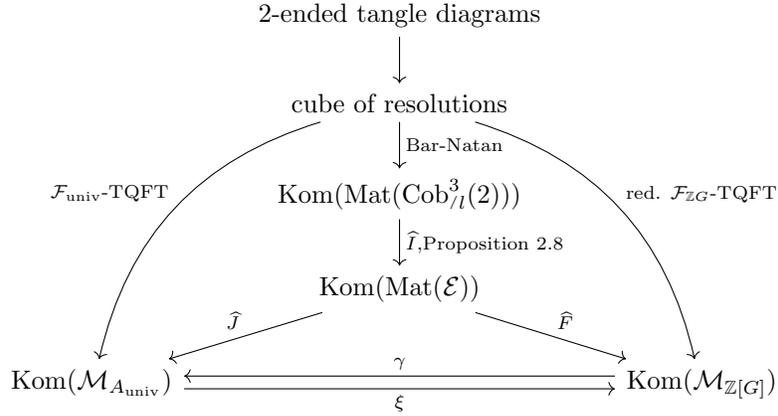

\subsection{Reduced $\mathbb{Z}[G]$-homology}\label{subsec:redzghomology}

Let $T$ be a $2$-ended tangle. We have seen in the previous subsection that the reduced $\mathbb{Z}[G]$-complex $\llbracket T \rrbracket$ is determined by the $\mathcal{F}_\textrm{univ}$-complex $C_\textrm{univ}(T)$ and vice-versa. One advantage of the $\mathbb{Z}[G]$-complex is that setting $G = 1$ yields a particularly simple homology theory.

\begin{prop}\label{prop:redzghomdim}
	Let $T$ be a $2$-ended tangle with a single component. Then
	\begin{equation*}
		H(\llbracket T \rrbracket_{G=1})\cong \mathbb{Z},
	\end{equation*}
	where $\llbracket T \rrbracket_{G=1}$ is the complex $\llbracket T \rrbracket_{G=1} \coloneqq \llbracket T \rrbracket \otimes_{\mathbb{Z}[G]} \mathbb{Z}[G]/(G-1)$.
\end{prop}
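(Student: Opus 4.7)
The plan is to adapt Lee's theorem on the Lee homology of knots \cite{Lee} to the present integral setting, exploiting that the relevant Frobenius algebra splits over $\mathbb{Z}$. The key algebraic observation is that setting $G = 1$ turns $\mathcal{F}_{\mathbb{Z}[G]}$ into the Frobenius algebra $A = \mathbb{Z}[X]/(X(X+1))$ over $\mathbb{Z}$. Since $X$ and $X+1$ are coprime in $\mathbb{Z}[X]$ (as $1 = (X+1)-X$), the Chinese Remainder Theorem yields an \emph{integral} ring isomorphism $A \cong \mathbb{Z} \times \mathbb{Z}$, realized by the orthogonal idempotents $e_1 = X+1$ and $e_2 = -X$. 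In particular, no inversion of $2$ is required, in contrast to the usual passage from Khovanov to Lee homology.

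First I would establish the unreduced statement $H(C_{\mathbb{Z}[G]}(T)|_{G=1}) \cong \mathbb{Z}^2$ for a knot $T$. A direct computation in the idempotent basis gives $e_i e_j = \delta_{ij}\, e_i$ and $\Delta(e_i) = \pm e_i \otimes e_i$, so every saddle map preserves the per-circle idempotent label up to sign. The complex therefore splits over $\mathbb{Z}$ into subcomplexes indexed by labelings of circles that are consistent across all cube-edges. Mimicking Lee, for each of the two orientations $o$ of $T$ one constructs a canonical cycle $\mathfrak{s}_o$ supported on the oriented resolution, labelling each circle by $e_1$ or $e_2$ according to Lee's orientation-dependent rule. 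A standard argument (for instance, a filtration by cube-degree, or a Reidemeister-style induction exploiting the idempotent splitting) shows that the two classes $[\mathfrak{s}_o]$ form a $\mathbb{Z}$-basis of the homology.

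To descend to the reduced complex, note that under $A \cong \mathbb{Z} \times \mathbb{Z}$ the ideal $(X) \subset A$ coincides with $\mathbb{Z} e_2$ (because $X = -e_2$), so $A/(X) \cong \mathbb{Z}$ is the projection onto the $e_1$-summand. Passing from $C_{\mathbb{Z}[G]}(T)|_{G=1}$ to $\llbracket T \rrbracket_{G=1}$ therefore amounts to forcing the marked circle to carry the label $e_1$ at every vertex of the cube. Reversing the orientation of the knot reverses the rotation direction of every circle in the oriented resolution and hence swaps all Lee labels, so the two canonical generators $\mathfrak{s}_o$ and $\mathfrak{s}_{\bar{o}}$ carry opposite labels on the marked circle; exactly one survives the reduction, and we obtain $H(\llbracket T \rrbracket_{G=1}) \cong \mathbb{Z}$. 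The main obstacle is the integral span step: tensoring with $\mathbb{Q}$ reduces it to Lee's theorem (our algebra becomes isomorphic to Lee's via $Y = 2X+1$), but one must rule out torsion, which the idempotent splitting of $A$ over $\mathbb{Z}$ guarantees once the decomposition of the full complex into idempotent-labeled subcomplexes is set up carefully.
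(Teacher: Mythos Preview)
Your proposal is correct and close in spirit to the paper's proof, but the execution differs in a meaningful way. The paper also uses the idempotent decomposition of the algebra at $G=1$ (via Wehrli's edge-coloring technique) and the Lee-type structure, but it works over $\mathbb{Q}$ first: it cites \cite{mvt} for $H(C(T))\cong\mathbb{Q}^2$, splits $C(T)=XC(T)\oplus (X{+}1)C(T)$ at the base point, observes that reduction kills the $XC(T)$ summand, and only then returns to $\mathbb{Z}$ by invoking a separate torsion-freeness argument from \cite{mvt}. Your route instead exploits from the outset that the idempotents $e_1=X{+}1$, $e_2=-X$ are already integral (in contrast to classical Lee theory, where one must invert $2$), so the whole Lee-style decomposition and the identification of the surviving generator can in principle be carried out over $\mathbb{Z}$ directly. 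This is conceptually cleaner and avoids the detour through $\mathbb{Q}$, but it shifts the burden: the step you call ``a standard argument'' (that the canonical classes form a $\mathbb{Z}$-basis, equivalently that the non-orientation subcomplexes are integrally acyclic) is exactly where the work lies, and it is what the paper outsources to \cite{mvt}. Both approaches are valid; yours is more self-contained once that step is spelled out, while the paper's is quicker to make rigorous by citation.
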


\begin{proof}
	Let us first look at the unreduced situation over the rationals, i.e.\ the complex
	\begin{equation*}
		C(T) \coloneqq (C_{\mathbb{Z}[G]}(T) \otimes_{A_{\mathbb{Z}[G]}} A_{\mathbb{Q}[G]})
		\otimes_{A_{\mathbb{Q}[G]}} \mathbb{Q}[G]/(G-1),
	\end{equation*}
	where $A_{\mathbb{Z}[G]} = \mathbb{Z}[X, G]/(X^2+GX)$ and $A_{\mathbb{Q}[G]} = \mathbb{Q}[X, G]/(X^2+GX)$. Note that $C(T)$ can be equivalently obtained from the Frobenius algebra $\mathbb{Q}[X]/(X^2+X)$ in the usual way. By \cite[Proposition 2.3]{mvt}, we know that
	\begin{equation*}
		H(C(T)) \cong \mathbb{Q} \oplus \mathbb{Q}.
	\end{equation*}
	In fact, using Wehrli's edge-coloring technique \cite[Section 2.1]{wehrliphd}, one obtains a decomposition
	\begin{equation*}
		C(T) = XC(T) \oplus (X+1)C(T),
	\end{equation*}
	where $XC(T)$ and $(X+1)C(T)$ are the subcomplexes generated by all elements having $X$ and $X+1$ as the first tensor factor (i.e.\ at the base point), respectively. Similar to \cite[Theorem 5]{wehrliphd}, one can show that both $XC(T)$ and $(X+1)C(T)$ have homology of dimension one.
	Now let's look at the reduced situation over the rationals, i.e.\ the complex
	\begin{equation*}
		\llbracket T \rrbracket_{\mathbb{Q}, G=1} \coloneqq \llbracket T \rrbracket_{G=1} \otimes_\mathbb{Z} \mathbb{Q},
	\end{equation*}
	By construction, $\llbracket T \rrbracket_{\mathbb{Q}, G=1}$ is equivalent to the complex $C(T)$ with $X$ set to $0$ in the first tensor factor of every summand in~$C(T)$, which means that the summand $XC(T)$ becomes trivial after reducing. Hence
	\begin{equation*}
		H(\llbracket T \rrbracket_{\mathbb{Q}, G=1}) \cong \mathbb{Q}.
	\end{equation*}
	Switching back to the integers, the above tells us that $\dim_\mathbb{Q}(\llbracket T \rrbracket_{G=1}) = 1$. Hence it remains to show that $\llbracket T \rrbracket_{G=1}$ has no torsion. This is done in the same way as in the proof of \cite[Proposition 2.4, (ii)]{mvt}.
\end{proof}

\begin{remark}
	Let $T$ be a $2$-ended tangle corresponding to a knot~$K$. It is interesting to note that one can extract the Rasmussen $s_\mathbb{F}$-invariant of $K$ over any field $\mathbb{F}$ from the $\mathbb{Z}[G]$-homology of~$K$. Indeed, consider the $\mathbb{Z}[G]$-complex with coefficients switched to some field~$\mathbb{F}$, i.e.
	\begin{equation*}
		\llbracket K \rrbracket_{\mathbb{F}[G]} = \llbracket K \rrbracket \otimes_{\mathbb{Z}[G]} \mathbb{F}[G].
	\end{equation*}
	This is a Khovanov-type complex over the PID~$\mathbb{F}[G]$, hence it decomposes into a single grading-shifted copy of the base ring $\mathbb{F}[G]\{n\}$ and some summands of the form $\mathbb{F}[G]\{m\} \overset{G^k}\rightarrow \mathbb{F}[G]\{2k+m\}$ for $k,m,n \in \mathbb{Z}$ and $k\geq 0$ (a so-called \emph{pawn} and several \emph{$G^k$-knights}, cf.~\cref{dfn:pawnknight}). Therefore, setting $G = 1$ in $\llbracket K \rrbracket_{\mathbb{F}[G]}$ yields
	\begin{equation*}
		\llbracket K \rrbracket_{\mathbb{F}[G]} \otimes_{\mathbb{F}[G]} \mathbb{F}[G]/(G-1) \simeq \mathbb{F}[G]\{n\} \otimes_{\mathbb{F}[G]} \mathbb{F}[G]/(G-1).
	\end{equation*}
	We claim that~$n$, i.e.\ the filtered degree of the generator of $\mathbb{F}[G]\{n\}$ in homology, is equal to~$s_\mathbb{F}(K)$. By \cite{mvt}, $s_\mathbb{F}(K)$ can be obtained from the homology of the unreduced complex $C_{\mathcal{F}_{\mathbb{F}[G]}}(K)$ corresponding to the Frobenius algebra $A_{\mathbb{F}[G]} = \mathbb{F}[G,X]/(X^2+GX)$ after setting~$G = 1$. On the other hand, using \cref{thm:F5fromZG} and the decomposition of $\llbracket K \rrbracket_{\mathbb{F}[G]}$ described above, we can write $C_{\mathcal{F}_{\mathbb{F}[G]}}(K)$ as
	\begin{equation*}
	\begin{split}
		C_{\mathcal{F}_{\mathbb{F}[G]}}(K) & \cong \llbracket K \rrbracket_{\mathbb{F}[G]} \otimes_{\mathbb{F}[G]} A_{\mathbb{F}[G]}\{1\} \\ &
                \cong (\mathbb{F}[G]\{n\} \oplus \mathcal{R}) \otimes_{\mathbb{F}[G]} A_{\mathbb{F}[G]}\{1\} \\ &
                \cong A_{\mathbb{F}[G]}\{n+1\} \oplus (\mathcal{R} \otimes_{\mathbb{F}[G]} A_{\mathbb{F}[G]}\{1\}),
	\end{split}
	\end{equation*}
	where $\mathcal{R}$ consists solely of summands $\mathbb{F}[G]\{m\} \overset{G^k}\rightarrow \mathbb{F}[G]\{2k+m\}$. If we now set $G = 1$ and take homology, we obtain
	\begin{equation*}
		H(C_{\mathcal{F}_{\mathbb{F}[G]}}(K) \otimes_{\mathbb{F}[G]} \mathbb{F}[G]/(G-1)) \cong \mathbb{F}[X]/(X^2+X)\{n+1\}.
	\end{equation*}
	Now $\mathbb{F}[X]/(X^2+X)\{n+1\}$ is generated by $1$ and $X$ in filtered degrees $n+1$ and $n-1$ respectively, hence $s_\mathbb{F}(K) = n$ by \cite{ras3} as claimed.
\end{remark}

\subsection{The Bar-Natan complex of tangles with base point}\label{subsect:bnzg}
Recall that the crossingless unoriented $2n$-ended tangle diagrams in $\Cob^3(2n)$ lie inside a fixed disk with fixed end points.
Let us fix one of those end points as base point.
Given a cobordism from the trivial 2-ended tangle diagram $D_{T_0}$ to itself,
and a cobordism in $\Cob^3(2n)$ between diagrams $D$ and~$D'$, one may glue these two cobordisms together such
that one of the end points of $D_{T_0}$ gets attached to the base point of $D$ and~$D'$.
This gives a bilinear map
\[
\Hom_{\Cob^3(2)}(D_{T_0}, D_{T_0}) \times \Hom_{\Cob^3(2n)}(D, D') \to \Hom_{\Cob^3(2n)}(D, D').
\]
Quotienting by the relations~$l$, and using that $\Hom_{\Cob^3(2)}(D_{T_0}, D_{T_0})$ is isomorphic to~$\Z[G]$, we obtain a $\Z[G]$-action on each of the morphism $\Z$-modules of~$\Cob^3_{\modl}(2n)$.
\begin{dfn}\label{dfn:bnzg}
Denote by $\Cob^{3,\bullet}_{\modl}(2n)$ the $\Z[G]$-enriched category
obtained from $\Cob^3_{\modl}(2n)$ by fixing one of the tangle end points as base point
and letting $\Z[G]$ act on the morphism groups as described above.
For a $2n$-ended tangle diagram $D$ with base point,
denote by $[D]^{\bullet}$ the Bar-Natan chain complex of $D$ over $\Cob^{3,\bullet}_{\modl}(2n)$.
Here, we identify equivalence classes of tangle diagrams in the disk in which $D$ lives
with equivalence classes of tangle diagrams
in the disk fixed for $\Cob^{3,\bullet}_{\modl}(2n)$, using a homeomorphism (which is unique up to isotopy) between
these disks that sends end points to end points and base point to base point.
\end{dfn}
Note that one may recover $\Cob^{3}_{\modl}(2n)$ from $\Cob^{3,\bullet}_{\modl}(2n)$ and $[D]$ from $[D]^{\bullet}$
by simply forgetting the $\Z[G]$-action and the base point. In other words, \cref{dfn:bnzg} only introduces the action of~$G$, but does not introduce any new objects and morphisms.
\begin{remark}
For~$n = 1$, the $\Z[G]$-action on $\Cob^{3}_{\modl}(2)$ is by construction the same as the one obtained
via the equivalence of $\Cob^{3}_{\modl}(2)$ and~$\mathcal{M}_{\Z[G]}$. In particular, for 2-ended tangle diagrams~$D$, both choices of base point result in the same $\Z[G]$-action on~$[D]^{\bullet}$.
\end{remark}

Gluing constructions as the one above have been formalized by Bar-Natan \cite{BN1} using the following tool.
\begin{dfn}
A \emph{$d$-input planar arc diagram} $\mathcal{D}$ is a disk (called \emph{output disk}), with $d$ enumerated open so-called \emph{input disks} removed from its interior; together with a proper smooth oriented 1-submanifold of~$\mathcal{D}$, with \emph{end points} on~$\partial \mathcal{D}$. Here, $\partial \mathcal{D}$ consists of the union of the~$\partial E$, with $E$ ranging over the input disks and the output disk. The number of end points on each such $\partial E$ is required to be even; if it is non-zero, then one of the end points is distinguished as base point of~$E$. An example can be seen below, and in \cref{fig:planararctwist}.
\end{dfn}

Let $\mathcal{D}$ be a $d$-input planar arc diagram with $2n_0$ end points on the output disk and $2n_i$ end points on the $i$-th input disk. By gluing tangle diagrams, $\mathcal{D}$ yields an operator
that takes as input $d$ base pointed tangle diagrams $D_1, \ldots, D_d$
that fit into the input disks, and that gives as output a base pointed tangle diagram $\mathcal{D}(D_1, \ldots, D_d)$.
For what follows, recall that for each~$n\geq 1$, we fixed a base point on the boundary of the disk containing the crossingless tangles of~$\Cob^3(2n)$.
By gluing crossingless tangle diagrams and cobordisms, $\mathcal{D}$ then gives a functor
\[
\prod_{i=1}^d \Cob^3(2n_i) \to \Cob^3(2n_0),
\]
which is compatible with modding out the relations~$l$.
By taking tensor products, this functor extends to a functor
\[
\prod_{i=1}^d \Kom(\Mat(\Cob^3_{\modl}(2n_i))) \to \Kom(\Mat(\Cob^3_{\modl}(2n_0))),
\]
which is compatible with homotopy equivalence.
Note that the orientations of the arcs and circles in $\mathcal{D}$ matter for the operator, but not for the functors.
We have the following compatibility result:
\begin{equation}\label{eq:planararcdiagram1}
\mathcal{D}([D_1], \ldots, [D_d]) \cong
[\mathcal{D}(D_1, \ldots, D_d)].
\end{equation}

Equipped with this tool set, one could give a more formal definition of the $\Z[G]$ action on $[D]^{\bullet}$ given in \cref{dfn:bnzg}, using a 2-input planar arc diagram whose two input disks have $2$ and $2n$ end points, respectively.

Moreover, for the following input diagram
\begin{center}$\mathcal{D} =$ \raisebox{-.4\height}{
\begingroup%
  \makeatletter%
  \providecommand\color[2][]{%
    \errmessage{(Inkscape) Color is used for the text in Inkscape, but the package 'color.sty' is not loaded}%
    \renewcommand\color[2][]{}%
  }%
  \providecommand\transparent[1]{%
    \errmessage{(Inkscape) Transparency is used (non-zero) for the text in Inkscape, but the package 'transparent.sty' is not loaded}%
    \renewcommand\transparent[1]{}%
  }%
  \providecommand\rotatebox[2]{#2}%
  \newcommand*\fsize{\dimexpr\f@size pt\relax}%
  \newcommand*\lineheight[1]{\fontsize{\fsize}{#1\fsize}\selectfont}%
  \ifx\svgwidth\undefined%
    \setlength{\unitlength}{89.90381556bp}%
    \ifx\svgscale\undefined%
      \relax%
    \else%
      \setlength{\unitlength}{\unitlength * \real{\svgscale}}%
    \fi%
  \else%
    \setlength{\unitlength}{\svgwidth}%
  \fi%
  \global\let\svgwidth\undefined%
  \global\let\svgscale\undefined%
  \makeatother%
  \begin{picture}(1,0.53390233)%
    \lineheight{1}%
    \setlength\tabcolsep{0pt}%
    \put(0,0){\includegraphics[width=\unitlength,page=1]{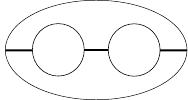}}%
    \put(-0.00300046,0.20855839){\makebox(0,0)[lt]{\lineheight{1.25}\smash{\begin{tabular}[t]{l}\textbf{*}\end{tabular}}}}%
    \put(0.14170149,0.20724699){\makebox(0,0)[lt]{\lineheight{1.25}\smash{\begin{tabular}[t]{l}\textbf{*}\end{tabular}}}}%
    \put(0.54719409,0.20724699){\makebox(0,0)[lt]{\lineheight{1.25}\smash{\begin{tabular}[t]{l}\textbf{*}\end{tabular}}}}%
  \end{picture}%
\endgroup%
},\end{center}
clearly $\mathcal{D}(D_1, D_2)$ is a diagram of the connected sum~$L_1 \# L_2$,
if $D_i$ is a 2-ended tangle diagram corresponding do the base pointed link~$L_i$,
in the sense of \cref{eq:2endlinks}. From $[\mathcal{D}(D_1, D_2)] \cong \mathcal{D}([D_1], [D_2])$,
it now follows that
\begin{equation}\label{eq:connectedsum}
\llbracket L_1 \# L_2 \rrbracket \cong \llbracket L_1 \rrbracket \otimes \llbracket L_2 \rrbracket.
\end{equation}

To adapt to $\Z[G]$-complexes, we consider planar arc diagrams
$\mathcal{D}$ satisfying the following condition:
$\mathcal{D}$ contains an arc connecting the base point of the output disk to the base point of the first input disk.
Then, $\mathcal{D}$ induces a functor
\[
\Kom(\Mat(\Cob^{3,\bullet}_{\modl}(2n_1))) \times \prod_{i=2}^d \Kom(\Mat(\Cob^3_{\modl}(2n_i))) \to \Kom(\Mat(\Cob^{3,\bullet}_{\modl}(2n_0))).
\]
For this functor, we have the following analog of \cref{eq:planararcdiagram1}:
\[
\mathcal{D}([D_1]^{\bullet}, [D_2], \ldots, [D_d]) \simeq
[\mathcal{D}(D_1, \ldots D_d)]^{\bullet}.
\]

\section{Properties of the $\lambda$-invariant}\label{sec:lambdaintro}
For the convenience of the reader, let us restate the definition of $\lambda$ from the introduction.
\dfnlambda*

Let us take this opportunity to clarify what we mean by `ungraded'.
\begin{dfn}\label{dfn:ungraded}
For chain complexes $(C,d), (C',d')$ in some additive category,
an \emph{ungraded} chain map $f\colon C\to C'$ 
is a morphism
\[
f\colon \bigoplus_{i=-\infty}^{\infty} C_i \to \bigoplus_{i=-\infty}^{\infty} C'_i
\]
that need \emph{not} respect homological degree, such that $d'\circ f = f \circ d$.
Whenever we want to highlight the difference, we call a chain map in the usual sense \emph{graded}.
If the underlying category is Abelian (so that one may take homology), then the ungraded chain map $f$ induces a morphism
\[
f_*\colon H(C) = \bigoplus_{i=-\infty}^{\infty} H_i(C) \to \bigoplus_{i=-\infty}^{\infty} H_i(C') = H(C').
\]
\end{dfn}
Some authors also call a chain complex without homological grading a \emph{differential module}.

\subsection{Basic properties and generalizations of $\lambda$}
First, let us extend the above definition of~$\lambda$.
\begin{dfn}\label{dfn:lambdageneral}
Abusing notation, we denote by $\lambda$ all of the following functions:
\begin{itemize}
\item $\lambda(A,B)$
for two chain complexes $A, B$ over~$\Z[G]$, or over $\Cob^{3,\bullet}_{\modl}(2n)$, is defined as
the minimal integer $k\geq 0$ such that there exist ungraded chain maps $f\colon A\to B$ and $g\colon B\to A$
and homotopies $g\circ f\simeq G^k\cdot \id_A$, $f\circ g\simeq G^k\cdot \id_B$, if such a $k$ exists, and $\infty$ otherwise.
\item $\lambda(A)$ for $A$ a chain complex over $\Z[G]$ is an abbreviation for $\lambda(A, \llbracket U\rrbracket)$.
\item $\lambda(D,D')$ for
$D$ and $D'$ two tangle diagrams in a fixed disk with the same end points and the same base point is
defined as $\lambda([D]^{\bullet}, [D']^{\bullet})$.
\item $\lambda(K, J)$ for $K$ and $J$ two knots is defined as $\lambda(\llbracket K\rrbracket, \llbracket J\rrbracket)$.
\end{itemize}
\end{dfn}
Note that for all knots~$K$, $\lambda(K)$ as in \cref{dfn:lambda} equals \[\lambda(K,U) = \lambda(\llbracket K\rrbracket, \llbracket U\rrbracket) = \lambda(\llbracket K\rrbracket)\] as in \cref{dfn:lambdageneral}.
\begin{remark}
One can naturally extend the definition of $\lambda$ from knots to links with base point, by setting
$\lambda(L, L')$ to be~$\lambda(T, T')$, where $T, T'$ are the 2-ended tangles corresponding
to the links $L, L'$ via \cref{eq:2endlinks}.
In this sense, most of this paper's results will generalize from knots to links. For simplicity's sake,
however, we are sticking with knots.
\end{remark}

Next, let us prove some useful basic properties of~$\lambda$.
\begin{lem}\label{lem:lambdadditive}
For some~$k\geq 1$, let $A_1,\ldots, A_k, B_1, \ldots, B_k$ be chain complexes over $\Z[G]$ or~$\Cob^{3,\bullet}_{\modl}(2n)$.
Then, for $A = A_1 \oplus \cdots \oplus A_k$ and $B = B_1 \oplus \cdots \oplus B_k$ one has
\[
\lambda(A, B) ~\leq~ \max(\lambda(A_1,B_1),\ldots, \lambda(A_k, B_k)).
\]
\end{lem}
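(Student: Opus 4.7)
The plan is to directly witness the inequality by taking direct sums of the maps and homotopies realizing each $\lambda(A_i, B_i)$, after rescaling by powers of $G$. The key observation, underlying everything, is that if $\lambda(A_i, B_i) = k_i$, then for any $N \geq k_i$ the value $N$ is also achievable: one simply multiplies either of the two chain maps by $G^{N-k_i}$ and the homotopies by $G^{N-k_i}$ as well. This is legal because $G$ acts $\Z[G]$-linearly on $\Z[G]$-complexes, and centrally on morphism groups of $\Cob^{3,\bullet}_{\modl}(2n)$ (the $G$-action is implemented by gluing a curtain of positive genus outside the tangle disks, hence commutes with every cobordism in the category).

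First, set $N = \max_i \lambda(A_i, B_i)$ and assume without loss of generality that each $\lambda(A_i, B_i) = k_i < \infty$ (otherwise the statement is vacuous). For each $i$ pick ungraded chain maps $f_i\colon A_i \to B_i$ and $g_i\colon B_i \to A_i$ together with homotopies $H_i, H_i'$ certifying $g_i\circ f_i - G^{k_i}\id_{A_i} = d H_i + H_i d$ and $f_i\circ g_i - G^{k_i}\id_{B_i} = d H_i' + H_i' d$.

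Next, define rescaled maps $\tilde f_i := G^{N-k_i} f_i$ (leaving $g_i$ unchanged), and rescaled homotopies $\tilde H_i := G^{N-k_i} H_i$, $\tilde H_i' := G^{N-k_i} H_i'$. Using that multiplication by $G^{N-k_i}$ is a chain map commuting with everything in sight, a one-line check gives
\[
g_i \circ \tilde f_i - G^N \id_{A_i} = d \tilde H_i + \tilde H_i d, \qquad \tilde f_i \circ g_i - G^N \id_{B_i} = d \tilde H_i' + \tilde H_i' d.
\]

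Finally, form $f := \bigoplus_i \tilde f_i \colon A \to B$ and $g := \bigoplus_i g_i \colon B \to A$, both ungraded chain maps. The direct sum homotopies $H := \bigoplus_i \tilde H_i$ and $H' := \bigoplus_i \tilde H_i'$ then witness $g\circ f \simeq G^N \id_A$ and $f\circ g \simeq G^N \id_B$, yielding $\lambda(A,B) \leq N$ as required.

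There is no real obstacle here: the statement is essentially bookkeeping about direct sums once one notes that $\lambda(A_i, B_i) \leq N$ implies the existence of witnesses with exponent exactly $N$. The only point that deserves explicit mention is the centrality of the $G$-action on $\Cob^{3,\bullet}_{\modl}(2n)$, so that $G^{N-k_i} f_i$ is again a chain map and $G^{N-k_i} H_i$ is again a valid homotopy; this is immediate from the definition of the $G$-action by gluing in curtains of positive genus outside the tangle disks.
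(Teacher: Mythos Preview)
Your proof is correct and follows essentially the same approach as the paper: rescale each $f_i$ by $G^{N-k_i}$, take block-diagonal direct sums, and verify the homotopies. The paper reduces to $k=2$ and leaves the homotopy check to the reader, whereas you treat general $k$ and spell out the rescaled homotopies and the centrality of the $G$-action, but the argument is the same.
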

\begin{proof}
Without loss of generality we can assume that~$k = 2$.
If either $\lambda(A_1,B_1)$ or $\lambda(A_2,B_2)$ are equal to $\infty$ the statement of the lemma is trivial, so let us assume that they are both finite. We pick chain maps $f_1,g_1$ such that $f_1 \circ g_1 \simeq G^{\lambda(A_1,B_1)} \cdot \id_{B_1}$ and $g_1 \circ f_1 \simeq G^{\lambda(A_1,B_1)} \cdot \id_{A_1}$, and choose maps $f_2,g_2$ similarly for~$\lambda(A_2,B_2)$.
Let $m = \max (\lambda(A_1,B_1), \lambda(A_2, B_2))$ and define $f\colon A_1\oplus A_2 \to B_1\oplus B_2$, $g\colon B_1\oplus B_2 \to A_1\oplus A_2$ as follows:
\[
f=
\begin{pmatrix}
G^{m-\lambda(A_1,B_1)} \cdot f_1 & 0  \\
0 & G^{m-\lambda(A_2,B_2)} \cdot f_2
\end{pmatrix}
, \quad g= 
\begin{pmatrix} 
g_1 & 0 \\
0 & g_2
\end{pmatrix}.
\]
We leave it to the reader to check that $f \circ g \simeq G^m \cdot \id_{B_1 \oplus B_2}$ and $g \circ f \simeq G^m \cdot \id_{A_1 \oplus A_2}$.
\end{proof}
Taking one of the $B_i$ as~$\llbracket U\rrbracket$, and all the others as~$0$,
we obtain the following special case of \cref{lem:lambdadditive},
which gives a useful upper bound for $\lambda$ of a direct sum.
\begin{corollary} \label{lem:lambdadirectsum}
Let $C^1,\ldots,C^n$ be chain complexes of $\Z[G]$-modules, fix a $j \in  \{1,\ldots,n\}$ and let~$l_k = \lambda(C^k,0)$, for all~$k \ne j$, and~$l_j = \lambda(C^j)$. Then:
\[
\lambda(\bigoplus_i C^i) \leq \max_{i} l_i.
\myqed
\]
\end{corollary}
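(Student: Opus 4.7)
The plan is to derive this corollary directly from \cref{lem:lambdadditive}, as indicated by the sentence immediately preceding the statement. Specifically, I would apply \cref{lem:lambdadditive} with $k = n$, $A_i = C^i$ for every $i$, and
\[
B_i = \begin{cases} \llbracket U\rrbracket & \text{if } i = j, \\ 0 & \text{if } i \neq j. \end{cases}
\]
Since direct-summing with zero complexes has no effect up to isomorphism, the resulting $B = B_1 \oplus \cdots \oplus B_n$ is canonically isomorphic to $\llbracket U\rrbracket$, while $A = \bigoplus_i C^i$ is the complex whose $\lambda$ we want to bound.

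With this substitution, the quantities appearing in \cref{lem:lambdadditive} match the $l_i$ of the corollary: by \cref{dfn:lambdageneral}, $\lambda(A_i, B_i) = \lambda(C^i, 0) = l_i$ for $i \neq j$, and $\lambda(A_j, B_j) = \lambda(C^j, \llbracket U\rrbracket) = \lambda(C^j) = l_j$. Applying \cref{lem:lambdadditive} then yields
\[
\lambda\Bigl(\bigoplus_i C^i,\ \llbracket U\rrbracket\Bigr) \leq \max_i\, l_i,
\]
which by the second clause of \cref{dfn:lambdageneral} is precisely $\lambda(\bigoplus_i C^i) \leq \max_i l_i$.

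There is essentially no obstacle here, since all the work has been done in \cref{lem:lambdadditive}: the block-diagonal construction with suitable $G$-power adjustments on the diagonal already accommodates arbitrary pairs $(A_i, B_i)$. The only content of the corollary is the observation that by designating one slot to be $\llbracket U\rrbracket$ and the remaining slots to be $0$, one converts the pairwise statement of \cref{lem:lambdadditive} into a statement about $\lambda$ of a single chain complex, which is the form in which we will actually apply it to decomposable $\Z[G]$-complexes of knots.
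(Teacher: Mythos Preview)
Your proposal is correct and matches the paper's approach exactly: the paper states (immediately before the corollary) that it follows from \cref{lem:lambdadditive} by taking one of the $B_i$ as $\llbracket U\rrbracket$ and the others as $0$, and accordingly gives no separate proof.
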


\begin{lem} \label{lem:propertieslambda}
\begin{enumerate} [label=(\roman*)]
    \item $\lambda(A_1 \otimes A_2) \leq \lambda(A_1) + \lambda(A_2)$ for $A_1,A_2$ chain complexes of $\Z[G]$-modules.
    \item $\lambda(\overline{A})=\lambda(A)$, where $A$ is a $\Z[G]$-complex and $\overline{A}$ its dual.
\end{enumerate}
\end{lem}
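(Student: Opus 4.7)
For part (i), the strategy is to tensor the witness maps. Pick ungraded chain maps $f_i\colon A_i \to \llbracket U\rrbracket$ and $g_i\colon \llbracket U\rrbracket \to A_i$ realizing $\lambda_i := \lambda(A_i)$, together with homotopies $H_i$ and $K_i$ verifying $g_i\circ f_i \simeq G^{\lambda_i}\cdot \id_{A_i}$ and $f_i\circ g_i \simeq G^{\lambda_i}\cdot \id_{\llbracket U\rrbracket}$. Since $\llbracket U\rrbracket\otimes_{\Z[G]} \llbracket U\rrbracket \simeq \llbracket U\rrbracket$, the maps
\[
f := f_1\otimes f_2\colon A_1\otimes A_2 \longrightarrow \llbracket U\rrbracket,\qquad g := g_1\otimes g_2\colon \llbracket U\rrbracket \longrightarrow A_1\otimes A_2
\]
are candidates. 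The verification reduces to the identity
\[
(g_1 f_1)\otimes (g_2 f_2) - G^{\lambda_1+\lambda_2}\id_{A_1\otimes A_2} = \bigl((g_1 f_1)-G^{\lambda_1}\id_{A_1}\bigr)\otimes (g_2 f_2) + G^{\lambda_1}\id_{A_1}\otimes \bigl((g_2 f_2) - G^{\lambda_2}\id_{A_2}\bigr),
\]
each summand of which is the tensor of a null-homotopic map with a chain map, hence null-homotopic via an explicit combination of $H_1\otimes (g_2f_2)$ and $G^{\lambda_1}\id_{A_1}\otimes H_2$; the other composition is handled symmetrically.

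For part (ii), the plan is to dualize. Let $f\colon A\to \llbracket U\rrbracket$ and $g\colon \llbracket U\rrbracket \to A$ witness $\lambda(A) = k$, with homotopies $H, K$ as above. Passing to $\Z[G]$-duals turns these into ungraded chain maps $\bar f\colon \overline{\llbracket U\rrbracket} \to \bar A$ and $\bar g\colon \bar A \to \overline{\llbracket U\rrbracket}$, and it turns $H,K$ into homotopies witnessing $\bar f\circ \bar g \simeq G^k\cdot \id_{\bar A}$ and $\bar g\circ \bar f \simeq G^k\cdot \id_{\overline{\llbracket U\rrbracket}}$ (note that $G$ acts on the dual by $G$, not $-G$, because dualizing reverses \emph{only} the homological and quantum degrees, not the $\Z[G]$-module structure). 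Since $\llbracket U\rrbracket \simeq \Z[G]$ is self-dual, we identify $\overline{\llbracket U\rrbracket}$ with $\llbracket U\rrbracket$ and obtain $\lambda(\bar A) \leq \lambda(A)$. Applying the same construction to $\bar A$ and using $\overline{\bar A} \simeq A$ (on the level of finitely generated free $\Z[G]$-modules, which is all we deal with after the standard simplifications) gives the reverse inequality.

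The only subtlety lies in checking that the tensor-product homotopy in (i) is well-defined even for \emph{ungraded} chain maps; this is not an issue because the formulas above use only the chain map property, not homogeneity of degree. Likewise for (ii), the passage to duals commutes with ungraded chain maps and with chain homotopies, because one works entry-wise with the underlying $\Z[G]$-module of the total complex $\bigoplus_i A_i$. No calculation beyond these bookkeeping points is required.
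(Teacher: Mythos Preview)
Your proof is correct and follows essentially the same approach as the paper: tensor the witness maps for (i) and dualize for (ii), using that $\llbracket U\rrbracket$ is self-dual. You are in fact more explicit than the paper in writing down the homotopy for (i); the paper simply asserts $g\circ f \simeq G^{\lambda(A_1)+\lambda(A_2)}\cdot \id$ without spelling out the decomposition you give.
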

\begin{proof}
For the first statement, let us assume that $\lambda(A_1),\lambda(A_2)$ are both finite (if either one is $\infty$ the statement is trivial). Let $f_i\colon A_i \to \llbracket U \rrbracket$, $g_i\colon \llbracket U \rrbracket \to A_i$ be chain maps such that $g_i \circ f_i \simeq G^{\lambda(A_i)} \cdot \id_{A_i}$ and $f_i \circ g_i \simeq G^{\lambda(A_i)} \cdot \id_{\llbracket U \rrbracket}$, for~$i=1,2$. Define $f\colon A_1 \otimes A_2 \to \llbracket U \rrbracket \otimes \llbracket U \rrbracket \cong \llbracket U \rrbracket$ and $g\colon \llbracket U \rrbracket \otimes \llbracket U \rrbracket \cong \llbracket U \rrbracket \to A_1 \otimes A_2$ as
\[
f=f_1 \otimes f_2, \qquad g=g_1 \otimes g_2.
\]
Then $g\circ f \simeq G^{\lambda(A_1)+\lambda(A_2)}\cdot \id_{A_1 \otimes A_2}$ and $f\circ g \simeq G^{\lambda(A_1)+\lambda(A_2)}\cdot \id_{\llbracket U \rrbracket}$, so $\lambda(A_1 \otimes A_2) \leq \lambda(A_1)+\lambda(A_2)$ as desired.

As for the second statement, it follows from the fact that if $f\colon A \to \llbracket U \rrbracket$, $g\colon \llbracket U \rrbracket \to A$ are chain maps such that $g \circ f \simeq G^{k} \cdot \id_{A}$ and $f \circ g \simeq G^{k} \cdot \id_{\llbracket U \rrbracket}$, then the induced dual chain maps $\overline{g}\colon \overline{A} \to \overline{\llbracket U \rrbracket} \cong \llbracket U \rrbracket$ and $\overline{f}\colon \overline{\llbracket U \rrbracket} \cong \llbracket U \rrbracket \to \overline{A}$ satisfy $\overline{f} \circ \overline{g} \simeq G^{k} \cdot \id_{\overline{A}}$ and $\overline{g} \circ \overline{f} \simeq G^{k} \cdot \id_{\llbracket U \rrbracket}$.
\end{proof}

\cref{prop:lambdabasicproperties} now follows directly from \cref{lem:propertieslambda}, since $\llbracket K \# J \rrbracket \cong \llbracket K \rrbracket \otimes \llbracket J \rrbracket$ (see \cref{eq:connectedsum}) and $\llbracket -K \rrbracket \cong \overline{\llbracket K \rrbracket}$.

\subsection{A closer look at $\lambda$ for tangles}\label{subsec:lambdatangles}
Here, we will again make use of planar arc diagrams, as introduced in \cref{subsect:bnzg}.
\begin{lem}\label{lem:lambdainputdiagram}
Let $\mathcal{D}$ be a 2-input planar arc diagram
containing an arc connecting the base points of the output disk and the first input disk.
Let $D_1$ and $D_1'$ be two tangle diagrams fitting into the first input disk,
and let $D_2$ be a tangle diagram fitting into the second input disk.
Then
\[
\lambda(\mathcal{D}(D_1, D_2), \mathcal{D}(D_1', D_2)) ~\leq~
\lambda(D_1, D_1').
\]
\end{lem}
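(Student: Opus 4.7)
The plan is to lift the witnessing chain maps for $\lambda(D_1,D_1')$ to witnessing chain maps for $\lambda(\mathcal{D}(D_1,D_2),\mathcal{D}(D_1',D_2))$ by applying the functor induced by the planar arc diagram $\mathcal{D}$. If $\lambda(D_1,D_1')=\infty$ there is nothing to prove, so assume $k=\lambda(D_1,D_1')<\infty$ and pick ungraded chain maps
\[
f\colon [D_1]^{\bullet}\to [D_1']^{\bullet},\qquad g\colon [D_1']^{\bullet}\to [D_1]^{\bullet}
\]
together with homotopies $g\circ f\simeq G^k\cdot\id_{[D_1]^{\bullet}}$ and $f\circ g\simeq G^k\cdot\id_{[D_1']^{\bullet}}$.

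The next step is to feed these into $\mathcal{D}$ alongside $\id_{[D_2]}$. Since $\mathcal{D}$ induces a functor
\[
\Kom(\Mat(\Cob^{3,\bullet}_{\modl}(2n_1)))\times\Kom(\Mat(\Cob^3_{\modl}(2n_2)))\to\Kom(\Mat(\Cob^{3,\bullet}_{\modl}(2n_0)))
\]
that is compatible with chain homotopies, we obtain mutually inverse (up to $G^k$) ungraded chain maps
\[
\mathcal{D}(f,\id_{[D_2]})\colon \mathcal{D}([D_1]^{\bullet},[D_2])\to \mathcal{D}([D_1']^{\bullet},[D_2]),
\]
\[
\mathcal{D}(g,\id_{[D_2]})\colon \mathcal{D}([D_1']^{\bullet},[D_2])\to \mathcal{D}([D_1]^{\bullet},[D_2]),
\]
whose compositions are homotopic to $\mathcal{D}(G^k\cdot\id_{[D_1]^{\bullet}},\id_{[D_2]})$ and $\mathcal{D}(\id_{[D_2]},G^k\cdot\id_{[D_1']^{\bullet}})$, respectively. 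Finally I invoke the analog of \cref{eq:planararcdiagram1} for the $\bullet$-decorated setting, which gives homotopy equivalences $\mathcal{D}([D_1]^{\bullet},[D_2])\simeq[\mathcal{D}(D_1,D_2)]^{\bullet}$ and similarly for $D_1'$, so the displayed maps transport to witnesses on $[\mathcal{D}(D_1,D_2)]^{\bullet}$ and $[\mathcal{D}(D_1',D_2)]^{\bullet}$.

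The heart of the argument, and the step I expect to require the most care, is the identity
\[
\mathcal{D}(G^k\cdot\id_{[D_1]^{\bullet}},\id_{[D_2]}) \;=\; G^k\cdot\id_{\mathcal{D}([D_1]^{\bullet},[D_2])},
\]
which uses crucially the hypothesis that $\mathcal{D}$ contains an arc from the base point of the output disk to the base point of the first input disk. Unpacked: the $\Z[G]$-action on $\Cob^{3,\bullet}_{\modl}$ comes from gluing a genus-one curtain at the base point (cf.\ \cref{dfn:bnzg}), and gluing such a curtain at the first input base point inside $\mathcal{D}$ produces, after evaluation, the same cobordism as gluing a genus-one curtain at the output base point, precisely because the two base points lie on a single arc of $\mathcal{D}$. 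This is where the proper (connectivity-preserving) hypothesis enters, and it is what makes the $G$-linearity transport through $\mathcal{D}$; without the connecting arc, one could only hope to land in $\mathcal{D}([D_1]^{\bullet},[D_2])$ with a $G$-action based at a different marked point.

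Putting these pieces together yields $\lambda(\mathcal{D}(D_1,D_2),\mathcal{D}(D_1',D_2))\leq k=\lambda(D_1,D_1')$, as required.
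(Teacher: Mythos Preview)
Your proposal is correct and follows essentially the same approach as the paper: pick witnessing maps $f,g$ for $\lambda(D_1,D_1')$, push them through the functor induced by $\mathcal{D}$ alongside $\id_{[D_2]}$, and use functoriality together with the $\Z[G]$-linearity (guaranteed by the arc joining base points) to conclude that the compositions are homotopic to $G^k$ times the identity. The paper's proof is terser on the key identity $\mathcal{D}(G^k\cdot\id,\id)=G^k\cdot\id$, simply invoking it, whereas you helpfully unpack why the connecting-arc hypothesis makes this work; apart from a harmless swap of arguments in one displayed expression (you wrote $\mathcal{D}(\id_{[D_2]},G^k\cdot\id_{[D_1']^{\bullet}})$ where you meant $\mathcal{D}(G^k\cdot\id_{[D_1']^{\bullet}},\id_{[D_2]})$), the arguments match.
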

\begin{proof}
If $\lambda(D_1,D_1')=\infty$ the statement is clear.
Suppose that $\lambda(D_1,D_1')=n\in \mathbb{N}$ and consider chain maps $f\colon [D_1]^{\bullet}\to[D_1']^{\bullet}$ and 
$g\colon [D_1']^{\bullet}\to[D_1]^{\bullet}$ satisfying
$f\circ g \simeq G^n \cdot\id_{[D_1']^{\bullet}}$
and
$g\circ f \simeq G^n \cdot\id_{[D_1]^{\bullet}}$.
Using the functor induced by~$\mathcal{D}$, we may define maps $\tilde f$ and $\tilde g$ as
\[
\begin{tikzcd}[column sep=5em]
\mathcal{D}([D_1]^{\bullet},[D_2]) \ar[r,"{\tilde f=\mathcal{D}(f,\id_{[D_2]})}",bend left=10]
&
\mathcal{D}([D_1']^{\bullet},[D_2]).
\ar[l,"{\tilde g =\mathcal{D}(g,\id_{[D_2]})}",bend left=10]
\end{tikzcd}
\]
These maps satisfy
\[
\tilde g \circ \tilde f = \mathcal{D}(g\circ f, \id_{D_2}) \simeq \mathcal{D}(G^n\cdot \id_{D_1}, \id_{D_2}) = G^n \cdot \id_{\mathcal{D}([D_1']^{\bullet}, [D_2])},
\]
and the analogous equality for~$\tilde g\circ \tilde f$. This shows the desired statement.
\end{proof}
\begin{figure}[t]
\centering
\def\svgwidth{0.8\textwidth}
{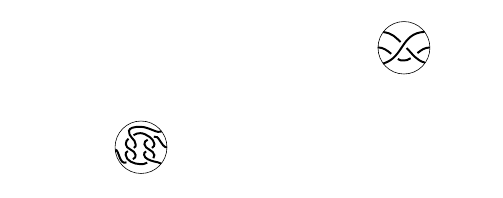}
\caption{Examples of the concepts introduced in \cref{dfn:addtwist}. An asterisk marks the base point.
(a)~$\mathcal{D}_4$, (b) a braid-like 6-ended tangle diagram~$Q$, (c) a 4-ended tangle diagram~$T$,
(d)~$\mathcal{D}_4(T,Q)$, called a braiding of $T$ using~$Q$.}
\label{fig:planararctwist}
\end{figure}
See \cref{fig:planararctwist} for examples of the following definitions.
\begin{dfn}\label{dfn:addtwist}
Let $\mathcal{D}_{2n}$ be the following 2-input planar arc diagram:
the two input disks are $2n$-ended and $(4n-2)$-ended, respectively;
$\mathcal{D}_{2n}$ consists of one arc connecting the base point of the output disk to the base point of the first input disk, $2n-1$ arcs connecting end points of the two input disks, and $2n-1$ arcs connecting end points of the second input disk to end points of the output disk. 

We say that a tangle diagram $Q$ with $2m$ end points is \emph{braid-like}, if it may be isotoped such that
$m$ end points are on the left, $m$ end points are on the right, and $Q$ consists of $m$ arcs that at no point have a vertical tangent.

For $\mathcal{D}_{2n}$ as above, $Q$ a $(4n-2)$-ended braid-like tangle diagram,
and $D$ a $2n$-ended tangle diagram, we say that $\mathcal{D}_{2n}(D, Q)$ is a \emph{braiding} of~$D$.
\end{dfn}
Recall from \cref{dfn:diagramoftangle} that to obtain a tangle diagram of a given tangle in a ball~$B$,
one must choose a homeomorphism between $B$ and the unit ball~$B_0$.
We will now show that two tangle diagrams of a fixed tangle are related by a finite sequence of Reidemeister moves and a braiding. In fact, the braiding only depends on the homeomorphisms between the balls, and not on the tangles. Let us make this precise.
\begin{lem}\label{lem:halftwists}
Let $B$ be a ball, and $P = \{p_1, \ldots, p_{2n}\}\subset \partial B$ for some~$n\geq 1$.
Let~$\varphi_1$, $\varphi_2$ be homeomorphisms from $B$ to the unit ball $B_0$ with $\varphi_1(P) = \varphi_2(P)$
and $\varphi_1(p_1) = \varphi_2(p_1)$.
Let $\mathcal{D}_{2n}$ be the 2-input planar arc diagram from \cref{dfn:addtwist}.
Then there is an unoriented braid-like $(4n-2)$-ended tangle diagram~$Q$,
such that for all tangles $T$ in $B$ with end points $P$ and base point $p_1$ the following holds:
if $D_1$ and $D_2$ are the tangle diagrams of $T$ coming from $\varphi_1$ and~$\varphi_2$, respectively,
then $\mathcal{D}_{2n}(D_1, Q)$ and $D_2$ are related by a finite sequence of Reidemeister moves and tangle diagram equivalences.
\end{lem}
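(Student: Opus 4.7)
The plan is to capture the discrepancy between the two projections in a single braid-like diagram $Q$ depending only on $\varphi_1,\varphi_2$. Set $\psi := \varphi_2 \circ \varphi_1^{-1}$; this is a self-homeomorphism of $B_0$ fixing the point $q_1 := \varphi_1(p_1) = \varphi_2(p_1)$ and preserving $P' := \varphi_1(P) = \varphi_2(P)$ setwise. I would show that $\psi$ admits a normal form in which all of its non-trivial behavior is concentrated in a collar of $\partial B_0$, and then identify the projection of the collar action with the braiding described by $\mathcal{D}_{2n}$.

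First, fix a concentric closed sub-ball $B_0' \subset B_0$ small enough that, for the tangles under consideration, $\varphi_1(T)$ meets the collar $A := B_0 \setminus \mathrm{int}(B_0')$ in $2n$ radial arcs from $\partial B_0'$ to $P' \subset \partial B_0$. By Alexander's trick, as recalled in the discussion preceding \cref{dfn:diagramoftangle}, the end-point-preserving isotopy class of $\psi$ is determined by $\psi|_{\partial B_0}$. Hence, through homeomorphisms fixing $q_1$ and preserving $P'$, $\psi$ is isotopic to a ``cone-off'' homeomorphism $\psi'$ which is the identity on $B_0'$ and realizes a one-parameter family of homeomorphisms from $\mathrm{id}_{\partial B_0'}$ to $\psi|_{\partial B_0}$ throughout $A$. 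Applying $\psi'$ leaves $\varphi_1(T) \cap B_0'$ unchanged, while it carries the $2n-1$ non-base-point radial arcs in $A$ to a geometric braid in $A$, and carries the radial arc ending at $q_1$ to itself (which we may assume is pointwise fixed after a further isotopy in $A$).

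Projecting via $\pi$ and perturbing to general position, the image of $\psi'(\varphi_1(T)) \cap A$ is a braid-like diagram $Q$ with $4n - 2$ end points, depending only on $\psi$ and hence only on $\varphi_1,\varphi_2$. The image of $\psi'(\varphi_1(T)) \cap B_0'$ is (isotopic to) $D_1$ via a tangle diagram equivalence, and the two pieces assemble into $\mathcal{D}_{2n}(D_1, Q)$; any remaining discrepancy with $D_2 = \pi \circ \varphi_2(T)$ is absorbed by Reidemeister moves needed to reconcile crossings introduced by the general-position perturbation with the crossings of $D_2$.

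The main obstacle is the cone-off normalization of $\psi$. The cleanest way to justify it is to recognize the mapping class group of the triple $(B_0, P', q_1)$---with $q_1$ fixed pointwise and $P' \setminus \{q_1\}$ permuted---as isomorphic via Alexander's trick to the mapping class group of the pair $(\partial B_0 \setminus \{q_1\}, P' \setminus \{q_1\})$, which is the braid group on $2n - 1$ strands. Each half-twist generator of this group is realized by a homeomorphism supported in a collar of $\partial B_0$, so any composition (and in particular $\psi|_{\partial B_0}$) can be realized by a homeomorphism of $B_0$ supported in $A$, giving exactly the $\psi'$ above and establishing the existence of $Q$.
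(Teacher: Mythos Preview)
Your proposal is correct and follows essentially the same approach as the paper: both arguments set $\psi=\varphi_2\circ\varphi_1^{-1}$, use Alexander's trick to reduce to $\psi|_{\partial B_0}$, identify the relevant mapping class group (of $S^2$ with $2n$ marked points, one fixed) with the braid group on $2n-1$ strands, and take $Q$ to be a braid-like diagram representing that class. The only difference is emphasis: the paper writes $\psi|_{\partial B_0}$ as a word in half-twist generators and lets $Q$ be the corresponding braid diagram directly, whereas you realize the braid geometrically by a cone-off homeomorphism supported in a collar and then project; these are two descriptions of the same construction.
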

\begin{proof}
Let $f\colon S^2\to S^2$ be the restriction of $\varphi_2 \circ \varphi_1^{-1}$ to~$S^2 = \partial B_0$.
Let us write $\tilde P = \varphi_1(P) = \varphi_2(P)$ and $\tilde p_i = \varphi_1(p_i)$.
Note $f(\tilde P) = \tilde P$.
In case that $f$ is isotopic to $\id_{S^2}$ along homeomorphisms fixing $\tilde P$ pointwise,
it follows that $\varphi_1(T)$ and $\varphi_2(T)$ are equivalent tangles, and thus $D_1$ and $D_2$
are related by a finite sequence of Reidemeister moves and tangle diagram equivalences.
To deal with general~$f$, let us consider the mapping class group of homeomorphisms $f\colon S^2\to S^2$ with
$f(\tilde P) = \tilde P$ and $f(\tilde p_1) = \tilde p_1$, up to isotopy along such maps.
Every such $f$ is isotopic to a homeomorphism fixing a neighborhood of $\tilde p_1$ pointwise,
and so this mapping class group is isomorphic to the mapping class group of the $(2n-1)$-punctured
disk, which is isomorphic to the braid group on $2n-1$ strands. More explicitly,
it is generated by $\sigma_1, \ldots, \sigma_{2n-2}$,
where $\sigma_i$ is a so-called \emph{half-twist}, switching the positions of the punctures $\tilde p_{i+1}$ and $\tilde p_{i+2}$~\cite[Section~9.1.3]{fm}.
So $f$ is isotopic to a product $\beta$ of the generators $\sigma_1, \ldots, \sigma_{2n-2}$.
Let $Q$ be a braid-like $(4n-2)$-ended tangle diagram corresponding to~$\beta$.
Then one sees that $\mathcal{D}_{2n}(D_1, Q)$ is a tangle diagram of $T$ coming from the homeomorphism
$(\varphi_2 \circ \varphi_1^{-1})\circ \varphi_1 = \varphi_2$. Therefore, $\mathcal{D}_{2n}(D_1, Q)$ and $D_2$
are related by a finite sequence of Reidemeister moves and tangle diagram equivalences, as desired.
\end{proof}
\begin{prop}\label{prop:lambdaequivariant}
Let $S$ and $T$ be tangles with the same end points and the same base point in a ball~$B$. Let $\varphi_1$ and $\varphi_2$ be homeomorphisms from $B$ to the unit ball~$B_0$, leading to tangle diagrams $D_{S1}, D_{S2}$ for $S$ and $D_{T1}, D_{T2}$ for~$T$, respectively.
Then
\[
\lambda(D_{S1}, D_{T1}) = \lambda(D_{S2}, D_{T2}).
\]
\end{prop}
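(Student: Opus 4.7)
The plan is to reduce the claim to the combination of two earlier tools: \cref{lem:halftwists}, which compares tangle diagrams coming from different identifications with the unit ball via a braiding, and \cref{lem:lambdainputdiagram}, which shows that braidings can only decrease $\lambda$. The crucial point that makes the argument go through is that in \cref{lem:halftwists} the braid-like tangle $Q$ depends only on the homeomorphisms $\varphi_1,\varphi_2$, and not on the tangle inserted into~$B$. Thus the \emph{same} braid $Q$ can be used for both $S$ and $T$.

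First, I would apply \cref{lem:halftwists} to the pair $(\varphi_1,\varphi_2)$ to obtain a braid-like $(4n-2)$-ended tangle diagram $Q$ such that, for every tangle $R$ in $B$ with the prescribed end points and base point, the tangle diagram $\mathcal{D}_{2n}(D_{R1},Q)$ is related to $D_{R2}$ by Reidemeister moves and tangle diagram equivalences. Applying this with $R=S$ and $R=T$ and invoking the Reidemeister invariance of the Bar-Natan complex in $\Cob^{3,\bullet}_{\modl}(2n)$ up to homotopy equivalence, we obtain
\[
[D_{S2}]^{\bullet}\simeq [\mathcal{D}_{2n}(D_{S1},Q)]^{\bullet},\qquad [D_{T2}]^{\bullet}\simeq [\mathcal{D}_{2n}(D_{T1},Q)]^{\bullet}.
\]

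Next, I would record the elementary fact that $\lambda(A,B)$ depends only on the homotopy equivalence classes of $A$ and $B$: given homotopy equivalences $p\colon A\to A'$, $q\colon A'\to A$ and similarly $r\colon B\to B'$, $s\colon B'\to B$, one transports any pair of ungraded chain maps $f,g$ realising $\lambda(A,B)=k$ to $rfq, pgs$, which realise $\lambda(A',B')\leq k$, and symmetrically. Combining this with the two homotopy equivalences above yields
\[
\lambda(D_{S2},D_{T2})\;=\;\lambda\bigl(\mathcal{D}_{2n}(D_{S1},Q),\,\mathcal{D}_{2n}(D_{T1},Q)\bigr).
\]
Now \cref{lem:lambdainputdiagram}, applied to $\mathcal{D}=\mathcal{D}_{2n}$ (which by \cref{dfn:addtwist} indeed contains the arc connecting the base points required by \cref{lem:lambdainputdiagram}), with $D_1=D_{S1}$, $D_1'=D_{T1}$, $D_2=Q$, gives
\[
\lambda\bigl(\mathcal{D}_{2n}(D_{S1},Q),\,\mathcal{D}_{2n}(D_{T1},Q)\bigr)\;\leq\;\lambda(D_{S1},D_{T1}).
\]
Chaining these yields $\lambda(D_{S2},D_{T2})\leq \lambda(D_{S1},D_{T1})$. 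The reverse inequality is obtained by interchanging the roles of $\varphi_1$ and $\varphi_2$ (equivalently, by re-running the argument with the braid $Q'$ produced by \cref{lem:halftwists} from $(\varphi_2,\varphi_1)$).

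I do not expect any real obstacle: the three ingredients (Reidemeister invariance, the homotopy-invariance of $\lambda$, and the two lemmas cited) each do essentially one line of work, and the only subtlety is making sure the same $Q$ works for both $S$ and $T$, which is precisely the content of \cref{lem:halftwists}. A minor bookkeeping point is that $Q$ is produced unoriented, but the Bar-Natan complex in $\Cob^{3,\bullet}_{\modl}(2n)$ is defined on unoriented diagrams, so this causes no issue; in the edge case $n=1$ the braid group on one strand is trivial and the statement is immediate.
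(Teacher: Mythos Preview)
Your proof is correct and follows essentially the same approach as the paper: apply \cref{lem:halftwists} to obtain a single braid-like $Q$ working for both $S$ and $T$, use \cref{lem:lambdainputdiagram} to get one inequality, and swap $\varphi_1,\varphi_2$ for the other. The paper's version is terser (it leaves the homotopy-invariance of $\lambda$ implicit), but the logic is identical.
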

\begin{proof}
By \cref{lem:halftwists}, there is a 2-input planar arc diagram $\mathcal{D}$ 
and a tangle $Q$ such that $\mathcal{D}(D_{S1}, Q)$ and $D_{S2}$ are related by a finite sequence of Reidemeister moves,
and so are $\mathcal{D}(D_{T1}, Q)$ and~$D_{T2}$.
By \cref{lem:lambdainputdiagram}, it follows that
$\lambda(D_{S2}, D_{T2}) \leq \lambda(D_{S1}, D_{T1})$.
Switching the roles of $\varphi_1$ and~$\varphi_2$, the opposite inequality also follows.
\end{proof}
As a consequence, the following is well-defined, since it does not depend on the choice of homeomorphism.
\begin{dfn}\label{dfn:lambdatangles}
Let $S$ and $T$ be tangles with the same end points and the same base point in a ball~$B$.
Then let $\lambda(S, T)$ be defined as $\lambda(D_S, D_T)$, where $D_S$ and $D_T$ are tangle diagrams of $S$ and~$T$, respectively, obtained via the same homeomorphism from $B$ to the unit ball~$B_0$.
\end{dfn}
\begin{prop}\label{prop:gluing}
Let $S$ and $T$ be two tangles in a ball $B$ with the same connectivity, base point and end points.
Let $R$ be a tangle in another ball~$B'$, and $\varphi\colon \partial B\to \partial B'$ an orientation-reversing homeomorphism sending end points to end points, such that $S\cup R$ and $T\cup R$ are knots in $B \cup_{\varphi} B' \cong S^3$. Then
\[
\lambda(S\cup R, T\cup R) \leq \lambda(S, T).
\]
\end{prop}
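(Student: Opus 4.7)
The plan is to reduce the statement to \cref{lem:lambdainputdiagram} by encoding the gluing with $R$ as a 2-input planar arc diagram operation. First I would pick a homeomorphism $\psi\colon B\to B_0$ sending the common base point of $S,T$ to a distinguished point on $\partial B_0$, producing tangle diagrams $D_S, D_T$ in a fixed disk with a common base point and common end points. Similarly I would pick a homeomorphism $\psi'\colon B'\to B_0$, producing a tangle diagram $D_R$. Different such choices do not affect the values of $\lambda$ that appear in the argument, by \cref{prop:lambdaequivariant}, so I have the freedom to choose $\psi'$ conveniently.

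Next I would construct a 2-input planar arc diagram $\mathcal{D}$ whose first input disk is $2n$-ended with base point (to receive $D_S$ or $D_T$), whose second input disk is $2n$-ended (to receive $D_R$), and whose output disk is $2$-ended. Its arcs would be: one arc from the base point of the output disk to the base point of the first input disk (the arc required by \cref{lem:lambdainputdiagram}); one arc from the other output end point to the end point of the second input disk that $\varphi$ pairs with the base point of $S$; and $2n-1$ further arcs pairing the remaining end points of the two input disks according to the bijection induced by $\varphi$. By construction, $\mathcal{D}(D_S, D_R)$ is a tangle diagram of the 2-ended tangle obtained from $S\cup R$ by cutting at $b_S$, and likewise $\mathcal{D}(D_T, D_R)$ for $T\cup R$. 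Applying \cref{lem:lambdainputdiagram} and \cref{dfn:lambdatangles} yields
\[
\lambda(S\cup R, T\cup R) = \lambda\bigl(\mathcal{D}(D_S, D_R),\, \mathcal{D}(D_T, D_R)\bigr) \leq \lambda(D_S, D_T) = \lambda(S, T).
\]

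The main obstacle is showing that such a $\mathcal{D}$ can actually be realized planarly: a priori the bijection induced on end points by $\varphi$ (a mapping class of the $2n$-punctured sphere) could fail to extend to a disjoint arc matching between two side-by-side disks. I would resolve this by exploiting the freedom in the choice of $\psi'$: since any cyclic ordering of the end points of $D_R$ on $\partial E_2$ can be realized by an appropriate choice of $\psi'$, and since $\varphi$ is orientation-reversing between the two boundary spheres, I can arrange $D_R$'s end points so that the pairing prescribed by $\varphi$ becomes a straightforward ``side-by-side'' planar matching, with all arcs drawable disjointly in the complement of the two input disks. This is exactly the step where the hypothesis that $S$ and $T$ share the same end points and base point (so that a single $\mathcal{D}$ works for both) is used.
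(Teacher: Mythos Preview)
Your proposal is correct and follows essentially the same route as the paper: choose compatible diagrams $D_S, D_T, D_R$, encode the gluing by a 2-input planar arc diagram $\mathcal{D}$ with an arc from the output base point to the first input base point, and apply \cref{lem:lambdainputdiagram} together with \cref{dfn:lambdatangles}. The paper simply asserts that suitable diagrams and $\mathcal{D}$ can be chosen, whereas you spell out the planarity concern and resolve it via the freedom in $\psi'$; this extra care is justified but not a different argument.
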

\begin{proof}
One may pick tangle diagrams~$D_S$, $D_T$ and $D_R$ for~$S$, $T$ and~$R$, respectively,
such that:
$D_S$ and $D_T$ come from the same homeomorphism from $B$ to~$B_0$;
gluing $D_S$ and $D_R$ (using a 2-input planar arc diagram~$\mathcal{D}$) results in a knot diagram of~$S\cup R$;
and similarly, $\mathcal{D}(D_T, D_R)$ is a diagram of~$T\cup R$.
Then, we have
\begin{align*}
\lambda(S\cup R, T\cup R) & = \lambda(\mathcal{D}(D_S,D_R), \mathcal{D}(D_T,D_R)) \\
 & \leq \lambda(D_S, D_T) \\
 & = \lambda(S,T)
\end{align*}
by the definition of $\lambda$ for knots, \cref{lem:lambdainputdiagram}, and \cref{dfn:lambdatangles}, respectively.
\end{proof}
\begin{prop}\label{prop:lambdametric}
Let us fix a ball and $2n$ end points on its boundary, and consider unoriented
tangles $T$ with a fixed number of components, a fixed connectivity, and a fixed base point in that ball.
On the set of equivalence classes of such tangles~$T$, $\lambda$~is a pseudometric.
\end{prop}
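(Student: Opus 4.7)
The plan is to verify the four axioms of a pseudometric for $\lambda$: well-definedness on equivalence classes, non-negativity, symmetry, vanishing on the diagonal, and the triangle inequality. Three of these are essentially formal; only the triangle inequality requires a genuine construction.

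\textbf{Well-definedness and the easy axioms.} Non-negativity is built into \cref{dfn:lambdageneral}. Well-definedness of $\lambda(S,T)$ on equivalence classes of tangles follows from \cref{prop:lambdaequivariant} together with the Reidemeister invariance of the Bar-Natan complex $[\,\cdot\,]^{\bullet}$ (for equivalent tangles, diagrams obtained from the same $\varphi\colon B\to B_0$ differ by Reidemeister moves and tangle diagram equivalences, inducing a homotopy equivalence of $\Z[G]$-enriched complexes that one can pre- and post-compose with the maps witnessing a given value of $\lambda$). Symmetry $\lambda(S,T)=\lambda(T,S)$ is immediate from the definition, since swapping the roles of $f$ and $g$ yields exactly the same collection of data. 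For $\lambda(T,T)=0$, take $f=g=\id_{[D_T]^{\bullet}}$; then $g\circ f=\id=G^{0}\cdot \id$, so the value $k=0$ is admissible.

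\textbf{The triangle inequality.} This is the main step, and the one that really uses the structure of the definition. Given three tangles $S,T,U$ in the fixed ball with the prescribed end points, base point, and connectivity, choose diagrams $D_S,D_T,D_U$ obtained via a common homeomorphism $B\to B_0$. If either $\lambda(S,T)$ or $\lambda(T,U)$ is infinite, the inequality is vacuous, so assume both are finite, with values $k$ and $m$ respectively. Pick chain maps
\[
f_1\colon [D_S]^{\bullet}\to[D_T]^{\bullet},\quad g_1\colon [D_T]^{\bullet}\to[D_S]^{\bullet},\quad f_2\colon [D_T]^{\bullet}\to[D_U]^{\bullet},\quad g_2\colon [D_U]^{\bullet}\to[D_T]^{\bullet}
\]
together with homotopies $g_1 f_1\simeq G^k\id_{[D_S]^{\bullet}}$, $f_1 g_1\simeq G^k\id_{[D_T]^{\bullet}}$, $g_2 f_2\simeq G^m\id_{[D_T]^{\bullet}}$, $f_2 g_2\simeq G^m\id_{[D_U]^{\bullet}}$. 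Now set $f\coloneqq f_2\circ f_1$ and $g\coloneqq g_1\circ g_2$. Using that $G$-multiplication is central (it commutes with every chain map, as $G$ is a scalar), compute
\[
g\circ f=g_1 g_2 f_2 f_1\simeq g_1(G^m\id_{[D_T]^{\bullet}})f_1=G^m(g_1 f_1)\simeq G^{k+m}\id_{[D_S]^{\bullet}},
\]
and symmetrically $f\circ g\simeq G^{k+m}\id_{[D_U]^{\bullet}}$. Hence $\lambda(S,U)\leq k+m=\lambda(S,T)+\lambda(T,U)$.

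\textbf{Expected obstacle.} The only delicate point is making sure that all complexes and chain maps can be taken to live over the \emph{same} disk and base point when composing across three tangles; this is why one first passes to diagrams via a common homeomorphism $B\to B_0$, so that $[D_S]^{\bullet}, [D_T]^{\bullet}, [D_U]^{\bullet}$ are all objects of $\Kom(\Mat(\Cob^{3,\bullet}_{\modl}(2n)))$ with matching boundary data, and compositions of chain maps make sense. Once that is set up, the remainder is formal.
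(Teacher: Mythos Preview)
Your verification of symmetry, the triangle inequality, and $\lambda(T,T)=0$ is correct and in fact more detailed than the paper's own proof, which dismisses these as ``straight-forward''. So that part is fine.

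However, you have omitted the one ingredient the paper actually singles out as needing an argument: \emph{finiteness}. A pseudometric takes values in $[0,\infty)$, but by \cref{dfn:lambdageneral} $\lambda(S,T)$ is a priori allowed to equal $\infty$, and you explicitly allow this in your triangle-inequality argument (``If either $\lambda(S,T)$ or $\lambda(T,U)$ is infinite, the inequality is vacuous''). The paper closes this gap by observing that any two tangles $S,T$ with the same connectivity are related by a finite sequence of crossing changes, and then invoking \cref{thm:rationalreplacement} (more precisely, the mechanism behind it via \cref{prop:discrete} and \cref{lem:lambdainputdiagram}) to conclude $\lambda(S,T)<\infty$. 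This is why the hypothesis of \emph{fixed connectivity} appears in the statement; without it, finiteness could genuinely fail. Your proof never uses that hypothesis, which is a signal that something is missing.
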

\begin{proof}
It is straight-forward to see that $\lambda$ is symmetric, satisfies the triangle inequality,
and~$\lambda(T,T) = 0$. Since any two tangles $S,T$ with same connectivity are related by crossing changes, \cref{thm:rationalreplacement} implies that~${\lambda(S,T) < \infty}$.
\end{proof}
Note that $\lambda(S,T) = 0$ if and only if $[S]^{\bullet}$ and $[T]^{\bullet}$ are ungradedly homotopy equivalent.
So the existence of non-equivalent tangles with homotopy equivalent Bar-Natan homology prevents $\lambda$ from being a metric. Still, this pseudometric allows for a nice formulation of the main step of the proof of \cref{thm:rationalreplacement}.
\begin{restatable}{prop}{propdiscrete}
\label{prop:discrete}
Fix a ball and four end points on its boundary, one of them distinguished as base point.
On the set of equivalence classes of unoriented rational tangles in that ball with fixed connectivity,
the pseudometric given by $\lambda$ is in fact equal to
the discrete metric. That is to say, $\lambda(S,T) = 1$ for inequivalent rational tangles $S$ and~$T$.
\end{restatable}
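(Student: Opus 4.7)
The plan is to leverage the Thompson-style explicit description of $[T]^\bullet$ for a rational tangle $T$, which is the main computational result of \cref{sec:rational}: for $T$ of slope $p/q$, the Bar-Natan complex has a ``zigzag'' representative of length $|p|+|q|$ whose objects are grading shifts of the two crossingless $4$-ended tangle diagrams (call them $H$ and $V$), joined by saddle cobordisms. Combined with the Kotelskiy-Watson-Zibrowius calculus for $4$-ended tangles, this reduces the problem to bounded combinatorics.

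For the lower bound $\lambda(S,T) \geq 1$ for inequivalent $S, T$ with the same connectivity, I would argue that the Thompson form is essentially a normal form determining the slope (via its length together with the sequence of saddle types). Hence $\lambda(S,T)=0$ would yield an ungraded homotopy equivalence $[S]^\bullet \simeq [T]^\bullet$, forcing $S \equiv T$, a contradiction.

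For the upper bound $\lambda(S,T) \leq 1$, I would construct ungraded chain maps $f \colon [S]^\bullet \to [T]^\bullet$ and $g \colon [T]^\bullet \to [S]^\bullet$ explicitly and verify $g \circ f \simeq G \cdot \id$ and $f \circ g \simeq G \cdot \id$. Since each component of $f$ and $g$ is a morphism between two of the diagrams $H, V$, and these morphism spaces form small explicit $\Z[G]$-modules in $\Cob^{3,\bullet}_{\modl}(4)$, the construction is combinatorial. I would proceed by induction on the total complexity $|p_S|+|q_S|+|p_T|+|q_T|$: the base case of ``small'' tangles (e.g.\ integer tangles of slopes $0$ and $\pm 1$ within a given connectivity class) is a direct calculation, and the inductive step uses that adding an integer twist modifies the Thompson complex in a controlled way, so that maps for smaller tangles extend to larger ones without increasing the power of $G$.

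The main obstacle will be ensuring that the compositions are homotopic to exactly $G \cdot \id$ rather than $G^k \cdot \id$ for some $k \geq 2$: this requires the components of $f$ and $g$ to be tightly coordinated so that, via the $S$-, $T$-, and $4Tu$-relations of $\Cob^{3,\bullet}_{\modl}(4)$, their composition assembles into exactly one genus-one cobordism (i.e.\ one factor of $G$) on each object. The zigzag structure, in which adjacent components are linked by saddles whose composition ``around a loop'' naturally yields a genus-one cobordism rather than something higher, is precisely the feature that should enable this; bypassing the triangle inequality (which would only give $\lambda(S,T) \leq n$ for some $n$) and getting $\lambda(S,T) \leq 1$ directly is the heart of the argument.
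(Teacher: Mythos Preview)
Your proposal misses the key reduction that makes the upper bound tractable. The paper does \emph{not} compare two arbitrary zigzag complexes: it first uses the equivariance of $\lambda$ under homeomorphisms of the ball (\cref{prop:lambdaequivariant}) together with the transitivity of the mapping class group action on rational tangles (\cref{lem:changerationalcoeff}) to reduce to the case $S = R(-1)$ and $T = R(x)$ with $x = p/q$, $p,q$ odd. Now one of the two complexes has only two objects, and the maps $f,g$ are supported just on the two ends $A_0, A_n$ of the zigzag for $R(x)$ (\cref{lem:fg}). The required homotopy for $g\circ f$ is then built from a family of ``local'' homotopies $h_i$ (\cref{lem:homotopy2}), whose existence is a purely combinatorial fact about the zigzag graphs $zz(x)$ (\cref{lem:bigcasedistinction}).

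Your inductive scheme on $|p_S|+|q_S|+|p_T|+|q_T|$ does not have this structure. Adding a twist changes one tangle's complex while the other stays fixed, so there is no obvious way to ``extend maps for smaller tangles'' without the power of $G$ creeping up; in effect you are trying to bypass the triangle inequality without a mechanism that pins both sides to a common small reference. The zigzag-loop heuristic you mention (saddles composing to one genus) is exactly what works when one side is the length-two complex of $R(-1)$, but it does not obviously globalize to maps between two long zigzags.

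For the lower bound, the paper's argument is also different and simpler: close both tangles with $R(2)$ via a planar arc diagram, obtaining the unknot from $R(-1)$ and a nontrivial two-bridge knot from $R(x)$; then \cref{lem:lambdainputdiagram} and unknot detection give $\lambda(R(-1),R(x)) \geq 1$. Your approach (ungraded homotopy type of the zigzag determines the slope) may well be true, but it requires a classification statement you have not established, whereas the closure argument needs nothing beyond what is already in the paper.
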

The proof of \cref{prop:discrete} will be given in \cref{sec:rational}.

\subsection{Decomposing $\Z[G]$-chain complexes into pieces} \label{subsec:pieces}

To analyze the $\Z[G]$-chain complex $\llbracket K\rrbracket$ of a knot $K$ and compute~$\lambda(K)$,
one may follow a divide-and-conquer strategy and decompose $\llbracket K\rrbracket$ as a direct sum.
This motivates the following definition.

\begin{dfn}
For a graded ring~$R$, a graded chain complex $C$ of free shifted $R$-modules of finite total rank,
i.e.~$C\in\Kom(\mathcal{M}_R)$, is called a \emph{piece} if it satisfies the following:
$C$ is not contractible (i.e.~not homotopy equivalent to the trivial complex),
and if $C$ is homotopy equivalent to $C'\oplus C''$ with $C', C'' \in \Kom(\mathcal{M}_R)$,
then either $C'$ or $C''$ is contractible.
In other words, a piece is an indecomposable object in the category $\Kom(\mathcal{M}_R)/h$ of chain complexes of finite total rank up to homotopy.
\end{dfn}
Let us now define the two most common kinds of pieces.
\begin{dfn}\label{dfn:pawnknight}
For a graded integral domain~$R$, let the \emph{pawn piece}, denoted by~$\sympawn$, be the chain complex consisting of just one copy $R$ in homological degree $0$ and quantum degree~$0$.
Given a non-trivial prime power~$z\in R$, we define the \emph{$z$-knight piece}, denoted by~$\symknight(z)$, to be the chain complex 
\[
_0R \xrightarrow{~z~} R \{ -\deg z \},
\]
where the left subscript denotes the homological degree.
\end{dfn}
The names of these pieces, coined by Bar-Natan \cite{BN1}, come from the fact that
a $\sympawn$ and $\symknight(G)$ piece in $\llbracket K\rrbracket$ result in the patterns
\[
\begin{array}{|c|} \hline \Q \\\hline \Q \\\hline \end{array} \qquad\text{and}\qquad
\begin{array}{|c|c|} \hline & \Q \\ \hline & \\ \hline \Q & \\\hline \end{array}\,,
\]
respectively, in unreduced rational Khovanov homology. This can be seen using \cref{cor:zg2kh}.
\begin{remark}
A complex $P$ is a piece if and only if the ring of endomorphisms of $P$ up to homotopy has precisely two distinct idempotents, namely the zero map and the identity map. Let us use this to check that pawns and knights actually are pieces.
For~$P = \sympawn$, the endomorphism ring of $P$ is isomorphic to~$R$, and there are no non-trivial homotopies.
Since $R$ is assumed to be an integral domain, the only idempotents are $0$ and~$1$, and~$0\neq 1$. So $\sympawn$ is indeed a piece.

Now consider~$P = \symknight(z)$. Ignoring the chain complex structure, $R$-module endomorphisms $P\to P$ are given
by $\left(\begin{smallmatrix}a & b\\c & d\end{smallmatrix}\right)$. Which among those maps are chain maps?
To respect homological degree, we must have~$b = c = 0$. To commute with the differential, we must have~$az = dz$.
Since $z\neq 0$ and $R$ is an integral domain, this implies~$a = d$. So the endomorphism ring of $P$ consists (as for~$\sympawn$)
just of multiples of~$\id_P$, i.e.~this ring is isomorphic to~$R$. All homotopies are  multiples of
$h\colon P_1\to P_0$,~$h(1)=1$. We have $h\circ d + d\circ h = z\cdot\id_P$, and so
the endomorphism ring of $P$ modulo homotopy is isomorphic to~$R/(z)$.
Since $z$ is by assumption a non-trivial prime power, $R/(z)$ has no non-trivial idempotents. Thus $\symknight(z)$ is indeed a piece.

In \cref{ex:t56} and \cref{sec:lambdacalc} below, we will claim that various chain complexes are pieces.
This may be checked by similar arguments as above; but since we don't actually make use of the fact that those complexes are pieces, we omit these arguments from the text.
\end{remark}

If $R$ is a graded PID, then pawns and knights are the only pieces.
This fact has been used previously to analyze homology theories coming from Frobenius algebras
over fields, e.g.~by Khovanov \cite{KH2},
or by Morrison \cite{morrison}%
\footnote{Morrison's `universal Khovanov homology' is equivalent to $\llbracket \,\cdot\,\rrbracket\otimes\Q$,
i.e.~the reduced theory coming from the Frobenius algebra $\Q[t, X] / (X^2 - tX)$ over~$\Q[t]$.
Since $\Q[t]$ is a PID, the chain complexes coming from that theory are homotopy equivalent to
a sum of $\sympawn$ and $\symknight(t^n)$ pieces, which Morrison calls $E$ and $C_n$ (or~$KhC[n]$), respectively.
This homology theory can be calculated with \texttt{JavaKh}~\cite{javakh}.}.
In the introduction, we have seen $\llbracket K\rrbracket$ for $K = U, T_{2,3}, T_{3,4}$,
and for those examples, $\llbracket K\rrbracket$ also decomposes as sum of pawns and knights.
Let us consider a further example, which demonstrates that the pieces of $\Z[G]$-chain complexes
can be significantly more complicated (in fact, we do not know a classification of those pieces,
cf.~\cite[Question~1]{zbMATH05118580}).

\begin{example}\label{ex:t56}
As one may compute with \texttt{khoca} and \texttt{homca},
the chain complex $\llbracket T_{5,6}\rrbracket$ is homotopy equivalent to the sum of
\[
_0\sympawn\{20\} ~\oplus~ {_2\symknight(G)\{24\}} ~\oplus~ {_4\symknight(G^2)\{26\}}
\]
and the following
four more complicated pieces (where we write~$R = \Z[G]$):
\medskip

\noindent\begin{tabular}{cc}
$P_1=$\begin{tikzcd}[row sep = 0pt, column sep=1.6em]
 _6R\{28\} \ar[r,"G"] & R\{30\} \\
 \oplus & \oplus \\
 R\{30\} \ar[r,"G",swap] \ar[ruu,"2",pos=.6]& R\{32\} \\
\end{tikzcd}, &
$P_2=$\begin{tikzcd}[row sep = 0pt, column sep=1.6em]
 & R\{34\} \ar[rd,"G",pos=.4] \\
_{8}R\{30\} \ar[ru,"2G^2",pos=.7]\ar[rd,"G^3",swap,pos=.6] & \oplus & R\{36\}, \\
 & R\{36\} \ar[ru,"-2",swap,pos=.3]
\end{tikzcd}
\\[9ex]
$P_3=$\begin{tikzcd}[row sep = 0pt, column sep=1.6em]
 & R\{36\} \ar[rd,"G^2",pos=.3] \\
_{10}R\{34\} \ar[ru,"5G",pos=.7]\ar[rd,"G^2",swap,pos=.6] & \oplus & R\{40\}, \\
 & R\{38\} \ar[ru,"-5G",swap,pos=.2]
\end{tikzcd} &
$P_4=$\begin{tikzcd}[row sep = 0pt, column sep=1.6em]
 & R\{40\} \ar[rd,"G",pos=.4] \\
_{12}R\{36\} \ar[ru,"3G^2",pos=.7]\ar[rd,"G^3",swap,pos=.6] & \oplus & R\{42\}. \\
 & R\{42\} \ar[ru,"-3",swap,pos=.3]
\end{tikzcd}
\end{tabular}
\bigskip

Note that $P_3$ is isomorphic to $_{10}\symknight(G^2) \otimes \symknight(5G)\{34\}$.
Let us now compute $\lambda$ of~$T_{5,6}$.
We have $\lambda(\symknight(G^k)) = k$ for $k\in\{1,2\}$ (in fact, for all~$k\geq 1$) and 
leave it to the reader to check that $\lambda(P_i, 0) \leq 3$ for $i\in\{1,2,3,4\}$.
Using \cref{lem:lambdadirectsum}, this implies $\lambda(T_{5,6}) \leq 3$.
To show $\lambda(T_{5,6}) \geq 3$, we rely on the maximal $G$-torsion order of homology, denoted by~$\mathfrak{u}_G$. This invariant is discussed in detail in the next \cref{subsec:torsionorders}. It gives a lower bound $\mathfrak{u}_G \leq \lambda$ (see \cref{lem:maxGtorsionVSlambda}).
Consider the homology of~$\overline{P_4}$, the dual of~$P_4$.
The annihilator of the class of a generator of ${_{-12}R\{-36\}}$ is the ideal $(3G^2,G^3) \subset \Z[G]$,
and so the $G$-torsion order of that homology class is equal to~$3$.
Hence $\lambda(T_{5,6}) = \lambda(-T_{5,6}) \geq \mathfrak{u}_G(-T_{5,6}) \geq 3$, and thus $\lambda(T_{5,6}) = 3$.
\end{example}

\begin{remark}\label{rmk:nonuniq}
If $R$ is Noetherian, then every chain complex in $\Kom(\mathcal{M}_R)/h$ can be written as a sum of finitely many pieces.
If $R$ is a graded PID, then this decomposition is essentially unique, i.e.~unique up to the order of the summands.
This is not true for~$R = \Z[G]$, as the following example demonstrates.
So, in this text we will often 
decompose chain complexes $\llbracket K\rrbracket$ as sums of pieces, but we will never rely on this decomposition being unique.

Let us now give an example of a chain complex that admits two essentially different decompositions as sums of pieces.
For any non-zero integer~$n$, let $Q_n$ be the complex
\[
\begin{tikzcd}[row sep = 0pt, column sep=1.6em]
 & R\{2\} \ar[rd,"-G",pos=.3] \\
_{0}R\{0\} \ar[ru,"nG",pos=.7]\ar[rd,"G^2",swap,pos=.6] & \oplus & R\{4\}. \\
 & R\{4\} \ar[ru,"n",swap,pos=.2]
\end{tikzcd}
\]
One computes that the endomorphism ring of $Q_n$ modulo homotopy is isomorphic to~$R/(G^2,nG)$. This ring does not admit non-trivial idempotents, and so $Q_n$ is a piece.
Now, the Smith normal form gives us invertible $2\times 2$ integer matrices $S,T$ such that $S\left(\begin{smallmatrix}2 & 0\\0 & 3\end{smallmatrix}\right)T = \left(\begin{smallmatrix}1 & 0\\0 & 6\end{smallmatrix}\right)$.
This leads to the following change of basis, which demonstrates $Q_2 \oplus Q_3 \cong Q_1 \oplus Q_6$, giving us the desired example. Note that~$Q_1 \simeq \symknight(G)$.
\newcommand{\sbm}[1]{{\let\amp=&\left(\begin{smallmatrix}#1\end{smallmatrix}\right)}}
\[
\begin{tikzcd}[row sep = 0pt, column sep=2em, ampersand replacement=\&]
 \& R\{2\}^{\oplus 2} \ar[rd,"-G",pos=.3] \\
_{0}R\{0\}^{\oplus 2} \ar[ru,"G\sbm{2 & 0\\0 & 3}",pos=.7]\ar[rd,"G^2",swap,pos=.6] \& \oplus \& R\{4\}^{\oplus 2} \\
 \& R\{4\}^{\oplus 2} \ar[ru,"\sbm{2 & 0\\0 & 3}",swap,pos=.2]
\end{tikzcd}\ \cong\ 
\begin{tikzcd}[row sep = 0pt, column sep=2em, ampersand replacement=\&]
 \& R\{2\}^{\oplus 2} \ar[rd,"-G",pos=.3] \\
_{0}R\{0\}^{\oplus 2} \ar[ru,"GS\sbm{2 & 0\\0 & 3}T",pos=.7]\ar[rd,"G^2",swap,pos=.6] \& \oplus \& R\{4\}^{\oplus 2} \\
 \& R\{4\}^{\oplus 2} \ar[ru,"S\sbm{2 & 0\\0 & 3}T",swap,pos=.2]
\end{tikzcd}
\]
\end{remark}
\subsection{Torsion orders}\label{subsec:torsionorders}
When computing $\lambda$ of a knot $K$ it is fairly simple to find an upper bound $k \geq \lambda(K)$ by defining ungraded chain maps $f \colon \llbracket K \rrbracket \to \llbracket U \rrbracket$, $g \colon \llbracket U \rrbracket \to \llbracket K \rrbracket$ such that $g \circ f$ and $f \circ g$ are homotopic to~$G^k$. In order to compute the exact value of $\lambda$ however, one has to find the minimal such~$k$, which can be a hard task. The invariants described in this subsection give lower bounds for $\lambda$ in terms of the maximal torsion order in homology.

In 2017, Alishahi and Dowlin \cite{zbMATH07178864,zbMATH07005602} introduced the following knot invariants which are lower bounds for the unknotting number: $\mathfrak{u}_h$ is defined as the maximal order of $h$-torsion in the unreduced homology with Frobenius algebra $\mathbb{F}_2[h,X]/(X^2+hX)$, while $\mathfrak{u}_X$ is the maximal $X$-torsion order in the unreduced homology $H_{\Q[t]}$ with Frobenius algebra $\Q[t,X]/(X^2-t)$ (a lift of Lee homology). Note that $H_{\Q[t]}$ is a module over $\Q[t,X]/(X^2-t)$, but that ring is just equal to its subring $\Q[X]$, and so we consider $H_{\Q[t]}$ as a module over $\Q[X]$.
It was then remarked in \cite{CGLLSZ} that for the latter invariant one can replace $\Q$ with $\mathbb{F}_p$ for any odd prime $p$ in order to obtain new bounds~$\mathfrak{u}_{(X,p)}$.
Finally, Gujral \cite{gujral} introduced a lower bound $\nu$ for the ribbon distance:
$\nu$
is the maximal order of $(2X-(\alpha_1+\alpha_2))$-torsion in the $\alpha$-homology of a knot,
which is the unreduced homology with Frobenius algebra $\mathbb{Z}[X,\alpha_1,\alpha_2]/((X-\alpha_1)(X-\alpha_2))$ over the ground ring $\mathbb{Z}[\alpha_1,\alpha_2]$, introduced in~\cite{frob2}.

The following invariant is the analog of those bounds in the $\Z[G]$-setting.
\begin{dfn} 
Let $K$ be a knot and consider its $\Z[G]$-complex $\llbracket K \rrbracket \in \Kom(\mathcal{M}_{\Z[G]})$. Since $\Kom(\mathcal{M}_{\Z[G]}) \subset \Kom(\Z[G]\textrm{-Mod})$ we can take the homology $H(\llbracket K \rrbracket)$ of $\llbracket K \rrbracket$ in the latter category (see \cref{rem:tqft}). Let now $a \in H(\llbracket K \rrbracket)$. We say that $a$ is $G$\emph{-torsion} if there is an $n \in \Z_{\geq 0}$ such that~$G^n \cdot a = 0$. 
Let the \emph{order} of a $G$-torsion element~$a$, $\text{ord}_G(a)$, be the minimal such $n$ and $T\left( H(\llbracket K \rrbracket)\right)$ the $\Z[G]$-module of $G$-torsion elements.
\end{dfn}

\begin{dfn} \label{def:maxGtorsion}
We define $\mathfrak{u}_G(K)$ to be the maximal order of a $G$-torsion element:
\[
\mathfrak{u}_G(K)\coloneqq\max_{a \in T \left( H(\llbracket K \rrbracket) \right) }\text{ord}_G(a).
\]
\end{dfn}

\begin{prop}
For all knots $K$, we have\medskip

\mbox{}
\hfill(i) \ ${\mathfrak{u}_G = \nu}$,
\hfill(ii) \ ${\mathfrak{u}_G \geq \mathfrak{u}_X}$,
\hfill(iii) \ ${\lambda \geq \mathfrak{u}_{(X,p)}}$,
\hfill(iv) \ ${\lambda\geq \mathfrak{u}_h}$.
\end{prop}
\begin{proof}
(i) By \cref{thm:equiv}, the $\alpha$-chain complex of $K$ is homotopy equivalent to
$\llbracket K \rrbracket \otimes_{\Z[G]} \Z[X,\alpha_1,\alpha_2]/((X-\alpha_1)(X-\alpha_2))$,
where $G$ acts as $2X - \alpha_1 - \alpha_2$ on the second tensor factor.
Less formally, $\alpha$-homology is just $\Z[G]$-homology with $G$ disguised as $2X - \alpha_1 - \alpha_2$.
The statement follows.

(ii) Again by \cref{thm:equiv},
$C_{\Q[t]}(K) \simeq \llbracket K\rrbracket \otimes_{\Z[G]} \Q[X]$,
where $G$ acts as $2X$ on~$\Q[X]$. So $X$-torsion in $H_{\Q[t]}(K)$ corresponds to
$G$-torsion in $H(\llbracket K\rrbracket \otimes \Q)$.
Thus it suffices to show that the maximal $G$-torsion order in $\Z[G]$-homology
is greater than or equal to the maximal $G$-torsion order in $\Q[G]$-homology.
So, let a cycle $x$ in the chain complex $\llbracket K\rrbracket \otimes \Q$
be given that represents a homology class of maximal $G$-torsion order in $\Q[G]$-homology,
i.e.~$G^{\mathfrak{u}_X(K)} [x] = 0$ and $G^{\mathfrak{u}_X(K) - 1} [x] \neq 0$.
Choose $n\in\mathbb{Z}\setminus\{0\}$ such that $nx \in \llbracket K\rrbracket$.
Since $G^{\mathfrak{u}_X(K)} x$ is a boundary over $\mathbb{Q}$, one may choose
$m\in\mathbb{Z}\setminus\{0\}$ such that $G^{\mathfrak{u}_X(K)} nmx$ is a boundary over $\mathbb{Z}$.
So $nm[x]$ is $G$-torsion in $H(\llbracket K\rrbracket)$.
Moreover, if $G^{\mathfrak{u}_X(K) - 1} nmx$ were a boundary $dy$ in $\llbracket K\rrbracket$,
then $G^{\mathfrak{u}_X(K) - 1} x = d(y/(nm))$ and thus $G^{\mathfrak{u}_X(K) - 1} [x] = 0 \in H(\llbracket K\rrbracket\otimes\Q)$ would follow, which is a contradiction. So the $G$-torsion order of $nm[x] \in H(\llbracket K\rrbracket)$ equals $\mathfrak{u}_X(K)$, which implies the claim $\mathfrak{u}_G(K) \geq \mathfrak{u}_X(K)$.

(iii)
Let $f\colon \llbracket K\rrbracket \to \Z[G]$ and $g\colon \Z[G] \to \llbracket K\rrbracket$
be chain maps such that $f\circ g$ and $g\circ f$ are homotopic to $G^{\lambda(K)}$ times
the identity of $\Z[G]$ and $\llbracket K\rrbracket$, respectively.
Once again by \cref{thm:equiv},
$C_{\mathbb{F}_p[t]}(K) \simeq \llbracket K\rrbracket \otimes_{\Z[G]} \mathbb{F}_p[X]$,
where $G$ acts as $2X$ on~$\mathbb{F}_p[X]$.
So $f$ and $g$ induce maps $f_p\colon H_{\mathbb{F}_p[t]}(K) \to \mathbb{F}_p[X]$
and $g_p\colon \mathbb{F}_p[X] \to H_{\mathbb{F}_p[t]}(K)$
such that $f_p\circ g_p$ and $g_p\circ f_p$ are multiplication with $(2X)^{\lambda(K)}$
on $\mathbb{F}_p[X]$ and $H_{\mathbb{F}_p[t]}(K)$, respectively.
The existence of such maps implies, in the usual way (see e.g.~the proof of \cref{lem:maxGtorsionVSlambda} below) that $X$-torsion orders in $H_{\mathbb{F}_p[t]}(K)$ are at most $\lambda(K)$.

(iv) This inequality may be proven similarly as the previous one,
using that by \cref{thm:equiv},
$C_{\mathbb{F}_2[h]}(K) \simeq \llbracket K\rrbracket \otimes_{\Z[G]} \mathbb{F}_2[h]$,
where $G$ acts as $h$ on~$\mathbb{F}_2[h]$.
\end{proof}
Note that the inequalities $\mathfrak{u}_G \geq \mathfrak{u}_{(X,p)}$ and $\mathfrak{u}_G \geq \mathfrak{u}_h$ do not hold in general; counterexamples are given by the complexes $P_4$ (for $\mathfrak{u}_{(X,3)}$) and $P_2$ (for $\mathfrak{u}_h$) from \cref{ex:t56}.

As a side note it is worthwhile to observe, although we  will not make use of it, that $\mathfrak{u}_h,\mathfrak{u}_X$ are also linked to the convergence of the Bar-Natan \cite{tur} and the Lee \cite{Lee} spectral sequences respectively. For all knots~$K$, these sequences start at Khovanov homology of $K$ (with coefficients in $\mathbb{F}_2$ and $\Q$ respectively) and, letting $n_{\textrm{BN}}$ and $n_{\textrm{Lee}}$ be the pages at which they collapse, we have $\mathfrak{u}_h(K)=n_{\textrm{BN}}-1$ and $\lceil \mathfrak{u}_X(K)/2\rceil=n_{\textrm{Lee}}-1$. As a consequence, the following interesting result holds:
\begin{corollary}[\cite{zbMATH07005602}]
The Knight Move Conjecture is true for all knots $K$ with~$u(K) \leq 2$.
\end{corollary}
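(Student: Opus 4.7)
The approach is to translate the Knight Move Conjecture into a collapse statement for the Lee spectral sequence and then feed in the Alishahi--Dowlin torsion bound quoted just before the corollary.

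Recall that the KMC asserts that for every knot $K$, the rational Khovanov homology $Kh(K;\Q)$ splits as a single exceptional pair, supported in bidegrees $(0, s(K)\pm 1)$, together with knight-move pairs $\{(i,j),(i+1,j+4)\}$. On the Lee spectral sequence $E_r \Rightarrow H_{\text{Lee}}(K;\Q)$, the differential $d_r$ has bidegree $(1,4r)$, and the $E_\infty$-page has total rank $2$ sitting in quantum degrees $s(K)\pm 1$. Hence the KMC for $K$ is equivalent to the vanishing of $d_r$ for all $r\geq 2$, i.e.\ to $n_{\text{Lee}}(K)\leq 2$. Combined with the identity $\mathfrak{u}_t(K)=n_{\text{Lee}}(K)-1$ recalled in the subsection, KMC for $K$ is equivalent to $\mathfrak{u}_t(K)\leq 1$.

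With this reformulation in hand, the plan splits by unknotting number. If $u(K)\leq 1$, the Alishahi--Dowlin inequality $\mathfrak{u}_t(K)\leq u(K)$ immediately gives $\mathfrak{u}_t(K)\leq 1$, and the knight-move decomposition is forced. The remaining case $u(K)=2$ is the main obstacle, since $\mathfrak{u}_t(K)\leq 2$ a priori allows a non-trivial $d_2$ of bidegree $(1,8)$. To rule this out, I would run the Bar-Natan spectral sequence over $\mathbb{F}_2$ in parallel: from $\mathfrak{u}_h(K)\leq u(K)\leq 2$ one gets $n_{\text{BN}}\leq 3$, so only $d_1^{\text{BN}}$ and $d_2^{\text{BN}}$ can act, again with $E_\infty^{\text{BN}}$ of rank~$2$. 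Comparing the $\mathbb{F}_2$- and $\Q$-pictures via universal coefficients, any hypothetical $d_2$ in the Lee sequence would imprint torsion of order $\geq 2$ on integral Khovanov homology in a pattern incompatible with the length-three collapse of the $\mathbb{F}_2$-Bar-Natan sequence; this contradiction forces $d_2=0$ and hence $\mathfrak{u}_t(K)\leq 1$, reducing this case to the first.

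The first step (reformulating KMC as $\mathfrak{u}_t\leq 1$) and the $u\leq 1$ case are essentially bookkeeping. The genuinely hard part is the $u=2$ case: one needs the bigraded Lee/Bar-Natan compatibility argument to be tight enough to exclude the $(1,8)$-shifted pattern a non-trivial $d_2$ would produce, rather than merely bounding its length. I expect that step to be where all the geometric input of $u(K)\leq 2$ is actually used, through the cobordism underlying Alishahi--Dowlin's proof of $\mathfrak{u}_t\leq u$ and $\mathfrak{u}_h\leq u$ simultaneously.
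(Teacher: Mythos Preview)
Your reformulation of the Knight Move Conjecture as the statement $\mathfrak{u}_t(K)\leq 1$ (equivalently $n_{\text{Lee}}\leq 2$) is correct, and that is indeed the key observation. But you have then applied the wrong Alishahi--Dowlin bound. The result of \cite{zbMATH07005602} is $\mathfrak{u}_X(K)\leq u(K)$, not merely $\mathfrak{u}_t(K)\leq u(K)$. Since $X^2=t$ in the Frobenius algebra $\Q[t,X]/(X^2-t)$, one has $\mathfrak{u}_t=\lceil \mathfrak{u}_X/2\rceil$, as recalled in the paragraph just before the corollary. Hence
\[
u(K)\leq 2 \;\Longrightarrow\; \mathfrak{u}_X(K)\leq 2 \;\Longrightarrow\; \mathfrak{u}_t(K)=\lceil \mathfrak{u}_X(K)/2\rceil\leq 1,
\]
and the corollary follows in one line. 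There is no separate case $u(K)=2$ to worry about.

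Your proposed comparison of the Lee and Bar-Natan spectral sequences over $\Q$ and $\mathbb{F}_2$ is therefore unnecessary. It is also unlikely to work as stated: universal coefficients does not let you transport the vanishing of $d_2$ from the $\mathbb{F}_2$-Bar-Natan sequence to the $\Q$-Lee sequence in the way you suggest, since the two deformations are genuinely different (they correspond to distinct specializations of the universal theory), and the integral torsion patterns they produce need not match. The entire difficulty you identified evaporates once you use the factor of two built into $X^2=t$.
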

In light of \cref{thm:rationalreplacement}, we even have:
\begin{corollary}
The Knight Move Conjecture is true for all knots $K$ with\break ${u_q(K) \leq 2}$.\qed
\end{corollary}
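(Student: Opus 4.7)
The plan is to mimic the proof of the previous corollary (for $u \leq 2$) but with the classical unknotting number $u$ replaced by the proper rational unknotting number $u_q$, using Theorem~\ref{thm:rationalreplacement} to bridge the gap. The key observation is that the argument of \cite{zbMATH07005602} for the $u \leq 2$ case really only uses the bound $\mathfrak{u}_t(K) \leq u(K)$, so it suffices to establish the stronger inequality $\mathfrak{u}_t(K) \leq u_q(K)$, which then slots directly into the same reasoning.

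Concretely, first I would chain together the inequalities already available in this section and in the introduction: by the proposition comparing torsion orders we have $\mathfrak{u}_t(K) \leq \mathfrak{u}_G(K)$; by the discussion that $\mathfrak{u}_G$ is a lower bound for $\lambda$ (used in \cref{ex:t56}) we have $\mathfrak{u}_G(K) \leq \lambda(K)$; and by \cref{thm:rationalreplacement} we have $\lambda(K) \leq u_q(K)$. Stringing these together yields
\[
\mathfrak{u}_t(K) \;\leq\; \mathfrak{u}_G(K) \;\leq\; \lambda(K) \;\leq\; u_q(K).
\]
Hence the hypothesis $u_q(K) \leq 2$ immediately implies $\mathfrak{u}_t(K) \leq 2$.

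Next, I would invoke the identification $\mathfrak{u}_t(K) = n_{\mathrm{Lee}} - 1$ recalled just above the previous corollary, which gives $n_{\mathrm{Lee}} \leq 3$, i.e.~the Lee spectral sequence from rational Khovanov homology collapses on (at most) its third page. From this point on the argument is verbatim the one of \cite{zbMATH07005602}: collapse of Lee's spectral sequence by page $3$ forces the only non-trivial higher differentials on rational Khovanov homology to be the $d_2$ differential, whose effect is exactly to cancel pairs of generators in the knight-move pattern (in addition to the two surviving copies of $\Q$ accounting for the Rasmussen invariant). This is precisely the content of the Knight Move Conjecture for $K$.

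There is essentially no obstacle here beyond correctly citing the right inequalities; the whole statement is a formal consequence of \cref{thm:rationalreplacement} together with the existing proof of the $u \leq 2$ case. The only minor point to check is that the inequality $\mathfrak{u}_G \leq \lambda$ is indeed available in this generality (it is, since a pair of ungraded chain maps $f,g$ with $g\circ f \simeq G^k \cdot \mathrm{id}$ forces $G^k$ to annihilate any $G$-torsion class of $H_{\Z[G]}(K)$, the unknot complex having no $G$-torsion), so no additional work is required.
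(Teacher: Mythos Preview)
Your proposal is correct and follows exactly the approach implicit in the paper's \qed: chain the inequalities $\mathfrak{u}_t \leq \mathfrak{u}_G \leq \lambda \leq u_q$ (the last being \cref{thm:rationalreplacement}) to get $\mathfrak{u}_t(K) \leq 2$, then feed this into the Alishahi--Dowlin argument via $n_{\mathrm{Lee}} = \mathfrak{u}_t + 1$. The paper treats this as an immediate consequence of \cref{thm:rationalreplacement} applied to the previous corollary, which is precisely what you have written out in detail.
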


The connection discussed above between invariants related to $\lambda$ and spectral sequences
brings us to the following natural question:
\begin{question}
Is there a spectral sequence $E_G$ such that for any knot~$K$, $E_G(K)$ starts at Khovanov homology (with coefficients in~$\Z$) and collapses at a page whose number is determined by~$\mathfrak{u}_G(K)$?
\end{question}
We suspect this question has a positive answer. Namely, consider the chain complex obtained from $\llbracket K \rrbracket$ by setting~$G=1$. The resulting complex is filtered, and gives rise to a spectral sequence $E_G(K)$ starting at (reduced) Khovanov homology with integer coefficients. It seems likely that $E_G(K)$ collapses at page~$\mathfrak{u}_G(K)-1$.

\begin{lem} \label{lem:lambdadetectsU}
The invariant $\mathfrak{u}_G$ detects the unknot, i.e.~$\mathfrak{u}_G(K) = 0$ holds if and only if $K$ is trivial.
\end{lem}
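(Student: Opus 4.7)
The easy direction is immediate: for the unknot $U$, the complex $\llbracket U\rrbracket$ is homotopy equivalent to one copy of $\Z[G]$, as noted in the introduction, so $H^*(\llbracket U\rrbracket)\cong\Z[G]$ has no $G$-torsion and $\mathfrak{u}_G(U)=0$.

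For the converse, suppose $\mathfrak{u}_G(K)=0$. The plan is to reduce to coefficients in $\Q$ and invoke Kronheimer-Mrowka's unknot-detection theorem \cite{km3} applied to the reduced Khovanov homology of $K$ over $\Q$. Tensor $\llbracket K\rrbracket$ with $\Q$ over $\Z$: by flatness of $\Q$, homology commutes with this tensor product, and one checks that no $G$-torsion persists after $\otimes_\Z\Q$, since a purported torsion element $a\otimes 1/n$ forces some integer multiple $ka$ to be $G$-torsion in $H^*(\llbracket K\rrbracket)$, hence to vanish. Now $\llbracket K\rrbracket\otimes\Q$ is a bounded chain complex of finitely generated graded free modules over $\Q[G]$, which is a graded PID whose only homogeneous irreducible (up to units) is $G$. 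By the standard classification of such complexes up to homotopy (equivalently, the piece analysis discussed in \cref{subsec:pieces}), $\llbracket K\rrbracket\otimes\Q$ is homotopy equivalent to a direct sum of shifted pawns $\sympawn$ and knights $\symknight(G^k)$; the assumption that homology carries no $G$-torsion rules out every knight summand, leaving only pawns.

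We now specialize in two ways. By \cref{prop:redzghomdim} tensored with $\Q$, the total $\Q$-dimension of $H(\llbracket K\rrbracket_{G=1}\otimes\Q)$ equals $1$; since each pawn contributes exactly $1$ to this count at any specialization of $G$, the decomposition must consist of a single pawn, in some bidegree. Specializing instead at $G=0$ produces the reduced Khovanov complex of $K$ over $\Z$, and tensoring with $\Q$ yields the reduced Khovanov homology of $K$ over $\Q$, which by the same count has total $\Q$-dimension equal to $1$. Kronheimer-Mrowka's theorem then forces $K$ to be the unknot. The only steps requiring explicit verification are the preservation of $G$-torsion-freeness under $\otimes_\Z\Q$ and the piece classification over $\Q[G]$; the first is a short direct computation, and the second is a standard consequence of $\Q[G]$ being a graded PID.
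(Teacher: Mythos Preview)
Your proof is correct. Both directions are sound: the torsion-freeness argument after tensoring with $\Q$ is fine, the piece classification over the graded PID $\Q[G]$ leaves only pawns, \cref{prop:redzghomdim} forces exactly one pawn, and specializing at $G=0$ then gives reduced rational Khovanov homology of dimension one, so Kronheimer--Mrowka applies.

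The paper takes a different route. Rather than working directly with $\llbracket K\rrbracket\otimes\Q$ over $\Q[G]$, it invokes the comparison $\mathfrak{u}_G\geq\mathfrak{u}_t$ (a consequence of universality, stated just before the lemma) and then argues on the unreduced Lee-type theory over $\Q[t]$: for a nontrivial knot the torsion part of $H_{\Q[t]}(K)$ is nonzero (again by Kronheimer--Mrowka, since setting $t=0$ recovers rational Khovanov homology), so $\mathfrak{u}_t(K)>0$ and hence $\mathfrak{u}_G(K)>0$. Both arguments ultimately reduce to unknot detection over $\Q$; the paper's version is shorter once the inequality $\mathfrak{u}_G\geq\mathfrak{u}_t$ is in hand, while yours is more self-contained within the $\Z[G]$-framework and does not need the Alishahi--Dowlin invariant $\mathfrak{u}_t$ or the unreduced theory at all.
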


\begin{proof}
We start by noticing that~$\mathfrak{u}_G(U)=0$: this is clear since $H(\llbracket U \rrbracket)=\Z[G]$ is torsion free. 
Note that $H_{\Q[t]}(K) \cong \Q[X] \oplus T \left( H_{\Q[t]}(K) \right)$, where $T \left( H_{\Q[t]}(K) \right)$ is the $X$-torsion part. Since Khovanov homology detects the unknot we have $T \left( H_{\Q[t]}(K) \right) = \{ 0 \}$ only if~$K=U$. This implies that if $K$ is not the unknot, then $\mathfrak{u}_G(K) \geq \mathfrak{u}_X(K) > 0$. 
\end{proof}

\begin{lem} \label{lem:maxGtorsionVSlambda}
Let $K$ be a knot. Then $\mathfrak{u}_G(K) \leq \lambda(K).$
\end{lem}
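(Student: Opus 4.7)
The plan is to exploit the maps $f$ and $g$ witnessing $\lambda(K) \leq k$ to kill every $G$-torsion class in $H_{\Z[G]}(K)$ after multiplication by $G^k$. Assume $\lambda(K) = k < \infty$ (otherwise the inequality is trivial), and pick ungraded chain maps $f \colon \llbracket K \rrbracket \to \llbracket U \rrbracket$ and $g \colon \llbracket U \rrbracket \to \llbracket K \rrbracket$ with $g \circ f \simeq G^k \cdot \id_{\llbracket K \rrbracket}$ and $f \circ g \simeq G^k \cdot \id_{\llbracket U \rrbracket}$. Passing to homology (in $\Z[G]\textrm{-Mod}$), homotopic chain maps induce the same map, so we get $\Z[G]$-linear maps
\[
f_* \colon H_{\Z[G]}(K) \to H_{\Z[G]}(U), \qquad g_* \colon H_{\Z[G]}(U) \to H_{\Z[G]}(K)
\]
satisfying $g_* \circ f_* = G^k \cdot \id_{H_{\Z[G]}(K)}$.

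Now I would take any $G$-torsion element $a \in T(H_{\Z[G]}(K))$; by definition there exists some $n \geq 0$ with $G^n \cdot a = 0$. Since $f_*$ is $\Z[G]$-linear, $G^n \cdot f_*(a) = f_*(G^n \cdot a) = 0$, so $f_*(a)$ is a $G$-torsion element of $H_{\Z[G]}(U)$. But $\llbracket U \rrbracket$ is homotopy equivalent to a single copy of $\Z[G]$ concentrated in homological degree $0$, so $H_{\Z[G]}(U) \cong \Z[G]$ is torsion-free. Hence $f_*(a) = 0$, and therefore
\[
G^k \cdot a = (g_* \circ f_*)(a) = g_*(0) = 0.
\]
This shows $\mathrm{ord}_G(a) \leq k$ for every $G$-torsion class $a$, and taking the maximum over all such $a$ yields $\mathfrak{u}_G(K) \leq k = \lambda(K)$.

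There is essentially no obstacle here beyond checking that the homology-level identity $g_* \circ f_* = G^k \cdot \id$ really holds: this is immediate because ungraded chain homotopies still induce equalities of induced maps on total homology (as recorded in \cref{dfn:ungraded}), and multiplication by $G^k$ is $\Z[G]$-linear on the chain complex, hence on homology. The argument is exactly the analog of Alishahi--Dowlin's argument relating $\mathfrak{u}_X$ to $\lambda_X$, transported to the reduced $\Z[G]$-setting; the crucial input is that the codomain $H_{\Z[G]}(U) \cong \Z[G]$ is torsion-free, which forces any torsion class to lie in $\ker f_*$.
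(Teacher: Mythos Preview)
Your proof is correct and follows essentially the same approach as the paper: both pick the maps $f,g$ realizing $\lambda(K)$, pass to homology, and use that $H_{\Z[G]}(U)\cong\Z[G]$ is $G$-torsion-free to conclude that $G^k$ annihilates every torsion class. Your presentation is in fact slightly more direct, observing immediately that $f_*(a)=0$ rather than tracking inequalities of torsion orders as the paper does.
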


\begin{proof}
Let $n=\lambda(K)$ and let $f \colon \llbracket K \rrbracket \to \llbracket U \rrbracket$, $g \colon \llbracket U \rrbracket \to \llbracket K \rrbracket$ be ungraded chain maps such that $g \circ f \simeq G^n \cdot \id_{\llbracket K \rrbracket}$ and $f \circ g \simeq G^n \cdot \id_{\llbracket U \rrbracket}$. Then, for every~$a \in H(\llbracket K \rrbracket)$: 
\[
\text{ord}_G \left( f_{*}(a) \right) \geq \text{ord}_G \left( g_{*} \circ f_{*}(a) \right) =\text{ord}_G \left( G^n \cdot a \right) \geq \text{ord}_G \left( a \right)  - n.
\]
Taking the maximum over $T\left( H(\llbracket K \rrbracket)\right)$ we get:
\[
0 = \mathfrak{u}_G(U) = \max_{a \in T \left( H(\llbracket K \rrbracket)  \right)} \text{ord}_G \left( f_{*}(a) \right) \geq \max_{a \in T \left( H(\llbracket K \rrbracket)  \right)} \text{ord}_G \left( a \right) - n = \mathfrak{u}_G (K) - n.
\]
This shows that $\mathfrak{u}_G (K) \leq n = \lambda (K)$.
\end{proof}
The two previous lemmas, combined with the fact that clearly $\lambda(U)=0$ (using the maps $f=g=\id_{\llbracket U \rrbracket}$), show that $\lambda$ detects the unknot, as claimed in \cref{prop:lambdadetectsunknot}.
A more direct proof may also be given as follows:
\begin{proof}[Proof of \cref{prop:lambdadetectsunknot}]
Let $K$ be a knot. It follows from the definition of $\lambda$ that $\lambda(K)=0$ if and only if $\llbracket K \rrbracket$ is ungraded chain homotopy equivalent to~$\llbracket U \rrbracket$. Since, even as an ungraded module, Khovanov homology detects the unknot, the latter condition holds if and only if~$K=U$.
\end{proof}

In \cref{dfn:lambdageneral} we saw how to define $\lambda$ for any $\Z[G]$-complex. Similarly, one can define the invariants $\mathfrak{u}_G, \mathfrak{u}_h, \mathfrak{u}_X, \mathfrak{u}_{(X,p)}$ on chain complexes over~$\mathcal{M}_R$, where $R$ is equal to $\Z[G],\mathbb{F}_2[h],\Q[X],\mathbb{F}_p[X]$ respectively. Namely, if $C \in \Kom(\mathcal{M}_R)$,  and $\eta = G, h, X$ and~$(X,p)$, then $\mathfrak{u}_{\eta}(C)$ is the maximal order of $\eta$-torsion in the homology $H(C) \in R \textrm{-Mod}$ of~$C$. 
\cref{lem:maxGtorsionVSlambda} also holds for complexes over~$\mathcal{M}_{\Z[G]}$.

We now state a few properties of $\mathfrak{u}_G,\mathfrak{u}_h,\mathfrak{u}_X,\mathfrak{u}_{(X,p)}$.
\begin{lem} \label{lem:propertiesuG}
Let $R_{G}=\Z[G], R_h=\mathbb{F}_2[h], R_X=\Q[X], R_{(X,p)}=\mathbb{F}_p[X]$, and let $A,B$ be chain complexes over~$\mathcal{M}_{R_{\eta}}$, for $\eta=G,h,X$ and~$(X,p)$. We have:
\begin{enumerate} [label=(\roman*)]
    \item $\mathfrak{u}_{\eta}(A \oplus B) = \max ( \mathfrak{u}_{\eta}(A), \mathfrak{u}_{\eta}(B) )$.
    \item For $\eta=h,X$ and $(X,p)$,
    \[
    \mathfrak{u}_{\eta}(A \otimes B) =
    \begin{cases*}
        \max (\mathfrak{u}_{\eta}(A), \mathfrak{u}_{\eta}(B)) & if $p_A > 0$ and $p_B > 0$, \\
        \mathfrak{u}_{\eta}(A) & if $p_A = 0$ and $p_B > 0$, \\ 
        \mathfrak{u}_{\eta}(B) & if $p_A > 0$ and $p_B = 0$, \\ 
        \min (\mathfrak{u}_{\eta}(A), \mathfrak{u}_{\eta}(B))  & if $p_A = p_B = 0$,
    \end{cases*}
    \]
    where, given a complex $C$ over~$\mathcal{M}_{R_{\eta}}$, $p_C$ is the number of pawn summands in the decomposition of $C$ into pawns and knights.
\end{enumerate}
\end{lem}

\begin{proof}
The first statement is clear. For the second one, we use the fact that $R_{\eta}$ is a PID. As noted in \cref{subsec:pieces}, this implies that
\[
A \simeq \sympawn^{\oplus p_A} \oplus \symknight(a_1)\oplus \dots \oplus \symknight(a_n),
\]
with $a_i=X^{k_i}$ (if $\eta = X$ or~$(X,p)$) or $a_i=h^{k_i}$ (if~$\eta = h$) and~$k_i \leq k_{i+1}$. Similarly,
\[
B \simeq \sympawn^{\oplus p_B} \oplus \symknight(b_1)\oplus \dots \oplus \symknight(b_m),
\]
with $b_j=X^{l_j}$ or $b_j=h^{l_j}$ and~$l_j \leq l_{j+1}$. One checks that $\mathfrak{u}_{\eta}(\sympawn)=0$ and $\mathfrak{u}_{\eta} (\symknight(a_i))=k_i$, therefore, by point (i), $\mathfrak{u}_{\eta} (A) = \max (\mathfrak{u}_{\eta}(\sympawn),\mathfrak{u}_{\eta} (\symknight(a_1)),\ldots,\mathfrak{u}_{\eta} (\symknight(a_n))) = k_n$. Similarly, $\mathfrak{u}_{\eta} (B)=l_m$. Now
\[
A \otimes B \simeq \bigoplus_{\substack{i \in \{0,\ldots,n\} \\ j \in \{0,\ldots,m\}}} A_i \otimes B_j,
\]
with $A_0=\sympawn^{\oplus p_A}$, $B_0=\sympawn^{\oplus p_B}$ and $A_i=\symknight(a_i)$, $B_j=\symknight(b_j)$ for $i,j>0$. It is a simple exercise to check that $\mathfrak{u}_{\eta}(\symknight(a_i) \otimes \symknight(b_j))=\min (k_i,l_j)$. It follows that 
\[
\begin{split}
    \mathfrak{u}_{\eta} (A \otimes B) &= \max_{i,j} (\mathfrak{u}_{\eta}(A_i \otimes B_j)) \\
    &= \max (\mathfrak{u}_{\eta} (A_0 \otimes B_m), \mathfrak{u}_{\eta} (A_n \otimes B_0), \mathfrak{u}_{\eta} (A_n \otimes B_m) ) \\
    &= \max (\mathfrak{u}_{\eta} (\sympawn^{\oplus p_A} \otimes \symknight(b_m)), \mathfrak{u}_{\eta} (\symknight(a_n) \otimes \sympawn^{\oplus p_B}), \mathfrak{u}_{\eta} (\symknight(a_n) \otimes \symknight(b_m)) ).
\end{split}
\]
Then statement (ii) can be deduced from the following:
\begin{gather*}
    \mathfrak{u}_{\eta} (\sympawn^{\oplus p_A} \otimes \symknight(b_m)) = \begin{cases*}
        \mathfrak{u}_{\eta} (\symknight(b_m))=l_m & if $p_A > 0$, \\
        0 & if $p_A=0$,
    \end{cases*} \\
    \mathfrak{u}_{\eta} (\symknight(a_n) \otimes \sympawn^{\oplus p_B}) = \begin{cases*}
        \mathfrak{u}_{\eta} (\symknight(a_n))=k_n & if $p_B > 0$, \\
        0 & if $p_B=0$,
    \end{cases*} \\
    \mathfrak{u}_{\eta} (\symknight(a_n) \otimes \symknight(b_m)) = \min (k_n,l_m).
\myqed
\end{gather*}
\end{proof}

\begin{remark}
As a consequence of (ii) of \cref{lem:propertiesuG}, if $\eta=h,X$ or $(X,p)$ and $K$ and $J$ are two knots, 
\begin{equation}\label{eq:eta-torsion_conn_sum}
    \mathfrak{u}_{\eta}(K \# J) = \max (\mathfrak{u}_{\eta}(K), \mathfrak{u}_{\eta}(J)),
\end{equation}
as the unreduced chain complexes over $\mathcal{M}_{R_\eta}$ associated to knots always contain a pawn summand (more precisely, exactly one pawn summand if $\eta = X$ or $(X,p)$, and exactly two if~$\eta = h$). 

Note that neither statement (ii) of \cref{lem:propertiesuG}, nor equation \eqref{eq:eta-torsion_conn_sum} hold in general for~$\mathfrak{u}_G$. This is due to the fact that, for~$\eta \ne G$, the $\mathfrak{u}_{\eta}$ are defined over PIDs, while $\mathfrak{u}_G$ is not (cf.~\cref{subsec:pieces}).
Later on in this article, \cref{rem:examplesKand-K,rem:u_GofKand-K} will provide us with examples of knots $K,J$ such that $\mathfrak{u}_{G}(K \# J) < \max (\mathfrak{u}_{G}(K), \mathfrak{u}_{G}(J))$, and others where $\mathfrak{u}_{G}(K \# J) > \max (\mathfrak{u}_{G}(K), \mathfrak{u}_{G}(J))$: if $K_1,K_2$ satisfy (1) of \cref{prop:lambdaKand-K} and $J$ satisfies (2), then $\mathfrak{u}_G (K_1) = \mathfrak{u}_G (K_2) = \mathfrak{u}_G (J) =1$, but $\mathfrak{u}_G(K_1 \# K_2) = 2 = \mathfrak{u}_G (K_1) + \mathfrak{u}_G (K_2)$ and $\mathfrak{u}_G((K_1 \# K_2) \# J) = 1 = \mathfrak{u}_G (K_1 \# K_2) - \mathfrak{u}_G (J)$. Therefore, the best that we can hope for is that $\mathfrak{u}_G (K \# J) \leq \mathfrak{u}_{G}(K) + \mathfrak{u}_{G}(J)$.
\end{remark}

\subsection{$\lambda$ of thin knots}
\label{subsec:thin}
For a homogeneous element $x$ of a doubly graded chain complex, with quantum degree $q$ and homological degree~$t$,
let the $\delta$-degree of $x$ be~$q - 2t$.
It was noted early on in the development of Khovanov homology that the rational Khovanov homology of alternating links is supported in a single $\delta$-degree~\cite{Lee}.
This led to various different notions of thinness and homological width of links, see e.g.~\cite{zbMATH06292128,1806.05168}.
In this article, we call a knot \emph{thin} if
their reduced integral Khovanov homology consists of free modules supported in a single $\delta$-degree (as already defined in the introduction). Let us prove in this subsection that $\lambda$ of non-trivial thin knots is~$1$.

\begin{lem}\label{lem:lambdaofknights}
If a chain complex $C \in \Kom(\mathcal{M}_{\Z[G]})$
decomposes (ignoring gradings) as a sum of one $\sympawn$ and finitely many $\symknight(G)$ pieces,
then~$\lambda(C) \leq 1$.
\end{lem}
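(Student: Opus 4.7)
The plan is to apply \cref{lem:lambdadirectsum} to the decomposition $C \simeq \sympawn \oplus \symknight(G)^{\oplus n}$, taking the pawn summand as the distinguished index $j$. This reduces the problem to verifying the two base estimates $\lambda(\sympawn) = 0$ and $\lambda(\symknight(G), 0) \leq 1$; one then obtains $\lambda(C) \leq \max(0, 1, \ldots, 1) = 1$. Since $\lambda$ is defined via ungraded chain maps, the various quantum and homological shifts attached to the summands of $C$ play no role, so the ``ignoring gradings'' clause in the hypothesis causes no trouble.

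The first base case is immediate, because the pawn $\sympawn$ coincides, as an ungraded complex, with $\llbracket U \rrbracket$, and the identity maps witness $\lambda(\sympawn) = 0$. For the second, note that any chain map $\symknight(G) \to 0$ or $0 \to \symknight(G)$ is forced to be zero, so verifying $\lambda(\symknight(G), 0) \leq 1$ amounts to exhibiting a null-homotopy of $G \cdot \id_{\symknight(G)}$. Writing $\symknight(G)$ as $R \xrightarrow{G} R$ with the two summands in homological degrees $0$ and $1$, one defines $h$ to be the identity from the degree-$1$ copy of $R$ to the degree-$0$ copy (and zero elsewhere). A direct computation gives $d h + h d = G \cdot \id$ on each of the two summands, as desired.

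The main potential obstacle — reconciling the ungraded nature of $\lambda$ with the graded decomposition of $C$ — is not actually an obstacle, since both $\lambda$ and \cref{lem:lambdadirectsum} are formulated entirely in terms of ungraded chain maps. Hence the only genuinely computational ingredient is the explicit homotopy for the knight piece, and the result follows.
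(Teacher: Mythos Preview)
Your proof is correct and follows exactly the paper's approach: apply \cref{lem:lambdadirectsum} with the pawn as the distinguished summand, using $\lambda(\sympawn)=0$ and $\lambda(\symknight(G),0)\leq 1$. You have simply spelled out the explicit homotopy for the knight piece that the paper leaves implicit.
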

\begin{proof}
Since $\lambda(\sympawn) = 0$ and $\lambda(\symknight(G),0) = 1$, this follows from \cref{lem:lambdadirectsum}.
\end{proof}

\begin{lem}\label{lem:minimalchaincomplex}
Let $K$ be a knot whose reduced integral Khovanov homology is torsion free.
Then $\llbracket K\rrbracket$ is homotopy equivalent to a chain complex $C \in \Kom(\mathcal{M}_{\Z[G]})$
of free shifted $\Z[G]$-modules, such that the Poincaré polynomial of $C$
is equal to the Poincaré polynomial of reduced integral Khovanov homology of~$K$.
\end{lem}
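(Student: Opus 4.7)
The plan is a Gaussian-elimination argument, combining the classification of chain complexes of finitely generated free abelian groups (with free homology) with Bar-Natan's cancellation lemma \cite{BN2}. The central observation is that, in $\Kom(\mathcal{M}_{\Z[G]})$, every entry of a differential is a $G$-monomial, and this monomial structure is preserved under cancellation.

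First I would fix any representative $(C,d) \in \Kom(\mathcal{M}_{\Z[G]})$ of $\llbracket K \rrbracket$. By the quantum-degree constraint (differentials are of degree $0$ and $\deg G = -2$), every entry of $d$ between summands $\Z[G]\{m\}$ and $\Z[G]\{n\}$ is a monomial $c G^{(m-n)/2}$ with $c \in \Z$ (in particular $m \geq n$ with $m-n$ even), or zero. Reducing modulo $G$ yields a chain complex $\overline{C} := C \otimes_{\Z[G]} \Z$ of finitely generated free abelian groups; specialising \cref{thm:F5fromZG} (cf.~\cref{cor:zg2kh} for the unreduced analogue) to the reduced Frobenius system at $G=0$ identifies its homology with the reduced integral Khovanov homology of $K$, which is free by hypothesis. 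The entries of $\overline{d}$ are the $G^0$-parts of entries of $d$, and these only connect summands of equal quantum shift; hence $\overline{C}$ splits as a direct sum $\bigoplus_q \overline{C}_q$ of finite-rank chain complexes indexed by quantum grading $q$.

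Within each $\overline{C}_q$, the classification of chain complexes of finitely generated free abelian groups with free homology yields a direct-sum decomposition into contractible pieces $\Z \xrightarrow{\pm 1} \Z$ (one per ``excess'' generator) and trivial pieces $\Z$ (one per homology generator). The corresponding $\Z$-change of basis at each homological degree has determinant $\pm 1$ and therefore lifts uniquely to an invertible grading-preserving $\Z[G]$-linear automorphism of the corresponding homogeneous summand of $C$. Transporting $d$ along these lifts in every quantum grading produces a new representative of $\llbracket K \rrbracket$ --- still with monomial entries --- whose mod-$G$ differential has block-diagonal form $\bigl(\begin{smallmatrix} I & 0 \\ 0 & 0 \end{smallmatrix}\bigr)$ within each quantum grading.

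Finally, apply Bar-Natan's cancellation lemma \cite{BN2} to each $\pm 1$ entry. The zig-zag replacement $\alpha \mapsto \alpha \mp \beta\gamma$ stays in $\Kom(\mathcal{M}_{\Z[G]})$: the degree constraint forces $\beta\gamma$ to be a monomial of the same $G$-exponent as $\alpha$, so $\alpha \mp \beta\gamma$ is again a monomial. A short case analysis shows that no new $\pm 1$ entry is created: either the modified entry lies between two quantum gradings both distinct from the one being cancelled, in which case $\beta$ and $\gamma$ have positive $G$-exponent and the modified entry has $G$-exponent $\geq 1$; or it lies inside the same quantum grading as the cancelled pair but in a different direct summand of $\overline{C}_q$, in which case the direct-sum form forces $\beta$ or $\gamma$ to vanish and the modification is trivial. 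After cancelling every $\pm 1$ entry, the mod-$G$ differential of the resulting complex is identically zero, so its bigraded rank coincides with the Poincaré polynomial of reduced integral Khovanov homology of $K$, as desired. The main technical point to keep track of --- preservation of monomial structure and of direct-sum form across cancellations --- is precisely what the degree constraint buys us.
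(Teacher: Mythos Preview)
Your argument is correct and follows essentially the same route as the paper's proof: reduce mod $G$, use torsion-freeness of the homology to split $\overline C$ into contractible $\Z\xrightarrow{\pm1}\Z$ pieces and trivial $\Z$ summands (what the paper phrases as putting the differentials into Smith normal form), lift that change of basis homogeneously to $\Z[G]$, and Gaussian-eliminate the resulting unit entries. Two cosmetic points: the exponent in your monomial formula should be $(n-m)/2$ with $n\geq m$ rather than $(m-n)/2$, and your two-case dichotomy omits the mixed situation where exactly one of $m_X,m_Z$ coincides with the cancelled grading (and the sub-case $m_X=m_Z\neq m_Y$)---but in each of those the same degree/block-diagonality argument forces one of $\beta,\gamma$ to vanish, so nothing is lost.
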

\begin{proof}
Start by picking an arbitrary chain complex $C' \in \Kom(\mathcal{M}_{\Z[G]})$ that is
homotopy equivalent to~$\llbracket K\rrbracket$.
Consider the chain complex $C' \otimes_{\Z[G]} \Z[G]/(G)$. This is a chain complex over the integers,
whose homology is isomorphic to reduced integral Khovanov homology of~$K$. In particular, it has torsion free homology by assumption.
One may select bases for the chain groups of the complex $C' \otimes_{\Z[G]} \Z[G]/(G)$,
with respect to which the matrices of the differentials are in Smith normal form.
Because homology is torsion free, all the entries of these matrices are $0$ or~$1$.
Gaussian elimination (see e.g.~\cref{lem:gaussian}) of all the entries equal to $1$ yields a homotopy equivalence between $C' \otimes_{\Z[G]} \Z[G]/(G)$
and a complex $Z$ with trivial differentials. So, $Z$ is isomorphic to the
reduced integral Khovanov homology of~$K$.

Now, one may lift the bases of $C' \otimes_{\Z[G]} \Z[G]/(G)$ to homogeneous bases of~$C'$.
Since the matrices of the differentials of $C'$ have homogeneous entries,
it follows that if a matrix entry of a differential of $C' \otimes_{\Z[G]} \Z[G]/(G)$ equals 1,
then the corresponding matrix entry of the corresponding differential of $C'$ also equals 1.
Therefore, one may lift the homotopy equivalence constructed above, obtaining
a homotopy equivalence between $C'$ and a complex $C \in \Kom(\mathcal{M}_{\Z[G]})$,
such that $C \otimes_{\Z[G]} \Z[G]/(G)$ is isomorphic to~$Z$.
It follows that $C$ and the reduced integral Khovanov homology of $K$ have the same Poincaré polynomial,
as desired.
\end{proof}

\begin{lem}\label{lem:thinknights}
For all thin knots~$K$, $\llbracket K\rrbracket$ is up to degree shifts
homotopy equivalent to a sum of one $\sympawn$ piece
and finitely many $\symknight(G)$ pieces.
\end{lem}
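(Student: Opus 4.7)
The plan is to first apply \cref{lem:minimalchaincomplex} to replace $\llbracket K\rrbracket$ by a minimal complex $C \in \Kom(\mathcal{M}_{\Z[G]})$ whose Poincaré polynomial matches that of the reduced integral Khovanov homology of $K$. Since $K$ is thin, all generators of $C$ lie on a single $\delta$-line: $C_t$ is isomorphic to a finite direct sum of copies of $\Z[G]\{2t+\delta_0\}$ for some fixed $\delta_0 \in \Z$. Bigraded homogeneity then forces every matrix entry of $d_t\colon C_t \to C_{t+1}$ to be of the form $nG$ with $n \in \Z$, because multiplication by $G^k$ has quantum degree $-2k$ while the quantum shifts of source and target differ by exactly $2$. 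So $d_t = G \cdot A_t$ for integer matrices $A_t$.

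The heart of the argument is to show that each $A_t$ has Smith normal form $I_{\kappa_t} \oplus 0$ over $\Z$. To get there, I would pass to $C \otimes_\Z \mathbb{F}$ for every prime field $\mathbb{F}$. Over the PID $\mathbb{F}[G]$ this complex decomposes into indecomposable pieces, and the bigrading forces each nontrivial piece to be either a $\sympawn$ or a $\symknight(cG)$ with $c \in \mathbb{F}^\times$, the latter being isomorphic to $\symknight(G)$. The Rasmussen $s_\mathbb{F}$ argument following \cref{prop:redzghomdim} shows that the number of $\sympawn$s equals $1$ in every characteristic, so the homological distribution of the $\symknight(G)$ pieces is pinned down by the ranks $\{r_t\}$ together with the homological position of the unique pawn---data that is $\mathbb{F}$-independent because the reduced integral Khovanov homology of $K$ is free. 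Consequently $\rk_{\mathbb{F}}(A_t)$ is $\mathbb{F}$-independent, and then SNF forces every nonzero invariant factor of $A_t$ to be $\pm 1$.

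Having secured this, I assemble the global decomposition by induction on homological degree. Starting at the smallest nonzero $t$, integer-invertible basis changes on $C_t$ and $C_{t+1}$ turn $d_t$ into the block matrix $G(I_{\kappa_t} \oplus 0)$, and the first $\kappa_t$ paired generators of $C_t$ and $C_{t+1}$ form $\kappa_t$ copies of $\symknight(G)$. These split off as direct summands: since $d^2 = 0$ and $\Z[G]$ is a domain, the neighboring differentials must vanish on the matched generators. Iterating through all degrees exhausts the nonzero differentials, leaving a residue that is a direct sum of $\sympawn$ pieces, and \cref{prop:redzghomdim} (applied after setting $G = 1$) ensures that exactly one pawn remains.

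The main obstacle I anticipate is establishing the field-independence of the knight distribution---this is the step where the thinness hypothesis (freeness of reduced integral Khovanov homology) really enters the picture. The subsequent SNF calculation and inductive peeling are essentially bookkeeping, the only subtle point being to verify using $d^2 = 0$ and the integrality of $\Z[G]$ that the knights actually split off as direct summands rather than merely subcomplexes.
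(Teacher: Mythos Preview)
Your argument is essentially correct, but it takes a detour that the paper avoids. Both proofs begin identically: apply \cref{lem:minimalchaincomplex} to get a complex $C$ supported on a single $\delta$-line, so every differential is $G$ times an integer matrix $A_t$. From here, the paper simply puts the $A_t$ into Smith normal form (simultaneously, using $A_{t+1}A_t=0$), yielding a decomposition into $\sympawn$'s and $\symknight(aG)$'s with a priori varying $a\in\Z_{>0}$. Then \cref{prop:redzghomdim} finishes the job in one stroke: setting $G=1$, each $\symknight(aG)$ contributes $\Z/a$ to homology, and since $H(\llbracket K\rrbracket_{G=1})\cong\Z$ is torsion-free, every $a$ must equal $1$ and there is exactly one pawn. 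No passage to prime fields is needed.

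Your route instead proves the invariant factors of $A_t$ are $\pm 1$ by checking that $\rk_{\mathbb{F}}(A_t)$ is field-independent. This works, but one step deserves care: you assert that the homological position of the unique pawn is $\mathbb{F}$-independent ``because the reduced integral Khovanov homology of $K$ is free.'' That reasoning is not quite right. Freeness of reduced Khovanov homology makes the ranks $r_t$ field-independent, but the $r_t$ alone do not determine the pawn position (e.g.\ $r_0=1,\,r_1=2,\,r_2=2$ admits both $t_0=0$ and $t_0=2$). What actually pins down the pawn position across all fields is \cref{prop:redzghomdim} over $\Z$: since $H(C_{G=1})\cong\Z$, universal coefficients gives $H(C_{G=1}\otimes\mathbb{F})\cong\mathbb{F}$ in the same degree for every $\mathbb{F}$. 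You do cite \cref{prop:redzghomdim}, so the fix is easy---but once you invoke it in that form, you may as well use it directly as the paper does and skip the field-by-field analysis entirely.
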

\begin{proof}
By \cref{lem:minimalchaincomplex}, we may pick a chain complex $C \in \Kom(\mathcal{M}_{\Z[G]})$
that is homotopy equivalent to~$\llbracket K\rrbracket$,
and has the same Poincaré polynomial as reduced integral Khovanov homology of~$K$.
Since the latter is supported on a single $\delta$-degree, so is~$C$.
Choosing arbitrary bases for the chain modules of~$C$, it follows that every entry
of the matrices of the differentials is an integer multiple of~$G$.
Similarly as in the proof of \cref{lem:minimalchaincomplex}, one may choose new bases
for the modules of~$C$, such that the matrices of the differentials equal
$G$ times a matrix in Smith normal form. Consequently, ignoring gradings $C$ decomposes as
a direct sum of $\sympawn$ and $\symknight(aG)$ pieces, with a priori varying~$a\in \Z_{>0}$.
By \cref{prop:redzghomdim}, there is exactly one $\sympawn$ piece, and all other pieces
are $\symknight(G)$ pieces.
\end{proof}

\propthin*
\begin{proof}
\cref{lem:thinknights} and \cref{lem:lambdaofknights} imply~$\lambda(K) \leq 1$,
whereas \cref{prop:lambdadetectsunknot} implies~$\lambda(K) \geq 1$.
\end{proof}
\begin{remark}
Note that \cref{lem:thinknights} also provides a proof (at least for knots)
for Bar-Natan's `structural conjecture' that all alternating links are `Khovanov basic' \cite[Conjecture~1]{BN1}.
\end{remark}

In \cite{CGLLSZ}, upper bounds for $\mathfrak{u}_X, \mathfrak{u}_h$ and $\mathfrak{u}_{(X,p)}$
are given in terms of the homological width of Khovanov homologies. This motivates the following question.
\begin{question}
Let $K$ be a knot such that $\llbracket K \rrbracket$ is homotopy equivalent to a complex
supported in $n$ adjacent $\delta$-degrees. Does then $\lambda(K) \leq n$ follow?
\end{question}

\section{Calculations of $\Z[G]$-homology and the $\lambda$-invariant}\label{sec:lambdacalc}
\subsection{Proof of \refinsec{Theorem}{thm:lambdahigh}: $\lambda$ can be arbitrarily big}

The purpose of this subsection is to show that our invariant $\lambda$ can grow arbitrarily. More precisely, as claimed in \cref{thm:lambdahigh}, for all $n \in \mathbb{N}$ we will define a knot $K$ such that~$\lambda(K) = n$.

We saw that $\Z[G]$-homology is bigraded. However, our invariant $\lambda$ does not depend on the quantum grading, therefore we will omit quantum shifts in this section.

\begin{dfn} \label{def:staircase}
For every $n \in \Z_{> 0}$ the \emph{staircase of rank}~$2n+1$, denoted by~$S_n$, is defined as the chain complex
\[
\begin{tikzcd}
0 \ar[r]& C_0 \ar["d_{S_n}",r]& C_1 \ar[r]& 0,
\end{tikzcd}
\]
where $C_0=(\Z[G])^{n+1}, C_1=(\Z[G])^{n}$ and
\[
d_{S_n}=
\begin{pNiceMatrix}
     2 & G && \Block{2-2}<\Large>{0} & \\
     &\Ddots&\Ddots&&  \\
     \Block{2-2}<\Large>{0} &&&& \\
     &&& 2 & G \\
\end{pNiceMatrix}.
\]
We can represent a staircase $S_n$ as shown in \cref{fig:staircase}.
\begin{figure}[bht]
\centering
\begin{tikzcd}[baseline=-27mm,row sep = 2.9pt]
	R_1 \arrow{rr}{2} && R_{n+2} \\
	\oplus && \oplus \\
	R_2 \arrow{rr}{2} \arrow{rruu}{G} && R_{n+3} \\
	\oplus && \oplus \\
	\vdots & \vdots & \vdots \\
	\oplus && \oplus \\
	R_n \arrow{rr}{2} && R_{2n+1} \\
	\oplus && \\
	R_{n+1} \arrow{rruu}{G} && \\
\end{tikzcd}\hspace{2cm}%
\parbox[b]{4cm}{\includegraphics[scale=.7]{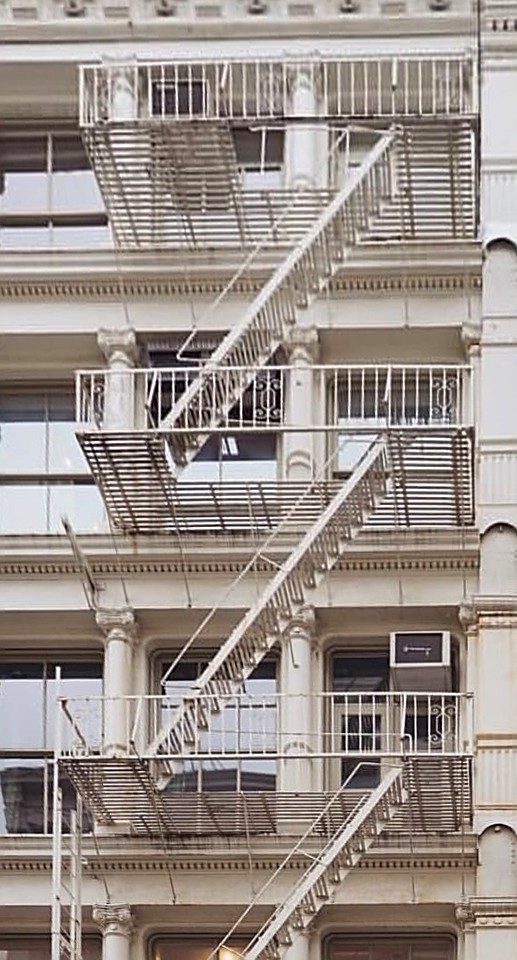}}
\caption{The staircase $S_n$ of rank~$2n + 1$. Here, $R_i=\Z[G]$ for all~$i$.}
    \label{fig:staircase}
\end{figure}
\end{dfn}

Let's prove a few lemmas about staircases.

\begin{lem} \label{lem:lambdaStaircase}
Let $\text{S}_n$ be a staircase of rank~$2n+1$. Then $\lambda(\text{S}_n)=\mathfrak{u}_G(S_n)=n$.
\end{lem}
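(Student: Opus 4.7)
The plan is to split the proof into two equalities, $\mathfrak{u}_G(S_n) = n$ and $\lambda(S_n) = n$, connected through \cref{lem:maxGtorsionVSlambda} which gives $\mathfrak{u}_G \leq \lambda$. The strategy is thus to sandwich $\lambda(S_n)$ between two matching bounds, one coming from the $G$-torsion in homology and the other from an explicit pair of ungraded chain maps.

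For the torsion order, I would first observe that $H_0(S_n) = \ker(d_{S_n})$ is free of rank one, generated by the cycle $v = \sum_{i=1}^{n+1}(-2)^{i-1}G^{n-i+1}\,e_i$, obtained by back-solving the relations $2a_k + Ga_{k+1} = 0$ that a cycle must satisfy; hence $H_0$ contributes no $G$-torsion. The $G$-torsion all lives in $H_1(S_n) = \coker(d_{S_n})$, and I would argue that the class $[f_1]$ realizes the maximal order: analyzing when $p\cdot f_1 \in \im(d_{S_n})$ propagates the same subdiagonal recursion and shows that the annihilator of $[f_1]$ equals the ideal $(2, G^n) \subset \Z[G]$. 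A reduction mod $2$ shows $G^{n-1} \notin (2, G^n)$, so $\operatorname{ord}_G([f_1]) = n$ exactly. Conversely, the identities $Gf_j = d(e_{j+1}) - 2f_{j+1}$, applied inductively starting from $Gf_n = d(e_{n+1})$, express $G^n f_j$ as a boundary for every $j$, so $G^n$ annihilates all of $H_1(S_n)$. This gives $\mathfrak{u}_G(S_n) = n$, hence $\lambda(S_n) \geq n$.

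For $\lambda(S_n) \leq n$, I would exhibit explicit ungraded chain maps $g\colon \Z[G] \to S_n$, $g(1) := v$ (a chain map since $v$ is a cycle), and $f\colon S_n \to \Z[G]$, $f(e_1) := 1$ with $f$ vanishing on all other generators (chain map since $f$ vanishes on $(S_n)_1$). The composition $f\circ g = G^n\cdot \id_{\Z[G]}$ holds on the nose from the definition of $v$. For $g\circ f - G^n\cdot \id_{S_n}$ — which sends $e_1 \mapsto v - G^n e_1$, $e_i \mapsto -G^n e_i$ for $i \geq 2$, and $f_j \mapsto -G^n f_j$ — I would define a homotopy $H\colon (S_n)_1 \to (S_n)_0$ by
\[
H(f_j) := \sum_{m=0}^{n-j}(-1)^{m+1}\, 2^m\, G^{n-1-m}\, e_{j+1+m},
\]
the formula being forced by the requirement $d(H(f_j)) = -G^n f_j$ starting from $H(f_n) = -G^{n-1}e_{n+1}$.

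The main obstacle is checking the second homotopy identity $H\circ d = g\circ f - G^n\cdot \id$ on $(S_n)_0$. For $2 \leq i \leq n$, the computation reduces to a telescoping sum in which the only surviving term of $G\,H(f_{i-1}) + 2\,H(f_i)$ is the leading $-G^n e_i$, thanks to the sign cancellation $(-1)^{m+1} + (-1)^m = 0$ for each $m \geq 1$. The endpoint $i = 1$ case recovers exactly the expansion of $v - G^n e_1$ after multiplying $H(f_1)$ by $2$, and $i = n+1$ is immediate from $GH(f_n) = -G^n e_{n+1}$. Combining the two bounds then gives $\lambda(S_n) = \mathfrak{u}_G(S_n) = n$.
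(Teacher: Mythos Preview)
Your proof is correct and follows essentially the same approach as the paper: the maps $f$ (projection to $e_1$) and $g$ (inclusion of the cycle $v$) coincide with the paper's choices, and your homotopy $H$ agrees with theirs up to indexing conventions. The only cosmetic difference is that the paper orders the two halves oppositely (first $\lambda \leq n$, then $\mathfrak{u}_G \geq n$) and does not separately verify $\mathfrak{u}_G \leq n$, since the chain $n \leq \mathfrak{u}_G \leq \lambda \leq n$ already forces all equalities.
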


\begin{proof}
We will first show that $\lambda(S_n) \leq n$ by finding ungraded chain maps $f \colon S_n \to \llbracket U \rrbracket$, $g \colon \llbracket U \rrbracket \to S_n$ such that $f \circ g \simeq G^n \cdot \id_{\llbracket U \rrbracket}$ and $g \circ f \simeq G^n \cdot \id_{S_n}$.

Since $f,g$ need not respect the homological degree we will simply consider $S_n$ as a pair $\left( S_n = R_1 \oplus \cdots \oplus R_{2n+1}, d\colon S_n \to S_n \right)$, with $R_i=\Z[G]$ as in \cref{fig:staircase} and
\[
d=
\NiceMatrixOptions{code-for-last-row = \color{darkred},
                   code-for-first-col = \color{darkred}}
\begin{pNiceArray}{ccc|cc}[last-row,first-col,margin]
\arrayrulecolor{darkred}
    \Block{2-1}{\color{darkred}\scriptstyle{n+1}}& \Block{2-3}<\LARGE>{0} &&&\Block{2-2}<\LARGE>{0}& \\
    \phantom{000}& &&&& \\
      \hline
    \Block{2-1}{\color{darkred}\scriptstyle{n}} & \Block{2-3}{d_{S_{n}}} &&&\Block{2-2}<\LARGE>{0}& \\
    & &&&& \\
    & \Block{1-3}{\color{darkred}\scriptstyle{n+1}}&&&\Block{1-2}{\color{darkred}\scriptstyle{n}}& \\
\end{pNiceArray}.
\]
Define
\begin{equation*}
\begin{split}
   & f = \begin{pmatrix} 1 & 0 & \cdots & 0
        \end{pmatrix} , \\
   & g = \begin{pmatrix} G^n & -2G^{n-1} & 4G^{n-2} & \cdots & (-2)^n & 0 & \cdots & 0
        \end{pmatrix}^{\mathsf{T}}.
\end{split}
\end{equation*}
It's easy to check that $f$ and $g$ are ungraded chain maps (interestingly, they also respect the homological degree, so they are actually graded chain maps) and $f \circ g = G^n \cdot \id_{\llbracket U \rrbracket}$. We need to verify that $g \circ f \simeq G^n \cdot \id_{S_n}$. We have
\[
g \circ f = 
\begin{pNiceArray}{c|ccccc}[first-row, last-col,margin]
\arrayrulecolor{darkred}
     &&& \color{darkred} \scriptstyle 2n && & \\
     G^n & \Block{7-5}<\LARGE>{0} &&&& & \\
     -2G^{n-1} &&&&& & \\
     \Vdots &&&&& & \\
     (-2)^n &&&&& & \color{darkred} \scriptstyle 2n+1 \\
     0 &&&&& & \\
     \Vdots &&&&& & \\
     0 &&&&& & \\ 
\end{pNiceArray}
\]
so let's show that 
\[
G^n \cdot \id_{S_n} - g \circ f = 
\NiceMatrixOptions{code-for-first-row = \color{darkred},
                   code-for-last-col = \color{darkred}}
\begin{pNiceArray}{c|cccccc}[first-row, last-col, margin]
\arrayrulecolor{darkred}
     &\Block{1-6}{\color{darkred} \scriptstyle 2n}&&&&& & \\
     0 & 0 &\Cdots&&&& 0 & \\
     2G^{n-1} & G^n &&&& \Block{2-2}<\LARGE>{0} & & \\
     \Vdots &&\Ddots&&&& & \\
     -(-2)^n &&&&&& & \scriptstyle 2n+1 \\
     0 &&&&&& & \\
     \Vdots & \Block{2-2}<\LARGE>{0} &&&&& & \\
     0 &&&&&& G^n & \\ 
\end{pNiceArray}
\]
is nullhomotopic. We define $h\colon S_n \to S_n$ as
\[
h=
\NiceMatrixOptions{code-for-first-row = \color{darkred},
                   code-for-last-col = \color{darkred}
                   \scriptstyle }
\begin{pNiceArray}{ccc|cccc}[first-row,last-col,margin]
\arrayrulecolor{darkred}
     & \color{darkred} \scriptstyle n+1 && \Block{1-4}{\color{darkred} \scriptstyle n} &&&& \\
     \Block{5-3}<\LARGE>{0} &&& 0 & \Block{3-3}<\LARGE>{0} &&& \\
     &&& G^{n-1} &&&& \\
     &&& -2G^{n-2} &\Ddots&&& n+1 \\
     &&& \Vdots &\Ddots&&& \\
     &&& (-2)^{n-1} & \Cdots & -2G^{n-2} & G^{n-1} & \\
      \hline
     \Block{3-3}<\LARGE>{0} &&& \Block{3-4}<\LARGE>{0} &&&& \\
     &&&&&&& n \\
     &&&&&&& \\
\end{pNiceArray}.
\]
It's easy to see that $d \circ h + h \circ d = G^n \cdot \id_{S_n} - g \circ f$. Therefore $G^n \cdot \id_{S_n} - g \circ f$ is nullhomotopic and $g \circ f \simeq G^n \cdot \id_{S_n}$.

We now show that~$\lambda(S_n) \geq n$. Since $\lambda \geq \mathfrak{u}_G$, it is enough to find an element $x \in H(\llbracket S_n \rrbracket)$ that has $G$-torsion order~$n$. Let $x = \begin{pmatrix}
     1 & 0 & \cdots & 0
\end{pmatrix}^{\mathsf{T}} \in C_1(S_n)=\Z[G]^n$. Since
\[
G^n \cdot x = d_{S_n} \left( \begin{pmatrix}
     0 & G^{n-1} & -2G^{n-2} & \cdots & (-2)^{n-1}
\end{pmatrix}^{\mathsf{T}} \right)
\]
clearly $\text{ord}_G(x) \leq n$. We will now show that the inequality $\text{ord}_G(x) \geq n$ also holds, i.e.\ that $G^k \cdot x \ne 0$ for~$k < n$. Let $k \in \Z_{\geq 0}$ and $a = \begin{pmatrix}
     a_1 & \cdots & a_{n+1}
\end{pmatrix}^{\mathsf{T}} \in C_0(S_n)=\Z[G]^{n+1}$ such that $d_{S_n}(a)=G^{k} \cdot x$. Let's prove that this implies~$k \geq n$. The equation
\[
\begin{pmatrix}
2a_1+Ga_2 \\
2a_2+Ga_3 \\
\vdots \\
2a_n+Ga_{n+1}
\end{pmatrix} 
= d_{S_n}(a) = G^k \cdot x =
\begin{pmatrix}
G^k \\
0 \\
\vdots \\
0
\end{pmatrix} 
\]
yields the following:
\begin{align*}
    &2a_n+Ga_{n+1}=0 &\Rightarrow& &a_{n+1}=-2b_n, \quad & a_{n}=Gb_n &\text{ for } b_n \in \Z[G] \\
    &2a_{n-1}+Ga_n=0 &\Rightarrow& &a_{n}=-2Gb_{n-1}, \quad & a_{n-1}=G^2b_{n-1} &\text{ for } b_{n-1} \in \Z[G] \\
    &\qquad \vdots &&&\vdots&&\vdots \qquad \\
    &2a_{2}+Ga_3=0 &\Rightarrow& &a_{3}=-2G^{n-2}b_{2}, \quad & a_{2}=G^{n-1}b_{2} &\text{ for } b_{2} \in \Z[G] \\
    &2a_{1}+Ga_2=G^k &\Rightarrow& &2a_{1}+G^nb_2=G^k \quad & \Rightarrow k \geq n. & \\
\end{align*}
This proves that~$\text{ord}_G(x)=n$, so $\lambda(S_n) \geq \mathfrak{u}_G(S_n) \geq n$. It follows that $\lambda(S_n)=\mathfrak{u}_G(S_n)=n$.
\end{proof}

\begin{lem} \label{lem:tensorStaircases}
Forgetting about quantum shifts we have
\[
\text{S}_1 \otimes \text{S}_n \cong \text{S}_{n+1} \oplus (\symknight (G) \otimes \symknight (2))^{\oplus n}.
\]
\end{lem}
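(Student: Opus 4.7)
The plan is to construct an explicit isomorphism of chain complexes. I would identify $S_{n+1}$ as a subcomplex of $S_1 \otimes S_n$ on a non-obvious basis, and then exhibit a direct complement which, after a single twist of basis, is isomorphic to $(\symknight(G)\otimes \symknight(2))^{\oplus n}$.

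First, fix bases $x_1, x_2$ of $(S_1)_0$ and $y$ of $(S_1)_1$, with $d(x_1)=2y$ and $d(x_2)=Gy$, and bases $f_1,\ldots,f_{n+1}$ and $g_1,\ldots,g_n$ of $(S_n)_0$ and $(S_n)_1$. In $S_1\otimes S_n$, denote the resulting generators by $a_j:=x_1\otimes f_j$, $b_j:=x_2\otimes f_j$ in degree $0$; by $c_k:=x_1\otimes g_k$, $e_k:=x_2\otimes g_k$, $p_j:=y\otimes f_j$ in degree $1$; and by $q_k:=y\otimes g_k$ in degree $2$. I would then tabulate the differentials via the Koszul sign rule; for instance $d(p_j)=-y\otimes d(f_j)$ and $d(b_j)=Gp_j+x_2\otimes d(f_j)$.

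Second, I would exhibit the $S_{n+1}$-subcomplex. Set $r_1:=p_1+c_1$, $r_j:=p_j+c_j+e_{j-1}$ for $2\leq j\leq n$, and $r_{n+1}:=p_{n+1}+e_n$; a direct check shows each $r_j$ is a cycle in degree~$1$. In degree $0$, let $F_1:=a_1$, $F_j:=a_j+b_{j-1}$ for $2\leq j\leq n+1$, and $F_{n+2}:=b_{n+1}$. Then $d(F_1)=2r_1$, $d(F_j)=Gr_{j-1}+2r_j$ for $2\leq j\leq n+1$, and $d(F_{n+2})=Gr_{n+1}$, reproducing the staircase matrix of $S_{n+1}$ exactly. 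Hence the $\Z[G]$-span of $\{F_j\}\cup\{r_i\}$ is a subcomplex isomorphic to $S_{n+1}$.

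Third, I would construct the complement. For $1\leq j\leq n$ one computes $d(b_j)=Gr_j-Gc_j+2e_j$, which has an unwanted $Gr_j$ term inside the $S_{n+1}$-subcomplex. To remove it, replace $c_j$ by $\tilde c_j:=c_j-r_j$. Since $r_j$ is a cycle, $d(\tilde c_j)=2q_j$, and substitution gives $d(b_j)=-G\tilde c_j+2e_j$. Together with $d(e_j)=Gq_j$, this shows that for each $j$ the span of $\{b_j,\tilde c_j, e_j, q_j\}$ is a subcomplex isomorphic to $\symknight(G)\otimes \symknight(2)$.

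Finally, I would verify that $\{F_1,\ldots,F_{n+2},b_1,\ldots,b_n\}$ and $\{r_1,\ldots,r_{n+1},\tilde c_1,\ldots,\tilde c_n,e_1,\ldots,e_n\}$ are $\Z[G]$-bases of the degree-$0$ and degree-$1$ chain modules (the change-of-basis matrices from the original generators are upper triangular with $\pm 1$'s on the diagonal), yielding the claimed chain complex isomorphism. The main obstacle is the twist in the third step: the naive complement fails to be a subcomplex because $d(b_j)$ has a component along $r_j\in S_{n+1}$, and no element of $S_{n+1}$ has differential $Gr_j$ (indeed $r_j$ carries the $G$-torsion of $H_1(S_{n+1})$), so a chain-level splitting by lifting $b_j$ directly is impossible; the replacement $c_j\mapsto c_j-r_j$ is exactly what absorbs this obstruction.
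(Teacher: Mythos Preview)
Your proof is correct and takes essentially the same approach as the paper: both construct an explicit chain complex isomorphism by exhibiting new bases in each degree that split off $S_{n+1}$ and then identify the complement with $n$ copies of $\symknight(G)\otimes\symknight(2)$. The paper carries this out via explicit row/column operations on the differential matrices in two stages, whereas you work directly with named generators and the single twist $\tilde c_j=c_j-r_j$; your presentation is arguably more transparent, but the underlying idea is identical.
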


\begin{proof}
Let $C= S_1 \otimes S_n$. The complex $C$ is isomorphic to 
\begin{equation*}
\centering
\begin{tikzcd}[row sep = 0pt]
	(\Z[G])^{2n+2} \arrow{rr}{A} && (\Z[G])^{3n+1} \arrow{rr}{B} && (\Z[G])^n. \\
	\rotatebox[origin=c]{90}{=} && \rotatebox[origin=c]{90}{=} && \rotatebox[origin=c]{90}{=}  \\
    C_0 && C_1 && C_2 \\
\end{tikzcd}
\end{equation*}
In order to describe the maps $A,B$ we have to choose a basis for $C_0,C_1,C_2$. Let's follow the notation of \cref{fig:staircase} for $S_1$ and~$S_n$, and denote by $a_i$ the generators of the $R_i$ belonging to $S_1$ and by $b_i$ the generators of the $R_i$ in~$S_n$. Then $C_0,C_1,C_2$ are respectively generated by
\[
\begin{pNiceMatrix}[margin]
\arrayrulecolor{darkred}
    a_1 \otimes b_1 \\
    \Vdots \\
    a_1 \otimes b_{n+1} \\
    \hline
    a_2 \otimes b_1 \\
    \Vdots \\
    a_2 \otimes b_{n+1} \\    
\end{pNiceMatrix}
, \qquad
\begin{pNiceMatrix}[margin]
\arrayrulecolor{darkred}
    a_1 \otimes b_{n+2} \\
    \Vdots \\
    a_1 \otimes b_{2n+1} \\
    \hline
    a_2 \otimes b_{n+2} \\
    \Vdots \\
    a_2 \otimes b_{2n+1} \\ 
    \hline
    a_3 \otimes b_{1} \\
    \Vdots \\
    a_3 \otimes b_{n+1} \\   
\end{pNiceMatrix}
, \qquad
\begin{pNiceMatrix}[margin]
\arrayrulecolor{darkred}
    a_3 \otimes b_{n+2} \\
    \Vdots \\
    a_3 \otimes b_{2n+1} \\
\end{pNiceMatrix}.
\]
We can now write out the differentials of~$C$:
\[
A=
\NiceMatrixOptions{code-for-first-row = \color{darkred}
                   \scriptstyle ,
                   code-for-last-col = \color{darkred} \scriptstyle }
\begin{pNiceArray}{c|c}[first-row,last-col,margin]
\arrayrulecolor{darkred}
     n+1 & n+1 & \\
     \Block{2-1}{d_{S_n}} & \Block{2-1}<\LARGE>{0} & n \\
     & & \\
     \hline
     \Block{3-1}<\LARGE>{0} & & \\
     & d_{S_n} & n \\
     & & \\
     \hline
     & & \\
     2 \cdot \mathbbm{1}_{n+1} & G \cdot \mathbbm{1}_{n+1} & n+1 \\
     & & \\
\end{pNiceArray}
\hspace{-0.5cm}, \qquad B=
\NiceMatrixOptions{code-for-first-row = \color{darkred}                             \scriptstyle ,
                   code-for-last-col = \color{darkred} \scriptstyle }
\begin{pNiceArray}{c|c|c}[first-row,last-col,margin]
\arrayrulecolor{darkred}
     n & n & n+1  & \\
     && & \\
     2 \cdot \mathbbm{1}_{n} & G \cdot \mathbbm{1}_{n} & -d_{S_n} & n \\
     && & \\
\end{pNiceArray}
\]
where $d_{S_n}$ is the $n \times (n+1)$-matrix introduced in \cref{def:staircase}.

Let now $C'=\text{S}_{n+1} \oplus (\symknight (G) \otimes \symknight (2))^{\oplus n}$. This complex is given by 
\begin{equation*}
\centering
\begin{tikzcd}[row sep = 0pt]
	(\Z[G])^{n+2} \arrow{rr}{d_{S_{n+1}}} && (\Z[G])^{n+1} && \\
	\oplus && \oplus && \\
	(\Z[G])^{n} \arrow{rr}{A''} && (\Z[G])^{2n} \arrow{rr}{B''} && (\Z[G])^n \\
\color{darkred}	\rotatebox[origin=c]{90}{=} && \color{darkred}	\rotatebox[origin=c]{90}{=} && \color{darkred}	\rotatebox[origin=c]{90}{=}  \\
\color{darkred} C'_0 \arrow[darkred]{rr}{A'} && \color{darkred}	C'_1	\arrow[darkred]{rr}{B'} &&\color{darkred} C'_2 \\
\end{tikzcd}
\end{equation*}
where
\[
A''=
\begin{pNiceMatrix}[name=m,margin]
    G &&\Block{2-2}<\LARGE>{0}& \\
    2 &&& \\
    & G && \\
    & 2 && \\
    &&& \\
    &&& \\
    \Block{2-2}<\LARGE>{0}&&& G \\
    &&& 2
\CodeAfter \tikz[remember picture] {
    \draw[line width=0.3mm, shorten >=2.5mm, shorten <=2.5mm, loosely dotted] (m-3-2)-- (m-7-4);
    \draw[line width=0.3mm, shorten >=2.5mm, shorten <=2.5mm, loosely dotted] (m-4-2)-- (m-8-4);
    }
\end{pNiceMatrix}
, \qquad B''=
\begin{pNiceMatrix}[name=n,margin]
    \Block{2-2}<\LARGE>{0}&&&&&& -2 & G \\
    &&&&&&& \\ 
    && -2 & G &&&\Block{2-2}<\LARGE>{0}& \\
    -2 & G &&&&&&
\CodeAfter \tikz[remember picture] {
    \draw[line width=0.3mm, shorten >=2.5mm, shorten <=2.5mm, loosely dotted] (n-1-7)-- (n-3-3);
    \draw[line width=0.3mm, shorten >=2.5mm, shorten <=2.5mm, loosely dotted] (n-1-8)-- (n-3-4);
    }
\end{pNiceMatrix}
\]
and
\[
A'=
\NiceMatrixOptions{code-for-first-row = \color{darkred},
code-for-first-col = \color{darkred} }
\begin{pNiceArray}{ccc|cc}[first-row,first-col,margin]
\arrayrulecolor{darkred}
    & \Block{1-3}{\color{darkred}\scriptstyle n+2}&&&\Block{1-2}{\color{darkred}\scriptstyle n} \\
   \Block{2-1}{\color{darkred}\scriptstyle n+1} & \Block{2-3}{d_{S_{n+1}}} &&&\Block{2-2}<\LARGE>{0}& \\
    \phantom{000}& &&&& \\
   \hline
   \Block{2-1}{\color{darkred}\scriptstyle 2n} & \Block{2-3}<\LARGE>{0}&&&\Block{2-2}{A''}& \\
    & &&&&\\
\end{pNiceArray}
, \qquad B'=
\NiceMatrixOptions{code-for-first-row = \color{darkred},
                   code-for-last-col = \color{darkred} \scriptstyle }
\begin{pNiceArray}{cc|c}[first-row,last-col,margin]
\arrayrulecolor{darkred}
    \Block{1-2}{\color{darkred}\scriptstyle n+1}&& \scriptstyle 2n & \\
    \Block{3-2}<\LARGE>{0}&& & \\
    && B'' & n \\
    && & \\
\end{pNiceArray}.
\]
Our goal is therefore to find a change of basis to obtain $C'$ from~$C$. We will do this in two steps: we first define a change of basis from $C$ to $S_{n+1} \oplus \widetilde{C}$, for some complex $\widetilde{C}=\left( \widetilde{C}_0 \overset{\widetilde{A}}{\longrightarrow} \widetilde{C}_1 \overset{\widetilde{B}}{\longrightarrow} \widetilde{C}_2 \right)$; then we do a second change of basis that yields $(\symknight (G) \otimes \symknight (2))^{\oplus n}$ from~$\widetilde{C}$.

For the first step we have to find two invertible matrices $M,N$ (over~$\Z[G]$) of dimension $2n+2$ and $3n+1$ respectively, such that
\begin{equation}
NAM=
\begin{pNiceArray}{ccc|cc}[margin]
\arrayrulecolor{darkred}
     \Block{2-3}{d_{S_{n+1}}} &&&\Block{2-2}<\LARGE>{0}& \\
     &&&& \\
     \hline
     \Block{2-3}<\LARGE>{0}&&&\Block{2-2}{\widetilde{A}}& \\
     &&&& \\
\end{pNiceArray}
=
\NiceMatrixOptions{code-for-first-row = \color{darkred},
                   code-for-last-col = \color{darkred} \scriptstyle }
\begin{pNiceArray}{cccc|cccc}[first-row, last-col,margin]
\arrayrulecolor{darkred}
    \Block{1-4}{\color{darkred}\scriptstyle n+2} &&& & \Block{1-4}{\color{darkred}\scriptstyle n} &&& & \\
    2 & G && &\Block{3-4}<\LARGE>{0}&&& & \\
    & \Ddots & \Ddots & &&&& & \color{darkred} n+1 \\
    && 2 & G & &&& & \\
    \hline
    \Block{3-4}<\LARGE>{0}&&& &\Block{3-4}{\widetilde{A}}&&& & \\
    &&& &&&& & \color{darkred} 2n \\
    &&& &&&& & \\
\end{pNiceArray}
\label{eq:changeOfBasisA}
\end{equation}
and
\begin{equation}
BN^{-1}=
\NiceMatrixOptions{code-for-first-row = \color{darkred} \scriptstyle,
code-for-last-col = \color{darkred} \scriptstyle }
\begin{pNiceArray}{ccc|c}[first-row,last-col,margin]
\arrayrulecolor{darkred}
    & n+1 && 2n & \\ 
    \Block{3-3}<\LARGE>{0}&&& & \\
    &&& \widetilde{B} & n \\
    &&& & \\
\end{pNiceArray}.
\label{eq:changeOfBasisB}
\end{equation}
We define $M$ as follows:
\[
M=
\NiceMatrixOptions{code-for-first-row = \color{darkred},
code-for-last-col = \color{darkred} \scriptstyle }
\begin{pNiceArray}{cccc|cccc}[first-row,last-col,name=p,margin]
\arrayrulecolor{darkred}
     \Block{1-4}{\color{darkred}\scriptstyle n+1}&&& &\Block{1-4}{\color{darkred}\scriptstyle n+1}&&& & \\
     \Block{3-4}{\mathbbm{1}_{n+1}} &&&     &\Block{3-4}<\LARGE>{0}&&& & \\
     &&& &&&& & n+1 \\
     &&& &&&& & \\
     \hline
     0 & 1 &&      &&&& 1  & \\
     &&&                    &&&& & n+1 \\
     &&& 1                  &&&& & \\  
     &&& 0     & 1 &&& &
\CodeAfter \tikz[remember picture] {
    \draw[line width=0.3mm, shorten >=2.5mm, shorten <=2.5mm, loosely dotted] (p-4-2)-- (p-6-4);    
    \draw[line width=0.3mm, shorten >=2.5mm, shorten <=2.5mm, loosely dotted] (p-4-8)-- (p-7-5);
    }
\end{pNiceArray}.
\]
We get
\[
AM=
\NiceMatrixOptions{code-for-first-row = \color{darkred},
                   code-for-last-col = \color{darkred} \scriptstyle }
\begin{pNiceArray}{cccccc|cccc}[first-row,last-col,name=q,margin]
\CodeBefore
        \rectanglecolor{myblue}{1-1}{3-6}
        \rectanglecolor{myred}{4-1}{6-6}
        \rectanglecolor{myred}{7-1}{9-6}
        \rectanglecolor{myblue}{10-1}{10-6}
    \Body
\arrayrulecolor{darkred}
     \Block{1-6}{\color{darkred}\scriptstyle n+2}&&&&&&\Block{1-4}{\color{darkred}\scriptstyle n}&&& & \\
     2 & G &&&& 0 &\Block{3-4}<\LARGE>{0}&&&  & \\
     &&&& & \Vdots &&&& & n \\
     &&& 2 & G & 0  & &&& & \\
     \hline
     0 & 2 & G &&  & &&& G & 2 & \\
     \Vdots &&&& & & G &&& & n  \\
     0 &&&& 2 & G  &  2 &&& & \\
     \hline
     2 & G &&&& 0  & &&& G & \\
     &&&& & \Vdots  & &&& & n  \\
     &&& 2 & G & 0  &  G &&& & \\
     \hline
     0 & \Cdots && 0 & 2 & G  & 0 & \Cdots && 0 &
\CodeAfter \tikz 
   {\draw[line width=0.3mm, shorten >=2.5mm, shorten <=2.5mm, loosely dotted] (q-1-1)-- (q-3-4);
    \draw[line width=0.3mm, shorten >=2.5mm, shorten <=2.5mm, loosely dotted] (q-1-2)-- (q-3-5);
    \draw[line width=0.3mm, shorten >=2.5mm, shorten <=2.5mm, loosely dotted] (q-4-2)-- (q-6-5);
    \draw[line width=0.3mm, shorten >=2.5mm, shorten <=2.5mm, loosely dotted] (q-4-3)-- (q-6-6);
    \draw[line width=0.3mm, shorten >=2.5mm, shorten <=2.5mm, loosely dotted] (q-7-1)-- (q-9-4);
    \draw[line width=0.3mm, shorten >=2.5mm, shorten <=2.5mm, loosely dotted] (q-7-2)-- (q-9-5);
    \draw[line width=0.3mm, shorten >=2.5mm, shorten <=2.5mm, loosely dotted] (q-5-7)-- (q-4-9);
    \draw[line width=0.3mm, shorten >=2.5mm, shorten <=2.5mm, loosely dotted] (q-6-7)-- (q-4-10);
    \draw[line width=0.3mm, shorten >=2.5mm, shorten <=2.5mm, loosely dotted] (q-9-7)-- (q-7-10);
}
\end{pNiceArray}.
\]
In order to obtain the right-hand side of equation \eqref{eq:changeOfBasisA} from $AM$ we have to cancel all the pairs \quad $2 \quad G$ \quad in the blocks highlighted in light red (this can be easily done by subtracting rows of the dark blue blocks), move $r_{3n+1}$ to row ${n+1}$ (where $r_i$ indicates the $i$-th row) and slide down $r_{n+1}, \ldots, r_{3n}$ consequently. Let us call $N$ the matrix expressing these operations, i.e.
\[
N = Q \cdot P^{3n}_{n} \cdots P^{2n+1}_{1} \cdot P^{2n}_{3n+1} \cdot P^{2n-1}_{n} \cdots P^{n+1}_{2}
\]
where $P^a_b$ is obtained from $\mathbbm{1}_{3n+1}$ by replacing $r_a$ with $r_a-r_b$, and $Q$ is the matrix expressing the appropriate reordering of rows $r_{n+1},\ldots,r_{3n+1}$. It is thus clear that $N$ satisfies equation \eqref{eq:changeOfBasisA}.
It is straightforward to check that $N^{-1}$ also satisfies \eqref{eq:changeOfBasisB}. Thus we showed that $S_1 \otimes S_n \cong S_{n+1} \oplus \widetilde{C}$.

The next step is to define a change of basis from $\widetilde{C}$ to $(\symknight (G) \otimes \symknight (2))^{\oplus n}$. For that purpose we have to explicitly describe $\widetilde{A}$ and~$\widetilde{B}$:
\[
\widetilde{A}=
\NiceMatrixOptions{code-for-last-col = \color{darkred} \scriptstyle }
\begin{pNiceArray}{cccc}[last-col,name=r,margin]
\CodeBefore 
        \rectanglecolor{myred}{1-1}{4-4}
        \rectanglecolor{myblue}{5-1}{8-4}
    \Body
\arrayrulecolor{darkred}
     \phantom{0}&& G & 2 & \\
     &&& & \\
     G &&& & n \\
     2 &&& \phantom{0} & \\
     \hline
     \phantom{0} &&& G & \\
     &&& & n \\
     &&& & \\
     G &&& \phantom{0} & 
\CodeAfter \tikz{
    \draw[line width=0.3mm, shorten >=2.5mm, shorten <=2.5mm, loosely dotted] (r-1-3)-- (r-3-1);
    \draw[line width=0.3mm, shorten >=2.5mm, shorten <=2.5mm, loosely dotted] (r-1-4)-- (r-4-1);
    \draw[line width=0.3mm, shorten >=2.5mm, shorten <=2.5mm, loosely dotted] (r-5-4)-- (r-8-1);
}
\end{pNiceArray}
, \qquad
\widetilde{B}=
\NiceMatrixOptions{code-for-first-row = \color{darkred} }
\begin{pNiceArray}{cccc|cccc}[first-row,name=t,margin]
\arrayrulecolor{darkred}
     \Block{1-4}{\color{darkred}\scriptstyle n}&&& & \Block{1-4}{\color{darkred}\scriptstyle n} &&& \\
     G &&& &  -2 & -G && \\
     &\Ddots&& & &&& \\
     &&& & &&& -G \\
     &&& G  & &&& -2
\CodeAfter \tikz{
    \draw[line width=0.3mm, shorten >=2.5mm, shorten <=2.5mm, loosely dotted] (t-1-5)-- (t-4-8);
    \draw[line width=0.3mm, shorten >=2.5mm, shorten <=2.5mm, loosely dotted] (t-1-6)-- (t-3-8);
}
\end{pNiceArray}.
\]
We can easily obtain $A''$ from $\widetilde{A}$ by removing the $G$ entries in the light red block (by subtracting rows of the dark blue block) and then reordering the rows appropriately. These operations are expressed by the matrix 
\[
L=\widetilde{Q} \cdot P^{n-1}_{2n} \cdots P^{1}_{n+2}
\]
where $P^a_b$ is obtained from $\mathbbm{1}_{2n}$ by replacing $r_a$ with $r_a-r_b$, and $Q$ is the matrix expressing the appropriate reordering of the rows.
One can check that $L\widetilde{A}=A''$ and $\widetilde{B}L^{-1}=B''$.
This concludes the proof of the lemma.
\end{proof}

We are now ready to prove \cref{thm:lambdahigh}.
\begin{proof}[Proof of \cref{thm:lambdahigh}]
\cref{lem:lambdaStaircase} and \cref{lem:tensorStaircases}, together with the fact that  $\llbracket K_1 \# K_2 \rrbracket \cong \llbracket K_1 \rrbracket \otimes \llbracket K_2 \rrbracket$ for any two knots $K_1,K_2$ (see \cref{eq:connectedsum}), are enough to construct knots with arbitrarily big~$\lambda$. Indeed, let us consider the knot $K=14n19265$. This knot was used by Seed to show that $s(K) \ne s_{\mathbb{F}_2}(K)$ \cite{knotkit,zbMATH06296598}, where $s$ is the classical Rasmussen invariant over $\Q$ and $s_{\mathbb{F}_2}$ is the invariant computed over~$\mathbb{F}_2$. We observe using \verb+khoca+ and \verb+homca+ that the $\Z[G]$-complex $\llbracket K \rrbracket$ decomposes as a sum of a staircase $\text{S}_1$ and finitely many $\symknight(G)$ and $\symknight(G) \otimes \symknight(2)$. Therefore, by \cref{lem:lambdadirectsum}:
\[
\lambda(K) \leq \max (\lambda(S_1), \hspace{0.2cm} \lambda(\symknight(G),0), \hspace{0.2cm} \lambda(\symknight(G) \otimes \symknight(2),0)) = 1.
\]
Since $K \ne U$ it follows that~$\lambda(K)=1$. By \cref{prop:lambdabasicproperties}, given $n \in \Z_{> 0}$, we then have:
\[
\lambda ( K^{\#n} ) \leq n \cdot\lambda ( K ) = n.
\]
On the other hand, it follows from \cref{lem:tensorStaircases} that $\llbracket K^{\#n} \rrbracket \cong S_{n} \oplus C$ for some chain complex~$C$. We know that $\mathfrak{u}_G ( S_{n} \oplus C) = \text{max} (\mathfrak{u}_G ( S_{n}), \mathfrak{u}_G (C))$ (cf.~\cref{lem:propertiesuG}), so
\[
\lambda ( K^{\#n} ) = \lambda ( S_{n} \oplus C ) \geq  \mathfrak{u}_G ( S_{n} \oplus C) \geq \mathfrak{u}_G ( S_{n}) = n.
\]
This proves that $\lambda(K^{\#n})=n$ for all~$n \geq 0$.
\end{proof}
The fact that $\lambda(K^{\# n})=n$ will also follow from \cref{prop:lambdaKand-K}.

\subsection{Further calculations}
\begin{prop} \label{prop:lambdaKand-K}
Let $K_1,\ldots,K_n$ and $J_1, \ldots , J_m$ be knots such that:
\begin{enumerate}
    \item for all $i=1,\ldots ,n$ the complex $\llbracket K_i \rrbracket$ splits as a sum of one staircase $S_1$ and finitely many $\symknight(G)$ and $\symknight(G) \otimes \symknight(2)$ pieces,
    \item for all $j=1,\ldots ,m$ the complex $\llbracket J_j \rrbracket$ decomposes as a sum of one dual staircase $\overline{S_1}$ and finitely many $\symknight(G)$ and $\symknight(G) \otimes \symknight(2)$.
\end{enumerate}
Let the empty $\scalebox{1.3}{$\#$}$ be equal to the unknot. Then
\[
\lambda(\underset{i \leq n}{\scalebox{1.3}{$\#$}} K_i \hspace{0.1cm} \# \hspace{0.1cm} \underset{j \leq m}{\scalebox{1.3}{$\#$}} J_j) = \left\{ \begin{array}{ll}
|n - m| & \text{if $n \neq m$ and $m,n \geq 0$,} \\
1 & \text{if $n = m \neq 0$,} \\
0 & \text{if $n = m = 0$.} \end{array}\right.
\]
\end{prop}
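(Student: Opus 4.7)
The strategy is to combine the given decompositions $\llbracket K_i\rrbracket = S_1\oplus B_i$ and $\llbracket J_j\rrbracket = \overline{S_1}\oplus C_j$ with the multiplicativity $\llbracket K\#J\rrbracket \cong \llbracket K\rrbracket\otimes\llbracket J\rrbracket$ to reduce the computation to a careful analysis of staircase tensor products. Mirror invariance $\lambda(K) = \lambda(-K)$ (Proposition~\ref{prop:lambdabasicproperties}), combined with the observation that the mirror sends type-(1) to type-(2) knots (via $\overline{S_1}$ and the self-duality of $\symknight(G)$ and $\symknight(G)\otimes\symknight(2)$ up to grading), lets me assume WLOG $n\geq m$. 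The case $n=m=0$ is the unknot. Expanding the tensor by distributivity,
\[
\llbracket\underset{i}{\scalebox{1.3}{$\#$}} K_i\,\#\,\underset{j}{\scalebox{1.3}{$\#$}} J_j\rrbracket \simeq S_1^{\otimes n}\otimes\overline{S_1}^{\otimes m}\ \oplus\ (\text{cross terms}),
\]
where every cross-term has at least one tensor factor from some $B_i$ or $C_j$. Since a single $\symknight(G)$ or $\symknight(G)\otimes\symknight(2)$ piece $K$ satisfies $\lambda(K,0)=1$ (the explicit null-homotopy $h=\id$ of $G\cdot\id_K$), and tensoring with an arbitrary complex preserves the bound $\lambda(\,\cdot\,,0)\leq 1$ (simply tensor the null-homotopy with the identity), every cross-term satisfies $\lambda(\,\cdot\,,0)\leq 1$.

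The main step is to analyze $M := S_1^{\otimes n}\otimes\overline{S_1}^{\otimes m}$. Iterating Lemma~\ref{lem:tensorStaircases} (and its mirror, obtained via $\overline{\,\cdot\,}$), I get $S_1^{\otimes k}\simeq S_k\oplus(\text{stuff with }\lambda(\cdot,0)\leq 1)$ and likewise for $\overline{S_1}^{\otimes \ell}$. Consequently, modulo further cross-terms of the same bounded type, $M$ reduces to $S_n\otimes\overline{S_m}$. The heart of the argument is then the following \emph{cancellation sublemma}, to be proved by induction on $\min(k,\ell)$:
\[
S_k\otimes\overline{S_\ell}\ \simeq\ S_{k-\ell}\oplus(\text{stuff with }\lambda(\cdot,0)\leq 1) \qquad(k\geq\ell\geq 0),
\]
with the convention $S_0 := \sympawn$. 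The base case $\ell=0$ is trivial; the inductive step requires explicit matrix manipulations over $\Z[G]$ in the style of the proof of Lemma~\ref{lem:tensorStaircases}, which is the technical heart. Accepting this, the whole connected-sum complex decomposes (up to homotopy) as
\[
\llbracket\underset{i}{\scalebox{1.3}{$\#$}} K_i\,\#\,\underset{j}{\scalebox{1.3}{$\#$}} J_j\rrbracket \simeq S_{n-m}\oplus U,\qquad \lambda(U,0)\leq 1.
\]

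The upper bound then follows from Lemma~\ref{lem:lambdadirectsum}: pairing the $S_{n-m}$-summand with $\llbracket U\rrbracket$ gives $\lambda\leq \max(\lambda(S_{n-m}),1)$, which by Lemma~\ref{lem:lambdaStaircase} equals $n-m$ when $n>m$ and $1$ when $n=m>0$. For the matching lower bound, when $n>m$ I use $\lambda\geq \mathfrak{u}_G$ (Lemma~\ref{lem:maxGtorsionVSlambda}): the $S_{n-m}$-summand contributes a $G$-torsion class of order exactly $n-m$ (by the lower-bound half of the proof of Lemma~\ref{lem:lambdaStaircase}), and this torsion order survives in the full complex, so $\lambda\geq n-m$. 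For $n=m>0$, non-triviality of the connected sum (guaranteed because each $K_i$ and $J_j$ is non-trivial) gives $\lambda\geq 1$ by Proposition~\ref{prop:lambdadetectsunknot}.

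The main obstacle is the cancellation sublemma $S_k\otimes\overline{S_\ell}\simeq S_{k-\ell}\oplus(\ldots)$; since $\Z[G]$ is not a PID, the decomposition is not automatic from the (relatively simple) homology of $S_k\otimes\overline{S_\ell}$, and must be realized by a concrete change of basis analogous to the one in Lemma~\ref{lem:tensorStaircases}. The homological bookkeeping---free rank one in the ``pawn'' slot of $S_{k-\ell}$, plus torsion of the form $(\Z[G]/(2,G))^{\oplus?}$ matching copies of $\symknight(G)\otimes\symknight(2)$---strongly constrains the possible decomposition and guides the explicit matrix manipulations.
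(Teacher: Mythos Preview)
Your overall architecture is sound: the cross-term bound via tensoring a null-homotopy, the use of Lemma~\ref{lem:tensorStaircases}, the lower bounds via $\mathfrak{u}_G$ and unknot detection, and the mirror reduction to $n\geq m$ are all correct and match the paper. The difference is in how the staircase--dual-staircase interaction is handled.

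You first aggregate $S_1^{\otimes n}$ into $S_n$ and $\overline{S_1}^{\otimes m}$ into $\overline{S_m}$, and are then left needing the general cancellation $S_k\otimes\overline{S_\ell}\simeq S_{k-\ell}\oplus(\ldots)$, which you correctly flag as the main obstacle. The paper sidesteps this entirely by reversing the order of operations: it proves only the single base case
\[
S_1\otimes\overline{S_1}\ \cong\ \sympawn\ \oplus\ (\symknight(G)\otimes\symknight(2))^{\oplus 2},
\]
and uses it to pair each $J_j$ with a $K_i$ \emph{before} any aggregation. Concretely, $\llbracket K_i\# J_i\rrbracket$ then decomposes into $\sympawn$, $\symknight(G)$, and $\symknight(G)\otimes\symknight(2)$ pieces only; tensoring this against the remaining $\llbracket\#_{m<i\leq n}K_i\rrbracket$ (which by iterated Lemma~\ref{lem:tensorStaircases} is $S_{n-m}$ plus simple pieces) yields the decomposition directly. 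In your language, this amounts to writing $M=S_1^{\otimes(n-m)}\otimes(S_1\otimes\overline{S_1})^{\otimes m}$ rather than $M\simeq S_n\otimes\overline{S_m}\oplus(\ldots)$.

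Two further remarks. First, the paper tracks the \emph{specific} piece types ($\sympawn$, $\symknight(G)$, $\symknight(G)\otimes\symknight(2)$, $S_k$) throughout rather than only the bound $\lambda(\cdot,0)\leq 1$; this costs a couple of small auxiliary lemmas (namely $\symknight(G)\otimes S_n$ and $\symknight(z^a)\otimes\symknight(z^b)$ decompositions) but keeps every step a concrete isomorphism of complexes. Second, beware that your general sublemma cannot be obtained by summand-chasing alone: decompositions over $\Z[G]$ are not unique (Remark~\ref{rmk:nonuniq}), so knowing that $S_k\otimes\overline{S_\ell}$ is a summand of something isomorphic to $S_{k-\ell}\oplus(\ldots)$ does not by itself pin down $S_k\otimes\overline{S_\ell}$. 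The paper's route avoids ever needing such a cancellation step.
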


\begin{remark} \label{rem:examplesKand-K}
Using \verb+khoca+ and \verb+homca+ one finds that there are many knots satisfying requirements (1) or (2) of \cref{prop:lambdaKand-K}. For instance, one can take any knot with up to 15 crossings such that $s_{\mathbb{F}_2} \ne s_{\mathbb{F}_3}$. One of those is the above-mentioned knot $14n19265$, and a complete list is given in~\cite{zbMATH07333633,zbMATH07506057}.

We also note that if a knot $K$ satisfies condition (1) of \cref{prop:lambdaKand-K} then its mirror image $-K$ will satisfy condition (2), and vice-versa.
\end{remark}

For the proof of \cref{prop:lambdaKand-K} we will need the following lemmas.

\begin{lem} \label{lem:knighttensorS_n}
Ignoring quantum shifts we have 
\[
\symknight(G) \otimes S_n \cong \symknight(G) \otimes \overline{S_n} \cong ( \symknight(G) \otimes \symknight(2) )^{\oplus n} \oplus \symknight(G)
\]
and
\[
\symknight(2) \otimes S_n \cong \symknight(2) \otimes \overline{S_n} \cong ( \symknight(G) \otimes \symknight(2) )^{\oplus n} \oplus \symknight(2).
\]
\end{lem}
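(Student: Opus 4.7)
The plan is to prove each of the four isomorphisms by an explicit change of basis, handling $\symknight(G) \otimes S_n$ first in detail and obtaining the remaining three by analogous constructions and duality.

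First I will fix bases. Writing $\symknight(G) = (a \xrightarrow{G} b)$ with $a$ in homological degree $0$, the tensor product $\symknight(G) \otimes S_n$ has generators $u_i = a \otimes x_i$ in degree $0$, $v_j = a \otimes y_j$ and $w_i = b \otimes x_i$ in degree $1$, and $z_j = b \otimes y_j$ in degree $2$. The Koszul sign rule for the tensor differential gives
\begin{align*}
d(u_1) &= 2 v_1 + G w_1, \qquad d(u_{n+1}) = G v_n + G w_{n+1}, \\
d(u_i) &= G v_{i-1} + 2 v_i + G w_i \qquad (2 \leq i \leq n), \\
d(v_j) &= G z_j, \qquad d(w_1) = -2 z_1, \qquad d(w_{n+1}) = -G z_n, \\
d(w_i) &= -G z_{i-1} - 2 z_i \qquad (2 \leq i \leq n).
\end{align*}

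The key step is the change of basis $w_i' := w_i + v_{i-1}$ for $2 \leq i \leq n+1$ (setting $w_1' := w_1$), leaving every other generator fixed. This substitution is unitriangular in the $w$-block and hence invertible over $\Z[G]$. A short calculation verifies that in the new basis one has $d(u_i) = 2 v_i + G w_i'$ for $1 \leq i \leq n$, $d(u_{n+1}) = G w_{n+1}'$, $d(w_i') = -2 z_i$ for $1 \leq i \leq n$, and $d(w_{n+1}') = 0$, while $d(v_j) = G z_j$ is unchanged. This exhibits $\symknight(G) \otimes S_n$ as the direct sum of $n$ squares $(u_k, v_k, w_k', z_k) \cong \symknight(G) \otimes \symknight(2)$ and one knight $(u_{n+1}, w_{n+1}') \cong \symknight(G)$.

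For the two $\overline{S_n}$ statements I will argue by duality: since $\overline{\symknight(G)} \cong \symknight(G)$ and $\overline{\symknight(2)} \cong \symknight(2)$, and hence $\overline{\symknight(G) \otimes \symknight(2)}$ agrees with $\symknight(G) \otimes \symknight(2)$ up to homological shift, dualising the already-established isomorphisms gives the dualised ones up to the shifts that the lemma ignores. For $\symknight(2) \otimes S_n$ I will instead perform the analogous change of basis $\delta_i := v_i + w_i$ for $1 \leq i \leq n$: this time $d(u_1) = 2 \delta_1$ and $d(\delta_1) = 0$ produce a knight $\symknight(2)$, while for $2 \leq i \leq n+1$ (with the convention $\delta_{n+1} := w_{n+1}$) one obtains $d(u_i) = G v_{i-1} + 2 \delta_i$, $d(v_{i-1}) = 2 z_{i-1}$, and $d(\delta_i) = -G z_{i-1}$, each a square $\symknight(G) \otimes \symknight(2)$.

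I expect no genuine obstacle: both substitutions are unitriangular over $\Z[G]$, so invertibility is immediate, and the remaining verification is elementary algebra once the Koszul signs in the tensor differentials have been written down correctly. The only real care needed is in the bookkeeping, in particular in identifying which end of the staircase contributes the ``extra'' knight that is not part of a square.
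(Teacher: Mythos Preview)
Your proof is correct and follows essentially the same approach as the paper: an explicit change of basis that splits the tensor complex into the stated summands. The paper works in matrix notation (row operations $P^a_b$ that subtract one row from another, followed by a reordering), whereas you work directly with generators via the substitutions $w_i' = w_i + v_{i-1}$ and $\delta_i = v_i + w_i$; these are equivalent unitriangular changes of basis. The paper in fact only writes out the case $\symknight(G)\otimes S_n$ and declares the other three ``very similar,'' so your explicit treatment of $\symknight(2)\otimes S_n$ and your duality argument for the $\overline{S_n}$ cases actually supply a little more detail than the paper does.
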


\begin{proof}
We proceed very similarly to the proof of \cref{lem:tensorStaircases}. 
We will only prove that $\symknight(G) \otimes S_n \cong ( \symknight(G) \otimes \symknight(2))^{\oplus n} \oplus \symknight(G)$, as the proofs of the remaining statements are very similar.

The complex $\symknight(G) \otimes S_n$ is isomorphic to 
\begin{equation*}
\centering
\begin{tikzcd}[row sep = 0pt]
	(\Z[G])^{n+1} \arrow{rr}{A} && (\Z[G])^{2n+1} \arrow{rr}{B} && (\Z[G])^n
\end{tikzcd}
\end{equation*}
with
\[
A=
\NiceMatrixOptions{code-for-last-col = \color{darkred} \scriptstyle }
\begin{pNiceArray}{ccccc}[last-col,name=u,margin]
\CodeBefore 
        \rectanglecolor{myred}{1-1}{4-5}
        \rectanglecolor{myblue}{5-1}{9-5}
    \Body
\arrayrulecolor{darkred}
     2 & G &&& & \\
     &\Ddots&\Ddots&& & n \\
     &&&& & \\
     &&& 2 & G  &\\
     \hline
     G &&&& & \\
     &\Ddots&&& & \\
     &&&& & n+1 \\
     &&&& & \\
     &&&& G  & 
\end{pNiceArray}
\hspace{-5mm},\hspace{5mm}
B=
\NiceMatrixOptions{code-for-first-row = \color{darkred} }
\begin{pNiceArray}{cccc|ccccc}[first-row,name=v,margin]
\arrayrulecolor{darkred}
     \Block{1-4}{\color{darkred}\scriptstyle n}&&& & \Block{1-5}{\color{darkred}\scriptstyle n+1} &&&& \\
     G &&& &  -2 & -G &&& \\
     &\Ddots&& & &&&& \\
     &&& & &&&& \\
     &&& G  & &&& -2 & -G
\CodeAfter \tikz{
    \draw[line width=0.3mm, shorten >=2.5mm, shorten <=2.5mm, loosely dotted] (v-1-5)-- (v-4-8);
    \draw[line width=0.3mm, shorten >=2.5mm, shorten <=2.5mm, loosely dotted] (v-1-6)-- (v-4-9);
}
\end{pNiceArray}
\]
and basis given by
\[
\begin{pNiceMatrix}[margin]
\arrayrulecolor{darkred}
    a_1 \otimes b_1 \\
    \Vdots \\
    a_1 \otimes b_{n+1} \\
\end{pNiceMatrix}
, \qquad
\begin{pNiceMatrix}[margin]
\arrayrulecolor{darkred}
    a_1 \otimes b_{n+2} \\
    \Vdots \\
    a_1 \otimes b_{2n+1} \\
    \hline
    a_2 \otimes b_{1} \\
    \Vdots \\
    a_2 \otimes b_{n+1} \\   
\end{pNiceMatrix}
, \qquad
\begin{pNiceMatrix}[margin]
\arrayrulecolor{darkred}
    a_2 \otimes b_{n+2} \\
    \Vdots \\
    a_2 \otimes b_{2n+1} \\
\end{pNiceMatrix}
\]
(here $b_i$ denotes the generator of $R_i$ in $S_n$ and $a_1,a_2$ are respectively the generators of $\Z[G]$ in homological degrees $0$ and $1$ of~$\symknight(G)$).

The statement of the lemma holds if we can find a change of basis $N$ such that 
\begin{equation}
\label{eq:reChangeOfBasisA}
NA=
\begin{pNiceArray}{cccc|c}[name=x,margin]
\arrayrulecolor{darkred}
    G &&& & 0 \\
    2 &&& & \\
    & G && & \\
    & 2 && & \\
    &&& & \Vdots  \\
    &&& & \\
    &&& G  & \\
    &&& 2 & 0 \\
    \hline
    0 &&\Cdots& 0  & G
\CodeAfter \tikz[remember picture] {
    \draw[line width=0.3mm, shorten >=2.5mm, shorten <=2.5mm, loosely dotted] (x-3-2)-- (x-7-4);
    \draw[line width=0.3mm, shorten >=2.5mm, shorten <=2.5mm, loosely dotted] (x-4-2)-- (x-8-4);
    }
\end{pNiceArray}
\end{equation}
and
\begin{equation}
\label{eq:reChangeOfBasisB}
BN^{-1}=
\begin{pNiceArray}{cccccccc|c}[name=y,margin]
\arrayrulecolor{darkred}
    -2 & G &&&&&& & 0 \\
    && -2 & G &&&& & \\ 
    &&&&&&& & \Vdots \\
    &&&&&& -2 & G & 0
\CodeAfter \tikz[remember picture] {
    \draw[line width=0.3mm, shorten >=2.5mm, shorten <=2.5mm, loosely dotted] (y-2-3)-- (y-4-7);
    \draw[line width=0.3mm, shorten >=2.5mm, shorten <=2.5mm, loosely dotted] (y-2-4)-- (y-4-8);
    }
\end{pNiceArray}.
\end{equation}
In order to obtain the right-hand side of \eqref{eq:reChangeOfBasisA} from $A$ we need to perform the following operations: get rid of the $G$ entries in the light red block of $A$ (by subtracting rows of the dark blue block) and then reorder the rows appropriately (let's call $Q$ the matrix expressing this second step). Then the change of basis
\[
N= Q \cdot P^{n}_{2n+1} \cdots P^{1}_{n+2}
\]
clearly satisfies equation \eqref{eq:reChangeOfBasisA}. Some simple calculations show that $N$ also satisfies~\eqref{eq:reChangeOfBasisB}.
\end{proof}

\begin{lem} \label{lem:knighttensorknight}
Let $z \in \Z[G]$ and $a,b \in \Z_{\geq 0}$ with~$a \leq b$. Then
\[
\symknight(z^a) \otimes \symknight(z^b) \cong \symknight(z^a) \oplus \symknight(z^a)
\]
(quantum shifts are omitted).
\end{lem}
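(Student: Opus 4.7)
Write $\symknight(z^a)$ as the complex $R \xrightarrow{z^a} R$ supported in homological degrees $0$ and $1$, and similarly for $\symknight(z^b)$. Applying the standard Koszul sign convention for tensor products of chain complexes, $\symknight(z^a) \otimes \symknight(z^b)$ is a three-term complex
\[
R \xrightarrow{\ d_0\ } R \oplus R \xrightarrow{\ d_1\ } R
\]
in homological degrees $0,1,2$, with
\[
d_0 = \begin{pmatrix} z^a \\ z^b \end{pmatrix}, \qquad d_1 = \begin{pmatrix} -z^b & z^a \end{pmatrix}.
\]
The goal is to produce a change of basis in the middle term that simultaneously diagonalises both differentials into blocks of $z^a$.

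The key move is to exploit the divisibility $z^a \mid z^b$ coming from the hypothesis $a \leq b$. Consider the invertible $R$-linear change of basis in the middle module given by
\[
M = \begin{pmatrix} 1 & 0 \\ -z^{b-a} & 1 \end{pmatrix},\qquad M^{-1} = \begin{pmatrix} 1 & 0 \\ z^{b-a} & 1 \end{pmatrix}.
\]
A direct computation yields $M d_0 = \begin{pmatrix} z^a \\ 0 \end{pmatrix}$ and $d_1 M^{-1} = \begin{pmatrix} 0 & z^a \end{pmatrix}$. With respect to the new basis, the complex therefore visibly splits as a direct sum of the subcomplex consisting of the degree-$0$ term together with the first coordinate of the middle module (joined by $z^a$), and the subcomplex consisting of the second coordinate of the middle module together with the degree-$2$ term (again joined by $z^a$). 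Each summand is isomorphic to $\symknight(z^a)$, up to the relevant homological and quantum shifts, which establishes the claim.

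There is essentially no obstacle here beyond choosing the right change of basis; once the divisibility $z^a \mid z^b$ is used to write $z^b = z^a \cdot z^{b-a}$, the matrix $M$ simultaneously cancels the $z^b$ entry in $d_0$ and in $d_1$. This is the same kind of simultaneous row/column operation that was used extensively in the proofs of \cref{lem:tensorStaircases} and \cref{lem:knighttensorS_n}, so the present lemma can be viewed as the simplest instance of that technique.
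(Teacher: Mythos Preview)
Your proof is correct and follows essentially the same approach as the paper: both perform a single elementary change of basis in the middle module, using the divisibility $z^a\mid z^b$ to clear the $z^b$ entry from both differentials simultaneously. The only cosmetic difference is the ordering of the two summands in degree~$1$ (and correspondingly which entry carries the Koszul sign), so your matrix $M$ is the transpose of the paper's matrix $N$.
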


\begin{proof}
The complex $\symknight(z^a) \otimes \symknight(z^b)$ is isomorphic to
\begin{equation*}
\centering
\begin{tikzcd}[row sep = 0pt]
	\Z[G] \arrow{rr}{\left( z^b \hspace{0.2cm} z^a \right)^{\mathsf{T}}} && \Z[G] \oplus \Z[G] \arrow{rr}{ \left( z^a \hspace{0.2cm} -z^b \right)} && \Z[G].  \\
\end{tikzcd}
\end{equation*}
Consider the matrix $N= \begin{pmatrix}
1 & -z^{b-a} \\
0 & 1 
\end{pmatrix}$. We have 
\[
N \cdot 
\begin{pNiceMatrix}
	z^b \\
	z^a
 \end{pNiceMatrix}
= \begin{pNiceMatrix}
	0 \\
	z^a
	\end{pNiceMatrix}
, \qquad \begin{pmatrix}
z^a & -z^b
   \end{pmatrix}
\cdot N^{-1} = 
\begin{pmatrix}
z^a & 0
  \end{pmatrix}.
\]
The matrix $N$ is therefore the desired change of basis from $\symknight(z^a) \otimes \symknight(z^b)$ to $\symknight(z^a) \oplus \symknight(z^a)$.
\end{proof}

\begin{lem}
The following are isomorphisms of (ungraded) chain complexes.
\begin{equation} \label{eq:S_1tensordual}
    S_1 \otimes \overline{S_1} \cong (\symknight(G) \otimes \symknight(2))^{\oplus 2} \oplus \sympawn,
\end{equation}
\begin{equation} \label{eq:S_1tensorrank4}
    (\symknight(G) \otimes \symknight(2)) \otimes S_1 \cong (\symknight(G) \otimes \symknight(2)) \otimes \overline{S_1} \cong (\symknight(G) \otimes \symknight(2))^{\oplus 3},
\end{equation}
\begin{equation} \label{eq:rank4tensorrank4}
    (\symknight(G) \otimes \symknight(2)) \otimes (\symknight(G) \otimes \symknight(2)) \cong (\symknight(G) \otimes \symknight(2))^{\oplus 4},
\end{equation}
\begin{equation} \label{eq:Gknighttensorrank4}
    \symknight(G) \otimes (\symknight(G) \otimes \symknight(2)) \cong (\symknight(G) \otimes \symknight(2))^{\oplus 2}.
\end{equation}
\end{lem}

\begin{proof}
The chain complex $S_1 \otimes \overline{S_1}$ can be written as
\begin{equation*}
\centering
\begin{tikzcd}[row sep = 0pt]
	(\Z[G])^2 \arrow{rr}{A} && (\Z[G])^5 \arrow{rr}{B} && (\Z[G])^2,
\end{tikzcd}
\end{equation*}
where 
\[
A= \begin{pmatrix}
G & 0 \\
2 & 0 \\
0 & G \\
0 & 2 \\
2 & G 
\end{pmatrix}
, \qquad 
B= \begin{pmatrix}
2 & 0 & G & 0 & -G \\
0 & 2 & 0 & G & -2
\end{pmatrix}.
\]
Consider the change of basis matrices
\[
L= \begin{pmatrix}
1 & 0 \\
0 & -1
\end{pmatrix}
, \qquad 
M= \begin{pmatrix}
1 & 0 & 0 & 0 & 0 \\
0 & 1 & 0 & 0 & -1 \\
0 & 0 & -1 & 0 & -1 \\
0 & 0 & 0 & -1 & 0 \\
0 & 1 & -1 & 0 & -1 
\end{pmatrix}
\]
and let $A'=M^{-1}AL$ and $B'=BM$. One can verify that $A'$ and $B'$ are the differentials of the chain complex $(\symknight(2) \otimes \symknight(G))^{\oplus 2} \oplus \sympawn \cong (\symknight(G) \otimes \symknight(2))^{\oplus 2} \oplus \sympawn$, which proves equation \eqref{eq:S_1tensordual}.

The first isomorphism of equation \eqref{eq:S_1tensorrank4} is given by \cref{lem:knighttensorS_n}. The following shows that $(\symknight(G) \otimes \symknight(2)) \otimes S_1 \cong (\symknight(G) \otimes \symknight(2))^{\oplus 3}$:
\[
\begin{split}
\symknight(G) \otimes \symknight(2) \otimes S_1 & \cong \symknight(G) \otimes ((\symknight(G) \otimes \symknight(2)) \oplus \symknight(2)) \\
& \cong (\symknight(G) \otimes \symknight(G) \otimes \symknight(2)) \oplus (\symknight(G) \otimes \symknight(2)) \\
& \cong ((\symknight(G) \oplus \symknight(G)) \otimes \symknight(2)) \oplus (\symknight(G) \otimes \symknight(2)) \\
& \cong (\symknight(G) \otimes \symknight(2)) \oplus (\symknight(G) \otimes \symknight(2)) \oplus (\symknight(G) \otimes \symknight(2)) \\
& \cong (\symknight(G) \otimes \symknight(2))^{\oplus 3}
\end{split}
\]
where the first isomorphism is given by \cref{lem:knighttensorS_n} and the third by \cref{lem:knighttensorknight}.
Lastly, equations \eqref{eq:rank4tensorrank4} and \eqref{eq:Gknighttensorrank4} follow easily from \cref{lem:knighttensorknight}.
\end{proof}

We can now turn to the proof of \cref{prop:lambdaKand-K}.
\begin{proof}[Proof of \cref{prop:lambdaKand-K}]
We remind the reader that for two knots $K_1,K_2$ we have $\llbracket K_1 \# K_2 \rrbracket \cong \llbracket K_1 \rrbracket \otimes \llbracket K_2 \rrbracket$ (see \cref{eq:connectedsum}).
Let $L=\underset{i \leq n}{\scalebox{1.3}{\#}} K_i \hspace{0.1cm} \# \hspace{0.1cm} \underset{j \leq m}{\scalebox{1.3}{\#}} J_j$. If $n = m=0$ then $\lambda(L)=~\lambda(U)=0$.

We now consider~$\{n,m\} \ne \{0\}$. Using equations \eqref{eq:S_1tensordual} to \eqref{eq:Gknighttensorrank4}, \cref{lem:knighttensorS_n} and \cref{lem:knighttensorknight} we find that for all $i,j \geq 1$ the complex $\llbracket K_i \# J_j \rrbracket$ splits as a sum of the following pieces:
\[
    \sympawn, \qquad \symknight(G), \qquad \symknight(G)\otimes\symknight(2).
\]
The same pieces also give a decomposition of $\llbracket \underset{i,j \geq 1}{\scalebox{1.3}{\#}} (K_i \# J_j) \rrbracket$.

If $n=m \ne 0$ then $L=\underset{1 \leq i \leq m}{\scalebox{1.3}{\#}} (K_i \# J_i)$. Using \cref{lem:lambdadirectsum} and the fact that $\lambda(\symknight(G),0)=\lambda(\symknight(G)\otimes\symknight(2),0)=1$, one obtains:
\[
\lambda(L)=\lambda(\underset{1 \leq i \leq m}{\scalebox{1.3}{\#}} (K_i \# J_i)) \leq \max (\lambda(\sympawn), \hspace{0.2cm} \lambda(\symknight(G),0), \hspace{0.2cm} \lambda(\symknight(G)\otimes\symknight(2),0)) = 1.
\]
We also have $\lambda(L) \geq 1$ by \cref{prop:lambdadetectsunknot}, since~$L \ne U$. This shows that~$\lambda(L)=1$.

Let now~$n > m \geq 0$. We have
\[ L=\underset{j \leq m}{\scalebox{1.3}{\#}} (K_j \# J_j) \hspace{0.2cm} \# \hspace{0.1cm} \underset{m+1 \leq i \leq n}{\scalebox{1.3}{\#}} K_i.
\]
It is easy to see, using equations \eqref{eq:S_1tensorrank4} to \eqref{eq:Gknighttensorrank4}, \cref{lem:tensorStaircases}, \cref{lem:knighttensorS_n} and \cref{lem:knighttensorknight} that $\llbracket \underset{m+1 \leq i \leq n}{\scalebox{1.3}{\#}} K_i \rrbracket$ splits as a sum of
\[
\symknight(G), \qquad \symknight(G)\otimes\symknight(2), \qquad S_{n-m}.
\]
Now $\llbracket L \rrbracket \cong \llbracket \underset{j \leq m}{\scalebox{1.3}{\#}} (K_j \# J_j) \rrbracket \otimes \llbracket \underset{m+1 \leq i \leq n}{\scalebox{1.3}{\#}} K_i \rrbracket$, therefore equations \eqref{eq:S_1tensorrank4} to \eqref{eq:Gknighttensorrank4}, along with \cref{lem:knighttensorS_n} and \cref{lem:knighttensorknight}, show that the same pieces also give a decomposition of~$\llbracket L \rrbracket$. Thus, in order to prove that $\lambda(L) \leq n-m$ all we have to do is apply \cref{lem:lambdadirectsum}, which yields:
\[
\lambda(L) \leq \max (\lambda(S_{n-m}), \lambda(\symknight(G),0), \lambda(\symknight(G)\otimes\symknight(2),0)) = n-m.   
\]
The inequality $\lambda(L) \geq n-m$ also holds: the complex $\llbracket \underset{j \leq m}{\scalebox{1.3}{\#}} (K_j \# J_j) \rrbracket$ has a $\sympawn$ piece, and $\llbracket \underset{m+1 \leq i \leq n}{\scalebox{1.3}{\#}} K_i \rrbracket$ has a $S_{n-m}$ piece, so there is a piece $S_{n-m} \cong \sympawn \otimes S_{n-m}$ in~$\llbracket L \rrbracket$. Using that $\mathfrak{u}_G(S_{n-m})=n-m$ (cf.~\cref{lem:lambdaStaircase}) and \cref{lem:propertiesuG} one finds $\lambda(L) \geq \mathfrak{u}_G(L) = n-m$. It follows that~$\lambda(L)= n-m$.

Lastly, let~$m > n \geq 0$. Then 
\[ L=\underset{i \leq n}{\scalebox{1.3}{\#}} (K_i \# J_i) \hspace{0.2cm} \# \hspace{0.1cm} \underset{n+1 \leq j \leq m}{\scalebox{1.3}{\#}} J_j
\]
and the only pieces appearing in $\llbracket L \rrbracket$ are 
\[
\symknight(G), \qquad \symknight(G)\otimes\symknight(2), \qquad \overline{S_{m-n}}.
\]
It follows that the pieces appearing in $\llbracket -L \rrbracket = \overline{\llbracket L \rrbracket}$ are 
\[
\symknight(G), \qquad \symknight(G)\otimes\symknight(2), \qquad S_{m-n}.
\]
Hence, by \cref{prop:lambdabasicproperties} and looking at the proof of the case $n > m$ just above, one finds $\lambda(L) = \lambda(-L) = m-n$.
\end{proof}

\begin{remark} \label{rem:u_GofKand-K}
It's easy to see from the above proof that a similar result as \cref{prop:lambdaKand-K} also holds for~$\mathfrak{u}_G$. Namely, if $K_1, \ldots , K_n$, $J_1, \ldots , J_m$ satisfy conditions (1) and (2) of \cref{prop:lambdaKand-K}, we have:
\[
\mathfrak{u}_G(\underset{i \leq n}{\scalebox{1.3}{$\#$}} K_i \hspace{0.1cm} \# \hspace{0.1cm} \underset{j \leq m}{\scalebox{1.3}{$\#$}} J_j) = \left\{ \begin{array}{ll}
n - m & \text{if $n > m \geq 0$} \\
1 & \text{if $n = m \neq 0$ or $m > n \geq 0$} \\
0 & \text{if $n = m = 0$} \end{array}\right.
\]
The partial difference is due to the fact that $\mathfrak{u}_G(\overline{S_{k}})=0$, while $\lambda(\overline{S_{k}})=\lambda(S_{k})=k$ for all~$k \geq 1$.
\end{remark}

	\begin{table}[b]
		\centering
		\begin{tabular}{|c|c|c|}
		\hline
		$K$ & $u(K)$ & $g(K)$ \\
		\hline
		$9_{46}$ & 2 & 1 \\
		$11n_{139}$ & 2 & 1 \\
		$12n_{203}$ & 3 or 4 & 3 \\
		$12n_{260}$ & 2 or 3 & 2 \\
		$12n_{404}$ & 2 or 3 & 2 \\
		$12n_{432}$ & 2 or 3 & 2 \\
		$12n_{554}$ & 3 & 2 \\
		$12n_{642}$ & 3 or 4 & 2 \\
		$12n_{764}$ & 3 or 4 & 3 \\
		$12n_{809}$ & 1, 2 or 3 & 2 \\
		$12n_{851}$ & 3 or 4 & 3 \\
		\hline
		\end{tabular}%
		\caption{Non-quasi-alternating prime knots with up to $12$ crossings for which (possibly) $g < u$ holds.}
		\label{tab:knotslambdagenus}
	\end{table}

\subsection{$\lambda$ of small knots}\label{subsec:lambdasmall}
We start this subsection by computing $\lambda$ for all knots with up to $10$ crossings.

\lambdasmallknots*

\begin{proof}
	By \cref{prop:lambdaofthin}, if a knot is thin then~$\lambda = 1$, so it suffices to look at knots which are not thin. Among the knots with up to $10$ crossings, there are twelve knots that are thick:
	\begin{equation*}
		8_{19},\ 9_{42},\ 10_{124},\ 10_{128},\ 10_{132},\ 10_{136},\ 10_{139},\ 10_{145},\ 10_{152},\ 10_{153},\ 10_{154},\ 10_{161}.
	\end{equation*}
	Using \texttt{khoca} and \texttt{homca}, one can compute that the $\mathbb{Z}[G]$-complex of the knots $9_{42}$, $10_{132}$, $10_{136}$, $10_{145}$, $10_{153}$ decomposes into a sum of a $\sympawn$ and several $\symknight(G)$ pieces, hence $\lambda = 1$ by \cref{lem:lambdaofknights}. The $\mathbb{Z}[G]$-complex of the remaining knots $8_{19}$, $10_{124}$, $10_{128}$, $10_{139}$, $10_{152}$, $10_{154}$, $10_{161}$ decomposes into a sum of a~$\sympawn$, several $\symknight(G)$ pieces and a single $\symknight(G^2)$  piece. Using \cref{lem:lambdadirectsum} and \cref{lem:propertiesuG}, one obtains $\lambda = 2$ for these knots.
\end{proof}

A natural question to ask when introducing a new invariant is how it compares to other already existing invariants. For example, how does $\lambda$ compare to the classical $3$-genus $g$ of a knot~$K$? We know that $\lambda$ is a lower bound for the unknotting number~$u$, while $g$ can be a lower or upper bound for $u$ depending on the knot. For instance, Lee--Lee \cite{zbMATH06212951} showed that for all knots with braid index~$\leq 3$, the inequality $u(K) \leq g(K)$ holds. However, this is no longer true for knots with braid index~$\geq 4$: as pointed out in their work, there are six knots with braid-index $4$ and at most $9$ crossings for which $u > g$ holds. How does $\lambda$ fit into this scheme? For knots up to $12$ crossings, we can provide the following answer.

\begin{prop}\label{prop:lambdagenus}
	For all knots up to $12$ crossings, the $3$-genus $g$ is an upper bound for~$\lambda$.
\end{prop}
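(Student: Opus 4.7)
The plan is to split the verification into three cases according to the nature of the knot $K$. First, if $K$ is a thin knot (which, by \cite{manozs}, includes every quasi-alternating knot, and thus every alternating knot), then \cref{prop:lambdaofthin} gives $\lambda(K) \le 1$. Since $g(U) = \lambda(U) = 0$ and every non-trivial knot has $g \ge 1$, the inequality $\lambda(K) \le g(K)$ holds in this case. This already handles the vast majority of knots with at most $12$ crossings.

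Second, for non-thin knots $K$ satisfying $u(K) \le g(K)$, the inequality follows immediately from \cref{thm:rationalreplacement}, which yields
\[
\lambda(K) \le u_q(K) \le u(K) \le g(K).
\]
What remains is thus the (finite) set of non-thin knots with at most $12$ crossings for which $u(K) > g(K)$ is known or not ruled out. Consulting standard tables (e.g. KnotInfo), one sees that these are precisely the knots listed in \cref{tab:knotslambdagenus}. For each such knot $K$, the plan is to compute $\llbracket K \rrbracket$ with \texttt{khoca} and decompose it into pieces with \texttt{homca}, and then apply \cref{lem:lambdadirectsum} to reduce $\lambda(K) \le g(K)$ to piecewise estimates $\lambda(P) \le g(K)$ or $\lambda(P, 0) \le g(K)$ on each summand $P$.

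The main obstacle is this third step. Some pieces $P$ appearing in $\llbracket K \rrbracket$ for these knots are likely to be more intricate than the pawns and knights that sufficed for knots up to ten crossings (compare the pieces $P_1, \ldots, P_4$ of $\llbracket T_{5,6} \rrbracket$ in \cref{ex:t56}). Establishing $\lambda(P, 0) \le g(K)$ for such a piece reduces to writing down an explicit null-homotopy of $G^{g(K)} \cdot \id_P$, which in principle may require a case-by-case analysis. Fortunately, \cref{tab:knotslambdagenus} is short (eleven knots), and in each line the known upper bound for $u(K)$ exceeds $g(K)$ by at most two, so the required null-homotopies should be of low order and the verification tractable. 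Note also that $9_{46}$ is already covered by \cref{prop:lambdasmallknots}, which gives $\lambda(9_{46}) = 1 = g(9_{46})$, providing a sanity check for the method.
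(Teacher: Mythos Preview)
Your approach is essentially the same as the paper's: reduce via $\lambda \le u$ and thinness to the short list in \cref{tab:knotslambdagenus}, then finish by computing $\llbracket K\rrbracket$ with \texttt{khoca}/\texttt{homca} for those eleven knots. The one place where your proposal diverges from reality is the anticipated difficulty of the third step. You expect pieces more intricate than pawns and knights and prepare for ad hoc null-homotopies of $G^{g(K)}\cdot\id_P$; in fact the paper reports that for every knot in \cref{tab:knotslambdagenus} the $\Z[G]$-complex decomposes into a single $\sympawn$ and finitely many $\symknight(G)$ pieces, so \cref{lem:lambdaofknights} gives $\lambda(K)=1$ outright and no case-by-case homotopy construction is needed.
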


\begin{proof}
	Since $\lambda$ is a lower bound for the unknotting number~$u$, it is sufficient to consider knots with up to $12$ crossings for which (possibly) $g < u$ holds. Using that quasi-alternating knots are thin and that for thin knots $\lambda = 1$ (cf.~\cref{prop:lambdaofthin}) there are $11$ non-quasi-alternating knots with at most $12$ crossings with (possibly)~${g < u}$. They were found using Livingston's wonderful KnotInfo \cite{knotinfo} and Jablan's table of quasi-alternating knots for up to $12$ crossings \cite{jablan}. The knots are listed in Table \ref{tab:knotslambdagenus}.

	A computation using \verb+khoca+ and its extension \verb+homca+ showed that the $\mathbb{Z}[G]$-complex of all knots in Table \ref{tab:knotslambdagenus} decomposes into $\sympawn$ and $\symknight(G)$ summands. By \cref{lem:lambdaofknights}, this implies that $\lambda = 1$ for all knots in Table \ref{tab:knotslambdagenus}.
\end{proof}

\cref{prop:lambdagenus} raises the following question.

\begin{question}
	Does $\lambda(K) \leq g(K)$ hold for all knots~$K$?
\end{question}

\section{Rational tangles and the $\lambda$-invariant}\label{sec:rational}
\subsection{The $\Z[G]$-homology of rational tangles}
\begin{dfn}
A 4-ended (oriented or unoriented) tangle $T$ is called \emph{rational} if 
the pair $(B,T)$ is homeomorphic to $(D^2\times [0,1], \{(-\tfrac12,0),(\tfrac12,0)\}\times [0,1])$,
drawn in \cref{fig:trivrational}.
\begin{figure}[ht]
    \centering
    \includegraphics[width=0.15\textwidth]{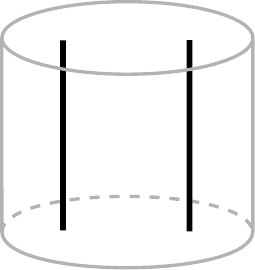}
    \caption{The rational tangle $(D^2\times [0,1], \{(-\tfrac12,0),(\tfrac12,0)\}\times [0,1])$.}
        \label{fig:trivrational}
\end{figure}
\end{dfn}
Let us briefly summarize Conway's famous one-to-one correspondence
\[
R\colon \Q \cup \{\infty\} \to \{\text{unoriented rational tangles}\}/\text{equivalence}.
\]
See e.g.\ \cite{zbMATH02120368} for an introduction to this topic.
Let us work with unoriented tangles in the unit ball $B_0 \subset \mathbb{R}^3\subset S^3$
with the four end points $(\pm 1/\sqrt{2}, \pm 1/{\sqrt{2}}, 0)$,
and base point $(-1/\sqrt{2}, -1/{\sqrt{2}}, 0)$.
Generically, the projection to $D^2 \times \{0\}$
yields tangle diagrams; these are the tangle diagrams we consider in what follows.
Then, $R$ may be defined by the rules in \cref{fig:rational} (where we set $1/\infty = 0$ and $1/0=\infty=\infty+1=-\infty$). By a slight abuse of notation, we denote by $R(x)$ both the tangle and the tangle diagram (both well-defined up to equivalence).

\begin{figure}[ht]
\begin{minipage}[t]{.47\textwidth}%
\begin{align}%
\label{eq:t0}%
\raisebox{-.3\height}{\includegraphics{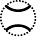}} &\quad=\quad\! R(0) \\
\label{eq:t1}%
\raisebox{-.3\height}{\includegraphics{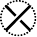}} &\quad=\quad\! R(1) \\
\label{eq:tsum}%
\raisebox{-.3\height}[9ex][5ex]{\mbox{
\begingroup%
  \makeatletter%
  \providecommand\color[2][]{%
    \errmessage{(Inkscape) Color is used for the text in Inkscape, but the package 'color.sty' is not loaded}%
    \renewcommand\color[2][]{}%
  }%
  \providecommand\transparent[1]{%
    \errmessage{(Inkscape) Transparency is used (non-zero) for the text in Inkscape, but the package 'transparent.sty' is not loaded}%
    \renewcommand\transparent[1]{}%
  }%
  \providecommand\rotatebox[2]{#2}%
  \newcommand*\fsize{\dimexpr\f@size pt\relax}%
  \newcommand*\lineheight[1]{\fontsize{\fsize}{#1\fsize}\selectfont}%
  \ifx\svgwidth\undefined%
    \setlength{\unitlength}{48.84769109bp}%
    \ifx\svgscale\undefined%
      \relax%
    \else%
      \setlength{\unitlength}{\unitlength * \real{\svgscale}}%
    \fi%
  \else%
    \setlength{\unitlength}{\svgwidth}%
  \fi%
  \global\let\svgwidth\undefined%
  \global\let\svgscale\undefined%
  \makeatother%
  \begin{picture}(1,0.56771553)%
    \lineheight{1}%
    \setlength\tabcolsep{0pt}%
    \put(0,0){\includegraphics[width=\unitlength,page=1]{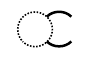}}%
    \put(0.34484228,0.23962483){\color[rgb]{0,0,0}\makebox(0,0)[t]{\lineheight{1.25}\smash{\begin{tabular}[t]{c}$\scriptscriptstyle R(x)$\end{tabular}}}}%
    \put(0,0){\includegraphics[width=\unitlength,page=2]{tanglesum1.pdf}}%
  \end{picture}%
\endgroup%
}} &\quad=\quad\! R(x+1) \\
\label{eq:tinv}%
\parbox{22.5mm}{\footnotesize $R(x)$ mirrored at plane $\langle e_1-e_2, e_3\rangle$}
&\quad=\quad\! R\biggl(\frac1x\biggr)
\end{align}%
\end{minipage}\hspace{.05\textwidth}
\begin{minipage}[t]{.47\textwidth}%
\begin{align}%
\label{eq:tinfty}%
\raisebox{-.3\height}{\includegraphics{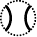}} &\quad=\quad\! R(\infty) \\
\label{eq:t-1}%
\raisebox{-.3\height}{\includegraphics{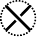}} &\quad=\quad\! R(-1) \\
\label{eq:t*}%
\raisebox{-.4\height}[9ex][5ex]{\mbox{
\begingroup%
  \makeatletter%
  \providecommand\color[2][]{%
    \errmessage{(Inkscape) Color is used for the text in Inkscape, but the package 'color.sty' is not loaded}%
    \renewcommand\color[2][]{}%
  }%
  \providecommand\transparent[1]{%
    \errmessage{(Inkscape) Transparency is used (non-zero) for the text in Inkscape, but the package 'transparent.sty' is not loaded}%
    \renewcommand\transparent[1]{}%
  }%
  \providecommand\rotatebox[2]{#2}%
  \newcommand*\fsize{\dimexpr\f@size pt\relax}%
  \newcommand*\lineheight[1]{\fontsize{\fsize}{#1\fsize}\selectfont}%
  \ifx\svgwidth\undefined%
    \setlength{\unitlength}{27.73159267bp}%
    \ifx\svgscale\undefined%
      \relax%
    \else%
      \setlength{\unitlength}{\unitlength * \real{\svgscale}}%
    \fi%
  \else%
    \setlength{\unitlength}{\svgwidth}%
  \fi%
  \global\let\svgwidth\undefined%
  \global\let\svgscale\undefined%
  \makeatother%
  \begin{picture}(1,1.76144557)%
    \lineheight{1}%
    \setlength\tabcolsep{0pt}%
    \put(0,0){\includegraphics[width=\unitlength,page=1]{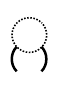}}%
    \put(0.50748722,1.08360011){\color[rgb]{0,0,0}\makebox(0,0)[t]{\lineheight{1.25}\smash{\begin{tabular}[t]{c}$\scriptscriptstyle R(x)$\end{tabular}}}}%
    \put(0,0){\includegraphics[width=\unitlength,page=2]{tanglemult1.pdf}}%
  \end{picture}%
\endgroup%
}} &\quad=\quad\! R\biggl(\frac{x}{x+1}\biggr)\\
\label{eq:m}%
\parbox{20.5mm}{\footnotesize $R(x)$ mirrored at plane $\langle e_1, e_2\rangle$}
&\quad=\quad\! R(-x)\makebox[0pt]{$\phantom{\biggl(\frac1x\biggr)}$}
\end{align}%
\end{minipage}
\caption{The recursive definition of the bijection $R$ between $\Q\cup\{\infty\}$ and equivalence classes of unoriented rational tangles. In (5.4) and (5.8), $e_1,e_2,e_3$ denote the standard basis vectors of~$\mathbb{R}^3$.}
\label{fig:rational}
\end{figure}

As stated, these rules are consistent and determine the correspondence $R$ completely,
but they are somewhat redundant: for example, \cref{eq:tinfty,eq:t-1,eq:t*} can be derived from the other rules.
For simplicity, we will now focus on rational tangles $T$ such that
$R^{-1}(T) \in \mathbb{Q}^+ = \{x\in \Q \mid x > 0\}$
(in particular excluding $R(0)$ and~$R(\infty)$).
The one-to-one correspondence between such rational tangles and $\mathbb{Q}^+$
is completely determined by \cref{eq:t1},\cref{eq:tsum},\cref{eq:tinv},
or by \cref{eq:t1},\cref{eq:tsum},\cref{eq:t*}.

Thompson \cite{thompson} has computed Khovanov homology of all oriented rational tangles.
More precisely, he shows that the complex of a rational tangle
in Bar-Natan's category of cobordisms with dots
is homotopy equivalent to a so-called zigzag complex,
which may be computed recursively.
As a crucial ingredient for the proof of \cref{thm:rationalreplacement}, we require the analog of Thompson's theorem
for Bar-Natan's theory \emph{without} dots (stated below as \cref{thm:thompson}), which is more general than the dotted theory.
Fortunately, Thompson's proof carries over mutatis mutandis to that more general homology theory.

Let us mention that 
Kotelskiy, Watson and Zibrowius have given an elegant way to compute the Bar-Natan complex over $\mathbb{F}_2$
of rational tangles using immersed curves~\cite[Example~6.2]{KWZ}.
Potentially, their methods also work over the integers, which would give an alternative proof of \cref{thm:thompson}.

Let us now have a look at the category $\Cob^{3,\bullet}_{\modl}(4)$ of 4-ended crossingless tangle diagrams with base point, and cobordisms between them. Recall that for two end points, we have found that $\Cob^{3,\bullet}_{\modl}(2)$ is equivalent to $\mathcal{M}_{\Z[G]}$ (see \Cref{sec:zg}). For four end points, a similar strategy of delooping and simplifying cobordisms leads to the following, which is just a reformulation of \cite[Theorem~1.1]{KWZ}:
\begin{theorem}
Consider the $\Z[G]$-enriched category with the two objects \img{tangle0} and~\img{tangleinfty},
and graded $\Z[G]$-morphism modules generated by the identity cobordisms, which we both denote by~$I$,
and the saddle cobordisms $\img{tangle0}\to\img{tangleinfty}$
and $\img{tangleinfty}\to\img{tangle0}$, which we both denote by~$S$, and compositions of these morphisms,
modulo the relations~$S^3 = GS$.
Then the inclusion of the additive graded closure of this category into $\Cob^{3,\bullet}_{\modl}(4)$
is an equivalence of categories.\qed
\end{theorem}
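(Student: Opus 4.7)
The plan is to mimic the two-step strategy that worked in the two-ended case (compare \cref{prop:equivcat} and the discussion preceding \cref{def:zgcomplex}): first use delooping to reduce arbitrary objects to direct sums of the two basic ones, and then analyze the resulting morphism modules. The delooping isomorphism of \cref{fig:delooping} is entirely local, so it remains valid in $\Mat(\Cob^{3,\bullet}_{\modl}(4))$ regardless of the base point or the two extra boundary points.

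For essential surjectivity on objects, I would observe that any crossingless 4-ended tangle diagram in the fixed disk is isotopic to either \img{tangle0.pdf} or \img{tangleinfty.pdf} (one from each connectivity class) disjoint union with finitely many circles. Iterated delooping, once per circle, then yields an isomorphism between the given diagram and a direct sum of grading-shifted copies of \img{tangle0.pdf} and \img{tangleinfty.pdf}, as needed.

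Next I would analyze the four $\Z[G]$-morphism modules between \img{tangle0.pdf} and \img{tangleinfty.pdf}. Using the neck-cutting identity (a consequence of $4Tu$), any cobordism between two such basic objects can be simplified by trading every interior handle for a factor that, with the help of further $4Tu$ moves, can be pushed onto the base-pointed arc, where it contributes a power of $G$. What remains is a cobordism of minimal topology: depending on the source and target, this is $I$, $S$, or $S^2$. The relation $S^3 = GS$ then emerges because, up to boundary-fixing isotopy, $S^3$ carries one extra handle compared to $S$; sliding that handle onto the base-pointed strand yields the factor of $G$.

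The main obstacle will be turning the above handle-sliding sketch into a complete normal-form argument, that is, proving that $I$ and $S$ (subject only to $S^3 = GS$) generate all four morphism modules freely over $\Z[G]$, with no further hidden relations coming from $S$, $T$, $4Tu$. Rather than redoing this analysis from scratch, I would invoke \cite[Theorem~1.1]{KWZ}, which establishes the analogous equivalence in Bar-Natan's dotted setting: a quiver category with two vertices, four generating morphisms, and an explicit set of relations. The translation between the dotted Bar-Natan theory and our $\Z[G]$-enriched $\Cob^{3,\bullet}_{\modl}(4)$ runs parallel to the one already carried out for the two-ended case in \cref{sec:zg}; in particular, placing a dot on the base-pointed strand corresponds to applying the $\Z[G]$-action from \cref{dfn:bnzg}, and the four KWZ generators and their single non-trivial relation translate directly into our $I$, $S$, and $S^3 = GS$.
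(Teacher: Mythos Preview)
The paper gives no proof of this theorem at all: it states the result as ``just a reformulation of \cite[Theorem~1.1]{KWZ}'' and closes with a \qed. Your proposal ultimately rests on the same citation, so the approaches agree; the delooping and handle-sliding sketch you add is reasonable scaffolding but is not something the paper itself supplies or requires.
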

This theorem gives us a compact notation for $\Cob^{3,\bullet}_{\modl}(4)$:
objects are isomorphic to shifted sums of $\img{tangle0}$ and~$\img{tangleinfty}$,
and morphisms are equal to $\Z[G]$-linear combinations of~$I$, $S$ and~$S^2$.
For convenience, we write $D\coloneqq S^2 - G$ (following \cite{KWZ}). Note that
$SD = DS = 0$ and~$D^2 = -GD$.

For the rest of the section, we will for the most part omit homological and quantum gradings without further mention.
\begin{dfn}\label{def:zigag}
A \emph{zigzag complex} is a graded chain complex $(\bigoplus_{i=0}^{n} A_i,\sum_{i=1}^{n} d_i)$ over $\Mat(\Cob^{3,\bullet}_{\modl}(4))$
satisfying the following:
\begin{enumerate}[label=(\roman*)]
\item Each $A_i$ is either \img{tangle0} or \img{tangleinfty} with a quantum and homological degree shift.
\label{eq:2objects}
\item Each $d_i$ has domain and target $A_{i-1}\to A_i$ or $A_i\to A_{i-1}$,
and is one of the following five maps:

\noindent
$S\colon\img{tangleinfty}\to\img{tangle0}$,\hfill
$S^2\colon\img{tangleinfty}\to\img{tangleinfty}$,\hfill
$D\colon\img{tangleinfty}\to\img{tangleinfty}$,\hfill
$S^2\colon\img{tangle0}\to\img{tangle0}$,\hfill
$D\colon\img{tangle0}\to\img{tangle0}$.
\label{eq:lineshapecomplex}
\item Two consecutive differentials $d_i, d_{i+1}$ (no matter what their domain and target are)
are either $S$ and~$D$, or $S^2$ and~$D$.
\label{eq:consecutived}
\item There is at least one differential~$S$.
\label{eq:saddleexists}
\end{enumerate}
The $A_i$ and $d_i$ are considered as part of the data of the zigzag complex.
We say that the zigzag complex $(\bigoplus_{i=0}^{n} A_{n-i}, \sum_{i=1}^{n} d_{n+1-i})$ is obtained from
$(\bigoplus_{i=0}^{n} A_i,\sum_{i=1}^{n} d_i)$ by \emph{reindexing}.
\end{dfn}
Note that reindexing does not change the isomorphism type of the chain complexes.
We would like to depict zigzag complexes using the following type of graphs.
\begin{dfn}
Let us consider a directed finite graph with two \emph{types} of vertices, $\bullet$ and~$\circ$.
Let us call an edge connecting a $\bullet$ and a $\circ$ vertex a \emph{saddle edge}.
Such a graph is called a \emph{zigzag graph} if it satisfies the following conditions.
\begin{enumerate}[label=(\roman*)]
\item \label{eq:lineshape}
The graph has the shape of a line, i.e.~there are exactly two vertices of valency 1 (which we call the \emph{ends}), and all other vertices have valency 2.
\item \label{eq:parity}
There is a partition of edges into \emph{odd} and \emph{even} edges, such
that all saddle edges are odd, and if two edges are adjacent, then one of them is odd and the other one even.
\item \label{eq:orientationsaddles} All saddle edges are directed like this: $\circ\to\bullet$.
\item \label{eq:2vertices} There is at least one saddle edge.
\end{enumerate}
\end{dfn}
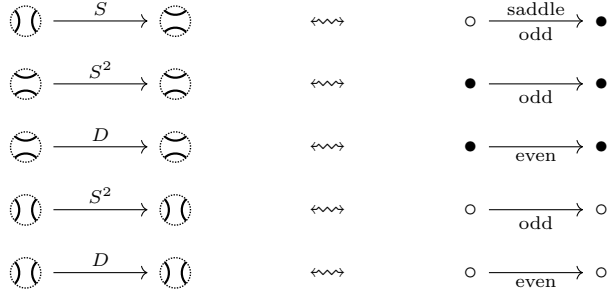
\begin{figure}[b]
\begin{tikzcd}[column sep=large, row sep=tiny]
\img{tangleinfty} \ar[r,"S"] & \img{tangle0} & \leftrightsquigarrow & \circ \arrow{r}{\text{saddle}}[swap]{\text{odd}} & \bullet \\
\img{tangle0} \ar[r,"S^2"] & \img{tangle0} & \leftrightsquigarrow & \bullet \arrow{r}{}[swap]{\text{odd}} & \bullet \\
\img{tangle0} \ar[r,"D"] & \img{tangle0} & \leftrightsquigarrow & \bullet \arrow{r}{}[swap]{\text{even}} & \bullet \\
\img{tangleinfty} \ar[r,"S^2"] & \img{tangleinfty} & \leftrightsquigarrow & \circ \arrow{r}{}[swap]{\text{odd}} & \circ \\
\img{tangleinfty} \ar[r,"D"] & \img{tangleinfty} & \leftrightsquigarrow & \circ \arrow{r}{}[swap]{\text{even}} & \circ 
\end{tikzcd}
\caption{Summary of the correspondence between objects and differentials of zigzag complexes (left column) and vertices and edges of zigzag graphs (right columns).}
\label{table:zigzag}
\end{figure}
Note that because there is at least one saddle edge, the partition of edges into odd and even edges is unique.%
\begin{dfn}
The \emph{graph of a zigzag complex} is the zigzag graph with
a vertex $\bullet$ or $\circ$ corresponding to each $A_i$ that is a shift of \img{tangle0} or \img{tangleinfty}, respectively; and one directed edge corresponding to each~$d_i$.
\end{dfn}
One easily checks that the graph of a zigzag complex really is a zigzag graph.
Moreover, every zigzag graph is the graph of a zigzag complex;
and the graph of a zigzag complex determines the zigzag complex up to reindexing,
and up to global shifts in homological and quantum degree.
The correspondence between zigzag complexes and zigzag graphs is summarized in \cref{table:zigzag}.

Let us now recursively define a zigzag graph $zz(x)$ for all positive rational numbers~$x\in\mathbb{Q}^+$
by the following rules:
\begin{gather}
\label{eq:zz1}
\parbox[t]{.9\textwidth}{$zz(1) = \circ\longrightarrow\bullet$.}\\
\label{eq:zzinv}
\parbox[t]{.9\textwidth}{$zz(1/x)$ is obtained from $zz(x)$ by switching $\circ$ and~$\bullet$, and reversing the directions of all edges.}\\
\label{eq:zz+1}
\parbox[t]{.9\textwidth}{$zz(x + 1)$ is obtained from $zz(x)$ by replacing each edge as shown in \cref{table:x+1}.\footnotemark}
\end{gather}%
\newcounter{disregardrow}\setcounter{disregardrow}{\value{footnote}}
\footnotetext{After the article at hand had appeared as a preprint, it was observed that the left hand side of the fourth row in \cref{table:x+1}
({\footnotesize end}\begin{tikzcd}[ampersand replacement=\&]  {\bullet} \ar[r,"\text{even}",swap] \& {\bullet}\end{tikzcd}{\footnotesize not an end})
does not actually occur in any~$zz(x)$, and so the fourth row may be disregarded~\cite[Lemma~A.1]{arXiv2409.05743}.}
\begin{table}[t]
\begin{tabular}{c|c}
Replace & by  \\\hline 
\raisebox{-4ex}{\rule{0em}{7ex}}%
\begin{tikzcd}{\bullet} \ar[r,"\text{odd}",swap] & {\bullet}\end{tikzcd} &
\begin{tikzcd}\bullet \ar[r,leftarrow] & \circ\ar[r] & \circ \ar[r] & \bullet \end{tikzcd} \\
\raisebox{-4ex}{\rule{0em}{4ex}}%
\makebox[4em][r]{\footnotesize not an end}\begin{tikzcd} {\bullet} \ar[r,"\text{even}",swap] & {\bullet}\end{tikzcd}\makebox[4em][l]{\footnotesize not an end} &
\begin{tikzcd}\bullet \ar[r] & \bullet \end{tikzcd} \\
\raisebox{-4ex}{\rule{0em}{4ex}}%
\makebox[4em][r]{\footnotesize not an end}\begin{tikzcd}  {\bullet} \ar[r,"\text{even}",swap] & {\bullet}\end{tikzcd}\makebox[4em][l]{\footnotesize end} &
\begin{tikzcd}\phantom{\circ}\ar[r,phantom] & \bullet \ar[r] & \bullet \ar[r,leftarrow] & \circ \end{tikzcd} \\
\raisebox{-4ex}{\rule{0em}{4ex}}%
\makebox[4em][r]{\footnotesize end}\begin{tikzcd}  {\bullet} \ar[r,"\text{even}",swap] & {\bullet}\end{tikzcd}\makebox[4em][l]{\footnotesize not an end}&
\begin{tikzcd}\circ \ar[r] & \bullet \ar[r] & \bullet \ar[r,phantom] & \phantom{\circ}\end{tikzcd} \\
\raisebox{-4ex}{\rule{0em}{4ex}}%
\begin{tikzcd}\circ \ar[r] & \bullet\end{tikzcd} &
\begin{tikzcd}\circ \ar[r] & \circ\ar[r] &  \bullet \ar[r,phantom] & \phantom{\circ}\end{tikzcd}  \\
\begin{tikzcd}\circ \ar[r] & \circ\end{tikzcd} &
\begin{tikzcd}\circ \ar[r] & \circ \end{tikzcd}
\end{tabular}
\caption{How to obtain $zz(x+1)$ from~$zz(x)$: each edge $e$ in $zz(x)$ falls into a unique one of the six cases shown in the left column of the table. Apply the rule, i.e.~replace $e$ by the graph $\Gamma_e$ in the right column in the same row. In this way, each of the two vertices $v, w$ adjacent to $e$ in $zz(x)$ are replaced by vertices $v_e, w_e$ in~$zz(x+1)$, namely the leftmost and the rightmost vertex in~$\Gamma_e$. If two edges $e$ and $f$ of $zz(x)$ are adjacent to a common vertex~$v$, identify the vertices $v_e$ and~$v_f$ in~$zz(x+1)$. Note that this is possible since $v_e$ and~$v_f$ always have the same type: in fact, $v_e$ has the same type as $v$ if $v$ (or equivalently,~$v_e$) is not an end.%
\protect\footnotemark[\value{disregardrow}]}\label{table:x+1}
\end{table}
Note that these cases are exhaustive, since two adjacent $\bullet$-vertices cannot both be ends (because there is at least one saddle edge),
and since a saddle edge is always directed from $\circ$ to~$\bullet$.

Let us check that $zz$ is well-defined. Indeed, $\circ\to\bullet$ is a zigzag graph,
and one may verify that \cref{eq:zzinv,eq:zz+1} map zigzag graphs to zigzag graphs.
Every positive rational number can be obtained from $1$ by a sequence of $x\mapsto 1/x$ and~$x\mapsto x + 1$.
Moreover, that sequence is unique up to inserting or removing two consecutive~$x\mapsto 1/x$.
Since applying \cref{eq:zzinv} twice has no effect,
\cref{eq:zz1,eq:zzinv,eq:zz+1} indeed define $zz(x)$ for every positive rational~$x$.%

\begin{figure}[p]
zz(3/7) =\hspace{14mm}
\begin{tikzcd}[row sep=tiny]
&&&&& \bullet \\
&&&& \bullet \arrow[ru] & \\
&&& \bullet \arrow[ru] & \\
&& \circ \arrow[ru] && \\
& \circ \arrow[ru]\arrow[rd] &&& \\
&& \bullet \arrow[rd] && \\
&&& \bullet & \\
&& \bullet \arrow[ru] && \\
& \bullet \arrow[ru] &&& \\
\circ \arrow[ru] &&&&   
\end{tikzcd}
\vspace{9mm}

\newcommand{\Aeq}[1]{\scriptstyle\textcolor{annotation}{A_{#1}=}\,}
\newcommand{\deq}[1]{\textcolor{annotation}{d_{#1}=}}
\makebox[0mm][l]{$\left[\raisebox{-.4\height}{\includegraphics{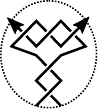}}\right]^{\bullet}\ \simeq$}
\tikzcdset{column sep/tinytiny/.initial=.1ex}
\hfill
\begin{tikzcd}[column sep = tinytiny, row sep=small]
&&&&& \Aeq{9}\img{tangle0}\{12\} \\
&&&&  \Aeq{8}\img{tangle0}\{10\} \arrow{ru}{\deq9S^2} & \\
&&&   \Aeq{7}\img{tangle0}\{8\} \arrow{ru}{\deq8D} & \\
&&    \Aeq{6}\img{tangleinfty}\{7\} \arrow{ru}{\deq7S} && \\
&     \Aeq{5}\img{tangleinfty}\{5\} \arrow{ru}{\deq6D}\arrow{rd}{\deq5S} &~\oplus&~\oplus& \\
&&    \Aeq{4}\img{tangle0}\{6\} \arrow{rd}{\deq4D} && \\
&\ \oplus&~\oplus&   \Aeq{3}\img{tangle0}\{8\} & \\
&&    \Aeq{2}\img{tangle0}\{6\} \arrow[ru,swap,"\deq3S^2"] && \\
&     \Aeq{1}\img{tangle0}\{4\} \arrow[ru,swap,"\deq2D"] &&& \\
      _0\Aeq{0}\img{tangleinfty}\{3\} \arrow[ru,swap,"\deq1S"] &&&&   
\end{tikzcd}
\vspace{8mm}
\caption{As an illustration of the correspondence between zigzag graphs and zigzag complexes,
and of \cref{thm:thompson}:
on the top, the zigzag graph $zz(3/7)$; on the bottom, a zigzag complex that is homotopy equivalent to the
Bar-Natan complex of the rational tangle $R(3/7)$ endowed with some orientation.
The left subscript gives the homological degree.}
\label{fig:37}
\end{figure}
We are now ready to state our generalization of Thompson's theorem.
\begin{theorem}\label{thm:thompson}
Let $R(x)$ be the unoriented rational tangle corresponding to a positive rational number~$x$.
Let $T$ be the tangle $R(x)$ equipped with some orientation~$o$.
Then, the Bar-Natan complex $[T]^{\bullet}$ is homotopy equivalent to a zigzag complex with graph~$zz(x)$.
\end{theorem}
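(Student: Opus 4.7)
The plan is to proceed by induction on $x \in \mathbb{Q}^+$ using the fact that every positive rational arises from $1$ via a finite sequence of the operations $x \mapsto 1/x$ and $x \mapsto x+1$, and mirror this induction on both the tangle side ($R$) and the graph side ($zz$). Orientations play no role in $[T]^{\bullet}$ up to isomorphism (they only affect the overall homological/quantum shift), so it is enough to establish the result up to such shifts.

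For the base case $x=1$, the tangle $R(1)$ is a single crossing, whose Bar-Natan complex is the saddle $\img{tangleinfty}\to\img{tangle0}$, which matches $zz(1) = \circ \to \bullet$. For the step $x \mapsto 1/x$ of \cref{eq:tinv}, the operation on the tangle is a spatial involution that interchanges the $0$- and $\infty$-resolutions at every crossing. On chain complexes in $\Kom(\Mat(\Cob^{3,\bullet}_{\modl}(4)))$ this swaps the objects $\img{tangle0}$ and $\img{tangleinfty}$, and after a global homological reflection required to keep differentials pointing in the correct direction (saddle edges must still go $\circ\to\bullet$), arrows in the zigzag graph are reversed. Comparing with the graph-side definition of $zz(1/x)$, the two operations match.

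The main content is the inductive step $x \mapsto x+1$. Adding a positive twist on the right corresponds to applying to $[R(x)]^{\bullet}$ the endofunctor $\Phi$ on $\Kom(\Mat(\Cob^{3,\bullet}_{\modl}(4)))$ given by horizontal composition with the single-crossing complex $\img{tangleinfty}\to\img{tangle0}$. I would first compute $\Phi$ on the two generating objects: each of $\Phi(\img{tangle0})$ and $\Phi(\img{tangleinfty})$ is a two-term complex, one summand of which becomes contractible after delooping, and then compute $\Phi$ on the five generating morphisms $S,S^2,D$ listed in \cref{def:zigag}\ref{eq:lineshapecomplex}. By induction, $[R(x)]^{\bullet}$ is a zigzag complex; applying $\Phi$ and performing Gaussian elimination of all identity-like components (using $SD=DS=0$ and $D^2=-GD$ from the relation $S^3 = GS$) yields a chain complex whose surviving objects and differentials correspond edge by edge to the six replacement rules in \cref{table:x+1}.

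The hard part will be the local verification of the six cases of \cref{table:x+1}: one must check that delooping and Gaussian elimination really do collapse each $\Phi$(edge) to the indicated subgraph, and then ensure that the identifications at shared vertices (where two edges of $zz(x)$ meet) glue correctly, so that the global output is again a zigzag complex in the sense of \cref{def:zigag}. Crucially one must track which vertices of $zz(x)$ are ends, because the ``end'' vertices are delooped only once instead of twice and therefore contribute fewer surviving summands, which is precisely the distinction between the middle three rows of \cref{table:x+1}. Once all six local calculations are carried out, conditions \ref{eq:2objects}--\ref{eq:saddleexists} of \cref{def:zigag} follow from the analogous graph-theoretic properties \ref{eq:lineshape}--\ref{eq:2vertices} of $zz(x+1)$, and the induction closes. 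As noted in the excerpt, this is exactly Thompson's strategy in \cite{thompson}, and the only substantive change is replacing dotted computations by their $G$-weighted counterparts via $S^2 - G = D$, which does not affect the combinatorial bookkeeping.
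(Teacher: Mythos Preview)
Your approach is essentially the paper's, but one step is understated to the point of being a gap. In the $x\mapsto x+1$ induction you claim that delooping followed by Gaussian elimination already produces a zigzag complex matching \cref{table:x+1}. It does not: after those two steps, an odd edge $\bullet\to\bullet$ in $zz(x)$ still contributes the square
\[
\begin{tikzcd}
\img{tangleinfty} \ar[d,"\pm S"] \ar[r,"G"] & \img{tangleinfty}\ar[d,"\mp S"]\\
\img{tangle0} \ar[r,"S^2"] & \img{tangle0}
\end{tikzcd}
\]
whose top arrow is multiplication by $G$, which is not one of the permitted zigzag differentials $S,S^2,D$. The paper repairs this with a further, nontrivial change of basis that also touches the two neighboring squares (replacing the $G$ by a $-D$ one position over); only after this third simplification does the output become a genuine zigzag complex with graph $zz(x+1)$. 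Your local edge-by-edge verification cannot succeed without this step, because the obstruction is not confined to a single square.

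A smaller omission in the $x\mapsto 1/x$ step: the $\pi/2$ rotation fixes the base point rather than carrying it along, so via the $4Tu$ relation it sends $D\colon\img{tangle0}\to\img{tangle0}$ to $-D\colon\img{tangleinfty}\to\img{tangleinfty}$, and a sign-correcting isomorphism (easy, thanks to the linear shape) is needed before the result is literally a zigzag complex.
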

\begin{remark}
Since the Bar-Natan complex of the mirror image of a tangle is isomorphic to the dual of the Bar-Natan complex of that tangle, 
\cref{thm:thompson} yields a rather simple representative of the homotopy equivalence class of the Bar-Natan complex of any rational tangle, up to global shifts in homological and quantum degree.
These shifts depend on the orientation of the tangle; since they do not matter for our work on~$\lambda$, we will neglect them. Thompson computes the shifts in \cite[Theorem~5.1]{thompson}.
\end{remark}
The proof will use Bar-Natan's computation method of delooping and Gaussian elimination \cite{BN2}.
We have described delooping in \cref{fig:delooping} in \cref{subsec:defzgcomplex}.
By Gaussian elimination, we mean the following.
\begin{lem}\label{lem:gaussian}
Assume $(C,d)$ is a chain complex in some additive category taking the following form:
\[
\begin{tikzcd}[row sep=tiny]
 && X \ar[rr,"c"]\ar[rrdd,"d",pos=0.15] && Z \ar[rd,"g"] \\
\ldots \ar[r] & C_{i-1} \ar[ru,"a"]\ar[rd,"b"]
 & \oplus           && \oplus    & C_{i+2} \ar[r] & \ldots \\
 && Y \ar[rr,"e"]\ar[rruu,"f",crossing over,pos=0.25] && W \ar[ru,"h"]
\end{tikzcd}
\]
with $e$ an isomorphism. Then $C$ is homotopy equivalent to
\[
\begin{tikzcd}
\ldots \ar[r] &
C_{i-1} \ar[r,"a"] & X \ar[rr,"c-fe^{-1}d"] && Z \ar[r,"g"] & C_{i+2} \ar[r] & \ldots.
\end{tikzcd}
\myqed
\]
\end{lem}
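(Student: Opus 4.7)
The proof is a standard piece of homological algebra that works verbatim in any additive category, so the argument never actually looks inside the objects $X, Y, Z, W, C_{i-1}, C_{i+2}$; one manipulates only morphisms. My first step would be to extract the consequences of $d \circ d = 0$. Projecting $d_i \circ d_{i-1}$ onto its $W$-component gives $da + eb = 0$, hence $b = -e^{-1}da$; projecting $d_{i+1} \circ d_i$ onto its $Z$-component gives $gf + he = 0$, hence $h = -gfe^{-1}$. These two identities are the only algebraic consequences of invertibility of $e$ that the proof uses. From them the proposed simplified differential on $C'$ is immediately seen to square to zero, since $(c - fe^{-1}d)\,a = ca + fb = 0$ and $g\,(c - fe^{-1}d) = gc + hd = 0$.

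Next I would write down explicit candidate chain maps $F\colon C \to C'$ and $G\colon C' \to C$, taken to be the identity in all degrees other than $i, i+1$, and given in direct-sum matrix notation by
\[
F_i = \begin{pmatrix}\id & 0\end{pmatrix},\quad F_{i+1} = \begin{pmatrix}\id & -fe^{-1}\end{pmatrix},\quad G_i = \begin{pmatrix}\id \\ -e^{-1}d\end{pmatrix},\quad G_{i+1} = \begin{pmatrix}\id \\ 0\end{pmatrix}.
\]
Commutation with the differentials in degrees $i-1, i, i+1$ is a short matrix computation that reduces, in each case, to one of the two identities above; in all other degrees both maps are the identity, so there is nothing to check. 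A direct multiplication shows that $F \circ G = \id_{C'}$.

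The remaining step is to produce a chain homotopy witnessing $G \circ F \simeq \id_C$. I would take $H$ to vanish in all degrees except for a single component $H_{i+1}\colon Z \oplus W \to X \oplus Y$ equal to $\bigl(\begin{smallmatrix}0 & 0 \\ 0 & e^{-1}\end{smallmatrix}\bigr)$; computing the matrices of $G_i F_i$ and $G_{i+1} F_{i+1}$ one finds that $d\,H + H\,d$ coincides with $\id_C - G \circ F$ in degrees $i$ and $i+1$ and is zero elsewhere. I do not anticipate any real obstacle: everything is forced by the shapes of the matrices, and the single place where the argument genuinely uses anything nontrivial is where we invert $e$, which is precisely our hypothesis. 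The only mild bookkeeping point is to confirm that the pieces of $d$ not displayed in the statement—those entering $C_{i-1}$ and leaving $C_{i+2}$—cause no trouble, but since $F$, $G$, and $H$ are (respectively) the identity and zero in those degrees, the relevant equations there are tautological.
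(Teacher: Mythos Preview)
The paper does not actually prove this lemma; it is stated with a \texttt{\textbackslash qed} and treated as a standard fact (implicitly attributed to Bar-Natan~\cite{BN2}). Your argument is the standard proof and all the matrix computations check out, so there is nothing to correct or compare.
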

Using Gaussian elimination as stated in the above lemma, one may eliminate the domain and target of an isomorphism in a chain complex, by paying the price of introducing a new differential $f\circ e^{-1}\circ d$.
\begin{proof}[Proof of \cref{thm:thompson}]
We proceed by induction over the number of transformations
$y\mapsto 1/y$ and $y \mapsto y + 1$ necessary to reach $x$ from~$1$.
For~$x = 1$, observe that $[T]^{\bullet}$ is homotopy equivalent (in fact isomorphic) to a zigzag complex
with graph equal to~$zz(1)$, by the definition of the Bar-Natan complex of a tangle consisting of a single crossing.

So let us now assume that the statement holds
for~$x$. 
To show that the statement also holds for~$1/x$, observe that the tangle diagrams
$R(x)$ and $R(1/x)$ are related by mirroring at the line
$\R\left(\begin{smallmatrix}1\\-1\end{smallmatrix}\right)$.
Equivalently, one may first rotate $R(x)$ by~$\pi / 2$,
getting a tangle diagram~$D_T$, and then switch all crossings in~$D_T$, getting~$R(1/x)$.
For all three tangle diagrams~$R(x)$, $D_T$ and~$R(1/x)$, the base point is the lower left end point
(as it is for all four-ended tangles, by the convention fixed at the beginning of this section).
Let $C$ be the zigzag complex homotopy equivalent to~$[R(x)]^{\bullet}$,
provided by the induction hypothesis.
Since the Bar-Natan complex is defined geometrically, a complex $C'$ homotopy equivalent to $[D_T]^{\bullet}$ may be
obtained from $C$ simply by rotating all crossingless tangle diagrams and all cobordisms in $C$
by~$\pi/2$. This switches \img{tangle0} and \img{tangleinfty}. The effect on cobordisms is a little more subtle: since the base point always remains at the lower left end point, rotation by $\pi/2$ does not commute with the action of~$G$.
Thus one finds (using the $4Tu$-relation, see \cref{fig:relations}) that the rotation sends $I$ to~$I$, $S$ to~$S$, $D\colon \img{tangleinfty}\to\img{tangleinfty}$ to
$D\colon \img{tangle0}\to\img{tangle0}$, but 
$D\colon \img{tangle0}\to\img{tangle0}$ to $-D\colon \img{tangleinfty}\to\img{tangleinfty}$.
However, because of the linear shape of~$C'$, there is a simple change of basis that gets rid of the introduced minus signs, yielding a complex $C''$ isomorphic to~$C'$. Finally, $[R(1/x)]^{\bullet}$ is homotopy equivalent to the dual of~$C''$; naively, the dual of $C''$ is obtained by simply reversing the direction of all differentials. As an upshot of this discussion, the dual of $C''$ is a zigzag complex corresponding to the zigzag graph obtained from $zz(x)$ by switching $\circ$ and $\bullet$ (the effect of the rotation) and redirecting all arrows (the effect of switching all crossings). This shows that the statement holds for~$1/x$.

\begin{figure}[t]
\[
\newcommand{\imgx}[2]{\raisebox{-.3\height}{\includegraphics[scale=#1]{#2}}}
\begin{tikzcd}[column sep=3em]
\textcolor{annotation}{\scriptstyle C_{6,0}} & \textcolor{annotation}{\scriptstyle C_{5,0}} & \textcolor{annotation}{\scriptstyle C_{4,0}} & \textcolor{annotation}{\scriptstyle C_{3,0}} & \textcolor{annotation}{\scriptstyle C_{2,0}} & \textcolor{annotation}{\scriptstyle C_{1,0}} \ar[r,white,"\textcolor{annotation}{\beta_{0,0}}"]& \textcolor{annotation}{\scriptstyle C_{0,0}} \\[-5ex]
  \img{tangleinftycircle}\ar[r,"{\imgx{.2}{grid2_} - \imgx{.2}{grid3_}}"]\ar[d,"\img{grid4_}"]
& \img{tangleinftycircle}\ar[r,"\img{grid1_}"]\ar[d,"\img{grid4_}"]
& \img{tangleinfty}
\ar[d,"S"]
& \img{tangleinfty}\ar[r,"\img{grid1_}",leftarrow]\ar[d,"S"]
& \img{tangleinftycircle}\ar[r,"{\imgx{.2}{grid2_} - \imgx{.2}{grid3_}}",leftarrow]\ar[d,"\img{grid4_}"]
& \img{tangleinftycircle}\ar[r,"\img{grid2_}",leftarrow]\ar[d,"\img{grid4_}"]
& \img{tangleinftycircle}\ar[d,"\img{grid4_}"] \\
  \img{tangleinfty}\ar[r,"D",swap]
& \img{tangleinfty}\ar[r,"S",swap]
& \img{tangle0}\ar[r,"D",leftarrow,swap]
& \img{tangle0}\ar[r,"S",leftarrow,swap]
& \img{tangleinfty}\ar[r,"D",leftarrow,swap]
& \img{tangleinfty}\ar[r,"S^2",leftarrow,swap]
& \img{tangleinfty} \\[-5ex]
\textcolor{annotation}{\scriptstyle C_{6,1}} & \textcolor{annotation}{\scriptstyle C_{5,1}} & \textcolor{annotation}{\scriptstyle C_{4,1}} & \textcolor{annotation}{\scriptstyle C_{3,1}} & \textcolor{annotation}{\scriptstyle C_{2,1}} & \textcolor{annotation}{\scriptstyle C_{1,1}} \ar[r,white,"\textcolor{annotation}{\beta_{0,1}}",swap]& \textcolor{annotation}{\scriptstyle C_{0,1}}
\end{tikzcd}
\]
\caption{An example of a grid complex appearing in the proof of \cref{thm:thompson}.
The lower row is a zigzag complex corresponding to~$zz(5/2)$.}
\label{fig:ladderexample}
\end{figure}

Let us now show that the statement holds for~$x+1$.
Let $T'$ be the tangle $R(x+1)$ equipped with the orientation induced by the orientation of~$T$.
Let $\mathcal{D}$ be the 2-input planar arc diagram
\[
\raisebox{-.4\height}{
\begingroup%
  \makeatletter%
  \providecommand\color[2][]{%
    \errmessage{(Inkscape) Color is used for the text in Inkscape, but the package 'color.sty' is not loaded}%
    \renewcommand\color[2][]{}%
  }%
  \providecommand\transparent[1]{%
    \errmessage{(Inkscape) Transparency is used (non-zero) for the text in Inkscape, but the package 'transparent.sty' is not loaded}%
    \renewcommand\transparent[1]{}%
  }%
  \providecommand\rotatebox[2]{#2}%
  \newcommand*\fsize{\dimexpr\f@size pt\relax}%
  \newcommand*\lineheight[1]{\fontsize{\fsize}{#1\fsize}\selectfont}%
  \ifx\svgwidth\undefined%
    \setlength{\unitlength}{84.58745894bp}%
    \ifx\svgscale\undefined%
      \relax%
    \else%
      \setlength{\unitlength}{\unitlength * \real{\svgscale}}%
    \fi%
  \else%
    \setlength{\unitlength}{\svgwidth}%
  \fi%
  \global\let\svgwidth\undefined%
  \global\let\svgscale\undefined%
  \makeatother%
  \begin{picture}(1,0.69442007)%
    \lineheight{1}%
    \setlength\tabcolsep{0pt}%
    \put(0,0){\includegraphics[width=\unitlength,page=1]{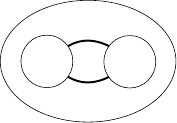}}%
    \put(0.11096711,0.19930223){\makebox(0,0)[lt]{\lineheight{1.25}\smash{\begin{tabular}[t]{l}\textbf{*}\end{tabular}}}}%
    \put(0.58228307,0.19762543){\makebox(0,0)[lt]{\lineheight{1.25}\smash{\begin{tabular}[t]{l}\textbf{*}\end{tabular}}}}%
    \put(0,0){\includegraphics[width=\unitlength,page=2]{arcdiagramclosure2.pdf}}%
    \put(0.02197702,0.12917546){\makebox(0,0)[lt]{\lineheight{1.25}\smash{\begin{tabular}[t]{l}\textbf{*}\end{tabular}}}}%
  \end{picture}%
\endgroup%
}.
\]
Then $T'$ is equivalent to $\mathcal{D}(T, R(1))$, and so
$[T']^{\bullet} \simeq \mathcal{D}([T]^{\bullet}, B)$, for $B$ 
the Bar-Natan complex of a single crossing, which is, up to global shifts, equal to
\[
B_0=\img{tangleinfty}\xrightarrow{\quad S\quad }\img{tangle0}\{1\}=B_1.
\]%
Less formally, $[T']^{\bullet}$ is homotopy equivalent to the
chain complex obtained by
taking the tensor product of the chain complexes
$[T]^{\bullet}$ and~$B$, while at the same time (on the level of tangles)
gluing two pairs of end points.
By the induction hypothesis, $[T]^{\bullet}$ is homotopy equivalent to a zigzag complex
$(\bigoplus_{i=0}^n A_i, \sum_{i=1}^n d_i)$ with graph~$zz(x)$.
Thus the complex $[T']^{\bullet}$ is homotopy equivalent to the complex
\[
\bigoplus_{\substack{i\in \{0, \ldots, n\}\\ j\in\{0,1\}}} C_{ij}
\]
with $C_{ij} = A_i \otimes B_j$
and the following differentials:
$\alpha_i\colon C_{i,0} \to C_{i,1}$ given by $(-1)^i \text{id}_{A_i} \otimes S$, and
$\beta_{i,j}$ a map $C_{i,j}\to C_{i-1,j}$ or $C_{i-1,j}\to C_{i,j}$ given by $d_i \otimes \text{id}_{B_j}$.
One may conveniently depict this complex in a grid with two rows and $n+1$ columns.
An example is shown in \cref{fig:ladderexample}.
The lower row of this grid is the subcomplex $(C_{i,1}, \beta_{i,1})$, which equals the complex $(A_i\{1\}, d_i)$.
In the upper row, on the other hand, $C_{i,0} = \img{tangleinfty}$ if $A_i = \img{tangle0}$
and $C_{i,0} = \img{tangleinftycircle}$ if $A_i = \img{tangleinfty}$; moreover, one easily checks
the correspondence between maps in the lower row and the upper row 
shown in the first two columns of \cref{fig:simplifications}.
Note that for these and the following calculations, it matters where the base point is (in the south west), and where the single crossing is attached to $T$ to form $T'$ (on the right).
Our strategy is now to simplify $C_{ij}$ step by step: first by delooping, then by Gaussian elimination, and finally by basis changes. Thereby we will produce a homotopy equivalence between $C_{ij}$ and a zigzag complex with graph~$zz(x+1)$.
\begin{table}[tb]%
\newcommand{\mysmallmatrix}[2]{(\begin{smallmatrix}#1 & #2\end{smallmatrix})}%
\newcommand{\mysmallmatrixfour}[4]{\bigl(\begin{smallmatrix}#1 & #2 \\ #3 & #4\end{smallmatrix}\bigr)}%
\newcommand{\imgx}[2]{\raisebox{-.3\height}{\includegraphics[scale=#1]{#2}}}
\noindent
\begin{tabular}{c|c|c|c}%
\parbox{3.6em}{\centering Edge\\ in $zz(x)$} & Square in $C_{ij}$ & Square in $C'_{ij}$ & Square in $C''_{ij}$ \\\hline\hline
\begin{tikzcd}[ampersand replacement=\&] {\circ} \ar[r] \& {\bullet}\end{tikzcd} &
\begin{tikzcd}[ampersand replacement=\&]
\img{tangleinftycircle} \ar[d,"{\pm\imgx{.2}{grid4_}}"]\ar[r,"{\imgx{.2}{figures/grid1_}}"] \& \img{tangleinfty}\ar[d,"\mp S"] \\
\img{tangleinfty} \ar[r,"S"] \& \img{tangle0}
\end{tikzcd} &
\begin{tikzcd}[ampersand replacement=\&]\img{tangleinfty}^{\oplus 2} \ar[d,"\pm\mysmallmatrix{D}{I}"]\ar[r,"\mysmallmatrix{0}{I}"] \& \img{tangleinfty}\ar[d,"\mp S"]\\
\img{tangleinfty} \ar[r,"S"] \& \img{tangle0}
\end{tikzcd} &
\begin{tikzcd}[ampersand replacement=\&]\img{tangleinfty} \ar[r,"-D"] \& \img{tangleinfty}\ar[d,"\mp S"]\\
 0 \& \img{tangle0}
\end{tikzcd}
\\\hline
\begin{tikzcd}[ampersand replacement=\&] {\bullet} \ar[r,"\text{odd}",swap] \& {\bullet}\end{tikzcd} &
\begin{tikzcd}[ampersand replacement=\&]
\img{tangleinfty} \ar[d,"\pm S"] \ar[r,"G"] \& \img{tangleinfty}\ar[d,"\mp S"]\\
\img{tangle0} \ar[r,"S^2"] \& \img{tangle0}
\end{tikzcd} &  same as $C_{ij}$ & same as $C_{ij}$
\\\hline
\begin{tikzcd}[ampersand replacement=\&] {\bullet} \ar[r,"\text{even}",swap] \& {\bullet}\end{tikzcd} &
\begin{tikzcd}[ampersand replacement=\&]
\img{tangleinfty} \ar[d,"\pm S"] \& \img{tangleinfty}\ar[d,"\mp S"] \\
\img{tangle0} \ar[r,"D"] \& \img{tangle0} \end{tikzcd}
 &  same as $C_{ij}$ & same as $C_{ij}$
\\\hline
\begin{tikzcd}[ampersand replacement=\&] {\circ} \ar[r,"\text{odd}",swap] \& {\circ}\end{tikzcd} &
\begin{tikzcd}[ampersand replacement=\&]
\img{tangleinftycircle} \ar[d,"{\pm\imgx{.2}{grid4_}}"]\ar[r,"{\imgx{.2}{grid2_}}"] \& \img{tangleinftycircle}\ar[d,"{\mp\imgx{.2}{grid4_}}"] \\
\img{tangleinfty} \ar[r,"S^2"] \& \img{tangleinfty}
\end{tikzcd} &
  \begin{tikzcd}[ampersand replacement=\&]\img{tangleinfty}^{\oplus 2} \ar[d,"{\pm\mysmallmatrix{D}{I}}"]\ar[r,"\mysmallmatrixfour{0}{I}{0}{G}"] \& \img{tangleinfty}^{\oplus 2}\ar[d,"{\mp\mysmallmatrix{D}{I}}"] \\
\img{tangleinfty} \ar[r,"S^2"] \& \img{tangleinfty}
\end{tikzcd} 
&
\begin{tikzcd}[ampersand replacement=\&]\img{tangleinfty} \ar[r,"-D"] \& \img{tangleinfty}\\ 0 \& 0\end{tikzcd}
\\\hline
\begin{tikzcd}[ampersand replacement=\&] {\circ} \ar[r,"\text{even}",swap] \& {\circ}\end{tikzcd} &
\begin{tikzcd}[ampersand replacement=\&]
\img{tangleinftycircle} \ar[d,"{\pm\imgx{.2}{grid4_}}"]\ar[r,outer sep=1ex,"{\imgx{.2}{grid2_} - \imgx{.2}{grid3_}}"] \& \img{tangleinftycircle}\ar[d,"{\mp\imgx{.2}{grid4_}}"] \\
\img{tangleinfty} \ar[r,"D"] \& \img{tangleinfty}
\end{tikzcd} &
  \begin{tikzcd}[ampersand replacement=\&]\img{tangleinfty}^{\oplus 2} \ar[d,"{\pm\mysmallmatrix{D}{I}}"]\ar[r,outer sep=.5ex,"\mysmallmatrixfour{-G}{I}{0}{0}"] \& \img{tangleinfty}^{\oplus 2}\ar[d,"{\mp\mysmallmatrix{D}{I}}"] \\
\img{tangleinfty} \ar[r,"D"] \& \img{tangleinfty}
\end{tikzcd} 
&
\begin{tikzcd}[ampersand replacement=\&]\img{tangleinfty} \ar[r,"-S^2"] \& \img{tangleinfty}\\ 0 \& 0 \end{tikzcd}
\\
\end{tabular}
\caption{Simplifications done in the proof of \cref{thm:thompson}.}
\label{fig:simplifications}
\end{table}

Let us first simplify $C_{ij}$ by delooping. This yields another grid complex $C'_{ij}$
with identical objects, except that $\img{tangleinftycircle}$ is replaced by
$C'_{i0} = \img{tangleinfty}\oplus \img{tangleinfty}$. The lower rows $C'_{i0}$ and $C_{i0}$ are identical,
including the differentials.
The third column of \cref{fig:simplifications} shows how the upper row $C'_{i1}$
is determined by the lower row~$C'_{i0}$.
All of the vertical differentials $\alpha'_i\colon C'_{i1}\to C'_{i0}$ are
either~$(-1)^i S$, or equal to
\[
(-1)^i \begin{pmatrix} D & I\end{pmatrix}.
\]
We leave it to the reader to verify these calculations.

As a second step, let us simultaneously eliminate all the $I$ entries in $\alpha'_i$ by Gaussian elimination,
and denote the resulting grid complex by~$C''_{ij}$. \cref{fig:simplifications} shows how $C''_{ij}$ is determined by 
$C'_{ij}$. Once again, we invite the reader to check these calculations.
Note that if there is a differential $\delta$ in $C'_{ij}$ whose target is one of the $\img{tangleinfty}$ objects
in the lower row, then there are only two possible cases.
The first case is that the domain of $\delta$ is equal to the domain of one of the maps $I$ that are being eliminated.
In that case, Gaussian elimination produces a new differential, which is the reason that the squares of $C''_{ij}$ shown in the last two rows of \cref{fig:simplifications} are $-D$ and~$-S^2$.
The second case is that the domain of $\delta$ is another $\img{tangleinfty}$ object in the
lower row (since there are no edges $\bullet\to\circ$ in~$zz(x)$, it cannot be a $\img{tangle0}$ object). But in that case, the domain itself is being eliminated, so in this case, Gaussian elimination does not produce new differentials.

\begin{figure}[tb]%
\begin{tikzcd}[column sep=4.5em]
 & \img{tangleinfty} \ar[ldd,firstmap,leftarrow,bend right=10]\ar[ldd,secondmap,bend left=10]\ar[d,"\pm S"] & \img{tangleinfty} \ar[rr,"G"] \ar[d,"\mp S"]   && \img{tangleinfty}\ar[d,"\pm S"] & \img{tangleinfty} \ar[d,"\mp S"]\\
 & \img{tangle0} \ar[ldd,firstmap,leftarrow,bend right=10]\ar[ldd,secondmap,bend left=10]\ar[r,dash,"D",pos=0.2]   & \img{tangle0} \ar[rr,"S^2",swap]\ar[rd,"\pm S" near end,secondmap]\ar[ldd,firstmap,leftarrow,bend right=10,crossing over]\ar[ldd,secondmap,crossing over,bend left=10] &&   \img{tangle0}\ar[ldd,firstmap,leftarrow,bend right=10]\ar[ldd,secondmap,bend left=10] \ar[r,dash,"D",pos=0.2]& \img{tangle0}\ar[ldd,firstmap,leftarrow,bend right=10]\ar[ldd,secondmap,bend left=10]\\
\img{tangleinfty}\ar[d,"\pm S",swap] & \img{tangleinfty} \ar[d,"\mp S",swap] \ar[firstmap,leftarrow,crossing over,from=ruu,bend right=10]\ar[secondmap,crossing over,bend left=10,from=ruu]  && \img{tangleinfty}\ar[d,"\pm S",swap] \ar[firstmap,leftarrow,crossing over,from=ruu]\ar[secondmap,crossing over,bend left=19.5,from=ruu] & \img{tangleinfty}\ar[d,"\mp S",swap]\ar[firstmap,leftarrow,crossing over,from=ruu,bend right=10]\ar[secondmap,crossing over,bend left=10,from=ruu]\\
\img{tangle0} \ar[r,dash,"D",swap]&\img{tangle0} \ar[rrruuu,firstmap,"\mp S",pos=0.9,controls={+(1,0) and +(-4,-2)},crossing over]
\ar[from=u,to=rru,"-D",crossing over] &&    \img{tangle0} \ar[r,dash,"D",swap] & \img{tangle0}  \\
\end{tikzcd}
\caption{A change of basis required in the proof of \cref{thm:thompson}.
The back face is the complex~$C''_{ij}$, and the front face is the simplified complex~$C'''_{ij}$.
Differentials are drawn in black.
The light and dark blue arrows show mutually inverse isomorphisms between those two complexes.
Unlabeled arrows signify identity maps~$I$. The arrows labeled $D$ on the lower level, on the left,
have no arrowhead, because they may point in either direction (either both point to the right, or both to the left).
The same holds for the arrows labeled $D$ on the lower level, on the right.}
\label{fig:changeofbasis}
\end{figure}
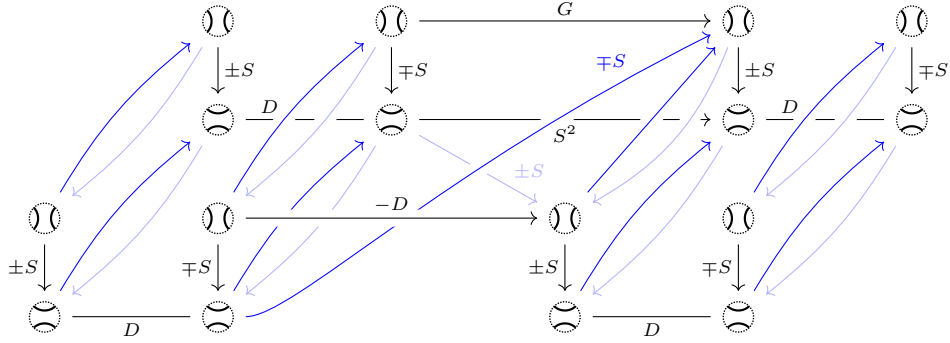%
The third step is to apply changes of basis. At each square $W$ of $C''_{ij}$ arising from an odd edge in $zz(x)$ between $\bullet$ vertices, i.e.~a square
\[
\begin{tikzcd}[ampersand replacement=\&]
\img{tangleinfty} \ar[d,"\pm S"]\ar[r,"G"] \& \img{tangleinfty}\ar[d,"\mp S"] \\
\img{tangle0} \ar[r,"S^2"] \& \img{tangle0}, \end{tikzcd}
\]
we apply the change of basis shown in \cref{fig:changeofbasis}.
Note that \cref{fig:changeofbasis} also shows the two squares adjacent to~$W$. Of course, if $W$ is at one end of the complex, then one of those squares does not exist. The point is that several of those basis changes are compatible, and may be made simultaneously, because the basis change maps on the adjacent objects are the identity~$I$.

This results in a grid complex~$C'''_{ij}$. Since $C'''_{ij}$ is homotopy equivalent to~$C_{ij}$, it just remains to verify that $C'''_{ij}$ is isomorphic to a zigzag complex corresponding to~$zz(x+1)$. The first two columns of \cref{table:fromxtox+1} summarize how $C'''_{ij}$ is determined by~$zz(x)$. From that table, it should be evident that $C'''_{ij}$ is a `linear' complex, i.e.~it is isomorphic to a chain complex of the form $(\bigoplus_i A_i, \sum_i d_i)$, with each $A_i$ either \img{tangleinfty} or \img{tangle0}, and each $d_i$ a map $A_{i-1}\to A_i$ or~$A_i\to A_{i-1}$. It is not quite a zigzag complex, yet, since the differentials are equal to $\pm S$, $\pm S^2$,~$\pm D$. However, due to the linear shape of the complex, one easily finds a basis change that gets rid of all minus signs. So, $C'''_{ij}$ truly is the zigzag complex of some zigzag graph~$\Gamma$. 

How edges of $zz(x)$ determine small subgraphs of $\Gamma$ is shown in the third column of \cref{table:fromxtox+1}.
However, the way these subgraphs fit together is somewhat cumbersome: subgraphs coming from adjacent edges in $zz(x)$ need to be glued together either along a single $\circ$-vertex, or along a vertical downwards edge from $\circ$ to~$\bullet$. As a remedy, we remove the two $\circ$-vertices from the subgraph induced by an even $\bullet\longrightarrow\bullet$ edge, unless those $\circ$-vertices are at one of the ends of the graph. After this reinterpretation, we obtain the subgraphs shown in the fourth column of \cref{table:fromxtox+1} (in the fourth column, we also disregard the grid structure, which was still shown in the third column). To form $\Gamma$ from the subgraphs in the fourth column, one simply glues along a single vertex. Now, the first and fourth column of \cref{table:fromxtox+1} are precisely identical with the rules of \cref{table:x+1}. This shows that $\Gamma = zz(x+1)$, concluding the proof.
\end{proof}
\begin{table}[tb]
\begin{tabular}{c|l|c|c}
\parbox{3.6em}{\centering Edge\\ in $zz(x)$} & \multicolumn{1}{c|}{Square in $C'''_{ij}$} & Subgraph of $\Gamma$ & \parbox{10.6em}{\centering Subgraph of~$\Gamma$,\\ reinterpreted} \\[2ex]\hline\hline
\begin{tikzcd}[ampersand replacement=\&] {\circ} \ar[r] \& {\bullet}\end{tikzcd} &
\begin{tikzcd}[ampersand replacement=\&]\img{tangleinfty} \ar[r,"-D"] \& \img{tangleinfty}\ar[d,"\mp S"]\\
 0 \& \img{tangle0} \end{tikzcd} &
\begin{tikzcd}[ampersand replacement=\&]\circ \ar[r] \& \circ\ar[d]\\
   \& \bullet \end{tikzcd} &
\begin{tikzcd}[ampersand replacement=\&]\circ \ar[r] \& \circ\ar[r] \& \bullet \end{tikzcd} 
\\\hline
\begin{tikzcd}[ampersand replacement=\&] {\bullet} \ar[r,"\text{odd}",swap] \& {\bullet}\end{tikzcd} &
\begin{tikzcd}[ampersand replacement=\&]\img{tangleinfty}\ar[d,"\pm S"] \ar[r,"-D"] \& \img{tangleinfty}\ar[d,"\mp S"]\\
 \img{tangle0} \& \img{tangle0} \end{tikzcd} &
\begin{tikzcd}[ampersand replacement=\&]\circ\ar[d] \ar[r] \& \circ\ar[d]\\
\bullet  \& \bullet \end{tikzcd} &
\begin{tikzcd}[ampersand replacement=\&]\bullet \& \circ\ar[l]\ar[r] \& \circ\ar[r] \& \bullet \end{tikzcd} 
\\\hline
\begin{tikzcd}[ampersand replacement=\&, row sep = -.2ex] {\bullet} \ar[r,"\text{even}",swap] \& {\bullet}\\
\scriptstyle x \& \scriptstyle y\end{tikzcd} &
\begin{tikzcd}[ampersand replacement=\&]\img{tangleinfty}\ar[d,"\pm S"]  \& \img{tangleinfty}\ar[d,"\mp S"]\\
 \img{tangle0} \ar[r,"D"]\& \img{tangle0} \end{tikzcd} &
\begin{tikzcd}[ampersand replacement=\&]\circ\ar[d]  \& \circ\ar[d]\\
\bullet \ar[r] \& \bullet \end{tikzcd} &
\begin{tabular}{@{}l@{}l@{}}
\begin{tikzcd}[ampersand replacement=\&]\bullet \ar[r] \& \bullet \& \circ\ar[l] \end{tikzcd} & {\footnotesize if $y$ is end} \\
\begin{tikzcd}[ampersand replacement=\&]\circ \ar[r] \& \bullet\ar[r] \& \bullet \end{tikzcd} & {\footnotesize if $x$ is end}  \\
\begin{tikzcd}[ampersand replacement=\&]\bullet \ar[r] \& \bullet \end{tikzcd} & {\footnotesize else}
\end{tabular}
\\\hline
\begin{tikzcd}[ampersand replacement=\&] {\circ} \ar[r,"\text{odd}",swap] \& {\circ}\end{tikzcd} &
\begin{tikzcd}[ampersand replacement=\&]\img{tangleinfty} \ar[r,"-D"] \& \img{tangleinfty}\\
 0 \& 0 \end{tikzcd} &
\begin{tikzcd}[ampersand replacement=\&]\circ\ar[r]  \& \circ\\
\phantom{\bullet} \& \end{tikzcd} &
\multirow{2}{*}[-5.5ex]{
\begin{tikzcd}[ampersand replacement=\&]\circ\ar[r]  \& \circ \end{tikzcd}
}
\\\cline{1-3}
\begin{tikzcd}[ampersand replacement=\&] {\circ} \ar[r,"\text{even}",swap] \& {\circ}\end{tikzcd} &
\begin{tikzcd}[ampersand replacement=\&]\img{tangleinfty} \ar[r,"-S^2"] \& \img{tangleinfty}\\
 0 \& 0 \end{tikzcd} &
\begin{tikzcd}[ampersand replacement=\&]\circ\ar[r]  \& \circ\\
\phantom{\bullet} \& \end{tikzcd} &
\\
\end{tabular}
\caption{From $zz(x)$ to $zz(x+1)$ in the proof of \cref{thm:thompson}.}
\label{table:fromxtox+1}
\end{table}

\subsection{The $\lambda$-distance between rational tangles}
Next, let us turn to \cref{thm:rationalreplacement}. Let us break up the proof into a sequence of lemmas.
\begin{lem}\label{lem:bigcasedistinction}
Let~$x\in\mathbb{Q}^+$. For every non-saddle edge $e$ of the zigzag graph~$zz(x)$,
there is a subgraph $\Gamma_e$ of $zz(x)$ as follows for some $n \geq 1$
(in what follows, a missing arrowhead means that the edge's direction is unknown):
\[
\Gamma_e = 
\begin{tikzcd}
A_1 \ar[r,dash]\ar[d,"e",swap] & A_2 \ar[r,dash] & \ldots \ar[r,dash] & A_n \\
B_1 \ar[r,dash] & B_2 \ar[r,dash] & \ldots \ar[r,dash] & B_n,
\end{tikzcd}
\]
such that for each $i$ with~$1 \leq i < n$,
the vertices $A_i$ and $B_i$ are of the same type ($\circ$~or~$\bullet$),
and the edges between $A_i$ and $A_{i+1}$ and between $B_i$ and $B_{i+1}$ are either both directed to the right, or both to the left (in the above drawing of~$\Gamma_e$); and such that
moreover, one of the following statements is true:
\begin{enumerate}[label=(\roman*)]
\item $A_n$ and $B_n$ are of the same type,
$A_n$ has no outgoing \emph{external} edge (i.e.~an edge towards a vertex in~$zz(x) \setminus \Gamma_e$),
and $B_n$ has no incoming external edge.
\item $n\geq 2$, and
\[
\begin{tikzcd}
A_{n-1} \ar[r,dash] & A_n \\
B_{n-1} \ar[r,dash] & B_n
\end{tikzcd}\quad\text{looks like}\quad
\begin{tikzcd}
\circ \ar[r] & \circ \\
\circ \ar[r] & \bullet.
\end{tikzcd}
\]
\item $n\geq 2$, and
\[
\begin{tikzcd}
A_{n-1} \ar[r,dash] & A_n \\
B_{n-1} \ar[r,dash] & B_n
\end{tikzcd}\quad\text{looks like}\quad
\begin{tikzcd}
\bullet &  \ar[l]\circ \\
\bullet &  \ar[l]\bullet.
\end{tikzcd}
\]
\end{enumerate}
\end{lem}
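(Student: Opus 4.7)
The plan is to proceed by induction on the complexity of $x$, measured by the length of the shortest expression for $x$ in terms of the operations $y \mapsto y+1$ and $y \mapsto 1/y$ applied to the starting value $1$. For the base case $x=1$, the graph $zz(1)$ is simply $\circ \to \bullet$, which has only a saddle edge, and the lemma holds vacuously.

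For the inductive step where $zz(x)$ is built from $zz(x')$ via the rule $x = 1/x'$ of~\eqref{eq:zzinv}, the transformation applies a global bijection that swaps $\circ \leftrightarrow \bullet$ and reverses every edge direction. This operation preserves the ``ladder body'' requirements---that $A_i$ and $B_i$ have matching types and that the horizontal rungs point in the same direction---and it carries condition (i) to itself while interchanging conditions (ii) and (iii). Hence a ladder for an edge $e$ in $zz(x')$ maps bijectively to a ladder for the corresponding edge in $zz(1/x')$.

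For the step $x = x'+1$, I would use the replacement rules of \cref{table:x+1}. The non-saddle edges of $zz(x'+1)$ split into two flavours: \emph{internal} edges produced inside a single replacement block (e.g., the new $\circ\to\circ$ edge created when replacing an odd $\bullet\bullet$ edge, or the $\circ\to\circ$ edge produced by the saddle-edge replacement), and \emph{persistent} edges coming from even $\bullet\bullet$ or $\circ\circ$ edges of $zz(x')$. For internal edges, a ladder $\Gamma_{e'}$ can be read off directly from the local replacement block, possibly extended into one adjacent block. For persistent edges, I would invoke the inductive hypothesis to obtain a ladder $\Gamma_e$ in $zz(x')$ for the predecessor edge $e$, and then apply the replacement rules simultaneously to the top and bottom rows of $\Gamma_e$. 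The matching-type and parallel-direction conditions along $\Gamma_e$ ensure that the two rows of replacements are compatible, producing a valid ladder $\Gamma_{e'}$ in $zz(x'+1)$.

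The hard part will be verifying that the terminating end of a replaced ladder again satisfies one of (i), (ii), or (iii). The rule in \cref{table:x+1} for an even $\bullet\bullet$ edge depends on whether the adjacent vertices are ends of $zz(x')$, so each of the three terminating conditions of $\Gamma_e$ can transmute into a different terminating condition of $\Gamma_{e'}$, and an explicit table of transitions is required. The crucial observation that makes this bookkeeping go through is that in condition (i) the vertices $A_n$ and $B_n$ of $\Gamma_e$ are either both ends or both non-ends of $zz(x')$---which follows from matching types together with the fact that only $\bullet$-vertices can be ends of $zz(x')$ adjacent to an even edge. Once this case analysis is tabulated, the induction closes.
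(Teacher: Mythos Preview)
Your inductive scheme and your treatment of the $y\mapsto 1/y$ step match the paper's proof. The $y\mapsto y+1$ step, however, has two genuine gaps.

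First, your dichotomy of non-saddle edges in $zz(x'+1)$ into ``internal'' and ``persistent'' is not the right one. The non-saddle edge $f$ produced inside the replacement $\bullet\leftarrow\circ\to\circ\to\bullet$ of an odd $\bullet\!-\!\bullet$ edge $e$ cannot in general be handled by looking only at the replacement block and one neighbour: its ladder may be forced to have arbitrary length. For a concrete instance, in $zz(3/7)$ the odd $\bullet\!-\!\bullet$ edge has a ladder of length $3$, and after passing to $zz(10/7)$ the corresponding $\circ\to\circ$ edge $f$ admits no ladder of length $\leq 4$ satisfying any of (i)--(iii); the shortest one has $n=5$. The paper handles this case by invoking the inductive hypothesis on $e$ itself and pushing the replacement rules through \emph{all} of $\Gamma_e$, which leads to eight subcases. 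You will need to move this case from ``internal'' to the inductive branch.

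Second, your ``crucial observation'' that in condition (i) the vertices $A_n$ and $B_n$ are either both ends or both non-ends of $zz(x')$ is false. Condition (i) permits $A_n$ to be an end while $B_n$ has an outgoing external edge, and vice versa; the paper's subcases (d) and (e) (within the odd $\bullet\!-\!\bullet$ case) deal precisely with these asymmetric configurations, where the even-$\bullet\bullet$ replacement rule applies differently to the top and bottom rows and one must adjoin an extra vertex on one row before the ladder terminates. Since your transition table relies on this symmetry to close up, it will not: you need to add the asymmetric end/non-end subcases explicitly.
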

\begin{proof}
We proceed by induction over the number of transformations
$y\mapsto 1/y$ and $y \mapsto y + 1$ necessary to reach $x$ from~$1$.
For~$x = 1$, there is no non-saddle edge in~$zz(x)$, so the statement is trivially true.

Now assume the statement holds for~$zz(x)$, and let us show that it holds for~$zz(1/x)$, too.
Since $zz(1/x)$ arises from $zz(x)$ by reversing edge directions and switching $\circ$ and~$\bullet$,
there is a canonical one-to-one correspondence between edges $e$ of $zz(x)$ and edges $f$ of~$zz(1/x)$.
The subgraph $\Gamma_e$ of $zz(x)$ then corresponds to a subgraph $\Gamma_f$ of~$zz(1/x)$.
Note that in order for $f$ to be pointing downwards,
one needs to turn $\Gamma_f$ upside down.
Then, one sees that $\Gamma_e$ satisfies (i), (ii), (iii) if and only if $\Gamma_f$ satisfies (i), (iii), (ii), respectively.

Finally, assume that the statement holds for~$zz(x)$, and let us prove the statement for~$zz(x+1)$.
Every edge of $zz(x+1)$ comes from some edge $e$ of $zz(x)$ by applying the rules in \cref{table:x+1}.
So let us consider the six rules shown in \cref{table:x+1} case by case.%
\footnote{We invite the reader who is eager to check the following proof in detail to have a printed copy of \cref{table:x+1} handy. Also, they might be well advised to prepare themself a modified version of \cref{table:x+1}, in which all subgraphs are rotated by~$180^{\circ}$, so that they can quickly find out the fate of edges pointing to the left.}
In each case, $e$ gives rise to some number (between one and three) of edges in~$zz(x+1)$, exactly one of which is a non-saddle edge. That edge, which we denote by~$f$, is the one for which we need to check the existence of a subgraph $\Gamma_f$ as in the statement of the lemma. In each case (except when $e$ is a saddle edge),
denote by $\Gamma_e$ a subgraph of $zz(x)$ as in the statement of the lemma, satisfying (i), (ii), or (iii).
\begin{enumerate}
\item \emph{$e$ is an odd edge between $\bullet$-vertices.}
Applying the rules in \cref{table:x+1} to every edge in the subgraph
\[
\Gamma_e = 
\begin{tikzcd}
\bullet \ar[r,dash]\ar[d,"e",swap] & A_2 \ar[r,dash] & \ldots \ar[r,dash] & A_n \\
\bullet \ar[r,dash] & B_2 \ar[r,dash] & \ldots \ar[r,dash] & B_n,
\end{tikzcd}
\]
transforms it into a subgraph $\Gamma'$ of~$zz(x+1)$ that looks as follows:
\[
\Gamma' = 
\begin{tikzcd}
\circ\ar[d,"f",swap]\ar[r] & \bullet \ar[r,dash] & A'_3 \ar[r,dash] & \ldots \ar[r,dash] & A'_k \\
\circ\ar[r]       & \bullet \ar[r,dash] & B'_3 \ar[r,dash] & \ldots \ar[r,dash] & B'_{\ell}.
\end{tikzcd}
\]
We consider the following list of exhaustive subcases:
\newcommand{\zzendabove}[1]{\makebox[0pt][c]{#1}\raisebox{1.5ex}{\makebox[0pt][r]{\footnotesize{} end}}}
\begin{enumerate}
\item \emph{$\Gamma_e$ satisfies (i) with $n$ odd.}
In this case, let us show that $\Gamma_f=\Gamma'$ satisfies (i).
Firstly, since the top and bottom row of $\Gamma_e$ look the same, and the last edge in each row is not an even edge between $\bullet$ vertices,
the top and bottom row of $\Gamma'$ look the same, too.
More precisely, we have~$k = \ell$, $A'_i$ and $B'_i$ are of the same type for $3 \leq i \leq k$,
and horizontal edges in the first and second row of $\Gamma'$ point in the same directions.
Secondly, since $A_n$ has no outgoing external edge, neither does~$A'_k$.
And since $B_n$ has no incoming external edge, and no outgoing external odd edge between $\bullet$ vertices,
$B'_k$ has no incoming external edge, either. So $\Gamma'$ satisfies (i).
\item \emph{$\Gamma_e$ satisfies (i) with $n$ even, and both $A_{n-1}$ and $A_n$ are of type~$\circ$.}
Same as case (a).
\item \emph{$\Gamma_e$ satisfies (i) with $n$ even, $A_{n-1}$ and $A_n$ are both of type~$\bullet$, and $A_n$ and $B_n$ are ends.}
Same as case (a).
\item \emph{$\Gamma_e$ satisfies (i) with $n$ even, $A_{n-1}$ and $A_n$ are both of type~$\bullet$, $A_n$ is an end, and $B_n$ is not.}
Since $B_n$ is not an end, and has no external incoming edge, it must have an external outgoing edge~$h$, which must go to a vertex $B_{n+1}$ of type~$\bullet$. Applying the rules of \cref{table:x+1} to $\Gamma_e\cup \{h, B_{n+1}\}$ yields the following (note that~$k = \ell+1$):

\[
\tikz[
overlay]{
    \node[text=gray,align=right] at (-.3,0) {$\Gamma_e$};
    \filldraw[fill=yellow!50,draw=gray] (0,-1) rectangle (2.2,1);
}
\begin{tikzcd}
\cdots \bullet \ar[r,dash,"\text{even}",swap] & \zzendabove{$\bullet$} \\
\cdots \bullet \ar[r,dash] & \bullet \ar[r,"h",swap] & \bullet
\end{tikzcd}
\quad\rightsquigarrow\qquad
\tikz[
overlay]{
    \node[text=gray,align=right] at (-.3,0) {$\Gamma'$};
    \node at (3.4,-1.0) {$\scriptstyle B'_k$};
    \filldraw[fill=yellow!50,draw=gray] (0,-1) -- (2.2,-1) -- (2.2,0) -- (3.6,0) -- (3.6,1) -- (0,1) -- cycle;
}
\begin{tikzcd}
\cdots \bullet \ar[r,dash] & \bullet & \ar[l] \zzendabove{\ $\circ$} \\
\cdots \bullet \ar[r,dash] & \bullet & \ar[l,"h'"] \circ \ar[r] & \circ \ar[r] & \bullet
\end{tikzcd}
\]
\smallskip

Now, $\Gamma_f = \Gamma' \cup \{h', B'_{k}\}$ satisfies (i).
\item \emph{$\Gamma_e$ satisfies (i) with $n$ even, $A_{n-1}$ and $A_n$ are both of type~$\bullet$, $A_n$ is not an end, and $B_n$ is.}
Since $A_n$ is not an end, and has no external outgoing edge, it must have an external incoming edge $g$ from a vertex~$A_{n+1}$.
Applying the rules of \cref{table:x+1} to $\Gamma_e\cup \{g, A_{n+1}\}$ yields the following (note that~$\ell = k+1$):
\[
\tikz[
overlay]{
    \node[text=gray,align=right] at (-.3,0) {$\Gamma_e$};
    \filldraw[fill=yellow!50,draw=gray] (0,-1) rectangle (2.2,1);
}
\begin{tikzcd}
\cdots \bullet \ar[r,dash,"\text{even}",swap] & \bullet & \ar[l,"g",swap] A_{n+1} \\
\cdots \bullet \ar[r,dash] & \zzendabove{$\bullet$} 
\end{tikzcd}
\quad\rightsquigarrow\qquad
\tikz[
overlay]{
    \node[text=gray,align=right] at (-.3,0) {$\Gamma'$};
    \node at (3.4,.9) {$\scriptstyle A'_{\ell}$};
    \filldraw[fill=yellow!50,draw=gray] (0,1) -- (2.2,1) -- (2.2,0) -- (3.6,0) -- (3.6,-1) -- (0,-1) -- cycle;
}
\begin{tikzcd}
\cdots \bullet \ar[r,dash] & \bullet & \ar[l,"g'",swap] \circ & \ar[l] \circ \cdots \\
\cdots \bullet \ar[r,dash] & \bullet & \ar[l] \zzendabove{$\circ$} 
\end{tikzcd}
\]
\smallskip

Now, $\Gamma_f = \Gamma' \cup \{g', A'_{\ell}\}$ satisfies (i).
\item \emph{$\Gamma$ satisfies (i) with $n$ even, $A_{n-1}$ and $A_n$ are both of type~$\bullet$, and neither $A_n$ nor $B_n$ are ends.}
This is basically the synthesis of the two previous cases. As before, there is an external incoming edge $g$ from some vertex $A_{n+1}$ to~$A_n$, and an external outgoing edge $h$ from $B_n$ to some vertex~$B_{n+1}$.
Applying the rules of \cref{table:x+1} to $\Gamma_e\cup \{g, A_{n+1},h,B_{n+1}\}$ yields the following (note that~$k = \ell$):
\[
\tikz[
overlay]{
    \node[text=gray,align=right] at (-.3,0) {$\Gamma_e$};
    \filldraw[fill=yellow!50,draw=gray] (0,-1) rectangle (2.2,1);
}
\begin{tikzcd}
\cdots \bullet \ar[r,dash,"\text{even}",swap] & \bullet & \ar[l,"g",swap] A_{n+1} \\
\cdots \bullet \ar[r,dash] & \bullet \ar[r,"h",swap] & \bullet
\end{tikzcd}
\quad\rightsquigarrow\qquad
\tikz[
overlay]{
    \node[text=gray,align=right] at (-.3,0) {$\Gamma'$};
    \node at (3.4,.9) {$\scriptstyle A'_{k+1}$};
    \node at (4.6,.9) {$\scriptstyle A'_{k+2}$};
    \node at (3.4,-.9) {$\scriptstyle B'_{k+1}$};
    \node at (4.6,-.9) {$\scriptstyle B'_{k+2}$};
    \filldraw[fill=yellow!50,draw=gray] (0,-1) rectangle (2.2,1);
}
\begin{tikzcd}
\cdots \bullet \ar[r,dash] & \bullet & \ar[l,"g'",swap] \circ & \ar[l] \circ \makebox[0pt][l]{\ $\cdots$} \\
\cdots \bullet \ar[r,dash] & \bullet & \ar[l,"h'"] \circ \ar[r] & \circ \ar[r] & \bullet
\end{tikzcd}
\]
\smallskip

Now, $\Gamma_f = \Gamma' \cup \{g', A'_{k+1}, h', B'_{k+1}\}$ satisfies (i).
\item \emph{$\Gamma$ satisfies (ii).} In this case, we have $\ell = k + 1$ and
\[
\Gamma'= \quad\cdots\!
\begin{tikzcd}
 \ar[r,dash] & A'_{k-2} \ar[r,dash] & \circ \ar[r] & \circ \\
 \ar[r,dash] & B'_{k-2} \ar[r,dash] & \circ \ar[r] & \circ \ar[r,"h'"] & \bullet
\end{tikzcd}
\]
If $A_n$ does not have an external outgoing edge, then neither does~$A'_k$, and thus $\Gamma_f = \Gamma' \setminus \{B'_{k+1}, h'\}$ satisfies (i).
If $A_n$ has an external outgoing edge $g$ towards a vertex~$A_{n+1}$, then $A'_k$ also has an external outgoing edge $g'$ towards a vertex~$A'_{k+1}$, which must be of type~$\circ$. Then $\Gamma_f = \Gamma' \cup \{g', A'_{k+1}\}$ satisfies (ii).
\item \emph{$\Gamma_e$ satisfies (iii).} In this case, we have $\ell = k + 1$ and
\[
\Gamma'= \quad\cdots\!
\begin{tikzcd}
 \ar[r,dash] & A'_{k-3} \ar[r,dash] & \bullet & \ar[l] \circ & \ar[l] \circ \\
 \ar[r,dash] & B'_{k-3} \ar[r,dash] & \bullet & \ar[l] \circ & \ar[l] \circ \ar[r,"h'"] & \bullet
\end{tikzcd}
\]
We may proceed exactly as in case (g) above.
\end{enumerate}
\item \emph{$e$ is an even edge between $\bullet$-vertices, none of which are an end.}
Applying the rules in \cref{table:x+1} to every edge in the subgraph
\[
\Gamma_e = 
\begin{tikzcd}
\bullet \ar[r,dash]\ar[d,"e",swap] & A_2 \ar[r,dash] & \ldots \ar[r,dash] & A_n \\
\bullet \ar[r,dash] & B_2 \ar[r,dash] & \ldots \ar[r,dash] & B_n,
\end{tikzcd}
\]
transforms it into a subgraph $\Gamma'$ of~$zz(x+1)$ that looks as follows:
\[
\Gamma' = 
\begin{tikzcd}
\bullet\ar[d,"f",swap]\ar[r,dash] &  A'_2 \ar[r,dash] & \ldots \ar[r,dash] & A'_k \\
\bullet\ar[r,dash]       & B'_2 \ar[r,dash] & \ldots \ar[r,dash] & B'_{\ell}.
\end{tikzcd}
\]
Now, one may proceed exactly as in case~(1).

\item \emph{$e$ is an even edge between $\bullet$-vertices, directed towards an end.}
In~$\Gamma_e$, $B_1$ is an end. Thus we must have $n = 1$ and $\Gamma$ satisfying (i).
So in~$zz(x)$, there is an edge between $A_1$ and a vertex~$A_2$, directed towards~$A_1$.
The rules of \cref{table:x+1} transform $A_2\to A_1\to B_1$ into a subgraph $\Gamma'$ of $zz(x+1)$ that looks as follows:
\[
\Gamma' =  \quad
\begin{tikzcd}
\bullet \ar[d,"f",swap] & \ar[l] \circ & \ar[l] \circ \cdots \\
\bullet & \ar[l] \circ \makebox[0pt][l]{\footnotesize{} end.}
\end{tikzcd}
\]
Depending on the type of~$A_2$, the first row may have a fourth column, or not.
But either way, the first two columns of $\Gamma'$ form a subgraph $\Gamma_f$ of $zz(x+1)$ satisfying (i) with~$n = 2$.
\item \emph{$e$ is an even edge between $\bullet$-vertices, directed away from an end.}
In~$\Gamma$, $A_1$ is an end. Thus we must have $n = 1$ and $\Gamma$ satisfying (i).
So in~$zz(x)$, there is an edge between $B_1$ and a vertex~$B_2$, directed away from~$B_1$.
Since $B_1$ is a $\bullet$-vertex, so is~$B_2$.
Let us inspect the transformation given by \cref{table:x+1}:
\[
\begin{tikzcd}[ampersand replacement=\&]
\bullet \ar[d,"e",swap] \makebox[0pt][l]{\footnotesize{} end} \\
\bullet \ar[r] \& \bullet
\end{tikzcd}
\quad\leadsto\quad
\begin{tikzcd}[ampersand replacement=\&]
\bullet \ar[d,"f",swap] \& \ar[l] \circ \makebox[0pt][l]{\footnotesize{} end} \\
\bullet  \& \ar[l] \circ \ar[r] \& \circ\ar[r] \& \bullet
\end{tikzcd}
\]
The first two columns of the right-hand side form a subgraph $\Gamma_f$ of~$zz(x+1)$, satisfying (i) with n = 2.
\item \emph{$e$ is a saddle edge.} Note that we do not have a graph $\Gamma_e$ in this case.
Nevertheless, let us denote by $A_1$ the $\circ$-vertex adjacent to~$e$.
Let us inspect the transformation given by \cref{table:x+1} in each of the three cases that (a) $A_1$ is an end, (b) $A_1$ has an incoming edge and (c) $A_1$ has an outgoing edge:
\begin{align*}
(a)\quad
\begin{tikzcd}[ampersand replacement=\&]
\circ \ar[d,"e",swap] \makebox[0pt][l]{\footnotesize{} end} \ar[r,phantom] \& \phantom{A_1} \\
\bullet
\end{tikzcd}
&\quad\leadsto\quad
\begin{tikzcd}[ampersand replacement=\&]
\circ \ar[d,"f",swap] \makebox[0pt][l]{\footnotesize{} end} \\
\circ \ar[r] \& \bullet
\end{tikzcd}
\\[1ex]
(b)\quad
\begin{tikzcd}[ampersand replacement=\&]
\circ \ar[d,"e",swap] \& \ar[l] \circ \\
\bullet
\end{tikzcd}
&\quad\leadsto\quad
\begin{tikzcd}[ampersand replacement=\&]
\circ \ar[d,"f",swap] \& \ar[l] \circ \\
\circ \ar[r] \& \bullet
\end{tikzcd}
\\[1ex]
(c)\quad
\begin{tikzcd}[ampersand replacement=\&]
\circ \ar[d,"e",swap] \ar[r] \& \circ \\
\bullet
\end{tikzcd}
&\quad\leadsto\quad
\begin{tikzcd}[ampersand replacement=\&]
\circ \ar[d,"f",swap] \ar[r] \& \circ \\
\circ \ar[r] \& \bullet
\end{tikzcd}
\end{align*}
In cases (a) and (b), the edge $f$ and its end points form a subgraph $\Gamma_f$ of $zz(x+1)$ satisfying (i), with n = 1.
In case (c), the whole right-hand side is a subgraph $\Gamma_f$ of~$zz(x+1)$, satisfying (ii) with n = 2.
\item \emph{$e$ is an edge between two $\circ$-vertices.}
Treat this case similarly as~(1) and~(2).\myqed
\end{enumerate}%
\end{proof}
\begin{lem}\label{lem:homotopy2}
Let $x\in\mathbb{Q}^+$, and let $(C,d) = (\bigoplus_{i=0}^n A_i, \sum_{i=1}^n d_i)$ be a zigzag complex corresponding to~$zz(x)$.
Then, for every $i\in\{1,\ldots,n\}$, there exists a homotopy $h\colon C\to C$ such that
$h\circ d + d \circ h = f\cdot (\id_{A_i} + \id_{A_{i-1}})$ with $f = S^2$ if $d_i$ is odd (i.e.~$d_i=S$ or~$d_i=S^2$),
and $f = D$ if $d_i$ is even, i.e.~$d_i=D$.
\end{lem}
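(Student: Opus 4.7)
The plan is to prove the lemma by cases on the type of $d_i$, using the ladder structure guaranteed by Lemma~\ref{lem:bigcasedistinction} to organize the construction when $d_i$ is non-saddle.

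\textbf{Case 1: $d_i = S$ is a saddle.} I would define $h$ to be the single morphism equal to $S$ in the direction opposite to $d_i$ (i.e.\ between the two summands $A_{i-1}$ and $A_i$, going against $d_i$), extended by zero to all other summands of $C$. By condition (iii) in the definition of a zigzag complex, any adjacent differentials $d_{i\pm 1}$ (if they exist) are of type $D$. The relations $SD = DS = 0$ then imply immediately that every off-diagonal contribution to $hd+dh$ vanishes, while the two diagonal contributions $h\circ d_i = d_i \circ h = S\circ S = S^2$ give the required identity. This case requires only a direct, line-by-line verification.

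\textbf{Case 2: $d_i$ is non-saddle ($S^2$ or $D$).} Here I would invoke Lemma~\ref{lem:bigcasedistinction} to obtain the subgraph $\Gamma_e$ with its two rows $A_1,\dots,A_n$ and $B_1,\dots,B_n$, and build $h$ as a sum
\[
h = \sum_{j=1}^n h_j \;(+\text{ one extra boundary term in cases (ii),(iii)}),
\]
where each $h_j$ is a morphism supported on the $j$-th column of $\Gamma_e$ (i.e.\ between $A_j$ and $B_j$). The first column carries $h_1 = I$, chosen so that $h_1 d_i = d_i h_1 = f$ at $A_{i-1}\oplus A_i$. Subsequent $h_j$ with $j\geq 2$ are then determined inductively: after placing $h_1,\dots,h_{j-1}$ one inspects the residual ``leakage'' of $hd+dh$ at the $j$-th column (which arises only through the horizontal ladder edges $A_{j-1}A_j$ and $B_{j-1}B_j$), and one picks $h_j = \pm I$ (or $\pm S$ in the saddle-terminated subcase) to annihilate this leakage exactly, using $SD=DS=0$ when a horizontal edge is a saddle and $D^2 = -GD$ or $S^2 = G+D$ when it is of type $D$. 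The alternating signs produce a telescoping cancellation along the ladder.

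\textbf{Closing the telescope.} The key place where the three subcases of Lemma~\ref{lem:bigcasedistinction} enter is at the far end $A_n,B_n$: the leakage produced at column $n$ must actually vanish, not merely be passed to column $n+1$. In case~(i), the hypothesis that $A_n$ has no outgoing and $B_n$ no incoming external edge means the only possible new contributions at the end of the ladder are compositions of the form $h_n\circ(\text{incoming edge to }A_n)$ or $(\text{outgoing edge from }B_n)\circ h_n$, and these are forced to be zero by $SD=DS=0$ (since the external edge adjacent to an end of the ladder is of type $D$ while $h_n$ is a saddle, or vice versa). In cases~(ii) and (iii) the specific saddle termination means the telescope is closed by adding one further $h$-term supported on the terminal saddle edge, of exactly the form dictated by the proof of Lemma~\ref{lem:bigcasedistinction}.

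\textbf{Main obstacle.} The delicate point is not the verification on the central column $A_{i-1}\oplus A_i$ (where the identity $f\cdot(\id+\id)$ comes out by direct computation from $h_1$), but the combinatorial bookkeeping needed to (a) choose the signs and morphism types of $h_2,\dots,h_n$ so that every leakage is cancelled by the next column, and (b) handle each of the three termination patterns in Lemma~\ref{lem:bigcasedistinction} separately; in particular one has to check that the boundary hypotheses on $A_n, B_n$ are exactly what is needed to force the telescope to close, which is ultimately the reason Lemma~\ref{lem:bigcasedistinction} was formulated with those three cases.
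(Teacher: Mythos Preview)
Your overall strategy is exactly the paper's: treat the saddle case by a single back-saddle $h=S$, and in the non-saddle case build $h=\sum_j h_j$ along the ladder $\Gamma_e$ from Lemma~\ref{lem:bigcasedistinction}, with $h_j=\pm I$ on each column (and $h_{k-1}=\pm S$ in cases (ii)/(iii)). However, two of your explanations of the cancellation mechanism are off and would not go through as written.

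First, the interior telescoping. You write that the leakage at column $j$ is killed ``using $SD=DS=0$ when a horizontal edge is a saddle and $D^2=-GD$ or $S^2=G+D$ when it is of type $D$.'' But for $j<n$ the vertices $A_j,B_j$ have the \emph{same} type and the horizontal edges $A_{j-1}A_j$ and $B_{j-1}B_j$ are literally the same morphism pointing the same way; no horizontal edge in the interior of the ladder is a saddle. The actual reason the leakage cancels is simply that the two rows are identical, so with $h_j=(-1)^j I$ each square of the ladder anticommutes: $d^{\mathrm{top}}\circ h_{j-1}+h_j\circ d^{\mathrm{bot}}=0$. The relations $D^2=-GD$ and $S^2=G+D$ are never invoked here.

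Second, the closure in case~(i). You say the surviving boundary terms are $h_n\circ(\text{incoming to }A_n)$ and $(\text{outgoing from }B_n)\circ h_n$, and that these vanish by $SD=DS=0$ because ``$h_n$ is a saddle.'' But in case~(i) one has $h_n=\pm I$, not $S$, so that argument fails. What actually happens is simpler: after fixing (w.l.o.g.) the direction of $d_i$ so that $h_n\colon B_n\to A_n$, the only compositions with an external edge that could be nonzero are $(\text{outgoing from }A_n)\circ h_n$ and $h_n\circ(\text{incoming to }B_n)$, and those edges are precisely the ones excluded by hypothesis~(i). So $\delta=0$ by target/domain mismatch, not by $SD=DS=0$. (In cases (ii)/(iii) the last $h$ really is $\pm S$, and there $SD=DS=0$ is what kills the external $D$-edges.)

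With these two corrections the argument is complete and coincides with the paper's proof.
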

\begin{proof}
By reindexing $C$ if necessary, we assume without loss of generality that $d_i$ is a map $A_{i-1}\to A_i$.

If $d_i$ is~$S$, then let $h$ be given by $S\colon A_i\to A_{i-1}$. Then, for
$h\circ d_j$ to be non-zero, the target of $d_j$ and the domain of $h$ must match;
this happens only if~$j = i$, or $j = i + 1 \leq n$ and $d_{i+1}$ is a map $A_{i+1}\to A_i$.
In the latter case, we nevertheless have $h\circ d_{i+1} = 0$, since $h$ is $S$ and $d_{i+1}$ is~$D$.
Similarly, one sees that $d_j\circ h = 0$ unless~$j = i$. Overall, we find
\[
h\circ d + d \circ h
= \sum_{j=1}^n h\circ d_j + d_j \circ h
= h\circ d_i + d_i \circ h = S^2\cdot(\id_{A_i} + \id_{A_{i-1}})
\]
as desired.

If $d_i$ is not~$S$, denote by $e$ the edge in $zz(x)$ corresponding to~$d_i$.
Since $e$ is not a saddle edge, by the previous \cref{lem:bigcasedistinction}
there is a subgraph $\Gamma$ satisfying (i), (ii) or~(iii). The part of $C$ corresponding to $\Gamma$ is the following (drawn in black):
\begin{equation}\label{eq:homotopy}
\begin{tikzcd}
A_{i-1} \ar[r,dash,"d_{i-1}"]\ar[d,"d_{i}",swap]
& A_{i-2} \ar[r,dash] & \ldots \ar[r,dash,"d_{i-k+1}"] & A_{i-k} \\
A_i \ar[r,dash,"d_{i+1}",swap] \ar[u,homotopy,dashed,bend right=30,swap,"h_0"]
& A_{i+1} \ar[r,dash] \ar[u,homotopy,dashed,swap,"h_1"]
& \ldots \ar[r,dash,"d_{i+k-1}",swap] & A_{i+k-1}.
 \ar[u,homotopy,dashed,swap,"h_{k-1}"]
\end{tikzcd}
\end{equation}
Let the homotopy~$h$, drawn in \cref{eq:homotopy} in red and dashed, be defined as the sum
\[
h = \sum_{j=0}^{k-1} h_j\colon A_{i+j}\to A_{i-j-1}
\]
with $h_j$ equal to $(-1)^{i-j}$ times the identity cobordism if the domain and target of $h_j$ are both
\img{tangle0} or both \img{tangleinfty}; and $h_j$ equal to $(-1)^{i-j}$ times $S$ if one of the domain and target of $h_j$ is \img{tangle0}, and the other \img{tangleinfty}. Note that the latter case only happens if $\Gamma$ satisfies (ii) or (iii) and $j = k-1$. Now, $h\circ d + d \circ h$ is equal to the sum of the following terms $\alpha, \beta_j, \gamma_j, \delta$
(all other compositions of $h_j$ and $d_k$ vanish because target and domain do not match):
\begin{multline*}
\underbrace{d_i\circ h_0 + h_0\circ d_i}_{\alpha}
\quad+\quad 
\sum_{j=1}^{k-1} \underbrace{d_{i-j} \circ h_{j-1} + h_j \circ d_{i+j}}_{\beta_j}
\quad+\quad \\
\sum_{j=1}^{k-1} \underbrace{d_{i-j} \circ h_j + h_{j-1} \circ d_{i+j}}_{\gamma_j}
\quad+\quad
\underbrace{d_{i-k} \circ h_{k-1}
+
h_{k-1} \circ d_{i+k}}_{\delta}.
\end{multline*}
Now, observe that $\alpha$ equals
$f\cdot (\id_{A_i} + \id_{A_{i-1}})$ with $f = S^2$ if $d_i$ is $S^2$ and $f = D$ if $d_i$ is~$D$.
So it just remains to show that the terms $\beta_j, \gamma_j$ and $\delta$ are 0.
For each~$j$, one of $\beta_j$ and $\gamma_j$ is 0 because targets and domains do not match;
and the other term is 0 because the squares in \cref{eq:homotopy} anticommute (remember that
$d_{i-j}$ and $d_{i+j}$ both point to the left, or both to the right). This is also true for the last square in case that $\Gamma$ satisfies (ii) or (iii), in which case that square respectively looks like
\[
\begin{tikzcd}
\img{tangleinfty} \ar[r,"S^2"] & \img{tangleinfty} \\
\img{tangleinfty} \ar[r,"S",swap] \ar[u,homotopy,dashed,"\pm 1"] & \img{tangle0}\ar[u,homotopy,dashed,"\mp S",swap]
\end{tikzcd}\qquad\text{or}\qquad
\begin{tikzcd}
\img{tangle0} \ar[r,leftarrow,"S"] & \img{tangleinfty} \\
\img{tangle0} \ar[r,leftarrow,"S^2",swap] \ar[u,homotopy,dashed,"\pm 1"] & \img{tangle0}.\ar[u,homotopy,dashed,"\mp S",swap]
\end{tikzcd}
\]
Finally, in case $\Gamma$ satisfies (i), $\delta$ is 0 because targets and domains mismatch.
If $\Gamma$ satisfies (ii) or (iii), $\delta$ is 0 either for the same reason, or because $h_{k-1}$ is $S$
and $d_{i-j}$ and $d_{i+j}$ are~$D$.
\end{proof}
The following lemma is well-known (see e.g.~\cite{zbMATH03960559,KWZ2}), and can easily be checked inductively.
\begin{lem}\label{lem:parity}
For $i\in \{1,2\}$, let $p_i$ and $q_i$ be coprime integers.
Then $R(p_1/q_1)$ and $R(p_2/q_2)$ have the same connectivity if and only if
$p_1 \equiv p_2\pmod{2}$ and $q_1 \equiv q_2\pmod{2}$.\qed
\end{lem}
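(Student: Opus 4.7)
The strategy is a straightforward double induction: on the rational numbers side, every coprime pair $(p,q)$ (including $\infty = 1/0$) can be reached from $1/1$ by a finite sequence of the moves $x \mapsto 1/x$ and $x \mapsto x+1$; on the tangle side, the same moves realize the geometric operations described in \cref{eq:tinv} and \cref{eq:tsum}. So we need only verify the statement on a base case and check that both sides transform compatibly under these two moves.

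First, I would note that there are exactly three connectivities for a $4$-ended tangle in a disk with endpoints labeled $\text{NW}, \text{NE}, \text{SW}, \text{SE}$: horizontal $(\text{NW--NE}, \text{SW--SE})$, vertical $(\text{NW--SW}, \text{NE--SE})$, and diagonal $(\text{NW--SE}, \text{NE--SW})$. The three parity classes that occur for coprime $(p,q)$ are exactly $(\text{even},\text{odd})$, $(\text{odd},\text{even})$ and $(\text{odd},\text{odd})$, since $(\text{even},\text{even})$ is impossible. Direct inspection of the three base tangles gives the bijection
\[
R(0) = R(0/1) \longleftrightarrow \text{horizontal},\quad R(\infty) = R(1/0) \longleftrightarrow \text{vertical},\quad R(1) = R(1/1) \longleftrightarrow \text{diagonal}.
\]
It suffices to prove the lemma for tangles reachable from $R(1)$, and the cases $p/q \in \{0, \infty\}$ can be handled similarly or treated as additional base cases.

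Second, I would verify the induction step for $x \mapsto 1/x$. On the arithmetic side this sends $(p,q)$ to $(q,p)$, swapping $(\text{even},\text{odd}) \leftrightarrow (\text{odd},\text{even})$ and fixing $(\text{odd},\text{odd})$. On the geometric side, \cref{eq:tinv} says $R(1/x)$ is obtained from $R(x)$ by mirroring along the plane $\langle e_1 - e_2, e_3\rangle$; this swaps $\text{NE}\leftrightarrow\text{SW}$ while fixing $\text{NW}$ and $\text{SE}$, so it swaps horizontal $\leftrightarrow$ vertical and fixes diagonal. The two permutations agree.

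Third, and this is the main case, I would analyze $x \mapsto x+1$. Arithmetically $(p,q) \mapsto (p+q, q)$, which swaps $(\text{even},\text{odd}) \leftrightarrow (\text{odd},\text{odd})$ and fixes $(\text{odd},\text{even})$. Geometrically, $R(x+1)$ is obtained from $R(x)$ by attaching one crossing on the right, i.e.\ gluing $\text{NE}$ and $\text{SE}$ to a crossing whose other two endpoints become the new $\text{NE}$ and $\text{SE}$. Tracing the three connectivities through the crossing (the crossing itself connects top-left to bottom-right and top-right to bottom-left) shows: horizontal becomes diagonal, diagonal becomes horizontal, and vertical stays vertical (because both strands ending on the right side of the old tangle pass through the crossing to the new right-hand endpoints, permuting only within $\{\text{NE}, \text{SE}\}$). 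Again the two permutations agree, completing the induction.

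The only real subtlety is being careful in the case analysis for $x \mapsto x+1$: one must remember to trace all four strand endpoints through the added crossing, distinguishing which of the two endpoints on the right side are connected by the old tangle and which come out to the new boundary. Once this is laid out, the rest is a mechanical check, matching each of the three permutations of a $3$-element set on both sides.
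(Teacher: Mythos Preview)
Your proof is correct and follows exactly the approach the paper indicates: the paper does not write out a proof at all, but simply states that the lemma ``is well-known \ldots\ and can easily be checked inductively,'' citing \cite{zbMATH03960559,KWZ2}. Your induction via the generating moves $x\mapsto 1/x$ and $x\mapsto x+1$ is precisely that inductive check, carried out in full.
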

Let us call an end of a zigzag graph \emph{even} or \emph{odd} depending on whether the unique edge adjacent to it is even or odd.
\begin{lem}\label{lem:goodzigzag}
Let $x = p/q$ with $p,q$ positive and coprime. Then the following hold.
\begin{enumerate}[label=(\roman*)]
\item $zz(x)$ has an even end iff $p$ or $q$ is even.
\item $zz(x)$ has an odd $\circ$-end iff $p$ is odd.
\item $zz(x)$ has an odd $\bullet$-end iff $q$ is odd.
\end{enumerate}
\end{lem}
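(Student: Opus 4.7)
The plan is to proceed by induction on the minimal number of operations $y \mapsto y+1$ and $y \mapsto 1/y$ needed to reach $x$ from $1$. For the base case $x = 1$, I would directly verify that $zz(1) = \circ \to \bullet$ exhibits one odd $\circ$-end and one odd $\bullet$-end, matching (ii) and (iii) for $p = q = 1$, with (i) vacuous.

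For the step $y \mapsto 1/y$: the graph $zz(1/y)$ arises from $zz(y)$ by swapping $\circ \leftrightarrow \bullet$ and reversing edge directions. This preserves saddle edges (since they are characterized by vertex types) and hence preserves every edge's parity; so odd $\bullet$-ends become odd $\circ$-ends and vice versa, while even ends stay even with vertex type swapped. Since $(p,q) \mapsto (q,p)$ swaps the parities of $p$ and $q$, the three conditions transform as required: (ii) and (iii) swap and (i) is preserved.

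For the step $y \mapsto y + 1$, which sends $(p,q) \mapsto (p+q, q)$: I would classify each end of $zz(y)$ by vertex type and by parity of its unique adjacent edge, and apply the substitution rules in \cref{table:x+1} to establish:
\begin{enumerate}[label=(\alph*)]
\item every odd $\bullet$-end becomes an odd (saddle-adjacent) $\bullet$-end (from the rules for odd $\bullet\bullet$ edges and for saddle edges);
\item every even $\bullet$-end becomes an odd $\circ$-end (from the rules for even $\bullet\bullet$ edges at an end, which append a new $\circ$ joined by a saddle);
\item every odd $\circ$-end becomes an even $\circ$-end;
\item every even $\circ$-end becomes an odd $\circ$-end.
\end{enumerate}
Combining these with the three possible parity types of $(p, q)$ --- namely $(\mathrm{odd},\mathrm{odd})$, $(\mathrm{odd},\mathrm{even})$, $(\mathrm{even},\mathrm{odd})$ --- then yields the correct conditions (i)--(iii) for $(p+q, q)$ in each case (note that $q$'s parity is unchanged, which is consistent with (iii) being preserved by the transformations above).

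The main obstacle is parts (c) and (d), where the rule for $\circ\circ$ edges naively preserves the end-edge, yet its odd/even label must flip. The resolution is to track the distance (in edges) from the $\circ$-end to the nearest saddle edge along the unique $\circ$-chain starting at the end: applying the substitution for saddles inserts an extra $\circ$-vertex, so this distance increases by exactly one, and then the end-edge's parity -- determined by this distance modulo $2$ via the global alternation of parities and the fact that saddles are odd -- flips accordingly.
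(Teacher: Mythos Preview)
Your proposal is correct and follows the same inductive scheme as the paper: base case $x=1$, then treat $y\mapsto 1/y$ and $y\mapsto y+1$ separately. The $1/y$ step is handled identically in both.

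The only real difference is in the $y\mapsto y+1$ step. The paper enumerates the five possible global configurations (parities of $p,q$ paired with the two end types of $zz(x)$) in a small table and checks each against \cref{table:x+1}. You instead track how each \emph{individual} end type transforms, giving the four local rules (a)--(d), and then combine with the parity of $(p,q)$. Your distance-to-nearest-saddle argument for (c) and (d) is a genuinely nicer explanation of \emph{why} the parity of a $\circ$-end flips: the paper simply asserts the fourth column of its table and leaves the verification to the reader, whereas you isolate the mechanism (the saddle rule inserts one extra $\circ$, shifting the alternating parity pattern by one). This buys a bit of conceptual clarity at no cost; the two arguments are otherwise equivalent.
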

\begin{proof}
Let us show this by induction over
the number of transformations
$y\mapsto 1/y$ and $y \mapsto y + 1$ necessary to reach $x$ from~$1$.
For~$x = 1$, $zz(x)$ has both a $\circ$-end a $\bullet$-end, and $p=q=1$ are both odd, so the statement holds.
Let us now assume the statement holds for~$x$.
The zigzag graph $zz(1/x)$ is obtained from $zz(x)$ by switching $\circ$ and $\bullet$ and reversing all edges.
This switches $\circ$- and $\bullet$-ends, and does not change the parity of edges.
This corresponds to the fact that $x\mapsto 1/x$ switches the parity of $p$ and~$q$. Thus, the statement holds for~$1/x$.

To check the statement for~$x + 1$, one needs to analyze the effect of \cref{eq:zz+1} on parity and the type of ends ($\bullet$ or~$\circ$).
The possible configurations for $zz(x)$ are listed in the first two columns of the following table:\medskip\\
\begin{tabular}{l|l||l|l}
parities of $p,q$ & ends of $zz(x)$ & parities of $p+q,q$ & ends of $zz(x+1)$ \\\hline
odd,  odd         & odd $\circ$,    odd $\bullet$ & even, odd  & even $\circ$, odd $\bullet$ \\
even, odd         & even $\circ$,   odd $\bullet$ & odd, odd   & odd $\circ$,  odd $\bullet$ \\
even, odd         & even $\bullet$, odd $\bullet$ & odd, odd   & odd $\circ$,  odd $\bullet$ \\
odd, even         & even $\bullet$, odd $\circ$   & odd, even  & even $\circ$, odd $\circ$ \\
odd, even         & even $\circ$,   odd $\circ$   & odd, even  & even $\circ$, odd $\circ$ \\
\end{tabular}\medskip\\
The third and fourth column show the respective resulting configurations for~$zz(x+1)$.
The third column is straight-forward. To verify the fourth column, one needs to refer to \cref{table:x+1}.
From the above table one sees that the induction statement holds for~$x+1$. That concludes the proof.
\end{proof}

The next lemma is the heart of the proof. It is the analog of Lemmas~3.1 and 3.2\ in \cite{zbMATH07005602}.
\begin{lem}\label{lem:fg}
Let $p/q\in\mathbb{Q}^+$ with both $p$ and $q$ odd, and let $C$ be a zigzag complex corresponding to~$zz(p/q)$.
Let $C'$ be the complex
\[
C'_0=\img{tangle0}\xrightarrow{\quad S\quad }\img{tangleinfty}\{1\}=C'_1,
\]
which is (up to global shifts) the Bar-Natan complex of $R(-1)$ equipped with some orientation.
Then there are ungraded chain maps $f\colon C\to C'$ and $g\colon C' \to C$,
such that $g\circ f \simeq G\cdot\id_C$ and $f\circ g\simeq G\cdot\id_{C'}$.
\end{lem}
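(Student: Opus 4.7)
The plan is to construct explicit ungraded chain maps $f\colon C\to C'$ and $g\colon C'\to C$ whose compositions are supported only at the two ends of $C$ (resp.\ $C'$), and then to extract the desired null-homotopy by combining the edge-wise homotopies from \cref{lem:homotopy2} via a parity count. By \cref{lem:goodzigzag}, the hypothesis that $p$ and $q$ are both odd implies that both ends of $zz(p/q)$ are adjacent to odd edges and have opposite types; after reindexing, say $A_0$ is the $\circ$-end and $A_n$ is the $\bullet$-end. The target identities are
\[
g\circ f \;=\; -D\cdot(\id_{A_0}+\id_{A_n}), \qquad f\circ g \;=\; -D\cdot\id_{C'}.
\]
At each end the component of $f$ is a single morphism, either $\pm I$ or $\pm D$, chosen according to the type (saddle vs.\ $S^2$) and direction of the adjacent odd edge; the relations $SD=DS=0$, $S^2D=DS^2=0$, $D^2=-GD$ are exactly what make the chain map condition hold and force the above compositions. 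For the base case $zz(1)$ one takes $f(A_0)=-I$, $f(A_1)=-D$, $g(C'_0)=I$, $g(C'_1)=D$, and a direct check gives $gf=fg=-D\cdot\id$.

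Given such $f,g$, I invoke \cref{lem:homotopy2}: for every edge $d_i$ of $C$ there is a homotopy $h^{(i)}\colon C\to C$ with $h^{(i)}d+dh^{(i)}=\psi_i\cdot(\id_{A_{i-1}}+\id_{A_i})$, where $\psi_i=S^2$ for odd edges and $\psi_i=D$ for even ones. Set
\[
H \;:=\; \sum_{i\ \mathrm{odd}} h^{(i)} \;-\; \sum_{i\ \mathrm{even}} h^{(i)}.
\]
Because both ends of $zz(p/q)$ are adjacent to odd edges and the parities of edges alternate, every vertex $A_k$ is adjacent to exactly one odd edge, whereas each internal vertex is adjacent to exactly one even edge and neither end is. Hence $\sum_{i\ \mathrm{odd}}(\id_{A_{i-1}}+\id_{A_i})=\id_C$ and $\sum_{i\ \mathrm{even}}(\id_{A_{i-1}}+\id_{A_i})=\id_C-\id_{A_0}-\id_{A_n}$, so
\[
Hd+dH \;=\; S^2\id_C - D(\id_C-\id_{A_0}-\id_{A_n}) \;=\; G\id_C + D(\id_{A_0}+\id_{A_n}),
\]
using $S^2-D=G$. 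Combined with $g\circ f=-D(\id_{A_0}+\id_{A_n})$, this yields $g\circ f - G\id_C = -(Hd+dH)$, so $-H$ realises the required homotopy $g\circ f\simeq G\id_C$. For $f\circ g\simeq G\id_{C'}$ the same count applied to the one-edge complex $C'$ (whose single odd saddle edge is adjacent to both ends, and which has no even edges) makes $S^2\id_{C'}$ null-homotopic, and since $f\circ g-G\id_{C'}=-D\id_{C'}-G\id_{C'}=-S^2\id_{C'}$, the claim follows.

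The main obstacle is the explicit construction of $f$ and $g$: verifying the chain map condition and the exact composition formulas at the two ends requires a case analysis over the possible local structures of $zz(p/q)$ at each end (saddle vs.\ $S^2$-type odd edge, and either orientation of the adjacent edge), and in some configurations—e.g.\ when the odd edge adjacent to an end is $S^2$-type and points outward from the end—the chain map condition forces additional correcting components of $f$ or $g$ on a neighbouring internal vertex. Once this bookkeeping is settled and the compositions are confirmed to be $-D(\id_{A_0}+\id_{A_n})$ and $-D\id_{C'}$, the parity-counting argument above produces the homotopy uniformly.
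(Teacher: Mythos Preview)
Your approach is essentially identical to the paper's: the paper also defines $f$ and $g$ by single components at the two ends (namely $\alpha=I\colon A_0\to C'_1$, $\gamma=-D\colon A_n\to C'_0$, $\beta=-D\colon C'_1\to A_0$, $\delta=I\colon C'_0\to A_n$), obtains $g\circ f=-D(\id_{A_0}+\id_{A_n})$ and $f\circ g=-D\cdot\id_{C'}$, and then uses the homotopy $h=\sum_i(-1)^{i+1}h_i$ from \cref{lem:homotopy2}, which is exactly your parity-signed sum (since $d_1$ is odd and parities alternate).

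Where you diverge is in the final paragraph: your worry about ``additional correcting components on a neighbouring internal vertex'' is unfounded, and the case analysis you anticipate is unnecessary. The point you are missing is a structural property of the graphs $zz(p/q)$ themselves: in every $zz(x)$, a $\circ$-end has an \emph{outgoing} adjacent edge and a $\bullet$-end has an \emph{incoming} adjacent edge. This follows by a short induction from the recursive rules \cref{eq:zz1,eq:zzinv,eq:zz+1}: it holds for $zz(1)$; the operation $x\mapsto 1/x$ swaps $\circ\leftrightarrow\bullet$ and reverses all arrows, preserving the property; and inspection of \cref{table:x+1} shows that every end of $zz(x+1)$ is either a $\circ$ with outgoing edge or a $\bullet$ with incoming edge. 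Consequently, for $p,q$ odd the edge $d_1$ leaves $A_0$ and the edge $d_n$ enters $A_n$, so $f\circ d_C$ and $d_C\circ g$ only produce terms of the form $D\cdot S$, $D\cdot S^2$, $S\cdot D$, or $S^2\cdot D$, all of which vanish. No extra components are needed; the paper's uniform choice $\alpha=I$, $\beta=\gamma=-D$, $\delta=I$ works in every case. Once you observe this, your proof is complete and matches the paper's exactly.
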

\begin{figure}[b]%
\newcommand{\abgd}[1]{\makebox[0pt]{\raisebox{-\baselineskip}{%
\textcolor{annotation}{$#1$}}}}
\begin{tikzcd}[column sep=1.4em,row sep=huge,execute at end picture={;}]
\makebox[0pt][r]{\textcolor{annotation}{$\scriptstyle A_0$\ }}\img{tangleinfty}\ar[r,swap,"S"]\ar[r,homotopy,leftarrow,dashed,bend left=30,"S"]
\ar[rrrrr,homotopy,leftarrow,dashed,rounded corners,to path={-- ([yshift=7ex]\tikztostart.north) -- node[text=red,above]{$\scriptstyle 1$}([yshift=7ex]\tikztotarget.north) -- (\tikztotarget.north) }] &
\img{tangle0}    \ar[r,swap,"D"]\ar[r,homotopy,leftarrow,dashed,bend left=30,"-1"]
\ar[rrr,homotopy,leftarrow,dashed,rounded corners,to path={-- ([yshift=4ex]\tikztostart.north) -- node[text=red,above]{$\scriptstyle -1$}([yshift=4ex]\tikztotarget.north) -- (\tikztotarget.north) }] & 
\img{tangle0}    \ar[r,swap,"S^2"]\ar[r,homotopy,leftarrow,dashed,bend left=30,"1"]
\ar[rrr,homotopy,dashed,rounded corners,to path={-- ([yshift=-2ex]\tikztostart.south) -- node[text=red,below]{$\scriptstyle S$}([yshift=-2ex]\tikztotarget.south) -- (\tikztotarget.south) },"S"] & 
\img{tangle0}    \ar[r,swap,leftarrow,"D"]\ar[r,homotopy,dashed,bend left=30,"-1"]
\ar[rrrrr,homotopy,leftarrow,dashed,rounded corners,to path={-- ([yshift=-7ex]\tikztostart.south) -- node[text=red,below]{$\scriptstyle -1$}([yshift=-7ex]\tikztotarget.south) -- (\tikztotarget.south) },"-1"] & 
\img{tangle0}    \ar[r,swap,leftarrow,"S"]\ar[r,homotopy,dashed,bend left=30,"S"]
\ar[rrr,homotopy,leftarrow,dashed,rounded corners,to path={-- ([yshift=-4ex]\tikztostart.south) -- node[text=red,below]{$\scriptstyle 1$}([yshift=-4ex]\tikztotarget.south) -- (\tikztotarget.south) },"1"] & 
\img{tangleinfty}\ar[r,swap,"D"]\ar[r,leftarrow,homotopy,dashed,bend left=30,"-1"] & 
\img{tangleinfty}\ar[r,swap,"S"]\ar[r,leftarrow,homotopy,dashed,bend left=30,"S"] & 
\img{tangle0}    \ar[r,swap,"D"]\ar[r,leftarrow,homotopy,dashed,bend left=30,"-1"] & 
\img{tangle0}    \ar[r,swap,"S^2"]\ar[r,leftarrow,homotopy,dashed,bend left=30,"1"] & 
\img{tangle0} \makebox[0pt][l]{\textcolor{annotation}{\ $\scriptstyle A_9$}}\\
\makebox[0pt][r]{\textcolor{annotation}{$\scriptstyle C_1'$\ }}\img{tangleinfty}
\ar[rrrrrrrrr,leftarrow,"S"]
\ar[rrrrrrrrr,homotopy,dashed,bend right=10,swap,"S"]
\ar[u,leftarrow,firstmap,bend left,"1\abgd{\alpha\ \ }"]
\ar[u,secondmap,bend right,swap,"-D\abgd{\!\!\!\!\!\!\beta}"]
&&&&&&&&& \img{tangle0}\makebox[0pt][l]{\textcolor{annotation}{\ $\scriptstyle C_0'$}}
\ar[u,leftarrow,firstmap,bend left,"-D\abgd{\gamma\ \ }"]
\ar[u,secondmap,bend right,swap,"1\abgd{\!\!\delta}"] 
\end{tikzcd}
\caption{Illustration of the proof of \cref{lem:fg}. In the top row, a zigzag complex $C$ with graph $zz(3/7)$
(compare \cref{fig:37}).
On the bottom row, the complex~$C'$. In light and dark blue, the ungraded chain maps $f\colon C\to C'$
(going down) and $g\colon C'\to C$ (going up).
Red and dashed, the required homotopies.
Homological and quantum degree shifts are omitted from the diagram.}
\label{fig:lemfg}
\end{figure}
\begin{proof}
Let us write
\[
(C,d) = \Bigl(\bigoplus_{i=0}^n A_i, \sum_{i=1}^n d_i\Bigr).
\]
By reindexing this zigzag if necessary, we may assume that the end of $\Gamma$ corresponding to $A_0$
is the $\circ$-end.
We are going to define $f$ as sum of two ungraded chain maps $\alpha\colon A_0 \to C'_1$ and $\gamma\colon A_n \to C'_0$,
and $g$ as sum of two ungraded chain maps $\beta\colon C'_1\to A_0$ and $\delta\colon C'_0\to A_n$.
See \cref{fig:lemfg} for an example.
Namely, let $\beta$ and $\gamma$ be~$-D$, and $\alpha$ and $\delta$ be~$1$.
One may check inductively using \cref{table:x+1} (see \cite[Remark~12.14]{iltgenphd} for details) that the $\circ$-end of $\Gamma$ has an outgoing differential, and the $\bullet$-end has an incoming differential; in other words, $d_1$ is a map~$A_0 \to A_1$, and $d_n$ is a map $A_{n-1} \to A_n$. Combined with the fact that $d_1$ and $d_n$ are both $S$ or~$S^2$, this implies that $f$ and $g$ are ungraded chain maps.
Now, one calculates that
\begin{align*}
f\circ g & = \alpha\circ \beta + \gamma\circ\delta \\
         & = (-D\colon C'_0\to C'_0) + (-D\colon C'_1\to C'_1).
\end{align*}
Let $h'\colon C'\to C'$ be $S\colon C'_1\to C'_0$. Then $d'\circ h' + h'\circ d' = G\cdot \id_{C'}  - f\circ g$,
so $f\circ g$ is homotopic to~$G$, as desired.

Similarly, one finds $g\circ f = (-D\colon A_0\to A_0) + (-D\colon A_n\to A_n)$.
By \cref{lem:homotopy2}, for each~$i$, there exists a homotopy $h_i\colon C\to C$ such that
$h_i\circ d + d\circ h_i$ equals $u\cdot (\id_{A_i} + \id_{A_{i-1}})$ with $u = S^2$ if $d_i$ is odd
and $u = D$ if $d_i$ is even. Let $h = \sum_i (-1)^{i+1} h_i$.
Now, one sees that
\[
h\circ d + d\circ h = G\cdot \id_C - g\circ f,
\]
which concludes the proof.
\end{proof}
We now need to examine rational replacements (first seen in \cref{dfn:ratioreplacement}) more closely.
\begin{dfn}\label{dfn:ratioreplacement2}
Two unoriented link $L, L'\subset S^3$ are related by a
\emph{rational replacement} if, after an isotopy, there exists
a ball $B\subset S^3$ whose boundary sphere intersects $L$ and $L'$ transversely,
such that $L \setminus B^{\circ} = L' \setminus B^{\circ}$,
and the two tangles $T = L\cap B$ and $T' = L'\cap B$ are rational.
If $T$ and $T'$ have the same connectivity, we say that the rational replacement is \emph{proper}.
If there is a homeomorphism between $B$ and the unit ball that sends $T$ to $R(x)$
and $T'$ to $R(y)$ for some $x,y\in\Q\cup\{\infty\}$,
we speak of an \emph{$x$ by $y$ rational replacement}.
\end{dfn}
It is a frequently used fact that a crossing change may be seen as $-1$ by $1$ rational replacement,
but also as $0$ by $2$ rational replacement.
The following lemma generalizes this.
\begin{lem}\label{lem:changerationalcoeff}
Let $S, T$ be rational tangles in a ball~$B$, let $x,y\in\Q\cup \{\infty\}$,
and let $\varphi$ be a homeomorphism of $B$ to the unit ball $B_0$ such that $\varphi(S) = R(x)$
and $\varphi(T) = R(y)$. Then there exists $z\in \{-1\}\cup [0,\infty)$,
and a homeomorphism $\varphi'\colon B\to B_0$ such that $\varphi'(S) = R(-1)$ and $\varphi'(T) = R(z)$.
\end{lem}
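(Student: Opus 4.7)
My plan is to reformulate the statement as a question about self-homeomorphisms of $B_0$. Setting $\psi = \varphi' \circ \varphi^{-1}$, what we need is an orientation-preserving homeomorphism $\psi \colon B_0 \to B_0$ sending endpoints to endpoints such that $\psi(R(x)) = R(-1)$ and $\psi(R(y)) = R(z)$ for some $z \in \{-1\} \cup [0,\infty)$. Equivalence classes of such $\psi$ form a group which acts on the set of rational tangles in $B_0$, and via the bijection $R$ this is an action on $\mathbb{Q} \cup \{\infty\}$.

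The next step would be to identify this action with $PSL(2,\mathbb{Z})$ acting by Möbius transformations. Two generators suffice: the quarter rotation about the $e_3$-axis, which is orientation-preserving and realizes $y \mapsto -1/y$; and the Dehn twist exchanging the two right endpoints (as in the planar arc diagram used for rule \cref{eq:tsum}, but realized internally as a self-homeomorphism), which realizes $y \mapsto y+1$. These two transformations generate $PSL(2,\mathbb{Z})$. Consequently, every $M \in PSL(2,\mathbb{Z})$ is induced by some self-homeomorphism $\psi$ of~$B_0$, so the lemma reduces to the purely algebraic statement that for all $(x,y) \in (\mathbb{Q} \cup \{\infty\})^2$ there exists $M \in PSL(2,\mathbb{Z})$ with $M(x) = -1$ and $M(y) \in \{-1\} \cup [0,\infty)$.

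For the algebraic part, I would first use transitivity of $PSL(2,\mathbb{Z})$ on $\mathbb{Q} \cup \{\infty\}$ to bring $x$ to $-1$ and then analyze the action of the stabilizer of $-1$ on the second coordinate. The key invariant is $\delta(x,y) = |ps - qr|$, where $x = p/q$ and $y = r/s$ are in lowest terms (with $\infty = 1/0$); the identity
\[
M(x) - M(y) = \frac{x-y}{(cx+d)(cy+d)} \quad \text{for } M = \begin{pmatrix} a & b \\ c & d \end{pmatrix} \in PSL(2,\mathbb{Z})
\]
shows $\delta$ is $PSL(2,\mathbb{Z})$-invariant, and a short argument shows it is a complete invariant of the diagonal orbits. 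Computing $\delta(-1, z) = |a+b|$ for $z = a/b$, the value $n = 0$ is achieved by $z = -1$ and any $n \geq 1$ is achieved by $z = n-1 \geq 0$. Hence every orbit meets $\{-1\} \times (\{-1\} \cup [0,\infty))$, which yields the desired $M$ and therefore the desired $\varphi'$.

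The main obstacle I anticipate is justifying rigorously that the induced mapping-class-group action is all of $PSL(2,\mathbb{Z})$ rather than a strict subgroup, together with handling the orientation and base-point conventions of \cref{dfn:diagramoftangle}. If that turns out to be awkward, an alternative is to bypass the group-theoretic statement entirely and argue inductively on the continued-fraction complexity of $(x,y)$: repeatedly apply $y \mapsto y+1$ or $y \mapsto -1/y$ (each coming from an explicit self-homeomorphism of $B_0$) to reduce, say, $|p| + |q| + |r| + |s|$ until $x = -1$ and $z$ lies in the prescribed range.
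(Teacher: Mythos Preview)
Your strategy is essentially the paper's: realize self-homeomorphisms of $B_0$ as $PSL(2,\Z)$ acting on $\Q\cup\{\infty\}$, use transitivity to pin down the first coordinate, then use the stabilizer to adjust the second. The paper carries this out by first sending $x$ to $\infty$ (where the stabilizer is just the translations $y\mapsto y+k$, so one can translate $y'$ into $(0,1]\cup\{\infty\}$), and then applying the single map $w\mapsto 1/w - 1$ to send $\infty$ to $-1$ and $(0,1]$ to $[0,\infty)$.

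There is, however, a genuine error in your execution. The quantity $\delta(x,y)=|ps-qr|$ is \emph{not} a complete invariant of the diagonal $PSL(2,\Z)$-action on ordered pairs. For example, $(\infty,1/5)$ and $(\infty,2/5)$ both have $\delta=5$, but any $M$ sending the first to the second must fix $\infty$, hence be a translation $y\mapsto y+k$; and $1/5+k=2/5$ has no integer solution. So exhibiting one $z\in\{-1\}\cup[0,\infty)$ with $\delta(-1,z)=n$ does not show that \emph{every} orbit with $\delta=n$ meets $\{-1\}\times(\{-1\}\cup[0,\infty))$. The conclusion you want is still true, but you need the paper's trick (or an equivalent): pass through $\infty$, where the stabilizer is transparent, and only afterwards conjugate to $-1$. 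Concretely, any $y'\in\Q$ can be translated into $(0,1]$, and then $w\mapsto 1/w-1$ sends $(0,1]$ onto $[0,\infty)$ and $\infty$ to $-1$; this replaces your $\delta$-argument entirely and is precisely your proposed fallback.
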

\begin{proof}
Let $\psi_1$ be a homeomorphism of $B_0$ with $\psi_1(R(x)) = R(\infty)$.
Then $\psi_1(R(y)) = R(y')$ for some~$y'$.
Let $\psi_2$ be a homeomorphism of $B_0$
such that $\psi_2(R(\infty)) = R(\infty)$
and $\psi_2(R(y')) = R(y'')$ with $y'' \in (0,1]\cup\{\infty\}$.
Such a $\psi_2$ may be constructed by adding a certain number of twists to the right side of the ball.
Finally, let $\psi_3$ be the homeomorphism of $B_0$ that sends $R(w)$ to $R(1/w - 1)$ for all $w\in\Q\cup\{\infty\}$.
Then $\psi_3\circ\psi_2\circ\psi_1\circ\varphi(S) = R(-1)$ and
$\psi_3\circ\psi_2\circ\psi_1\circ\varphi(T) = R(z)$ for $z\in \{-1\}\cup [0,\infty)$, as desired.
\end{proof}
\propdiscrete*
\begin{proof}
By \cref{lem:changerationalcoeff}, there exists $x \in \{-1\}\cup [0,\infty)$ and a homeomorphism
that sends $S$ to $R(-1)$ and $T$ to~$R(x)$.
Since $S$ and $T$ are not equivalent, we have~$x\neq -1$.
Since the connectivities of $S$ and $T$ are the same, \cref{lem:parity} implies that
$x = p/q$ with both $p$ and $q$ odd (in particular,~$p/q \neq 0$).
By \cref{prop:lambdaequivariant}, $\lambda$ is equivariant under homeomorphisms,
and so we have $\lambda(S, T) = \lambda(R(-1),R(x))$. So it will be sufficient to show that $\lambda(R(-1),R(x)) = 1$.

By \cref{thm:thompson}, $[R(x)]^{\bullet}$ is homotopy equivalent to a zigzag complex $C$ with graph~$zz(x)$.
By \cref{lem:fg}, there are ungraded chain maps $f\colon [R(-1)]^{\bullet} \to C$
and $g\colon C \to [R(-1)]^{\bullet}$ with $g\circ f \simeq G\cdot \id_{[R(-1)]^{\bullet}}$ and 
$f\circ g \simeq G\cdot \id_{C}$, showing $\lambda(R(-1),R(x)) \leq 1$.

Let $\mathcal{D}$ be the following 2-input planar arc diagram:
\[
\begingroup%
  \makeatletter%
  \providecommand\color[2][]{%
    \errmessage{(Inkscape) Color is used for the text in Inkscape, but the package 'color.sty' is not loaded}%
    \renewcommand\color[2][]{}%
  }%
  \providecommand\transparent[1]{%
    \errmessage{(Inkscape) Transparency is used (non-zero) for the text in Inkscape, but the package 'transparent.sty' is not loaded}%
    \renewcommand\transparent[1]{}%
  }%
  \providecommand\rotatebox[2]{#2}%
  \newcommand*\fsize{\dimexpr\f@size pt\relax}%
  \newcommand*\lineheight[1]{\fontsize{\fsize}{#1\fsize}\selectfont}%
  \ifx\svgwidth\undefined%
    \setlength{\unitlength}{84.58745894bp}%
    \ifx\svgscale\undefined%
      \relax%
    \else%
      \setlength{\unitlength}{\unitlength * \real{\svgscale}}%
    \fi%
  \else%
    \setlength{\unitlength}{\svgwidth}%
  \fi%
  \global\let\svgwidth\undefined%
  \global\let\svgscale\undefined%
  \makeatother%
  \begin{picture}(1,0.69442001)%
    \lineheight{1}%
    \setlength\tabcolsep{0pt}%
    \put(0,0){\includegraphics[width=\unitlength,page=1]{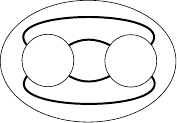}}%
    \put(0.11748526,0.20573563){\makebox(0,0)[lt]{\lineheight{1.25}\smash{\begin{tabular}[t]{l}\textbf{*}\end{tabular}}}}%
    \put(0.58788111,0.2028397){\makebox(0,0)[lt]{\lineheight{1.25}\smash{\begin{tabular}[t]{l}\textbf{*}\end{tabular}}}}%
  \end{picture}%
\endgroup%

\]
Then $\mathcal{D}(R(-1), R(2))$ is the unknot,
and $\mathcal{D}(R(x), R(2))$ is the two-bridge knot $K$ corresponding to $x + 2 = (p + 2q) / q$.
Since~$x+2 > 1$, this is a non-trivial knot, and so we have $\lambda(K) > 0$ because $\lambda$ detects the unknot (see \cref{prop:lambdadetectsunknot}).%
\footnote{Here, we do not even need Kronheimer--Mrowka's theorem that Khovanov homology detects the unknot, but only the (much easier) theorem that Khovanov homology detects the unknot among two-bridge knots (in fact, already the Jones polynomial can be seen to accomplish that).}
Overall, using \cref{lem:lambdainputdiagram}, we get
\[
\lambda(R(-1),R(x)) \geq \lambda(\mathcal{D}(R(-1), R(2)), \mathcal{D}(R(x), R(2))) = \lambda(K) > 0.
\]
This concludes the proof.
\end{proof}
\begin{proof}[Proof of \cref{thm:rationalreplacement}]
To show $\lambda(K) \leq u_q(K)$, it is sufficient to show the following:
if two knots $K$ and $J$ are related by a proper rational replacement, then $\lambda(K,J) \leq 1$.
So let knots~$K$, $J$ related by a proper rational replacement be given.
By definition, there exists a 4-ended tangle~$T$, such that $K$ is the union of $T$ with a rational tangle~$S$,
and $J$ is the union of $T$ with a another rational tangle~$S'$.
Since the replacement is proper, $S$ and $S'$ have the same connectivity.
So $\lambda(S, S') \leq 1$ by \cref{prop:discrete},
and thus $\lambda(K, J) \leq 1$ by \cref{prop:gluing}.
\end{proof}

\begin{appendix}
\section{Proof of \refinsec{Proposition}{prop:equivcat}}\label{sec:appendix}
This appendix is devoted to the proof of the following proposition.
\rpropequivcat*

\begin{proof}
	We need to check that $B$ is faithful, full and dense. As explained before, density of $B$ follows directly from delooping. To show that $B$ is faithful and full, we are going to look at the morphism spaces
	\begin{equation*}
	\begin{split}
		& \Hom_\mathcal{E}(D_{T_0}, D_{T_0}),\\ &
		\Hom_{\Cob^3(2)}(B(D_{T_0}), B(D_{T_0})),\\ &
		\Hom_{\Cob^3_{\modl}(2)}(B(D_{T_0}), B(D_{T_0})),
	\end{split}
	\end{equation*}
	where $D_{T_0}$ is the diagram of the trivial $2$-ended tangle~$T_0$.
	
	Let $G, \Sigma_i$, $i \in \Z_{\geq 0}$ be formal variables. We introduce a grading on $\Z[G]$ and $\Z[G, \Sigma_0, \Sigma_1, \Sigma_2, \dots]$ by setting $\deg G = -2$ and $\deg \Sigma_i = 2-2i$. Then there is an isomorphism of graded Abelian groups\footnotemark
	\begin{equation*}
		\Hom_\mathcal{E}(D_{T_0}, D_{T_0}) \cong \Z[G]
	\end{equation*}
	given by mapping a connected cobordism of genus $k$ to~$G^k$. Similarly, there is an isomorphism of graded Abelian groups\footnotemark[\value{footnote}]\footnotetext{This is in fact an isomorphism of graded rings if we declare multiplication in $\Hom_{\mathcal{E}}$ (resp.~$\Hom_{\Cob^3(2)}$) as composition of cobordisms.}
	\begin{equation*}
		\Hom_{\Cob^3(2)}(B(D_{T_0}), B(D_{T_0})) \cong \Z[G, \Sigma_0, \Sigma_1, \Sigma_2, \dots],
	\end{equation*}
	given by mapping a cobordism, which consists of the marked component with genus $k$ and a disjoint union of $n_i$ many closed surfaces of genus~$i$, to the product $G^k\prod_{i=0}^\infty \Sigma_i^{n_i}$. In order to determine $\Hom_{\Cob^3_{\modl}(2)}(B(D_{T_0}), B(D_{T_0}))$, we need to understand how the local relations~$S$, $T$, and $4Tu$ in $\Cob^3_{\modl}(2)$ affect the ring $\Z[G, \Sigma_0, \Sigma_1, \Sigma_2, \dots]$. Introducing $S$ and $T$ translates to $\Sigma_0 = 0$ and~$\Sigma_1 = 2$. Next, we replace $4Tu$ with the equivalent $3S_2$ relation (cf.~\cite[Section 11.4]{BN1}, and \cref{fig:3S2} below) which is easier to handle as there are at most three surfaces involved. We name the surfaces in the relation $A, B, C$ in a clockwise manner starting top left. Suppose that $g(A) = a$, $g(B) = b$,~$g(C) = c$. Let $M_n$ be the curtain with genus~$g(M_n) = n$.
	
	\begin{figure}[ht]
		\centering
		\def\svgwidth{\textwidth}
               {
\begingroup%
  \makeatletter%
  \providecommand\color[2][]{%
    \errmessage{(Inkscape) Color is used for the text in Inkscape, but the package 'color.sty' is not loaded}%
    \renewcommand\color[2][]{}%
  }%
  \providecommand\transparent[1]{%
    \errmessage{(Inkscape) Transparency is used (non-zero) for the text in Inkscape, but the package 'transparent.sty' is not loaded}%
    \renewcommand\transparent[1]{}%
  }%
  \providecommand\rotatebox[2]{#2}%
  \newcommand*\fsize{\dimexpr\f@size pt\relax}%
  \newcommand*\lineheight[1]{\fontsize{\fsize}{#1\fsize}\selectfont}%
  \ifx\svgwidth\undefined%
    \setlength{\unitlength}{851.03311813bp}%
    \ifx\svgscale\undefined%
      \relax%
    \else%
      \setlength{\unitlength}{\unitlength * \real{\svgscale}}%
    \fi%
  \else%
    \setlength{\unitlength}{\svgwidth}%
  \fi%
  \global\let\svgwidth\undefined%
  \global\let\svgscale\undefined%
  \makeatother%
  \begin{picture}(1,0.10709594)%
    \lineheight{1}%
    \setlength\tabcolsep{0pt}%
    \put(0,0){\includegraphics[width=\unitlength,page=1]{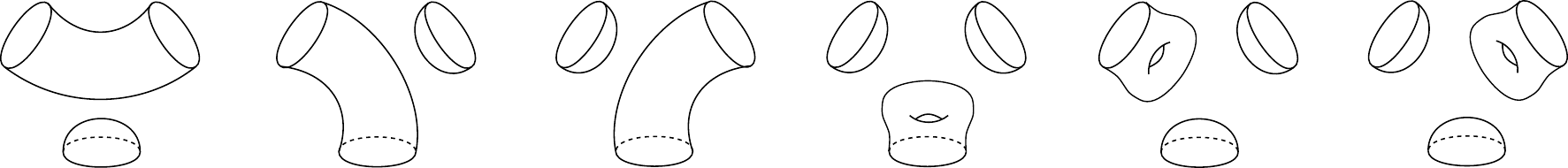}}%
    \put(0.13839834,0.03476824){\makebox(0,0)[lt]{\lineheight{1.25}\smash{\begin{tabular}[t]{l}+\end{tabular}}}}%
    \put(0.31466373,0.03565843){\makebox(0,0)[lt]{\lineheight{1.25}\smash{\begin{tabular}[t]{l}+\end{tabular}}}}%
    \put(0.49270951,0.033878){\makebox(0,0)[lt]{\lineheight{1.25}\smash{\begin{tabular}[t]{l}=\end{tabular}}}}%
    \put(0.66363351,0.03521331){\makebox(0,0)[lt]{\lineheight{1.25}\smash{\begin{tabular}[t]{l}+\end{tabular}}}}%
    \put(0.83722811,0.0383291){\makebox(0,0)[lt]{\lineheight{1.25}\smash{\begin{tabular}[t]{l}+\end{tabular}}}}%
  \end{picture}%
\endgroup%
}
		\caption{The $3S_2$-relation.}
		\label{fig:3S2}
	\end{figure}
	
	Suppose first that $A = B = M_0 \neq C$. In this case, the $3S_2$ relation translates to
	\begin{equation*}
		G\Sigma_c + G^c + G^c = \Sigma_{c+1} + G\Sigma_c + G\Sigma_c \iff \Sigma_{c+1} = 2G^c - G\Sigma_c.
	\end{equation*}
	By the $S$ relation, we have $\Sigma_0 = 0$ and thus $\Sigma_1 = 2G^0 - G\cdot 0 = 2$, which coincides with the $T$ relation. By induction, we therefore obtain the relation
	\begin{equation*}\tag{$*$}
		\Sigma_c = \begin{cases}
			2G^{c-1}, & c \textrm{ odd},\\
			0, & c \textrm{ even},
		\end{cases}
	\end{equation*}
	giving us a surjection $\Hom_{\Cob^3_{\modl}(2)}(B(D_{T_0}), B(D_{T_0})) \twoheadrightarrow \Z[G]$. We claim that there are no other relations introduced, i.e.\ that this surjection is an isomorphism. For this, we check all possible general cases of the $3S_2$ relation.
	\newline
	\newline
	\textbf{Case 1}: $A, B, C$ are three different closed surfaces (i.e.\ none of them is a curtain~$M_n$). In this case, $3S_2$ translates to
	\begin{equation*}
		\Sigma_{a+b}\Sigma_c + \Sigma_{a+c}\Sigma_b + \Sigma_{b+c}\Sigma_a = \Sigma_{a+1}\Sigma_b\Sigma_c + \Sigma_a\Sigma_{b+1}\Sigma_c + \Sigma_a\Sigma_b\Sigma_{c+1}.
	\end{equation*}
	If all $a \equiv b\equiv c \mod 2$ or if $a$ is odd and $b, c$ even, both sides of the equation vanish after applying~$(*)$. On the other hand, if $a$ is even and $b, c$ odd, then $(*)$ gives us
	\begin{equation*}
		4G^{a+b+c-2} + 4G^{a+b+c-2} + 0 = 8G^{a+b+c-2} + 0 + 0,
	\end{equation*}
	showing that there is no new relation introduced. Since $3S_2$ is symmetric in $A, B, C$, no other parities of $a, b, c$ need to be checked in this case.
	\newline
	\newline
	\textbf{Case 2}: $A = B \neq C$, none of them is a curtain~$M_n$. In this case we have
	\begin{equation*}
		\Sigma_{a+1}\Sigma_c + \Sigma_{a+c} + \Sigma_{a+c} = \Sigma_a\Sigma_{c+1} + \Sigma_{a+1}\Sigma_c + \Sigma_{a+1}\Sigma_c.
	\end{equation*}
	If $a \equiv c \mod 2$ both sides of the equation vanish after applying~$(*)$. If $a$ is even and $c$ odd, we obtain
	\begin{equation*}
		4G^{a+c-1} + 0 + 0 = 0 + 2G^{a+c-1} + 2G^{a+c-1},
	\end{equation*}
	showing that there is no new relation introduced. Again by symmetry of the $3S_2$ relation, no other cases $a$ and $c$ need to be checked.
	\newline
	\newline
	\textbf{Case 3}: $A = B = C$, none of them is a curtain~$M_n$. In this case we have
	\begin{equation*}
		3\Sigma_{a+1} = 3\Sigma_{a+1},
	\end{equation*}
	showing immediately that there are no new relations introduced.
	\newline
	\newline
	\textbf{Case 4}: $A = M_a$, $B$, $C$ are three different surfaces. The $3S_2$ relations translates to
	\begin{equation*}
		G^{a+b}\Sigma_c + G^{a+c}\Sigma_b + G^a\Sigma{b+c} = G^{a+1}\Sigma_b\Sigma_c + G^a\Sigma_{b+1}\Sigma_c + G^a\Sigma_b\Sigma_{c+1}.
	\end{equation*}
	If $b, c$ are even, both sides vanish after applying~$(*)$. If $b$ is even and $c$ is odd, we obtain
	\begin{equation*}
		2G^{a+b+c-1} + 0 + 2G^{a+b+c-1} = 0 + 4G^{a+b+c-1} + 0,
	\end{equation*}
	and if $b$ is odd and $c$ even, we get
	\begin{equation*}
		0 + 2G^{a+b+c-1} + 2G^{a+b+c-1} = 0 + 0 + 4G^{a+b+c-1}.
	\end{equation*}
	In both cases, no new relations are introduced.
	\newline
	\newline
	\textbf{Case 5}: $A = B = M_a \neq C$. In this case, we get
	\begin{equation*}
		G^{a+1}\Sigma_c + G^{a+c} + G^{a+c} = G^{a+1}\Sigma_c + G^{a+1}\Sigma_c + G^a\Sigma_{c+1}.
	\end{equation*}
	If $c$ is odd, applying $(*)$ yields
	\begin{equation*}
		2G^{a+c} + G^{a+c} + G^{a+c} = 2G^{a+c} + 2G^{a+c} + 0,
	\end{equation*}
	and if $c$ is even,
	\begin{equation*}
		0 + G^{a+c} + G^{a+c} = 0 + 0 + 2G^{a+c}.
	\end{equation*}
	In both cases, no new relations are introduced.
	\newline
	\newline
	\textbf{Case 6}: $A = M_a$, $B = C = M_c$, $M_a \neq M_c$. We have
	\begin{equation*}
		G^{a+c} + G^{a+c} + G^a\Sigma_{c+1} = G^{a+1}\Sigma_c + G^a\Sigma_{c+1} + G^a\Sigma_{c+1}.
	\end{equation*}
	If $c$ is odd, we obtain after applying $(*)$
	\begin{equation*}
		G^{a+c} + G^{a+c} + 0 = 2G^{a+c} + 0 + 0,
	\end{equation*}
	and if $c$ is even,
	\begin{equation*}
		G^{a+c} + G^{a+c} + 2G^{a+c} = 0 + 2G^{a+c} + 2G^{a+c}.
	\end{equation*}
	In both cases, no new relations are introduced.
	\newline
	\newline
	\textbf{Case 7}: $A = B = C = M_a$. As in the third case, we have
	\begin{equation*}
		3G^a = 3G^a,
	\end{equation*}
	showing immediately that there are no new relations introduced.
	
	The above shows that there are isomorphisms
	\begin{equation*}
	\begin{split}
		& \varphi: \Hom_\mathcal{E}(D_{T_0}, D_{T_0}) \overset{\cong}\rightarrow \Z[G] \\ &
		\psi: \Hom_{\Cob^3_{\modl}(2)}(B(D_{T_0}), B(D_{T_0})) \overset{\cong}\rightarrow \Z[G].
	\end{split}
	\end{equation*}
	Consider the diagram
	\begin{center}
	\begin{tikzcd}
		& \Z[G] \arrow[ld, swap, "\varphi"] \arrow{rd}{\psi} & \\
		\Hom_\mathcal{E}(D_{T_0}, D_{T_0}) \arrow{rr}{B} & & \Hom_{\Cob^3_{\modl}(2)}(B(D_{T_0}), B(D_{T_0})).
	\end{tikzcd}
	\end{center}
	By construction this diagram commutes, i.e.\ $B \circ \varphi = \psi$. Since both $\varphi$ and $\psi$ are isomorphisms, $B$ has to be an isomorphism as well.
\end{proof}

\end{appendix}
\bibliographystyle{myamsalpha}
\bibliography{References}
\end{document}